\newcommand{\eq}[2]{\begin{equation}\label{#1}#2 \end{equation}}
\newcommand{\ml}[2]{\begin{multline}\label{#1}#2 \end{multline}}
\newcommand{\mlnl}[1]{\begin{multline*}#1 \end{multline*}}
\newcommand{\arir}{\ar@{^{(}->}}
\newcommand{\aril}{\ar@{_{(}->}}
\newcommand{\are}{\ar@{>>}}
\newcommand{\xr}[1] {\xrightarrow{#1}}
\newtheorem{lemma}{Lemma}[section]
\newtheorem{thm}[lemma]{Theorem}
\newtheorem{theorem}{Theorem}
\newtheorem{cor-intro}{Corollary}
\newtheorem{prop}[lemma]{Proposition}
\newtheorem{proposition}[lemma]{Proposition}
\newtheorem{cor}[lemma]{Corollary}
\newtheorem{corollary}[lemma]{Corollary}
\theoremstyle{definition}
\newtheorem{defn}[lemma]{Definition}
\newtheorem{definition}[lemma]{Definition}
\newtheorem{nota}[lemma]{Notation}
\newtheorem{para}[lemma]{}
\newtheorem{conventions}[lemma]{Conventions}
\newtheorem*{ack}{Acknowledgments}
\theoremstyle{remark}
\newtheorem{remark}[lemma]{Remark}
\newtheorem{remarks}[lemma]{Remarks}
\newtheorem{claim}{Claim}[lemma]
\newtheorem*{claim*}{Claim}
\newcounter{zaehler} 
\numberwithin{equation}{lemma}
\newcommand{\N}{\mathbb{N}}
\newcommand{\Q}{\mathbb{Q}}
\newcommand{\Z}{\mathbb{Z}}
\renewcommand{\P}{\mathbf{P}}
\newcommand{\A}{\mathbf{A}}
\newcommand{\F}{\mathbf{F}}
\newcommand{\G}{\mathbf{G}}
\newcommand{\sH}{\mathcal{H}}
\newcommand{\sO}{\mathcal{O}}
\newcommand{\sU}{\mathcal{U}}
\newcommand{\sX}{\mathcal{X}}
\newcommand{\sY}{\mathcal{Y}}
\newcommand{\bZ}{\mathbb{Z}}
\newcommand{\fm}{\mathfrak{m}}
\newcommand{\fp}{\mathfrak{p}}
\newcommand{\Xb}{{\overline{X}}}
\newcommand{\Cor}{\operatorname{\mathbf{Cor}}}
\newcommand{\ProCor}{\operatorname{\mathbf{ProCor}}}
\newcommand{\ProulMCor}{\operatorname{\mathbf{Pro\underline{M}Cor}}}
\newcommand{\HI}{\operatorname{\mathbf{HI}}}
\newcommand{\RSC}{{\operatorname{\mathbf{RSC}}}}
\newcommand{\RSCNis}{{\operatorname{\mathbf{RSC}}}_{\Nis}}
\newcommand{\ul}[1]{{\underline{#1}}}
\newcommand{\PST}{{\operatorname{\mathbf{PST}}}}
\newcommand{\NST}{\operatorname{\mathbf{NST}}}
\newcommand{\Hom}{\operatorname{Hom}}
\newcommand{\Ker}{\operatorname{Ker}}
\renewcommand{\Im}{\operatorname{Im}}
\newcommand{\Coker}{\operatorname{Coker}}
\newcommand{\Tr}{\operatorname{Tr}}
\newcommand{\Nm}{\operatorname{Nm}}
\newcommand{\Div}{\operatorname{Div}}
\newcommand{\Pic}{\operatorname{Pic}}
\newcommand{\Spec}{\operatorname{Spec}}
\newcommand{\Art}{\operatorname{Art}}
\newcommand{\RoSe}{\operatorname{RoSe}}
\newcommand{\wR}{\operatorname{R}}
\newcommand{\wV}{\operatorname{V}}
\newcommand{\wF}{\operatorname{F}}
\newcommand{\td}{\operatorname{trdeg}}
\newcommand{\ab}{{\rm ab}}
\newcommand{\dlog}{\operatorname{dlog}}
\newcommand{\Gal}{\operatorname{Gal}}
\newcommand{\Qlb}{{\overline{\mathbb{Q}}_{\ell}}}
\newcommand{\Sm}{\operatorname{\mathbf{Sm}}}
\newcommand{\ProSm}{\operatorname{\mathbf{ProSm}}}
\newcommand{\Ab}{\operatorname{\mathbf{Ab}}}
\newcommand{\tr}{{\operatorname{tr}}}
\newcommand{\red}{{\operatorname{red}}}
\newcommand{\cont}{{\operatorname{cont}}}
\newcommand{\Zar}{{\operatorname{Zar}}}
\newcommand{\Nis}{{\operatorname{Nis}}}
\newcommand{\fppf}{{\operatorname{fppf}}}
\newcommand{\dR}{{\operatorname{dR}}}
\newcommand{\et}{{\operatorname{\acute{e}t}}}
\newcommand{\irr}{{\operatorname{irr}}}
\newcommand{\inj}{\hookrightarrow}
\newcommand{\surj}{\rightarrow\!\!\!\!\!\rightarrow}
\newcommand{\Res}{\operatorname{Res}}
\newcommand{\id}{{\operatorname{id}}}
\newcommand{\divi}{{\operatorname{div}}}
\newcommand{\ch}{{\operatorname{ch}}}
\newcommand{\Sym}{{\operatorname{Sym}}}
\newcommand{\Frac}{{\operatorname{Frac}}}
\newcommand{\fillog}[1]{{\operatorname{fil}^{\rm log}_{#1}}}
\newcommand{\fil}[1]{{\operatorname{fil}_{#1}}}
\newcommand{\filF}[1]{{\operatorname{fil}^{F}_{#1}}}
\newcommand{\gr}{{\operatorname{gr}}}
\newcommand{\colim}{\operatornamewithlimits{\varinjlim}}
\newcommand{\ol}{\overline}
\renewcommand{\phi}{\varphi}
\renewcommand{\epsilon}{\varepsilon}
\newcommand{\MNST}{\operatorname{\mathbf{MNST}}}
\newcommand{\MCor}{\operatorname{\mathbf{MCor}}}
\newcommand{\MSm}{\operatorname{\mathbf{MSm}}}
\newcommand{\MPST}{\operatorname{\mathbf{MPST}}}
\newcommand{\CI}{\operatorname{\mathbf{CI}}}
\newcommand{\bcube}{{\ol{\square}}}
\newcommand{\ulMPST}{\operatorname{\mathbf{\underline{M}PST}}}
\newcommand{\ulMNST}{\operatorname{\mathbf{\underline{M}NST}}}
\newcommand{\ulMCor}{\operatorname{\mathbf{\underline{M}Cor}}}
\newcommand{\Comp}{\operatorname{\mathbf{Comp}}}
\def\bZ{\mathbb{Z}}
\def\Ztr{\bZ_\tr}
\def\tF{\widetilde{F}}
\def\tG{\widetilde{G}}
\def\Xinf{X_{\infty}}
\def\Func{\mathrm{Func}}
\def\FunPhiF{\Func_\Phi(F,\N)}
\def\FunPhiFn{\Func_\Phi(F,\N)_{\leq n}}
\def\Phin{\Phi_{\leq n}}
\def\CondscF{\mathrm{Cond}(F)^{sc}}
\def\CondscFn{\mathrm{Cond}(F)^{sc}_{\leq n}}
\def\tF{\tilde{F}}
\def\hF{\hat{F}}
\def\hFc{\hat{F}_c}
\begin{document}
\title{Reciprocity sheaves and their ramification filtrations}

\begin{abstract}
We define a motivic conductor for any presheaf  with transfers $F$ using the 
categorical framework developed for the theory of motives with modulus by Kahn-Miyazaki-Saito-Yamazaki.
If $F$ is a reciprocity sheaf this conductor yields an increasing and exhaustive filtration
on $F(L)$, where $L$ is any henselian discrete valuation field of geometric type over the perfect ground field.
We show if $F$ is a smooth group scheme, then the motivic conductor extends the Rosenlicht-Serre conductor;
if $F$ assigns to $X$ the group of finite characters on the abelianized \'etale fundamental group of $X$, then the motivic conductor
agrees with the Artin conductor defined by Kato-Matsuda; if $F$ assigns to $X$ the group of integrable rank one connections
(in characteristic zero), then it agrees with the irregularity. We also show that this machinery gives rise to a conductor
for torsors under finite flat group schemes over the base field, which we believe to be new.
We introduce a general notion of conductors on
presheaves with transfers and show that on a reciprocity sheaf the motivic conductor is minimal and
any conductor which is defined only for henselian discrete valuation fields of geometric type 
with {\em perfect} residue field can be uniquely extended to all such fields without any restriction on the residue field.
For example the Kato-Matsuda Artin conductor is characterized as the canonical extension 
of the classical Artin conductor defined in the perfect residue field case.
  \end{abstract}

\author{Kay R\"ulling and Shuji Saito}

\address{Bergische Universit\"at Wuppertal, Gau\ss{}str 20, 42119 Wuppertal, and}
\address{Technische Universit\"at M\"unchen,Boltzmannstr. 3, 85748 Garching}
\email{ruelling@uni-wuppertal.de}
\address{Graduate School of Mathematical Sciences, University of Tokyo}
\email{sshuji@msb.biglobe.ne.jp}

\thanks{The first  author is supported by the DFG Heisenberg Grant RU 1412/2-2.
Part of the work was done while he was a visiting professor at the TU M{\"u}nchen.
He thanks Eva Viehmann for the invitation and the support.
The second author is supported by JSPS KAKENHI Grant (15H03606) and the DFG SFB/CRC 1085 ``Higher Invariants''.}

\maketitle

\tableofcontents

\section{Introduction}

Fix a perfect field $k$ and let $\Sm$ be the category of separated smooth $k$-schemes. Let $\Cor$ be the category 
of finite correspondences:
$\Cor$ has the same objects as $\Sm$ and morphisms in $\Cor$ are finite correspondences (see \ref{para:Cor}
 for a precise definition). 
Let $\PST$ be the category of additive presheaves of abelian groups on $\Cor$, called presheaves with transfers. 
In this note we give a construction which associates to each $F\in \PST$ a collection of functions
\begin{equation*}
c^F =\{c^F_L: F(L) \to \N\cup \{\infty\}\}_{L\in \Phi},
\end{equation*}
where $\N$ is the set of non-negative integers, $\Phi$ is the collection of henselian discrete valuation fields which are the fraction 
fields of the henselization $\sO_{X,x}^h$ of $X\in \Sm$ at points  $x$ of codimension one in $X$, and 
\[ F(L) = \colim_{V} F(V-D_x),\]
where $V\to X$ ranges over \'etale neighborhoods of $x$ and $D_x$ is the closure of $x$ in $V$. 
We call $c^F$ \emph{the motivic conductor for $F$}. 
Our main aim is to convince the reader that our construction deserves such a pretentious terminology.
Indeed, it gives a unified way to understand different conductors such as 
the Artin conductor of a character of the abelian fundamental group
$\pi_1^{\ab}(X)$ with $X\in \Sm$ along a boundary of $X$,
the Rosenlicht-Serre conductor of a morphism from a curve to a commutative algebraic $k$-group, 
and the irregularity of a line bundle with connections on
$X\in \Sm$ along a boundary of $X$ . 
It also gives rise to a new conductor for  $G$-torsors with $G$ a finite flat 
$k$-group scheme. The latter conductor specializes to the classical Artin conductor in case $G$ is constant.

Our construction of the motivic conductors is rather simple once we have the new categorical framework introduced 
in \cite{KMSY-MotModI}, \cite{KMSY-MotModII} at our disposal (see \eqref{eq2;cF} below).
The main aim of {\em loc. cit.} is to develop a theory of \emph{motives with modulus} 
generalizing Voevodsky's theory of motives in order to capture non-$\A^1$-invariant phenomena and objects.
The basic principle is that the category $\Cor$ should be replaced by the larger category of \emph{modulus pairs}, 
$\ulMCor$: Objects are pairs 
$\sX=(\ol{X}, \Xinf)$ consisting of a separated $k$-scheme of finite type $\ol{X}$ and an effective (possibly empty) 
Cartier divisor $\Xinf$ on it such that the complement 
$\ol{X}\setminus \Xinf$ is smooth. Morphisms are given by finite correspondences between the smooth 
complements satisfying certain admissibility conditions (see \S\ref{sec:modulus} for the precise definition). 
Let $\MCor\subset \ulMCor$ be the full subcategory consisting of objects $(\Xb,\Xinf)$
with $\Xb$ proper over $k$. 
We then define $\ulMPST$ (resp. $\MPST$) as the category of additive presheaves of abelian groups 
on $\ulMCor$ (resp. $\MCor$). We have a functor 
\[\omega:\ulMCor \to \Cor, \quad (\Xb,\Xinf) \mapsto \Xb - |\Xinf|,\]
and two pairs of adjunctions
\begin{equation*}
\MPST\begin{smallmatrix} \tau^*\\ \longleftarrow\\ \tau_!\\ \longrightarrow\\
\end{smallmatrix}\ulMPST, 
\quad
\MPST
\begin{smallmatrix} \omega^*\\ \longleftarrow\\ \omega_!\\ \longrightarrow\\
\end{smallmatrix}\PST,
\end{equation*}
where $\tau^*$ is induced by the inclusion $\tau:\MCor\to \ulMCor$ and
$\tau_!$ is its left Kan extension, and $\omega^*$ is induced by $\omega$ and $\omega_!$ is its left Kan extension 
(see \ref{para:modulusII} for more concrete descriptions of these functors). 
A basic notion is the $\bcube$-invariance, where $\bcube=(\P^1,\infty)\in \MCor$: $F\in \MPST$ is called $\bcube$-invariant 
if $F(\sX)\simeq F(\sX\otimes\bcube)$ for all $\sX\in \MCor$  
(see \ref{para:modulus} for the tensor product $\otimes$ in $\MCor$). 
It is an analogue of the $\A^1$-invariance\footnote{Recall $F\in \PST$ is $\A^1$-invariant if $F(X)\simeq F(X\times\A^1)$ 
for all $X\in \Sm$.} exploited by Voevodsky in his theory of motives.
We write $\CI$ for the full subcategory of $\MPST$ consisting of $\bcube$-invariant objects. 
We know (\cite[Lem 2.1.7]{KSY-RecII}) that the inclusion $\CI\to \MPST$ admits 
a right adjoint $h_\bcube^0$ which associates to $F\in \MPST$ the maximal $\bcube$-invariant subobject of $F$.
We define the functor
\[ \omega^{\CI} : \PST \xr {\omega^*} \MPST \xr{h_\bcube^0} \CI,\]
and write $\tilde{F}=\tau_!\omega^{\CI} F\in \ulMPST$, for $F\in \PST$.
Then the motivic conductor $c^F$ for $F\in \PST$ is defined by
\begin{equation}\label{eq2;cF} 
c^F_L(a) =\min\{n|\; a\in \tilde{F}(\sO_L,\fm_L^{-n})\}, \quad\text{for } a\in F(L).
\end{equation}
Here, for $G\in \ulMPST$,  $L=\Frac(\sO_{X,x}^h)\in \Phi$, and $n\in \Z_{\geq 1}$, we put 
\[ G(\sO_L,\fm_L^{-n}) = \colim_{V} G(V,nD_x),\]
where $V\to X$ ranges over \'etale neighborhoods of $x$ and $D_x$ is the closure of $x$ in $V$ and 
$n D_x$ is its $n$-th thickening in $V$. 
By convention, 
\[G(\sO_L,\fm_L^{-n}) = G(\sO_L) = \colim_{V} G(V),\quad \text{for } n=0.\]
For $G=\tF$ there are natural inclusions $\tF(\sO_L,\fm^{-n}_L)\inj F(L)$, which are used to define \eqref{eq2;cF}.
It turns out that 
$\{\tilde{F}(\sO_L,\fm_L^{-n})\}_{n\in \Z_{\geq 0}}$
induces an increasing filtration on $F(L)$ which is exhaustive if $F\in \RSC$.  
Here $\RSC$ is the full subcategory of $\PST$ consisting of the objects belonging to the essential image of $\CI$ under $\omega_!$.
Objects of $\RSC$ are called \emph{reciprocity presheaves} and play a key role in this note. 
We know (see \cite[Cor 2.3.4]{KSY-RecII}) that 
$\RSC$ contains all $\A^1$-invariant objects in $\PST$. 
Moreover it contains many interesting objects $F$ which are not $\A^1$-invariant. 
In this note we consider in particular the following examples
($X$ runs over objects of $\Sm$):
\begin{enumerate}[label = (\roman*)]
\item\label{intro:ex1}
$F(X)=\Hom_{\Sm}(X,\Gamma)$, where $\Gamma$ is a smooth commutative algebraic $k$-group 
which may have non-trivial unipotent part (for example $\Gamma=\G_a$). 
\item\label{intro:ex2}
$F(X)=H^1_{\et}(X,\Q/\Z)=\Hom_{\cont}(\pi_1(X)^{\ab},\Q/\Z)$.
\item\label{intro:ex3}
$F(X)={\rm Conn}^1(X)$ (resp. ${\rm Conn}_{\rm int}^1(X)$) the group of isomorphism classes of (resp. integrable) 
rank $1$ connections on $X$.
Here we assume $\ch(k)=0$.
\item\label{intro:ex4}
$F(X)=H^1_{\fppf}(X,\Gamma)$,  where $\Gamma$ is a finite flat $k$-group.
\end{enumerate}
We prove the following (see Theorems \ref{thm:rose-cond}, \ref{thm:filF-motivic},  \ref{thm:Art-motivic}, and 
\ref{thm:irr-mot} for the precise statements).

\begin{theorem}\label{intro:thm1}
\begin{enumerate}[label = (\arabic*)]
\item
In case \ref{intro:ex1}, $c^F_L$ agrees with the conductor of Rosenlicht\hyp{}Serre (\cite{SerreGACC})
if $L$ has perfect residue field.
If $\ch(k)=p$ is positive and $F=W_n$ is the group scheme of $p$-typical Witt vectors of length $n$,
 then $c^F_L$ agrees  with a conductor
defined by Kato-Russell in \cite{Kato-Russell} for any $L$.
\item
In case \ref{intro:ex2}, $c^F_L$ agrees with the Artin conductor $\Art_L$ of Kato-Matsuda 
(see \S7.1)\footnote{It coincides with the classical Artin conductor if
$L$ has perfect residue field.}. 
\item
In case \ref{intro:ex3}, $c^F$ agrees with the irregularity of connections. 
\end{enumerate}
\end{theorem}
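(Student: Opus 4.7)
The proof of Theorem \ref{intro:thm1} is a case-by-case identification of two filtrations on $F(L)$: the motivic filtration $\{\tilde F(\sO_L, \fm_L^{-n})\}_n$ and the classical one induced by $c^{\rm cl}$. The plan is, for each $F$ in the theorem with classical conductor $c^{\rm cl}$, to construct an object $F^{\rm cl} \in \ulMPST$ defined by $F^{\rm cl}(\Xb, \Xinf) := \{a \in F(\Xb - |\Xinf|) : c^{\rm cl}_{L_\eta}(a|_\eta) \leq \mathrm{ord}_\eta(\Xinf) \text{ for every generic point } \eta \text{ of } \Xinf\}$, then verify that $\tau^* F^{\rm cl} \in \CI$, and finally use this to sandwich the motivic filtration with the classical one. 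The $\bcube$-invariance of $\tau^* F^{\rm cl}$ together with the maximality of $\omega^{\CI} F$ as a $\bcube$-invariant subobject of $\omega^* F$ yields a comparison whose inspection at valuation fields provides the inequality $c^F_L \leq c^{\rm cl}_L$; equivalently, one appeals directly to the minimality of the motivic conductor on reciprocity sheaves, once $c^{\rm cl}$ is shown to satisfy the abstract conductor axioms.

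For the reverse inequality $c^{\rm cl}_L \leq c^F_L$, one uses specific inputs from each classical theory. For (1), Rosenlicht-Serre's realisation of the conductor via vanishing of local symbols on the generalised Jacobian of a smooth curve allows one to detect the classical conductor on the motivic modulus filtration by pullback along test curves; for Witt vectors $W_n$, the explicit Kato-Russell filtration in terms of $dV^i$ and $F^i$ extends the argument to arbitrary (possibly imperfect) residue fields. For (2), one first identifies $c^F_L$ with the classical Artin conductor when $L$ has perfect residue field (reducing the statement to a computation with Kummer/Artin-Schreier-Witt characters), and then invokes the unique extension principle from the abstract to deduce agreement with the Kato-Matsuda conductor for arbitrary $L$. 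For (3), Deligne's description of the irregularity of a rank one connection in terms of the pole order of its connection form on a log model directly identifies the two filtrations.

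The main obstacle is the case-by-case verification of $\bcube$-invariance of $F^{\rm cl}$: given the modulus pair $(\Xb \times \P^1, \Xinf \times \P^1 + \Xb \times \{\infty\})$ one must show that the classical modulus condition on sections of $F$ pulled back to this pair reduces to the condition on $(\Xb, \Xinf)$. Each case requires non-trivial structural input from the theory of the corresponding classical conductor --- additivity, behaviour under finite covers, and compatibility with specialisation along a normal crossings divisor. A secondary difficulty in case (2) is the passage from perfect residue fields, where the identification with the classical Artin conductor is direct, to arbitrary residue fields via the canonical extension principle.
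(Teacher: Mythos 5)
Your overall strategy --- construct $F^{\rm cl}$ from the classical conductor, verify cube-invariance, use minimality to get $c^F \le c^{\rm cl}$, then run a local-symbol computation for $c^{\rm cl} \le c^F$ --- is indeed the route the paper takes. But there are two places where your outline diverges in a way that matters.

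First, your definition of $F^{\rm cl}(\Xb,\Xinf)$ imposes the bound $c^{\rm cl}_{L_\eta}(a|_\eta)\le\mathrm{ord}_\eta(\Xinf)$ only at \emph{generic points} $\eta$ of $\Xinf$. This is too weak to give a presheaf on $\ulMCor$: when you push a section through a left-proper admissible correspondence $V$, the normalization $\ol V^N$ maps finitely to $\Xb$, and the height-one points of $\ol V^N$ dominating $\eta$ can land on \emph{non-generic} points of $\Yinf$. The paper's $F_c$ (Proposition~\ref{prop:cCI}) imposes the bound along \emph{all} henselian dvf points of the modulus pair, and the proof of transfer-compatibility (inequalities~\eqref{prop:cCI3.1}) really uses axioms (c2) and (c3), not just the generic-point data. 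You do acknowledge the cleaner alternative (``appeal to the minimality ... once $c^{\rm cl}$ satisfies the abstract conductor axioms''), and that is in fact the paper's uniform mechanism: the axiom (c4) \emph{gives} cube-invariance of $F_c$ for free, so what you describe as ``the main obstacle'' (case-by-case verification of $\bcube$-invariance) is instead handled once and for all by Proposition~\ref{prop:cCI}(1). The case-by-case work then collapses to verifying (c1)--(c6), which is where the classical structural input (additivity, behaviour under finite covers, specialization) actually enters.

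Second, for case (2) you propose to prove the identification for perfect residue fields and then ``invoke the unique extension principle'' to reach arbitrary $L$. The paper's logic runs in the opposite direction: Theorem~\ref{thm:Art-motivic} is proved \emph{directly} for all $L$ via Propositions~\ref{prop:Art-cond} and~\ref{prop:Art-symb} (the latter using two coefficient fields $\sigma_0,\sigma_1$ precisely to handle the imperfect-residue-field, $p\mid r$ case), and the ``canonical extension'' description of the Kato--Matsuda conductor is then read off as a \emph{consequence}. Your route would require, as an independent input, that the Kato--Matsuda conductor \emph{is} the canonical extension of the classical Artin conductor, i.e.\ that $\tF_{\Art^{\le 1}} = \tF_{\Art}$. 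That equality is not formal --- it is exactly what the two-coefficient-field symbol computation establishes --- so the proposed shortcut would circle back to the same hard comparison. The level-1 statement really is a corollary, not a stepping stone. Cases (1) and (3) as you describe them match the paper's proofs (Theorems~\ref{thm:rose-cond}, \ref{thm:filF-motivic}, \ref{thm:irr-mot}) closely.
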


As far as we know, the motivic conductor $c^F$ in the case \ref{intro:ex4} is new
and we give an explicit description only in case the infinitesimal unipotent part of $G$ is $\alpha_p$, where $p=\ch(k)$ 
(see  Theorem \ref{thm-H1G-RSC}).  
\medbreak

An amusing application of the motivic conductor $c^F$ is to give an explicit description of the maximal $\A^1$-invariant part of $F$:
 Let $\HI\subset \PST$ be the full subcategory of $\A^1$-invariant objects. 
The inclusion $\HI\to \PST$ admits a right adjoint $h^0_{\A^1}$ which associates to $F\in\PST$ the 
maximal $\A^1$-invariant subobject of $F$ (see \ref{para:HIsub} for an explicit description of $h^0_{\A^1}$). 
Let $\NST\subset \PST$ be the full subcategory of Nisnevich sheaves, i.e.,
 those objects $F\in \PST$ whose restrictions to $\Sm\subset \Cor$ are sheaves with respect to the Nisnevich topology.

\begin{theorem}\label{intro:thm2}
For $F\in \RSC\cap \NST$ and $X\in \Sm$, we have
\[h^0_{\A^1}(F)(X)  = \underset{\rho}{\bigcap} \; \{a\in F(X)|\; c^F(\rho^*a)\leq 1\},\]
where $\rho$ ranges over all morphisms $\Spec L \to X$ with $L\in \Phi$.
\end{theorem}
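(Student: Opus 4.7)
Write $F^{(1)}(X)$ for the right-hand side; we prove the two inclusions separately.

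For $h^0_{\A^1}(F)(X) \subseteq F^{(1)}(X)$, set $F' := h^0_{\A^1}(F)$, which lies in $\HI \cap \RSC \cap \NST$. Since $\omega^*$ is exact and $h^0_{\bcube}$ is a right adjoint, the monomorphism $F' \hookrightarrow F$ induces compatible inclusions $\widetilde{F'}(\sO_L, \fm_L^{-n}) \hookrightarrow \tilde{F}(\sO_L, \fm_L^{-n})$ inside $F(L)$; hence $c^F(\rho^* a) \leq c^{F'}(\rho^* a)$ for every $a \in F'(X)$ and every $\rho \colon \Spec L \to X$. It remains to see that $c^G \leq 1$ for all $G \in \HI \cap \RSC$, which follows because $\omega^* G$ is already $\bcube$-invariant --- one checks $\omega^* G(\overline{Y}, D)$ depends on $D$ only through $|D|$ --- so $h^0_{\bcube}(\omega^* G) = \omega^* G$ and the filtration $\tilde{G}(\sO_L, \fm_L^{-n})$ is constant for $n \geq 1$.

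For the reverse inclusion $F^{(1)}(X) \subseteq h^0_{\A^1}(F)(X)$, the strategy is to upgrade $F^{(1)}$ to a subpresheaf with transfers of $F$ belonging to $\HI$; then the universal property of $h^0_{\A^1}$ as right adjoint of $\HI \hookrightarrow \PST$ forces $F^{(1)} \subseteq h^0_{\A^1}(F)$. Stability of the conductor $\leq 1$ condition under pullback by finite correspondences follows from the functoriality of $\tilde F \in \ulMPST$: any transfer map acts on the modulus sections compatibly with the filtration, a feature baked into the construction $F \mapsto \tilde F$.

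The main obstacle is proving that $F^{(1)}$ is $\A^1$-invariant, i.e., that $p^* \colon F^{(1)}(X) \to F^{(1)}(X \times \A^1)$ is bijective for $p$ the projection. Injectivity comes from the Nisnevich sheaf property of $F$ together with its injectivity at generic points. For surjectivity, given $b \in F^{(1)}(X \times \A^1)$, the hypothesis that $b$ has conductor $\leq 1$ at every codimension-one point of $X \times \A^1$ --- in particular along the horizontal divisor at infinity of a good compactification $\overline{X} \times \P^1$ --- should force $b$ to come, after Nisnevich localization, from a section of $\tilde{F}(\overline{X} \times \P^1, \overline{X} \times \{\infty\}) = \tilde{F}(\overline{X} \otimes \bcube)$. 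The $\bcube$-invariance of $h^0_{\bcube}(\omega^* F)$ then collapses this to a section over $\overline{X}$, yielding the required descent along $p$. The delicate step is verifying that pointwise conductor $\leq 1$ over every $L \in \Phi$ really does produce such a global extension with reduced boundary; this is where a Gersten-type purity for $\tilde F$ along smooth codimension-one subvarieties appears indispensable, together with a compatibility between the horizontal and vertical codimension-one valuations of $X \times \A^1$.
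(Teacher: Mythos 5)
Your first inclusion is fine: the argument that $h^0_{\A^1}(F)\subset F^{c^F\le 1}$ via $\omega^{\CI}F'=\omega^*F'$ for $F'=h^0_{\A^1}(F)\in\HI$ and the resulting constancy of the filtration for $n\ge 1$ is correct, and it is essentially the paper's route (Proposition \ref{prop:mot-cond-sub-sheaf} plus Corollary \ref{cor:HI}), just phrased more directly. The problems are in the reverse inclusion, where the two steps you need are exactly the ones you do not prove. First, the stability of the condition ``$c^F\le 1$ at all henselian dvf points'' under finite correspondences does \emph{not} follow from a bare appeal to the functoriality of $\tF\in\ulMPST$: if $V\in\Cor(X,Y)$ is a finite prime correspondence and $\rho:\Spec L\to X$ a henselian dvf point, the closure of $V\circ\rho$ in $\Spec\sO_L\times\ol{Y}$ need not be an \emph{admissible} correspondence from $(\Spec\sO_L,s)$ to a compactification $(\ol{Y},Y_{\infty,\red})$ with reduced boundary, because the pulled-back multiplicity $v_{L_i}(Y_{\infty,\red})$ at a branch $\Spec L_i$ can exceed the ramification index $e(L_i/L)=v_{L_i}(s)$ (tangential intersection). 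The correct argument, as in Proposition \ref{prop:HIc} via the computation \eqref{prop:cCI3.1}, decomposes $\rho^*V^*=\sum_i m_i f_{i*}\rho_i^*$ with $\rho_i:\Spec L_i\to Y$ henselian dvf points and $f_i$ finite, and uses that $f_{i*}$ maps $\tF(\sO_{L_i},\fm_{L_i}^{-e_i})$ into $\tF(\sO_L,\fm_L^{-1})$ (the admissible morphism $(\Spec\sO_L,s)\to(\Spec\sO_{L_i},e_i s_i)$ in $\ulMCor^{\rm pro}$, i.e.\ property \ref{c3} with its round-up $\lceil 1/e_i\rceil=1$), together with \ref{c2}; your one-line justification skips precisely this point.

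Second, and more seriously, the $\A^1$-invariance of $F^{(1)}$ is only asserted (``should force'', ``appears indispensable''), and this is the heart of the theorem. In the paper it is property \ref{c4} of the motivic conductor, proved in Claim \ref{claim:rec-cond} inside Theorem \ref{thm:rec-cond}: one first reduces to the generic point using the injectivity $F(\A^1_X)\inj F(\A^1_K)$, $K=k(X)$ (\cite{KSY-Rec}, \cite{KSY-RecII}) -- note your compactification $(\ol X\times\P^1,\ol X\times\{\infty\})$ does not even have $X\times\A^1$ as its open part unless $X$ is proper, and over $X_\infty$ one would need an unbounded multiple of the boundary, which is why the reduction to $\Spec K$ is made -- and then applies the Nisnevich localization sequence for $\ul{a}_{\Nis}\tF$ on $(\P^1_K,\infty)$, the purity theorem $\ul{a}_{\Nis}\tF(\A^1_K,\emptyset)=F(\A^1_K)$, and the cube invariance of $\ul{a}_{\Nis}\tF$, all from \cite{Saito-Purity-Rec}. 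Without these inputs (or a substitute for them) the ``Gersten-type purity'' you invoke remains exactly the missing ingredient, so the proposal as written does not establish the inclusion $F^{(1)}(X)\subset h^0_{\A^1}(F)(X)$.
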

 
In case $F=H^1_{\et}(-,\Q/\Z)$ from \ref{intro:ex2} (resp. $F={\rm Conn}^1_{\rm int}$ from \ref{intro:ex3}),
Theorem \ref{intro:thm2} asserts that the maximal $\A^1$-invariant part of $F$ is 
precisely the subsheaf of tame characters (resp. regular singular connections).

\medbreak

In what follows we fix $F\in \RSC\cap \NST$ and introduce a class of collections of functions
\begin{equation*}
c =\{c_L: F(L) \to \N\}_{L\in \Phi}
\end{equation*}
which may be called {\em conductors for $F$}.
Let $\FunPhiF$ be the partially ordered set consisting of collections of functions with
partial order given by  $c\leq c'$, if $c_L(a)\leq c'_L(a)$ for all $L\in \Phi$ and $a\in F(L)$. 
Let $\CI(F)$ be the partially ordered set consisting of subobjects
$G$ of $\omega^{\CI} F$ such that the induced maps
$\omega_! G \to \omega_! \omega^{\CI} F$ 
are isomorphisms and with partial order given by inclusion.
Then every $G\in \CI(F)$ gives rise to an exhaustive increasing filtration 
$\{\tau_!G(\sO_L,\fm_L^{-n})\}_{n\geq 0}$ on $F(L)$ and we define $c^G\in \FunPhiF$ by
\begin{equation*}
c^G_L(a) =\min\{n\mid a\in \tau_!G(\sO_L,\fm_L^{-n})\},\quad \text{for } a\in F(L).
\end{equation*}
By definition the motivic conductor $c^F$ of $F$ is $c^{\omega^{\CI} F}$ and $c^F\leq c^G$, for all $G\in \CI(F)$. 
Now a question is whether there is a simple characterization of the poset
$\{c^G|\; G\in \CI(F)\}$ in $\FunPhiF$. We answer it in the following refined form.
Let $n$ be a positive integer or $\infty$.
Let $\Phin \subset \Phi$ be the collection of such $L$ that $\td_k(L)\leq n$.
(Note that in positive characteristic $\Phi_{\leq 1}$ consists precisely of those $L\in \Phi$ that have a perfect residue field.)
Let $\FunPhiFn$ be the poset consiting of collections of functions 
\begin{equation*}
c =\{c_L: F(L) \to \N\}_{L\in \Phin}
\end{equation*} 
with partial order defined in the same manner as $\FunPhiF$.
There is an obvious restriction functor
\begin{equation}\label{restrictionFunPhiF}
\FunPhiF \to \FunPhiFn, \quad c \mapsto c^{\leq n}.
\end{equation}
We then introduce six axioms \ref{c1} through \ref{c6} for $\FunPhiFn$
(cf. Definitions \ref{defn:cond} and \ref{defn:c6}) and call those elements satisfying the axioms 
\emph{semi-continuous conductors of level $n$}. 
Let $\CondscFn$ be the  sub-poset of $\FunPhiFn$ consisting of such objects.
Write $\CondscF$ for $\CondscFn$ with $n=\infty$.
\footnote{There is one axiom \ref{c4} which is not preserved by the functor \eqref{restrictionFunPhiF}. 
So it does not induce  $\CondscF \to \CondscFn$.}
For example,  for $F=H^1_{\et}(-,\Q/\Z)$ from \ref{intro:ex2}, the classical Artin conductor $\{\Art_L\}_{L\in \Phi_{\leq 1}}$ 
is an element of $\CondscF_{\leq 1}$ and
the Kato-Matsuda conductor $\{\Art_L\}_{L\in \Phi}$ is an element of $\CondscF$.
We show the following (see Theorem \ref{thm:summary}).
\begin{theorem}\label{intro:thm3}
\begin{enumerate}[label= (\arabic*)]
\item
$c^G\in \CondscF$ for every $G\in \CI(F)$.
\item
There exists an order reversing map
\[ \CondscFn\to \CI(F),\quad c \mapsto \hFc \]
such that $c=(c^{\hFc})^{\leq n}$. 
For $\sX=(\Xb,\Xinf)\in \MCor$ with $X=\Xb-|\Xinf|$ we have
\[ \hFc(\sX) = \{a\in F(X)\;| \; c_{\Xb}(a)\leq \Xinf\},\]
where $c_{\Xb}(a)\leq \Xinf$ means that for any $L\in \Phin$ and any morphism
$\rho: \Spec \sO_L \to \Xb$ such that $\rho(\Spec L)\in X$, 
$c_L(\rho^*a)$ is not more than the multiplicity of 
the pullback of $\Xinf$ along $\rho$. 
\end{enumerate}
\end{theorem}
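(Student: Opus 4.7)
The plan is to prove the two parts in sequence. For part (1), given $G \in \CI(F)$, I verify each of the six semi-continuous conductor axioms for $c^G$. The axioms governing functoriality under morphisms of henselian discrete valuation fields and the compatibility of the filtration $\{\tau_! G(\sO_L,\fm_L^{-n})\}_n$ with étale pullback follow because $G$ is an object of $\MPST$ and its embedding into $\omega^{\CI} F$ respects the admissibility condition built into morphisms of $\ulMCor$. Finiteness of the conductor, i.e.\ exhaustiveness of the filtration on $F(L)$, is the defining property of $F \in \RSC$ applied to $\omega_! G = \omega_! \omega^{\CI} F$. The axiom bounding the conductor by $1$ on sections that extend smoothly across a reduced divisor is a direct reformulation of the $\bcube$-invariance of $G$. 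Semi-continuity over a parameter scheme follows by spreading a section on $\Spec \sO_L$ to an étale neighborhood $V$ and invoking $\tau_! G \in \ulMPST$.

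For part (2), given $c \in \CondscFn$, define $\hFc$ on $\MCor$ by the displayed formula, and verify in turn: (a) $\hFc$ is a presheaf on $\MCor$; (b) $\hFc \in \CI(F)$, i.e.\ $\hFc$ is $\bcube$-invariant and $\omega_! \hFc = F$; (c) $(c^{\hFc})^{\leq n} = c$; (d) the assignment is contravariant. Step (a) reduces, by decomposing a morphism $f : \sY \to \sX$ in $\MCor$ into irreducible components and working Nisnevich-locally at generic points of divisors, to showing that if $c_{\Xb}(a) \le \Xinf$ then $c_{\Yb}(f^* a) \le \Yinf$; the admissibility of $f$ bounds the pullback of $\Xinf$ by $\Yinf$, and then the axiom governing $c$ under finite morphisms of henselian dvrs closes the argument. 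Step (b) combines the corresponding axiom on $c$ with the observation that the pullback of $\Xinf$ to $\Xb \times \P^1$ only acquires contributions from $\Xb \times \{\infty\}$, which do not interact with the conductor condition. Step (d) is formal: $c \le c'$ in $\FunPhiFn$ gives $\hat{F}_{c'} \subseteq \hFc$ as subpresheaves of $\omega^{\CI} F$.

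The main obstacle will be step (c). The inequality $c^{\hFc}_L(a) \le c_L(a)$ is immediate: any lift of $a$ to a section over an étale neighborhood $V$ with conductor at most $m$ along $D_x$ places $a$ in $\hFc(V, m D_x)$, hence in $\tau_! \hFc(\sO_L,\fm_L^{-m})$. The reverse inequality is subtler: given a section $a \in \hFc(V, m D_x)$, one must show that its image in $F(L)$ satisfies $c_L(a) \le m$. Specializing along $\Spec \sO_{V,x}^h \to V$ transports the bound from the definition of $\hFc$ to $L$, but the delicate point is that $\hFc$ a priori only constrains sections through conductors on fields in $\Phin$; the proof must argue that this is already enough data to pin down $\hFc$ unambiguously as a subobject of $\omega^{\CI} F$. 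This is precisely the content of the canonical-extension principle announced in the introduction, and it relies essentially on the axiom that fails to be preserved by the restriction functor $\FunPhiF \to \FunPhiFn$, which is exactly what allows the level-$n$ data to determine the full object. Once (c) holds, the final formula for $\hFc(\sX)$ and the functoriality of the construction follow.
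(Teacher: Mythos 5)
Your overall plan tracks the paper closely: part (1) is obtained by verifying the conductor axioms (c1)--(c5) for $c^G$ (the paper's Theorem~\ref{thm:rec-cond}\ref{thm:rec-cond2}) and then characterizing semi-continuity via the filtration equality $\tau_!G(\sO_L,\fm_L^{-r})=\tF_{c^G}(\sO_L,\fm_L^{-r})$ (Theorem~\ref{thm:rec-cond}\ref{thm:rec-cond3} with Lemma~\ref{lem:c6}); and part (2) sets $\hFc=\tau^*F_c$ with $F_c$ as in Proposition~\ref{prop:cCI}. So the architecture is right. However, two of your assessments of where the difficulty lies are off.

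First, axiom \ref{c4} for $c^G$ is \emph{not} a direct reformulation of $\bcube$-invariance. The actual argument (Claim~\ref{claim:rec-cond}) requires: injectivity of $F(\A^1_X)\to F(\A^1_K)$ (which uses the global injectivity theorem for reciprocity sheaves), the Nisnevich localization sequence for $\ul{a}_\Nis(\tau_!G)$, the purity result $\ul{a}_\Nis(\tau_!G)(\A^1_K,\emptyset)=F(\A^1_K)$ from \cite{Saito-Purity-Rec}, and cube invariance of the Nisnevich sheafification. This is where the sheaf-theoretic machinery enters, and your proposal as written doesn't account for it. Second, you have misdiagnosed the reverse inequality $c_L(a)\le c^{\hFc}_L(a)$ for $L\in\Phin$ as the ``main obstacle''---it is actually the easy direction. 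If $a\in\tau_!\hFc(\sO_L,\fm_L^{-m})$ then $a\in\hFc(\sY)$ for some compactification $\sY$ of $(V,m D_x)$; the canonical dvf point $\rho:\Spec L\to\sY$ satisfies $v_L(Y_\infty)=m$ and $L\in\Phin$, so the definition of $\hFc(\sY)$ directly yields $c_L(a)\le m$. No appeal to canonical extension or to the non-restriction of \ref{c4} is needed; you have conflated part (2) with part (3) of the theorem, which concerns extending $c$ beyond level $n$. Finally, you should explicitly address why $\hFc$ lands inside $\omega^{\CI}F$ rather than merely $\omega^*F$: this is the content of Lemma~\ref{lem:MspCI}, where one factors the Yoneda map $a:\Ztr(\sX)\to\hFc$ through $h_0^\bcube(\sX)$ using cube invariance, and then applies $\omega_!$ to obtain the required factorization of $a:\Ztr(X)\to F$ through $h_0(\sX)$; this step is not merely a restatement of $\hFc\in\CI$.
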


As a consequence, we obtain the following (see Theorem \ref{thm:summary}\ref{thm:summary4}).

\begin{cor-intro}\label{canonicalextension}
There exists a unique map 
\[ \CondscFn \to \CondscF,\quad  c \mapsto c^\infty,\]
such that $\hFc=\hF_{c^\infty}$ and that $c=(c^\infty)^{\leq n}$.
\end{cor-intro}

We call $c^\infty$ the \emph{canonical extension of $c$}.
For example, the Kato-Matsuda Artin conductor is the canonical extension of the classical Artin conductor. 
We say $F$ has level $n$, if $(c^F)^{\leq n}\in \CondscFn$;
in this case $c^F$ is the canonical extension of $(c^{F})^{\le n}$, by Theorem \ref{thm:summary}\ref{thm:summary5}.
We show  that $F= H^1_{\et}(-,\Q/\Z)$ in  \ref{intro:ex2} is of level $1$ (see Theorem \ref{thm:Art-motivic}), 
$F={\rm Conn}^1$ (resp. $F={\rm Conn}_{\rm int}^1$) 
from \ref{intro:ex3} is of level $2$ (resp. $1$) (see Theorem \ref{thm:irr-mot}), 
and $F=H^1_{\fppf}(-,\Gamma)$ from \ref{intro:ex4}
is of level $1$ if the infinitesimal unipotent part of $\Gamma$ is trivial and else is of level $2$ (see Theorem \ref{thm-H1G-RSC}).

\medbreak

\medbreak

We give a description of the content of each section: In section \ref{sec:proPST} we explain how to extend a presheaf 
with transfers to the category of 
regular schemes over $k$ which are pro-smooth; this is well-known and we include it only for  lack of reference.
In section \ref{sec:modulus} we recall the necessary constructions and results from the theory of motives with modulus
as developed in \cite{KMSY-MotModI}, \cite{KMSY-MotModII}, \cite{KSY-Rec}, \cite{KSY-RecII}, and \cite{Saito-Purity-Rec}.
Then we introduce in section \ref{sec:cond} the notion of (semi-continuous) conductors and 
prove Theorems \ref{intro:thm3} and 
\ref{intro:thm2}. 
We close the section with a discussion of the relation between the 
motivic conductor of a reciprocity sheaf with certain vanishing properties of its associated symbol.
This is needed in order to prove in the later sections that a certain conductor is equal to the motivic one;
 the main point being Corollary \ref{cor:localLS3}. In the second  part we consider various 
conductors which are mostly  classical and show that they are motivic in our sense. 
K\"ahler differentials and rank one connections are considered in section \ref{sec:Conn1}, where $\ch(k)=0$. 
In the following sections we assume $\ch(k)=p>0$. 
In section \ref{subsec:Witt} it is shown that one of the conductors defined by Kato-Russell for $W_n$ is motivic.
We use this in section \ref{subsec:art} to show that the Kato-Matsuda conductor for characters is motivic,
which yields also a description of the motivic conductor for lisse $\bar{\Q}_{\ell}$-sheaves of rank 1.
Finally, in section \ref{sec:torsor} we define
and investigate a conductor for torsors under finite flat $k$-groups, which we believe to be new.
The general pattern of these computations is always the same: First we show that the collection $c=\{c_L\}$ defined 
in the various situations defines a semi-continuous conductor (of a certain level) in the sense of 
Definitions \ref{defn:cond} and \ref{defn:mot-cond}, then we do a symbol computation to show that this conductor
is actually motivic. Note however, that the actual computations in the various cases differ quite a bit. 

\begin{ack}
We thank the referee for helpful remarks.
\end{ack}

\begin{conventions}
We work over a perfect field $k$. If $K/k$ is a field extension, then by a $K$-scheme
we will always mean a scheme which is separated and of finite type over $K$. In contrast, the phrase 
{\em scheme over $K$} refers to any scheme morphism $X\to \Spec K$.
By a {\em smooth} $K$-scheme we mean a $K$-scheme which is
smooth over $K$. We denote by $\Sm_K$ the category  of such schemes and set $\Sm=\Sm_k$.
For $k$-schemes $X$ and $Y$ we write $X\times Y$ instead of $X\times_k Y$.
For any scheme $X$ we denote by $X^{(i)}$ the set of $i$-codimensional points of $X$.
\end{conventions}

\part{The general theory}
\section{Presheaves with transfers on pro-smooth schemes}\label{sec:proPST}
The material in this section is well-known, we give some details for lack of reference. 
\begin{para}\label{para:Cor}
Denote by $\Cor$ the category of finite correspondences of Suslin-Voevodsky. Recall that the objects are
 the smooth $k$-schemes and morphisms are given by  correspondences, i.e.,
$\Cor(X,Y)$ is the free abelian group generated by  prime correspondences, i.e.,
integral closed subschemes $V\subset X\times Y$
which are finite and surjective over a connected component of $X$.
Given two prime correspondences $V\in \Cor(X,Y)$ and $W\in \Cor(Y,Z)$ their composition is given by
the intersection product (see e.g. \cite[V, C]{SerreAL})
\eq{para:Cor1}{W\circ V= p_{13*} (p_{12}^*V \cdot p_{23}^* W),}
where $p_{ij}$ denotes the projection from $X\times Y\times Z$ to the factor $(i,j)$.

Denote by $\ProCor$ the pro-category of $\Cor$, i.e., objects 
are functors $I^{\rm o}\to \Sm$, $i\mapsto X_i$, where $I$ is a filtered essentially small category, and
the morphisms between two pro-objects $(X_i)_{i\in I}$ and $(Y_j)_{j\in J}$ are given by
\[\ProCor((X_i), (Y_j))= \varprojlim_{j\in J}\varinjlim_{i\in I} \Cor(X_i, Y_j).\]
\end{para}

\begin{defn}\label{defn:Corpro}
We define the category $\Cor^{\rm pro}$ as follows:
The objects are the noetherian regular schemes over $k$ of the form
\eq{defn:Corpro1}{X=\varprojlim_{i\in I} X_i,}
where $(X_i)_{i\in I}$ is a projective system of smooth $k$-schemes indexed by a partially ordered set
and with affine transition maps $X_i\to X_j$, $i\ge j$.
If $X$ and $Y$ are two objects in $\Cor^{\rm pro}$, then 
$\Cor^{\rm pro}(X,Y)=\Cor(X,Y)$ is the free abelian group generated by prime correspondences
in the sense of \ref{para:Cor}.
The composition is defined in the same way as in
the case of $\Cor$. (Note that this still makes sense by \cite[V, B, 3., Thm 1]{SerreAL}.)
\end{defn}

\begin{remarks}\label{rmk:Corpro}
\begin{enumerate}
\item All objects in $\Cor^{\rm pro}$ are separated, noetherian, and regular schemes over $k$. 
     Any affine, noetherian, and   regular scheme over $k$ defines an object   in $\Cor^{\rm pro}$,
by \cite[(1.8) Thm]{Popescu86} and \cite[Exp I, Prop 8.1.6]{SGA4I}.
\item Note, that for $X, Y\in \Cor^{\rm pro}$  the cartesian product $X\times Y$ does not need to be noetherian;
but if $Y\in \Sm$ and $X\in \Cor^{\rm pro}$,  then $X\times Y\in \Cor^{\rm pro}$.
\end{enumerate}
\end{remarks}

\begin{lemma}\label{lem:regular-proj}
Let $A$ be a $k$-algebra which is noetherian, regular, and is a directed limit $A=\varinjlim_{i\in I} A_i$,
where the $A_i$ are smooth and of finite type over $k$ and the transition maps  $A_i\to A_j$, $j\ge i$ are flat.
Let $X$ be a regular quasi-projective $A$-scheme. Then $X\in \Cor^{\rm pro}$.
\end{lemma}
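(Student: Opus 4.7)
I would prove the lemma in three steps, combining standard noetherian approximation for quasi-projective morphisms with a flat-descent argument for regularity.

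First, I would invoke the spreading-out results of EGA IV$_3$, \S 8 to realize $X$ over the system $(\Spec A_i)$: there exist an index $i_0\in I$ and a quasi-projective $A_{i_0}$-scheme $X_{i_0}$ with $X\iso X_{i_0}\times_{\Spec A_{i_0}}\Spec A$. Setting $X_i:=X_{i_0}\times_{\Spec A_{i_0}}\Spec A_i$ for $i\ge i_0$, one obtains a projective system of $k$-schemes of finite type whose transition maps $X_j\to X_i$ are affine (being base changes of the affine morphisms $\Spec A_j\to \Spec A_i$) and whose limit is $X$.

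Next, since $A_i\to A$ is flat as a filtered colimit of flat maps, each projection $\pi_i\colon X\to X_i$ is flat. For $y\in X$ with image $x_i\in X_i$ the induced faithfully flat local map $\sO_{X_i,x_i}\to \sO_{X,y}$ transports regularity of $\sO_{X,y}$ to $\sO_{X_i,x_i}$, so $\pi_i(X)$ lies in the open regular locus $U_i\subset X_i$ (open by excellence of finite-type $k$-schemes). Since $k$ is perfect and $U_i$ is of finite type over $k$, $U_i$ is smooth over $k$. The same descent argument shows that each transition $X_j\to X_i$ restricts to a morphism $U_j\to U_i$, and the identification $X=X_i\times_{A_i}A$ combined with the factorization of $\pi_i$ through $U_i$ yields $X=U_i\times_{A_i}A$, so $X=\varprojlim_{i\ge i_0}U_i$ as schemes.

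The final and most delicate step is to verify that the refined transition maps $U_j\to U_i$ are affine: a priori $U_j$ is only an open subscheme of the preimage $\pi_{ji}^{-1}(U_i)\subset X_j$, on which the transition map is affine. I would handle this by first replacing $X_{i_0}$ by its regular locus (smooth over $k$) and each subsequent $X_i$ by $U_{i_0}\times_{A_{i_0}}A_i$, which still forms a projective system with affine transitions; then iterating the descent-of-regularity argument, or alternatively invoking Popescu's desingularization theorem to refine $(A_i)$ to a cofinal subsystem with smooth transitions between the $A_i$ (such a refinement exists because $k\to A$ is a regular morphism when $k$ is perfect and $A$ is regular). This produces a projective system of smooth $k$-schemes with honest affine transitions and limit $X$, exhibiting $X$ as an object of $\Cor^{\rm pro}$. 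The main obstacle is precisely this affineness check: flat descent readily delivers the smoothness of each $U_i$, but establishing affineness of the restricted transitions demands either the iterative refinement or a Popescu-theoretic improvement of the base system.
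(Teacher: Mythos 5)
Your spreading-out step and your use of flatness of $X\to X_i$ together with descent of regularity to see that $X$ maps into the open smooth locus of $X_i$ are exactly the paper's first two moves. The genuine gap is the one you flag yourself and never close: with $U_i:=\mathrm{Reg}(X_i)$ the transition $U_j\to U_i$ is an open immersion followed by an affine morphism, hence only quasi-affine, and neither of your proposed repairs works as stated. Iterating ``replace $X_i$ by $U_{i_0}\times_{A_{i_0}}A_i$ and shrink again'' reintroduces an open immersion at every stage, so affineness of the transitions in the shrunken system is never actually secured; and Popescu's theorem does not deliver what you invoke: it presents $A$ as a filtered colimit of smooth $k$-algebras $B_j$, but gives no cofinal compatibility with the given system $(A_i)$, no smoothness or even flatness of the transition maps, and in particular no flatness of $B_j\to A$ --- which is precisely what your descent-of-regularity argument needs in order to locate the image of $X$ inside the smooth locus. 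So the decisive last step of your proof is missing.

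The paper's resolution sidesteps the problem instead of confronting it: shrink only once, at the bottom of the tower. Since $\tau_{i_0}\colon X\to X_{i_0}$ is flat and $X$ is regular, choose a single open $U_{i_0}\subset X_{i_0}$ containing $\tau_{i_0}(X)$ which is regular, hence smooth over the perfect field $k$, and set $U_i:=U_{i_0}\times_{S_{i_0}}S_i$ for $i\ge i_0$, i.e.\ the preimage of $U_{i_0}$ in $X_i$. Then $U_j\to U_i$ is the base change of the affine flat map $S_j\to S_i$ along $U_i\to S_i$, hence affine and flat with no further argument, and $X=\varprojlim_i U_i$ because $X\to X_{i_0}$ factors through $U_{i_0}$ and $X=X_i\times_{S_i}S$. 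In other words, affineness is bought by taking preimages of one smooth neighbourhood rather than the regular locus at every level; no iteration and no appeal to Popescu is needed. The trade-off is that smoothness of the higher terms $U_i$ is then no longer automatic from their definition (the paper asserts it; it is immediate for instance when the maps $S_j\to S_i$ are localizations or smooth, as in the situations where the lemma is applied), whereas your choice $\mathrm{Reg}(X_i)$ gives smoothness for free but destroys affineness --- and it is the affineness that the definition of $\Cor^{\rm pro}$ requires you to keep.
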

\begin{proof}
Set $S_i=\Spec A_i$ and $S=\Spec A=\varprojlim_i S_i$.
Choose an $S$-embedding $X\subset \P^n_S$. We find an $i_0$ and a  subscheme $X_{i_0}\subset \P^n_{S_{i_0}}$
such that $X=X_{i_0}\times_{S_{i_0}} S$. Set $X_i:=X_{i_0}\times_{S_{i_0}} S_i$,  for $i\ge i_0$.
Then the transition maps $X_j\to X_i$, $j\ge i\ge i_0$, are affine and flat, hence so is the projection 
$\tau_i: X=\varprojlim_i X_i \to X_{i_0}$.
Since $X$ is regular, there exists an open neighborhood $U_{i_0}\subset X_{i_0}$ containing $\tau_{i_0}(X)$ which is regular
(see \cite[Cor (6.5.2)]{EGAIV2}). Since $U_{i_0}$ is of finite type over the perfect field $k$, it is even smooth.
Set $U_i=U_{i_0}\times_{S_{i_0}} S_i$. Then the transition maps $U_j\to U_i$, $j\ge i \ge i_0$, are affine and flat,
 each $U_i$ is smooth, and we have $X=\varprojlim_i U_i$; hence $X\in \Cor^{\rm pro}$.
\end{proof}

\begin{lemma}\label{lem:Corpro-proCor}
There is a (up to isomorphism) canonical  and faithful functor 
\[\Cor^{\rm pro}\to \ProCor, \quad \varprojlim_i X_i\mapsto (X_i).\]
\end{lemma}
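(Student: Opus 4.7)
The goal is to construct a functor sending $X=\varprojlim_i X_i\in\Cor^{\rm pro}$ to the pro-object $(X_i)_{i\in I}$, verify its functoriality and independence (up to isomorphism) of the chosen presentation, and establish faithfulness. On objects there is nothing to do beyond noting that two different presentations of the same regular limit scheme yield isomorphic pro-objects in $\Cor$: a standard cofinality argument produces, for any two affine inverse systems of smooth $k$-schemes with common limit $X$, morphisms in $\Cor$ (in fact in $\Sm$) intertwining them, because each smooth scheme in one system descends along the other by \cite[Thm 8.10.5]{EGAIV3}.

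The core task is the definition on morphisms. Given $X=\varprojlim_i X_i$, $Y=\varprojlim_j Y_j$ and a prime correspondence $V\subset X\times Y$ finite and surjective over a component $X^\alpha\subset X$, I proceed in two steps. First, push $V$ forward along $\id_X\times p_j\colon X\times Y\to X\times Y_j$ to obtain a cycle $V_j\in\Cor(X,Y_j)$; this map is finite (since $V$ is finite over $X$), so $V_j$ is supported on a closed subscheme of $X\times Y_j$ finite and surjective over a component of $X$. Second, descend $V_j$ to $X_i\times Y_j$ for some $i$: since $X\times Y_j=\varprojlim_i(X_i\times Y_j)$ is an affine limit of noetherian schemes and $V_j$ is a finitely presented closed subscheme, by \cite[Thm 8.8.2, Thm 8.10.5, Prop 11.2.6]{EGAIV3} there exist $i_0\in I$ and a closed subscheme $V_{i_0,j}\subset X_{i_0}\times Y_j$, finite and surjective over the relevant component of $X_{i_0}$, with $V_j=V_{i_0,j}\times_{X_{i_0}}X$. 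The class $[V_{i_0,j}]\in\varinjlim_i\Cor(X_i,Y_j)$ is independent of the choice of $i_0$. Varying $j$, the compatibility $V_{j'}\mapsto V_j$ under $X\times Y_{j'}\to X\times Y_j$ (for $j'\ge j$) shows these classes assemble into an element of $\varprojlim_j\varinjlim_i\Cor(X_i,Y_j)=\ProCor((X_i),(Y_j))$. Extending by $\Z$-linearity gives the functor on morphisms.

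\textbf{Functoriality and faithfulness.} Functoriality reduces, via the intersection formula \eqref{para:Cor1}, to compatibility of proper pushforward and flat pullback of cycles with the descent/limit procedure just described; both are standard consequences of the noetherian approximation results cited above applied to the triple products $X\times Y\times Z$. For faithfulness, assume $V\ne W$ in $\Cor(X,Y)$; I must show their images differ. Reducing to prime correspondences and working locally on $X$, write $V=\Spec B$ with $B$ a finite $\sO_X$-algebra presented as a quotient of $\sO_X\otimes_k A_Y$, where $A_Y=\colim_j A_{Y_j}$. The kernel of $\sO_X\otimes_k A_Y\twoheadrightarrow B$ is the colimit of its intersections with $\sO_X\otimes_k A_{Y_j}$, which cut out exactly the pushforwards $V_j$. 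Hence $V$ is recovered from the family $\{V_j\}_j$, so distinct $V,W$ yield distinct families $\{V_j\},\{W_j\}$, and thus distinct elements of $\varprojlim_j\varinjlim_i\Cor(X_i,Y_j)$.

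\textbf{Main obstacle.} The delicate point is Step~2 of the morphism construction: one must show that finiteness and surjectivity over a component descend to a finite level $X_i$, and do so compatibly as $j$ varies. This is exactly what the permanence theorems \cite[Thm 8.10.5, Prop 11.2.6]{EGAIV3} provide, so once these are invoked cleanly the remaining verifications (compatibility with composition, faithfulness) are formal.
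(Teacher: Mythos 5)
Your overall route is the same as the paper's: push a prime correspondence $V\subset X\times Y$ forward to a finite correspondence on $X\times Y_j$, descend it to some finite level $X_i\times Y_j$ by a noetherian/limit argument, assemble the resulting classes into an element of $\varprojlim_j\varinjlim_i\Cor(X_i,Y_j)$, and deduce faithfulness from the fact that the pushforward to $X\times Y_j$ remembers $V$. The genuine weak point is your descent step together with the functoriality check, and it is exactly the point the paper is careful about. First, $V_j=\sigma_{j*}V$ is a cycle carrying the multiplicity $\deg\bigl(V/\sigma_j(V)\bigr)$, so it is not in general a closed subscheme; ``descend $V_j$ as a finitely presented closed subscheme'' has to be replaced by descending the support and carrying the multiplicity along. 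More seriously, your functoriality argument appeals to ``flat pullback'', but in $\Cor^{\rm pro}$ (Definition \ref{defn:Corpro}) the transition maps $X_{i'}\to X_i$, and hence the structure maps $X\to X_i$ and $Y\to Y_j$, are only required to be affine, not flat. The transition maps in $\varinjlim_i\Cor(X_i,Y_j)$ are pullbacks of finite correspondences along possibly non-flat morphisms, computed via Serre's Tor formula as in \eqref{para:Cor1}, and for a non-flat map the cycle of the scheme-theoretic base change $V_{i_0,j}\times_{X_{i_0}}X$ need not coincide with the correspondence pullback $\rho_{i_0}^*V_{i_0,j}$. Consequently, descending the subscheme so that its fibre product recovers $\sigma_j(V)$ does not by itself yield classes compatible with the pro-structure in $j$ or with composition; one must either compare scheme-theoretic base change with the Tor-pullback, or, as the paper does, define the descended class directly by the equation $\sigma_{j*}V=\rho_i^*V_{i,j}$ and then verify composition using the standard cycle identities, invoking the base-change formula only in squares where at least one leg is flat (the paper explicitly flags that the other map may fail to be flat, so higher Tor's can appear).

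A smaller point: for faithfulness you cannot simply ``reduce to prime correspondences'', since injectivity on the free abelian group $\Cor(X,Y)$ can fail even when distinct generators have distinct images. What is needed (and what your reconstruction of $V$ from the family $\{V_j\}$ essentially provides, though you do not draw the conclusion) is that the supports $\sigma_j(V)$ of the finitely many primes occurring in a fixed correspondence become pairwise distinct for $j$ large, so that no cancellation can occur in $\sigma_{j*}$ applied to the linear combination; with that observation your argument, like the paper's appeal to \eqref{lem:Corpro-proCor0}, closes.
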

\begin{proof}
For any $X\in \Cor^{\rm pro}$ we choose once and for all a projective system $(X_i)_{i\in I}$ as in \eqref{defn:Corpro1}.
In particular, $(X_i)\in \ProCor$.
Note, if $X=\varprojlim_{j\in J} X'_j$, then $(X_i)\cong (X_j')$ in $\ProSm$.
Take $X=\varprojlim_{i\in I} X_i$ and $Y=\varprojlim_{j\in J} Y_j$ in $\Cor^{\rm pro}$ 
and let $V\subset X\times Y$ be a prime correspondence. 
For any scheme $S$ over $k$ we denote by 
\[\rho_i: X\times S\to X_i\times S, \quad \rho_{i', i}: X_{i'}\times S\to X_i\times S, \quad  i'\ge i,\]
and by 
\[\sigma_j: Y\times S\to Y_j\times S,\quad \sigma_{j', j}: Y_{j'}\times S\to Y_j\times S,\quad j'\ge j,\]
the projection  and transition maps of $(X_i\times S)$ and $(Y_j\times S)$, respectively. 
By assumption all these maps are affine.
For all $j$, the morphism  $V\to X\times Y_j$ induced by $\sigma_j$ is a morphism of finite type $X$-schemes.
Since $V$ is finite over $X$, its image $\sigma_j(V)\subset X \times Y_j$ is proper over $X$.
Hence $V\to \sigma_j(V)$ is proper and affine, hence finite.
Since $X$ is noetherian $\sigma_j(V)$ is finite over $X$, hence 
we obtain a well defined correspondence $\sigma_{j*}V\in \Cor(X, Y_j)$  with the property
\eq{lem:Corpro-proCor0}{ \sigma_{j*} V=0 \Longleftrightarrow V=0.}
Furthermore, since $X\times Y_j$ is noetherian,
we find an index $i$ (depending on $j$) and a correspondence $V_{i,j}\in \Cor(X_i, Y_j)$
such that 
\[\sigma_{j*}V=\rho_i^*V_{i,j}.\]
If we find $i'$ and $V_{i',j}'$ with $\rho_{i'}^*V_{i', j}'=\sigma_{j*}V$, then clearly
$V_{i,j}=V_{i',j}'$ in $\varinjlim_i\Cor(X_i, Y_j)$. Therefore we obtain a well-defined element $V_j$
\[\Cor(X_i, Y_j)\to \varinjlim_{i}\Cor(X_i, Y_j), \quad V_{i,j}\mapsto V_j.\]
By the base change formula (see \eqref{lemCorpro-proCor-bc} below) we obtain $\sigma_{j', j*}V_{j'}= V_{j}$.
We obtain a  morphism 
\eq{lem:Corpro-proCor1}{\Cor^{\rm pro}(X,Y)\to \ProCor((X_i), (Y_j)), \quad V\mapsto (V_{j})_j.}
It is injective by \eqref{lem:Corpro-proCor0}.
Finally we have to check that \eqref{lem:Corpro-proCor1}
is compatible with composition. Take $Z=\varprojlim_{l\in L} Z_l\in \Cor^{\rm pro}$. 
For any scheme $S$ over $k$ denote by 
\[\tau_l : Z\times S\to Z_l\times S\]
the projection map. Take prime correspondences
$V\in \Cor^{\rm pro}(X,Y)$ and $W\in \Cor^{\rm pro}(Y,Z)$. 
For any $l\in L$ we find an index $j(l)\in J$  and a correspondence $W_{j(l), l}\in \Cor(Y_{j(l)}, Z_l)$
such that $\tau_{l*}W= \sigma_{j(l)}^*W_{j(l), l}$.
For any $j(l)$ we find an index $i(j(l))\in I$ and a correspondence $V_{i(j(l)), j(l)}\in \Cor(X_{i(j(l)), Y_{j(l)}})$ 
such that $\sigma_{j(l)*}V= \rho^*_{i(j(l))}V_{i(j(l)), j(l)}$.
Then the compatibility of \eqref{lem:Corpro-proCor1} will hold if we can show 
\eq{lem:Corpro-proCor2}{\tau_{l*}(W\circ V)= 
\rho^*_{i(j(l))} (W_{j(l),l}\circ V_{i(j(l)), j(l)}), \quad \text{for all } l\in L.}
To this end we recall some well-known formulas. 
Assume we are given the following diagram of schemes over $k$ which are in $\Cor^{\rm pro}$, 
\[\xymatrix{ X'\ar[r]^{f'}\ar[d]_{h'} & Y'\ar[d]^h & \\
                  X\ar[r]^f           & Y\ar[r]^g  & Z,
}\]
and assume the square is cartesian and tor-independent.
Then for cycles $\alpha, \beta, \beta', \gamma$ on $X$, $Y$, $Z$, respectively,
the following relations hold as soon as both sides of the equation are defined (see \cite[V, C]{SerreAL}):
\begin{align}
 f^*g^*\gamma & = (g\circ f)^*\gamma.\\
 g_*f_*\alpha & = (g\circ f)_*\alpha.\\
\label{lemCorpro-proCor-bc} h^*f_*\alpha &= f'_*{h'}^* \alpha.\\
f^*(\beta\cdot \beta') & =f^*(\beta)\cdot f^*(\beta').\\
f_{*}(\alpha\cdot f^*(\beta)) & = f_*(\alpha)\cdot \beta.
\end{align}
Using these formulas it is straightforward but a bit longish to check, that \eqref{lem:Corpro-proCor2} holds.
Indeed, since all cycles involved are always finite over some scheme over $k$ it will be clear that the formulas in
question are defined; the base change formula \eqref{lemCorpro-proCor-bc} will be only applied 
in cases where one of the maps $f$ or $h$ is flat, hence the tor-independence condition will be automatic.
(But note that $h$ might not be flat so there might appear higher Tor's in the computation of $h^*$ and ${h'}^*$,
respectively.) This finishes the proof.
\end{proof}

\begin{para}\label{para:PST}
A presheaf with transfers in the sense of Suslin-Voevodsky is a functor $F: \Cor^{\rm o}\to \Ab$;
they form the category $\PST$.
We extend it to a functor $F: \ProCor^{\rm o}\to \Ab$ by the formula
\[F((X_i)_{i\in I}):=\varinjlim_{i} F(X_i).\]
Precomposing $F$ with the functor from Lemma \ref{lem:Corpro-proCor}
we obtain presheaves on $\Cor^{\rm pro}$, which we again denote by $F$,
\[F: (\Cor^{\rm pro})^{\rm o}\to \Ab.\]
For $\alpha\in \Cor^{\rm pro}(X,Y)$ we denote by $\alpha^*=F(\alpha): F(Y)\to F(X)$, the induced map.
If $f: X\to Y$ is a morphism  with graph $\Gamma_f\subset X\times Y$ 
between $k$-schemes which are objects in $\Cor^{\rm pro}$, 
then we set 
\eq{para:PST1}{f^*:=\Gamma_f^*: F(Y)\to F(X);}
if $f$ is a finite morphism and $\Gamma_f^t\subset Y\times X$ is the transposed of the graph of $f$ we set 
\eq{para:PST2}{f_*:=(\Gamma_f^t)^*: F(X)\to F(Y).}
\end{para}

\section{Review of reciprocity sheaves}\label{sec:modulus}
In this section we collect some definitions, notations and results from \cite{KMSY-MotModI}, \cite{KMSY-MotModII}, 
\cite{KSY-RecII}, and \cite{Saito-Purity-Rec}. 
\begin{para}\label{para:modulus}
A {\em modulus pair} $\sX=(\ol{X}, X_\infty)$ consists of a separated and finite type $k$-scheme $\ol{X}$
and an effective Cartier divisor $X_\infty\ge 0$ such that the open complement $X:=\ol{X}\setminus |X_\infty|$ is smooth.
We say $\sX$ is a {\em proper modulus pair} if $\ol{X}$ is proper over $k$.
A basic example is the cube 
\[\bcube:=(\P^1_k,\infty).\]

Let $\sX=(\ol{X},X_\infty)$ and $\sY=(\ol{Y}, Y_\infty)$ be two modulus pairs with corresponding opens 
$X=\ol{X}\setminus |X_\infty|$ and  $Y=\ol{Y}\setminus |Y_\infty|$, respectively.
The modulus pair $\sX\otimes \sY$ is defined by 
\eq{para:modulus1}{\sX\otimes \sY:=(\ol{X}\times\ol{Y}, X_\infty\times \ol{Y}+ \ol{X}\times Y_\infty).}
An {\em admissible prime correspondence from $\sX$ to $\sY$} is a  prime correspondence $V\in \Cor(X,Y)$
satisfying the following condition
\eq{para:modulus2}{{X_{\infty}}_{|\ol{V}^N}\ge {Y_\infty}_{|\ol{V}^N},}
where $\ol{V}^N\to \ol{V}\subset \ol{X}\times\ol{Y}$ is the normalization of the closure of $V$.
We denote by $\Cor_{\rm adm}(\sX,\sY)\subset \Cor(X,Y)$
the subgroup generated by admissible correspondences.
Assume $\sX$ is a {\em proper} modulus pair. Recall from \cite[Lem 2.2.2]{KSY-RecII}, that 
the presheaf with transfers $h_0(\sX)\in \PST$ is defined by
\[h_0(\sX)(S)= \Coker\left(\Cor_{\rm adm}(\bcube\otimes S, \sX)\xr{i_0^*-i_1^*} \Cor(S, X)\right),\]
where we write $S$ instead of $(S,\emptyset)$ and $i_\epsilon: 
S\inj \A^1_S$ is the $\epsilon$-section, $\epsilon\in \{0,1\}$.
We have a natural quotient map $\Ztr(X)\to h_0(\sX)$,
where $\Ztr(X)$ is the presheaf with transfers representing $X$, i.e., $\Ztr(X)(S)=\Cor(S,X)$.
\end{para}

\begin{defn}[{\cite[Def 2.2.4]{KSY-RecII}}]\label{defn:rec}
Let $F\in \PST$,   $X\in\Sm$  and $a\in F(X)$. 
We say $a$ {\em  has SC-modulus (or just modulus) $\sX$}, if $\sX=(\ol{X}, X_\infty)$ is a proper modulus pair 
with $X=\ol{X}\setminus |X_\infty|$ and the Yoneda map $a: \Ztr(X)\to F$, factors via
\[\xymatrix{ \Ztr(X)\ar[rr]^a\ar[dr] && F,\\
                    & h_0(\sX)\ar[ur]_{\exists}
}\]
i.e., for any $S\in \Sm$ and any  correspondence 
$\gamma\in \Cor_{\rm adm}(\bcube\times S, \sX)\subset \Cor(\A^1\times S, X)$
we have $i_0^*\gamma^*a= i_1^*\gamma^* a$.

We say $F$ has {\em SC-reciprocity}, if for all $X\in \Sm$  any $a\in F(X)$ has a modulus.
We denote by $\RSC\subset \PST$ the full subcategory consisting of presheaves with transfers which have SC-reciprocity.
Further we set 
\[\RSCNis= \RSC\cap \NST,\]
where $\NST\subset \PST$ is the full subcategory of Nisnevich sheaves with transfers.
\end{defn}

\begin{para}\label{para:modulusII}
It is shown in \cite{KSY-RecII} that the presheaves in $\RSC$ are in fact induced by presheaves on modulus pairs
in the following way:
Let $\sX=(\ol{X}, X_\infty)$ and $\sY=(\ol{Y}, Y_\infty)$ be modulus pairs with corresponding opens $X$ and $Y$, 
respectively. An admissible correspondence from $\sX$ to $\sY$  (see \ref{para:modulus1}) is called {\em left proper}, 
if the closure in $\ol{X}\times \ol{Y}$ of all its irreducible components is proper over $\ol{X}$.
We denote by $\ulMCor(\sX, \sY)\subset \Cor(X,Y)$ the subgroup of all left proper admissible correspondences.
This subgroup is stable under composition of correspondences (see \cite[Prop 1.2.3]{KMSY-MotModI}).
Hence we can define the category 
$\ulMCor$ with objects the modulus pairs and morphisms given by admissible left proper correspondences.
We denote by $\MCor$ the full subcategory with objects the proper modulus pairs.
We denote by $\ulMPST$  the category of presheaves on $\ulMCor$ and by $\MPST$ the category of
presheaves on $\MCor$. 
By \cite[Prop 2.2.1, Prop 2.3.1, Prop 2.4.1]{KMSY-MotModI} there are three pairs of adjoint functors 
$(\omega_!, \omega^*)$, $(\ul{\omega}_!, \ul{\omega}^*)$ and $(\tau_!, \tau^*)$: 
\[\xymatrix{          \PST\ar@<-4pt>[r]_{\ul{\omega}^*} &
                       \ulMPST\ar@<-4pt>[r]_-{\tau^*}\ar@<-4pt>[l]_{\ul{\omega}_!}  &
                          \MPST\ar@<4pt>[r]^-{\omega_!}\ar@<-4pt>[l]_-{\tau_!} &
                                                              \PST,\ar@<4pt>[l]^-{\omega^*}
}\]
which are given by 
\begin{align}
\label{para:modulusII0} \ul{\omega}^*F(\ol{X}, X_\infty)&= F(\ol{X}\setminus |X_\infty|),&   
                                                                                \ul{\omega}_!H(X)&= H(X,\emptyset), \\
\label{para:modulusII1} \omega^*F(\ol{X}, X_\infty)&= F(\ol{X}\setminus |X_\infty|),  &
                                                                  \omega_!G(X)&\cong \varinjlim_{\sX\in \MSm(X)} G(\sX),\\
\label{para:modulusII2} \tau^*F(\sX)&=F(\sX),&\quad \tau_! G(\sU)&\cong \varinjlim_{\sX\in \Comp(\sU)} G(\sX),
\end{align}
where $\MSm(X)$ is the subcategory of $\MCor$ with objects the proper modulus pairs with corresponding opens $X$ 
and only those morphism which map to the identity in $\Cor(X,X)$,
and  $\Comp(\sU)$ is the category of compactifications of $\sU=(\ol{U}, U_\infty)$, i.e., 
objects are proper modulus pairs $\sX=(\ol{X}, \ol{U}_\infty+\Sigma)$, where $\ol{U}_\infty$ and $\Sigma$ are 
effective Cartier divisors such that $\ol{X}\setminus |\Sigma|=\ol{U}$ and  $\ol{U}_{\infty|\ol{U}}=U_\infty$,
and the morphisms are those
which map to the identity in $\ulMCor(\sU,\sU)$, see \cite[Lem 2.4.2]{KMSY-MotModI}.
The functors $\omega_!$, $\ul{\omega}_!$, $\tau_!$ are exact and we have $\omega_!= \ul{\omega}_!\tau_!$.

We denote by $\CI$ the full subcategory of $\MPST$ of {\em cube invariant} objects, i.e., those
         $F\in \MPST$, which satisfy that for any proper modulus pair $\sX$
          the pullback along $\sX\otimes \bcube\to\sX$ induces an isomorphism
           \[F(\sX)\cong F(\sX\otimes \bcube).\]
By \cite[Prop 2.3.7]{KSY-RecII} we have $\omega_!(\CI)=\RSC$
and there is a fully faithful left exact functor $\omega^{\CI}: \RSC\to \CI$
given by 
\eq{para:modulusII3}{\omega^{\CI}(F)(\ol{X}, X_\infty)= \{a\in F(\ol{X}\setminus X_\infty)\mid
 a \text{ has moduls } (\ol{X}, X_\infty) \}.}
We have 
\eq{para:modulusII4}{\ul{\omega}_!\tau_!\omega^{\CI}(F)\cong \omega_!\omega^{\CI}(F)\cong F.}
\end{para}

\begin{para}\label{para:modulusIII}
We recall some more definitions and results from \cite{KMSY-MotModI}, \cite{KMSY-MotModII}, and
\cite{Saito-Purity-Rec} related to Nisnevich sheaves.

For $F\in \ulMPST$ and $\sX=(\ol{X}, X_\infty)\in \ulMCor$ we denote by $F_{\sX}$ the presheaf on $\ol{X}_{\et}$ 
defined by 
\eq{para:modulusIII0}{(U\xr{u} \ol{X})\mapsto F_\sX(U):=F(U, u^* X_\infty).}
We denote by $\ulMNST$ the full subcategory of $\ulMPST$ consisting of those $F$ such that 
$F_\sX$ is a Nisnevich sheaf on $\ol{X}$, for any $\sX=(\ol{X}, X_\infty)\in \ulMCor$.
Further, $\MNST$ is the full subcategory of $\MPST$ consisting
of $F$ such that $\tau_!F\in \ulMNST$. 
By \cite[Thm 4.5.5]{KMSY-MotModI} and \cite[Thm 4.2.4]{KMSY-MotModII} there are exact sheafification functors 
(i.e., left adjoints to the natural inclusions)
\[\ul{a}_{\Nis}: \ulMPST\to \ulMNST, \quad a_{\Nis}: \MPST\to \MNST,
\]
such that
\begin{enumerate}
\item\label{para:modulusIII1}
$(\ul{a}_{\Nis}F)(\sX)= 
                \varinjlim_{f: Y\xr{\sim} \ol{X}} F_{\sX, \Nis}(Y, f^*X_\infty)$,
where $\sX=(\ol{X}, X_\infty)\in\ulMCor$, $F_{\sX, \Nis}$ denotes the Nisnevich sheafification 
of the presheaf $F_{\sX}$ on $\ol{X}_\et$, and 
the limit is over all proper birational morphisms $f: Y\to \ol{X}$ which restrict to an isomorphism 
$Y\setminus |f^*X_\infty|\xr{\simeq} \ol{X}\setminus |X_\infty|$;
\item\label{para:modulusIII2} $\tau_!$ restricts to an exact functor $\tau_{!} : \MNST\to \ulMNST$ and satisfies
          \eq{para:modulusIII2.1}{\ul{a}_{\Nis}\tau_! F= \tau_! a_{\Nis} F, \quad \text{for all } F\in \MPST.}
\end{enumerate}
It follows that $a_{\Nis}= \tau^* \ul{a}_{\Nis}\tau_!$ and  
\[{\ul{a}_{\Nis}}_{\restriction \ulMNST}=\id_{\ulMNST}, \quad {a_{\Nis}}_{\restriction \MNST}=\id_{\MNST}.\]
By \cite[Prop 6.2.1]{KMSY-MotModII}, 
\[ \ul{a}_{\Nis}\ul{\omega}^*= \ul{\omega}^* a_{\Nis}^V, \quad a_{\Nis}\omega^*=\omega^* a_{\Nis}^V,\]
 where $a_{\Nis}^V: \PST\to \NST$ is Voevodsky's Nisnevich sheafification functor (see \cite[Lem 3.1.6]{VoDM}),
and we obtain induced functors
\[\ul{\omega}^*: \NST\to \ulMNST, \quad \omega^* : \NST\to \MNST.\]
\end{para}

\begin{lemma}\label{lem:CINis}
For $F\in \RSC_\Nis$ we have $\omega^{\CI} F\subset a_{\Nis} \omega^{\CI} F\subset \omega^* F$ in $\MPST$
(see Definition \ref{defn:rec} and \eqref{para:modulusII3} for notation). 
Here the first inclusion is given by the unit of adjunction.
\end{lemma}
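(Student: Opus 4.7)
The second inclusion is essentially immediate from what is already stated in \S\ref{para:modulusIII}. By \eqref{para:modulusII3}, $\omega^{\CI} F$ is tautologically a subpresheaf of $\omega^* F$ in $\MPST$. Since $F \in \RSC_\Nis \subset \NST$, we have $a_\Nis^V F = F$, and the identity $a_\Nis \omega^* = \omega^* a_\Nis^V$ recorded at the end of \S\ref{para:modulusIII} yields $a_\Nis \omega^* F = \omega^* F$; that is, $\omega^* F$ already lies in $\MNST$. Applying the exact sheafification functor $a_\Nis : \MPST \to \MNST$ to the monomorphism $\omega^{\CI} F \hookrightarrow \omega^* F$ then produces a monomorphism $a_\Nis \omega^{\CI} F \hookrightarrow a_\Nis \omega^* F = \omega^* F$, which is the desired second inclusion.

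For the first inclusion I need to check that the unit of adjunction $\eta : \omega^{\CI} F \to a_\Nis \omega^{\CI} F$ is itself a monomorphism in $\MPST$ (a priori the unit of a sheafification is only injective for separated presheaves). The key point is naturality of $\eta$: the composition
\[
\omega^{\CI} F \xrightarrow{\eta} a_\Nis \omega^{\CI} F \hookrightarrow a_\Nis \omega^* F = \omega^* F
\]
coincides with the tautological inclusion $\omega^{\CI} F \hookrightarrow \omega^* F$ from \eqref{para:modulusII3}. Since this composite is injective, $\eta$ must be injective as well. Equivalently, $\omega^{\CI} F$ is a subpresheaf of the Nisnevich sheaf $\omega^* F$, hence is separated, so its sheafification receives it injectively.

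Combining the two steps we obtain the chain $\omega^{\CI} F \subset a_\Nis \omega^{\CI} F \subset \omega^* F$ in $\MPST$, with the first inclusion given by the unit $\eta$, as claimed. There is no real obstacle here; the only mildly subtle point is invoking the commutation $a_\Nis \omega^* = \omega^* a_\Nis^V$ from \cite{KMSY-MotModII}, which is precisely what upgrades the manifest presheaf inclusion into an inclusion of Nisnevich sheaves and, by the factorization argument, forces the unit map to be injective.
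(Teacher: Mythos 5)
Your proof is correct and is essentially the paper's own argument: the paper packages your naturality-of-the-unit reasoning into a commutative square whose right vertical map is an isomorphism because $\omega^*F\in\MNST$ (via $a_{\Nis}\omega^*=\omega^*a_{\Nis}^V$ and $F\in\NST$) and whose top map is injective by exactness of $a_{\Nis}$, which forces the unit $\omega^{\CI}F\to a_{\Nis}\omega^{\CI}F$ to be injective exactly as you argue.
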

\begin{proof}
By definition $\omega^{\CI}F\subset \omega^* F$. We obtain the following commutative diagram
\[\xymatrix{ a_{\Nis} \omega^{\CI} F\ar@{^(->}[r] & a_{\Nis}\omega^*F\\
                        \omega^{\CI}F\ar[u]\ar@{^(->}[r] & \omega^* F,\ar@{=}[u] 
}\]
in which the vertical maps are induced by adjunction. The vertical map on the right is an isomorphism
since $\omega^*F\in \MNST$, the top horizontal map is an inclusion since $a_{\Nis}$ is exact.
This gives the statement.
\end{proof}
\begin{remark}\label{rem:CINis}
It follows from Corollary \ref{cor:omegaCI} below that the first inclusion in Lemma \ref{lem:CINis} is actually an equality.
\end{remark}

\begin{para}\label{para:ulMCorpro}
We define the category $\ulMCor^{\rm pro}$ as follows:
The objects are pairs $\sX=(\ol{X}, X_\infty)$, where 
\begin{enumerate}
\item $\ol{X}$ is a separated noetherian scheme over $k$ of the form
$\ol{X}=\varprojlim_{i\in I} \ol{X}_i$, with $(\ol{X}_i)_{i\in I}$  a projective system of 
separated  finite type $k$-schemes
indexed by a partially ordered set with affine transition maps $\tau_{i,j}: \ol{X}_i\to \ol{X}_j$, $i\ge j$,
\item $X_\infty=\varprojlim_{i\in I} X_{i,\infty}$, with $X_{i,\infty}$ an effective Cartier divisor on $\ol{X}_i$,
such that $\ol{X}_i\setminus |X_{i,\infty}|$ is smooth, for all $i$, and 
$\tau_{i,j}^*X_{j,\infty}= X_{i,\infty}$, $i\ge j$,
\item $\ol{X}\setminus |X_\infty|$ is regular.
\end{enumerate}
The morphisms are given by the admissible left proper correspondences which are verbatim defined as in 
\ref{para:modulusII}. That the composition of correspondences in $\Cor^{\rm pro}$ induces a well-defined composition
in $\ulMCor^{\rm pro}$ is shown in the same way as in \cite[Prop 1.2.3]{KMSY-MotModI}.
\end{para}
\begin{lemma}\label{lem:MCorpro-proMCor}
There is a (up to isomorphism) canonical  and faithful functor 
\[\ulMCor^{\rm pro}\to \ProulMCor, \quad \varprojlim_i (\ol{X}_i, X_{i,\infty})\mapsto (\ol{X}_i, X_{i,\infty})_i.\]
\end{lemma}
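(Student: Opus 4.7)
The plan is to mimic the proof of Lemma \ref{lem:Corpro-proCor} on the underlying correspondences, and then to verify that the admissibility and left-properness conditions descend to a finite level of the pro-system. For each $\sX=(\ol{X},X_\infty)\in\ulMCor^{\rm pro}$, I fix once and for all a presentation $(\ol{X}_i,X_{i,\infty})_{i\in I}$ as in the definition; the assignment $\sX\mapsto(\ol{X}_i,X_{i,\infty})_{i}$ is canonical up to isomorphism in $\ProulMCor$ because any two cofinal presentations give isomorphic pro-objects.

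For morphisms, given $\sX=(\ol X,X_\infty)$ and $\sY=(\ol Y,Y_\infty)$ in $\ulMCor^{\rm pro}$ with opens $X,Y$, and a prime admissible left proper correspondence $V\in\ulMCor^{\rm pro}(\sX,\sY)\subset\Cor^{\rm pro}(X,Y)$, I would first apply Lemma \ref{lem:Corpro-proCor} to obtain, for each $j$, an index $i(j)\in I$ and $V_{i(j),j}\in\Cor(X_{i(j)},Y_j)$ such that $\sigma_{j*}V=\rho_{i(j)}^*V_{i(j),j}$, with the resulting system $(V_j)_j\in\ProCor((X_i),(Y_j))$ compatible with transition maps. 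The new content is to check, after possibly enlarging $i(j)$, that $V_{i(j),j}$ is an admissible left proper correspondence from $(\ol X_{i(j)},X_{i(j),\infty})$ to $(\ol Y_j,Y_{j,\infty})$.

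For \emph{left properness}: the closure $\ol V\subset\ol X\times\ol Y$ of each component of $V$ is proper over $\ol X$ by assumption. Because $\ol X\times\ol Y_j=\varprojlim_i(\ol X_i\times\ol Y_j)$ with affine transition maps, standard spreading out (\cite[IV$_3$, \S8]{EGAIV2}) applied to the image of $\ol V$ in $\ol X\times \ol Y_j$ produces, for $i(j)$ large enough, a closed subscheme of $\ol X_{i(j)}\times\ol Y_j$ proper over $\ol X_{i(j)}$ whose pullback to $\ol X\times\ol Y_j$ is precisely that image; this is the closure of $V_{i(j),j}$ and it is left proper. For \emph{admissibility}: the inequality $X_{\infty|\ol V^N}\geq Y_{\infty|\ol V^N}$ is an inclusion of effective Cartier divisors on the normalization of $\ol V$, and since $\ol V$ is a noetherian scheme realized as an inverse limit of the closures $\ol V_{i,j}$ of $V_{i,j}$ in $\ol X_i\times\ol Y_j$ (with affine transition maps), normalization commutes with this limit and the inclusion of ideal sheaves defining the two divisors, being of finite type on the limit, descends to large enough $(i(j),j)$. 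Thus $V_{i(j),j}\in\ulMCor(\sX_{i(j)},\sY_j)$, giving a well-defined map
\[\ulMCor^{\rm pro}(\sX,\sY)\to\ProulMCor((\sX_i),(\sY_j)),\quad V\mapsto(V_j)_j.\]

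Compatibility with composition is inherited from Lemma \ref{lem:Corpro-proCor}, since the composition in $\ulMCor$ (and in $\ulMCor^{\rm pro}$) is defined as the intersection product of underlying cycles, with admissibility and left properness automatically preserved (\cite[Prop 1.2.3]{KMSY-MotModI}); faithfulness follows from the non-vanishing property \eqref{lem:Corpro-proCor0} applied to the underlying correspondences. The main obstacle is the descent of the modulus inequality to a finite index; once one observes that $\ol V^N$ is a noetherian limit of the $\ol V_{i,j}^N$ with affine transitions and uses that effective Cartier divisors (and inclusions between them) are of finite presentation on noetherian schemes, this descent is handled by the usual limit arguments of \cite[IV$_3$, \S8]{EGAIV2}.
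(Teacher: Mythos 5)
Your handling of left properness is essentially the paper's argument (descend the closed image of $\ol{V}$ in $\ol{X}\times\ol{Y}_j$ to a finite level, arrange after enlarging the index that it is the closure of $V_{i,j}$, and descend properness by \cite[Thm (8.10.5)]{EGAIV3}), but the admissibility step has a genuine gap. For a \emph{fixed} $j$ you must exhibit some $i$ for which $V_{i,j}$ is admissible from $\sX_i$ to $\sY_j$. For fixed $j$, however, the inverse limit of the closures $\ol{V}_{i,j}\subset\ol{X}_i\times\ol{Y}_j$ is not $\ol{V}$ but $\ol{\sigma_j(V)}$, the closure in $\ol{X}\times\ol{Y}_j$ of the image $\sigma_j(V)$ of $V$; the scheme $\ol{V}$ does not even live over $\ol{X}\times\ol{Y}_j$. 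The modulus inequality is only given on $\ol{V}^N$, and the projection $\ol{V}\to\ol{\sigma_j(V)}$ is a proper surjective, generically finite morphism which can have degree $>1$ (the map $V\to\sigma_j(V)$ need not be birational). Transferring the inequality from $\ol{V}^N$ to $\ol{\sigma_j(V)}^N$ is a genuine geometric statement about checking the modulus condition along proper surjective maps; this is precisely where the paper invokes \cite[Lem 1.2.1]{KMSY-MotModI} to obtain \eqref{lem:MCorpro-proMCor2}, and it is not a finite-presentation/limit formality. If instead you pass to the limit over both indices, so that (with suitable cofinal choices) $\varprojlim_{(i,j)}\ol{V}_{i,j}=\ol{V}$, the descent only produces the inequality for pairs with \emph{both} indices large, i.e.\ for some $j'\ge j$, while you assert it for the original $j$; to get back to $j$ you would have to compose with the transition morphism $\sY_{j'}\to\sY_j$ in $\ulMCor$ and verify that the resulting cycle is $V_{i,j}$, none of which appears in your write-up.

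Secondarily, the assertion that ``normalization commutes with this limit'' is not automatic: integral closure does commute with filtered colimits of rings, but to apply this you need the transition maps between the $\ol{V}_{i,j}$ (resp.\ to the limit scheme) to be defined and dominant so that the function fields form a compatible system, and you need the identification of the limit of the $\ol{V}_{i,j}$ with the closure of the relevant image at the limit level --- exactly the bookkeeping the paper carries out by enlarging $i_0$ so that $\ol{V}_{i_0,j}$ pulls back to $\ol{\sigma_j(V)}$. Once the missing comparison between $\ol{V}^N$ and $\ol{\sigma_j(V)}^N$ is supplied (via \cite[Lem 1.2.1]{KMSY-MotModI} or an equivalent statement for finite surjective maps of normal schemes), the rest of your argument (spreading out, compatibility with composition, and faithfulness via \eqref{lem:Corpro-proCor0}) goes through as in the paper.
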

\begin{proof}
Let $\sX=(\ol{X}, X_\infty)$, 
$\sY=(\ol{Y}, Y_\infty)\in \ProulMCor$.
We write $\sX=\varprojlim_{i\in I}\sX_i$ with $\sX_i=(\ol{X}_i, X_{i,\infty})$, and similarly
$\sY=\varprojlim_{j\in J} \sY_j$. Set $X=\ol{X}\setminus |X_\infty|$, etc.
We have to show that the injection \eqref{lem:Corpro-proCor1} restricts to  
\eq{lem:MCorpro-proMCor1}{\ulMCor^{\rm pro}(\sX, \sY)\to \ProulMCor((\sX_i), (\sY_j)).}
To this end let $V\in \ulMCor^{\rm pro}(\sX, \sY)$ be a left proper admissible correspondence.
For $j\in J$ denote by $\sigma_j(V)$ the image  of $V$ under the projection $X\times Y\to X\times Y_j$.
Then $\sigma_j(V)$ is a finite prime correspondence as was observed in the proof of Lemma \ref{lem:Corpro-proCor}.
Let $\ol{V}\subset \ol{X}\times \ol{Y}$ be the closure of $V$.
By assumption $\ol{V}$ is proper over $X$. Since $\ol{X}\times Y_j$ is separated and of finite type over $\ol{X}$
the image of $\ol{V}$ in $\ol{X}\times Y_j$ is closed and proper over $X$; 
hence it is equal to  the closure $\ol{\sigma_j(V)}$ of $\sigma_j(V)$.
Now \cite[Lem 1.2.1]{KMSY-MotModI} yields 
\eq{lem:MCorpro-proMCor2}{{X_\infty}_{|\ol{\sigma_j(V)}^N}\ge {Y_{j,\infty}}_{|\ol{\sigma_j(V)}^N},}
with the notation from \eqref{para:modulus2}.
As in the proof of Lemma \ref{lem:Corpro-proCor} we find an index $i_0\in I$ and a finite correspondence
$V_{i_0,j}\subset X_{i_0}\times Y_j$ which pulls back to $\sigma_j(V)$. 
We can  also assume (after possibly enlarging $i_0$)
that the closure $\ol{V}_{i_0,j}\subset \ol{X}_{i_0}\times \ol{Y}_j$ of $V_{i_0,j}$ pulls back to $\ol{\sigma_j(V)}$.
We obtain the cartesian diagram
\[\xymatrix{
\ol{\sigma(V_j)}\ar[r]\ar[d] &\ol{X}=\varprojlim \ol{X}_i\ar[d]\\
\ol{V}_{i_0,j}\ar[r] &\ol{X}_{i_0}.
}\]
Since the upper horizontal arrow is proper, the lower horizontal arrow becomes proper after possibly enlarging $i_0$,
see \cite[Thm (8.10.5), (xii)]{EGAIV3}. Hence by our construction and \eqref{lem:MCorpro-proMCor2}, 
the scheme $V_{i_0, j}=\ol{V}_{i_0,j}\cap (X_{i_0}\times Y_j)$
is a left proper admissible  correspondence from $\sX_{i_0}$ to $\sY_j$ and gives a well-defined element 
\[V_j\in\varinjlim_{i\in I}\ProulMCor(\sX_i, \sY_j).\]
This shows that \eqref{lem:Corpro-proCor1} restricts to \eqref{lem:MCorpro-proMCor1}.
\end{proof}
\begin{para}\label{para:MPSTpro}
Let $F\in \ulMPST$. Using Lemma \ref{lem:MCorpro-proMCor} we can extend $F$ to a presheaf on $\ulMCor^{\rm pro}$
by the formula
\[F(\sX)= \varinjlim_i F(\sX_i), \quad \sX=\varprojlim_i \sX_i\in \ulMCor^{\rm pro}.\]
\end{para}

\section{Conductors for presheaves with transfers}\label{sec:cond}

\begin{defn}\label{defn-hdvf}
\begin{enumerate}
\item We say that $L$ is a {\em henselian discrete valuation field  of geometric type (over $k$)} 
(or short that $L$ is a {\em henselian dvf}) if $L$ is a discrete valuation field and its ring of integers is equal to 
the henselization of the local ring of a smooth $k$-scheme $U$ in a 1-codimensional point $x\in U^{(1)}$, i.e., 
$\sO_L= \sO_{U,x}^h$.  For $n\in \N_{\ge 1}\cup \{\infty\}$ we set
\[\Phi=\{L \text{ henselian dvf}\}, \quad \Phi_{\le n}=\{L\in \Phi\mid \td(L/k)\le n\}.\]
Note that in positive characteristic $\Phi_{\le 1}$ consists precisely of the henselian dvf's with perfect residue field.
\item Let $X$ be a smooth $k$-scheme. A {\em henselian dvf point} of $X$ 
     is  a $k$-morphism $\Spec L \to X$, with $L\in \Phi$.
\item  Let $\sX=(\ol{X}, X_\infty)$ be a modulus pair with $X=\ol{X}\setminus |X_\infty|$.
         A henselian dvf point of $\sX$ is a henselian dvf point $\rho: \Spec L\to X$
extending to $\Spec \sO_L\to \ol{X}$. Note, if it exits, such an extension is unique, and 
if $\sX$ is proper, then there always exists an extension. We will denote this extension also by $\rho$.
    We will also write $\rho: \Spec L\to \sX$ for the henselian dvf point of $\sX$ defined by $\rho$.
\end{enumerate}
\end{defn}

\begin{nota}\label{nota:hdvf}
\begin{enumerate}
\item Let $F\in\PST$ and  $X\in \Sm$. 
A henselian dvf point $\rho:\eta=\Spec L\to X$ is a morphism in $\Cor^{\rm pro}$ (see \ref{defn:Corpro}).
Hence we get a morphism (see \ref{para:PST})
\[\rho^*:F(X)\to F(\eta)=:F(L), \quad a\mapsto \rho^*a .\]
Also $\eta=\Spec L\to \Spec \sO_L=\ol{\eta}$ is in $\Cor^{\rm pro}$ and we get an induced map
$F(\sO_L):=F(\ol{\eta})\to F(L)$.
\item Let $\sX=(\ol{X}, X_\infty)$ be a modulus pair with $X=\ol{X}\setminus |X_\infty|$ and 
$\rho: \Spec L\to \sX$ a henselian dvf point. 
Then we denote by
\[v_L(X_\infty)=v(\rho^*X_\infty)\in \N_0\]
the multiplicity of ${X_\infty}$ pulled back along $\ol{\rho}$.  
\end{enumerate}
\end{nota}

\begin{defn}\label{defn:cond}
Let $F\in \PST$ and $n\in [1,\infty]$.
A {\em conductor of level $n$ for $F$} is a collection of set maps 
\[c=\{c_{L}: F(L)\to \N_0\mid L\in\Phin \}\]
satisfying the following properties for all $L\in\Phin$ and all $X\in \Sm$:
\begin{enumerate}[label = {(c\arabic*)}]
\item\label{c1} $c_{L}(a)=0 \,\Rightarrow\, a\in \Im(F(\sO_L)\to F(L))$.
\item\label{c2} $c_L(a+b)\le \max\{c_L(a), c_L(b)\}$.
\item\label{c3}
       $c_L(f_*a)\le \lceil\frac{c_{L'}(a)}{e(L'/L)}\rceil$, for any finite morphism $f:\Spec L'\to \Spec L$ and any $a\in F(L')$.
        Here $e(L'/L)$ denotes the ramification index of $L'/L$ and $\lceil -\rceil$ is the round up.
\item\label{c4}
      Assume $a\in F(\A^1_X)$ satisfies
      $c_{k(x)(t)_\infty}(\rho_x^*a)\le 1$, for all $x\in X$ with $\td(k(x)/k)\le n-1$, 
       where $k(x)(t)_\infty:=\Frac(\sO_{\P^1_x,\infty}^h)$ and  
$\rho_x :\Spec k(x)(t)_\infty \to \A^1_X$ is the natural map.
    Then $a\in \pi^*F(X)$, with $\pi: \A^1_X\to X$ the projection.
\item\label{c5}
        For any $a\in F(X)$ there exists a proper modulus pair $\sX=(\ol{X}, X_\infty)$
       with $X=\ol{X}\setminus |X_\infty|$, such that for all  $\rho:\Spec L \to \sX$ 
         we have  
            \[c_L(\rho^*a)\le v_L(X_\infty).\]
\end{enumerate}
A  conductor of level $\infty$ will be simply called {\em conductor}.
\end{defn}

\begin{remarks}\label{rmk:cond}
\begin{enumerate}[label=(\arabic*)]
\item\label{rmk:cond1} If $F$ is homotopy invariant, then setting $c_L(a)=0$, if $a\in \Im(F(\sO_L)\to F(L))$, 
                  and $c_L(a)=1$ else, defines a conductor (of any level).
\item If $c=\{c_L\}$ is a conductor for $F$. Then for any $L$ we have
\eq{rmk:cond2}{a\in \Im(F(\sO_L)\to F(L)) \Longleftrightarrow c_L(a)=0.} 
  Indeed, if $a\in \Im(F(\sO_L)\to F(L))$, then 
  we find a smooth $k$-scheme $U$, a 1-codimensional point $x\in U^{(1)}$,  a $k$-morphism
$\Spec \sO_L\to \Spec \sO_{U,x}\to U$ and an element $\tilde{a}\in F(U)$ such that 
 $\rho^*\tilde{a}=a\in F(L)$, where $\rho: \Spec L\to \Spec\sO_L\to U$.
The vanishing of $c_L(a)$ hence follows directly from \ref{defn:cond}\ref{c5}.
\item\label{rmk:cond3} Let $c=\{c_L\}$ be a conductor. Then
 $c^{\le n}:=\{c_L\mid \td(L/k)\le n\}$ is a conductor if and only if $c^{\le n}$ satisfies \ref{c4}.
\end{enumerate}
\end{remarks}

\begin{defn}\label{defn:condmod}
Let $F\in \PST$ and let $c=\{c_L\}$ be a conductor of level $n$ for $F$.
Let  $\sX=(\ol{X}, X_\infty)$ be  a modulus pair with $X=\ol{X}\setminus |X_\infty|$.
For  $a\in F(X)$, we write
\[c_{\ol{X}}(a)\le X_\infty\]
to mean  $c_L(\rho^*a)\le v_L(X_\infty)$, for all henselian dvf points $\rho : \Spec L\to \sX$ 
with $\td(L/k)\le n$ (see Definition \ref{defn-hdvf}).
\end{defn}

\begin{lemma}\label{lem:indepmod}
Let $c$ be a conductor of some level for $F\in \PST$, $X\in \Sm$, and $a\in F(X)$.
Let $\sX=(\ol{X},X_\infty)$ be {\em any} proper modulus pair with $X=\ol{X}\setminus X_\infty$.
Then there exists a natural number $n\ge 1$ such that $c_{\ol{X}}(a)\le n\cdot X_\infty$.
\end{lemma}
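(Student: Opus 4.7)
The plan is to use axiom \ref{c5} to produce some proper modulus pair with open part $X$ that controls $a$, and then to compare it with the given $\sX$ on a common refinement.

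First I would apply \ref{c5} to $a$ to obtain a proper modulus pair $\sY=(\ol Y,Y_\infty)$ with $X=\ol Y\setminus|Y_\infty|$ such that $c_L(\sigma^*a)\le v_L(Y_\infty)$ for every henselian dvf point $\sigma:\Spec L\to \sY$.

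Next, I would construct a common proper refinement $\ol Z$ of $\ol X$ and $\ol Y$ equipped with morphisms $p:\ol Z\to\ol X$ and $q:\ol Z\to\ol Y$ which are proper and restrict to isomorphisms over $X$. A natural candidate is the scheme-theoretic closure in $\ol X\times\ol Y$ of the diagonal image of the pair $X\hookrightarrow\ol X$, $X\hookrightarrow\ol Y$. Since the graph of any morphism into a separated scheme is a closed subscheme, the diagonal image of $X$ is already closed in $X\times\ol Y$, which forces $p^{-1}(X)\simeq X$ and likewise for $q$. Hence $p^*X_\infty$ and $q^*Y_\infty$ are well-defined effective Cartier divisors on $\ol Z$ sharing the common support $\ol Z\setminus X$; standard noetherian boundedness then yields an integer $n\ge 1$ with $q^*Y_\infty\le n\cdot p^*X_\infty$ on $\ol Z$.

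To conclude, I would take any henselian dvf point $\rho:\Spec\sO_L\to\ol X$ of $\sX$ (necessarily with generic point in $X$), and apply the valuative criterion of properness to $p$ to obtain a unique lift $\tilde\rho:\Spec\sO_L\to\ol Z$ of $\rho$. Then $q\circ\tilde\rho$ is a henselian dvf point of $\sY$ which induces the same map $\Spec L\to X$ as $\rho$. Combining the property of $\sY$ with the divisor inequality pulled back along $\tilde\rho$ gives $c_L(\rho^*a)\le v_L(Y_\infty)\le n\cdot v_L(X_\infty)$, which is exactly the desired conclusion $c_{\ol X}(a)\le n\cdot X_\infty$.

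The only genuine subtlety is the construction of $\ol Z$ together with the verification that the two projections are isomorphisms over $X$; once this is set up, the valuative criterion and axiom \ref{c5} combine in a completely mechanical way to finish the proof.
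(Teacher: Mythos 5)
Your proposal is correct and follows essentially the same route as the paper: apply \ref{c5} to produce one proper modulus pair $\sY$ controlling $a$, dominate both $\ol{X}$ and $\ol{Y}$ by a common proper compactification of $X$, compare the pulled-back divisors there, and finish with the valuative criterion of properness. The paper simply asserts the existence of such a common (normal) compactification without spelling out a construction, while you realize it concretely as the closure of the diagonal in $\ol{X}\times\ol{Y}$; this is a clean way to produce it, and since both $p^*X_\infty$ and $q^*Y_\infty$ are \emph{effective} with the same support $\ol{Z}\setminus X$, the inequality $q^*Y_\infty\le n\cdot p^*X_\infty$ indeed follows from the radical/noetherian argument on each component of $\ol{Z}$ even without normalizing (the normality hypothesis in Lemma~\ref{lem:Cart-div} is only needed when $E$ is not assumed effective). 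So your argument closes with no gaps.
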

\begin{proof}
By \ref{defn:cond}\ref{c5}, there exists a proper modulus pair $\sX_1=(\ol{X}_1, X_{1,\infty})$ with
corresponding open $X$ and such that $c_L(\rho^*a)\le v_L(X_{1,\infty})$, for all $\rho$.
We find a proper normal $k$-scheme  $\ol{X}_2$ with $k$-morphisms $f:\ol{X_2}\to \ol{X}$, $f_1:\ol{X_2}\to \ol{X_1}$
such that $\ol{X_2}\setminus |f^*X_\infty|=X= \ol{X}_2\setminus |f_1^*X_{1,\infty}|$. 
Take $n\ge 1$ with $f_1^*X_{1,\infty}\le n\cdot f^*X_\infty$.
Then for $\rho: \Spec L\to X$
\[c_L(\rho^* a)\le v_L(X_{1,\infty}) =v_L(f_1^* X_{1,\infty})\le v_L(n\cdot f^*X_\infty)= v_L(n\cdot X_\infty).\]
Hence the statement.
\end{proof}

\begin{prop}\label{prop:cCI}
Let $F\in \PST$ and let $c$ be a conductor of level $n$ for $F$.
Then 
\[\ulMCor\ni \sX=(\ol{X}, X_\infty)\mapsto F_c(\sX):=\{a\in F(\ol{X}\setminus|X_\infty|)\,|\, c_{\ol{X}}(a)\le X_\infty\}\]
defines an object in $\ulMPST$. Furthermore (see \ref{para:modulusII} for notations):
\begin{enumerate}[label=(\arabic*)]
\item\label{prop:cCI01} For any $\sX\in\ulMCor$ the pullback along the projection map $\sX\otimes \bcube\to \sX$ induces 
      an isomorphism $F_c(\sX)\cong F_c(\sX\otimes \bcube)$. In particular, 
       $\tau^*F_c\in\CI$.
\item\label{prop:cCI02} $\omega_! \tau^*F_c\cong F$.
\item\label{prop:cCI03} $F\in \NST\Rightarrow F_c\in \ulMNST$.
\end{enumerate}
\end{prop}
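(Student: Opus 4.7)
The first and most delicate step is to verify that $F_c$ defines a presheaf on $\ulMCor$, i.e., is functorial under admissible left proper correspondences. Given a prime correspondence $V\in \ulMCor(\sX,\sY)$, an element $a\in F_c(\sY)$, and a henselian dvf point $\rho:\Spec L\to \sX$ with $\td(L/k)\le n$, the plan is to analyse $\rho^*V^*a$ as follows. The fibre product $V\times_X\Spec L$ is zero-dimensional and decomposes as a finite sum $\sum_i m_i[\Spec L_i]$ with $L_i/L$ finite; each $\Spec L_i$ should lift, through the proper $\sO_L$-scheme $\ol V^N\times_{\ol X}\Spec\sO_L$, to a henselian dvf point $\rho_i:\Spec L_i\to \sY$ (note that $\td(L_i/k)=\td(L/k)\le n$ since $L_i/L$ is finite). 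The standard composition formula in $\Cor^{\rm pro}$ then yields $\rho^*V^*a=\sum_i m_i(f_i)_*\rho_i^*a$ with $f_i:\Spec L_i\to \Spec L$. Applying \ref{c2}, \ref{c3} together with the admissibility condition ${X_\infty}|_{\ol V^N}\ge {Y_\infty}|_{\ol V^N}$, which at $\rho_i$ reads $e(L_i/L)\cdot v_L(X_\infty)\ge v_{L_i}(Y_\infty)$, gives $c_L(\rho^*V^*a)\le \max_i\lceil v_{L_i}(Y_\infty)/e(L_i/L)\rceil\le v_L(X_\infty)$. The main technical hurdle here is justifying the branch decomposition rigorously — in particular, the lifts to $\sO_{L_i}$-valued points of $\ol Y$; once established, the conductor estimate is a direct application of the axioms.

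For part \ref{prop:cCI01}, both the projection $\pi:\sX\otimes\bcube\to\sX$ and the zero section $i_0:\sX\to \sX\otimes\bcube$ are admissible and left proper, hence induce morphisms on $F_c$, and injectivity of $\pi^*$ follows from $i_0^*\pi^*=\id$. Surjectivity proceeds via axiom \ref{c4}: given $a\in F_c(\sX\otimes\bcube)$, for each $x\in X$ with $\td(k(x)/k)\le n-1$ the dvf point $\rho_x:\Spec k(x)(t)_\infty\to \sX\otimes\bcube$ sending the closed point to $(x,\infty)$ satisfies $v_{k(x)(t)_\infty}((\sX\otimes\bcube)_\infty)=1$ (since $x\in X$ kills the first summand and $\infty$ contributes $1$), so $c_{k(x)(t)_\infty}(\rho_x^*a)\le 1$, and \ref{c4} produces $b\in F(X)$ with $\pi^*b=a$. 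To confirm $b\in F_c(\sX)$, I compose an arbitrary $\sigma:\Spec L\to \sX$ with $i_0$: the multiplicity of $(\sX\otimes\bcube)_\infty$ at the resulting dvf point equals $v_L(X_\infty)$, so the bound on $a$ transfers to $b$.

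Parts \ref{prop:cCI02} and \ref{prop:cCI03} should then follow routinely. For \ref{prop:cCI02}, the formula $\omega_! G(X)=\varinjlim_{\sX\in\MSm(X)}G(\sX)$ combined with axiom \ref{c5} (which yields the exhaustion $F(X)=\bigcup_\sX F_c(\sX)$, all inclusions being tautological) gives $\omega_!\tau^*F_c\cong F$. For \ref{prop:cCI03}, separatedness of $F_{c,\sX}$ on $\ol X_\et$ is inherited from $F\in\NST$ via the inclusion $F_c\subset \omega^*F$; for gluing along an elementary Nisnevich square $\{U_1,U_2\}\to U$ over $\ol X$, Nisnevich descent for $F$ yields a glued section $a$ on the smooth complement $U\setminus|u^*X_\infty|$, and the required bound $c_L(\rho^*a)\le v_L(u^*X_\infty)$ at any henselian dvf point $\rho:\Spec \sO_L\to U$ of the modulus pair follows by lifting $\rho$ through some $U_i$ (possible since $\Spec \sO_L$ is henselian local) and invoking the hypothesis on $a_i$.
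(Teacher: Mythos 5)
Your proposal is correct and follows essentially the same route as the paper's proof: the branch decomposition $\rho^*V^*=\sum_i m_i f_{i*}\rho_i^*$ with lifts of the $\rho_i$ obtained from left properness of $\ol V$ over $\ol X$, the estimate via \ref{c2}, \ref{c3} and the pulled-back admissibility inequality, the use of \ref{c4} at the points $(x,\infty)$ together with $i_0^*\pi^*=\id$ for cube invariance, \ref{c5} for $\omega_!\tau^*F_c\cong F$, and the henselian lifting through a Nisnevich cover for the sheaf property. The only cosmetic difference is that you pull the modulus inequality back through the normalization $\ol V^N$, whereas the paper argues integrality of $x/y$ on the local ring of $\ol V$ directly; these are interchangeable.
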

\begin{proof}
We start by showing $F_c\in \ulMPST$. Let $\sX=(\ol{X}, X_\infty)$ and $\sY=(\ol{Y}, Y_\infty)$ be two 
modulus pairs with corresponding opens $X$ and $Y$, respectively. 
We have to show that a left proper admissible prime correspondence $V\in \ulMCor(\sX, \sY)\subset \Cor(X, Y)$
sends the subgroup $F_c(\sY)\subset F(Y)$ to $F_c(\sX)\subset F(X)$. 
Take $a\in F_c(\sY)$ and a henselian dvf point $\rho: \eta=\Spec L\to \sX$  with $\td(L/k)\le n$. 
We have to show
\eq{prop:cCI1}{ c_L(\rho^* V^* a)\le v(\ol{\rho}^*X_\infty).}
Since $V\to X$ is finite, $(\eta\times_X V)_{\red}$ is a disjoint union of points $\eta_i=\Spec L_i$, 
with $L_i\in\Phin$. Thus 
\[V\circ \rho=\sum_i m_i\cdot \eta_i\in \Cor(\eta, Y),\]
with some multiplicities  $m_i\in \N$. For each $i$ we get a commutative diagram
\[\xymatrix{ \eta_i\ar[r]\ar@/^1.5pc/[rr]^{\rho_i}\ar[d]_{f_i} & V\ar[d]\ar[r]  & Y\\
                  \eta\ar[r]^{\rho} & X, 
}\]
where $\rho_i$ is a henselian dvf point of $Y$ and $f_i$ is finite. We have
$\eta_i=\Gamma_{\rho_i}\circ \Gamma_{f_i}^t$ in $\Cor(\eta, Y)$ (see \ref{para:PST} for the notation). 
Thus 
\eq{prop:cCI2}{\rho^*V^*=\sum_i m_i \cdot f_{i*}\rho_i^* : F(Y)\to F(\eta).}
Since the closure $\ol{V}$ of $V$ in $\ol{X}\times \ol{Y}$ is proper over $\ol{X}$ and 
$\rho$ extends to $\ol{\rho}$, we see that  $\rho_i$ extends to
$\ol{\rho}_i$ as in the diagram
\[\xymatrix{ \Spec \sO_{L_i}\ar@/^1.5pc/[rr]^{\ol{\rho}_i}\ar[r]\ar[d] & \ol{V}\ar[d]\ar[r] & \ol{Y}\\
                    \Spec \sO_L\ar[r]^{\ol{\rho}} & \ol{X}.
}\] 
Since $V$ satisfies the modulus condition \eqref{para:modulus2} we get
\eq{prop:cCI3}{v_{L_i}(X_\infty) \ge v_{L_i}(Y_\infty).}
Indeed, let $B$ be the local ring of $\ol{V}$ at the image of the closed point of $\sO_{L_i}$,
$x$ and $y\in B$ the local  equations for  ${X_{\infty}}_{\restriction \ol{V}}$ 
and  ${Y_{\infty}}_{\restriction \ol{V}}$, respectively,
and $\bar{x}$ and $\bar{y}$ their images in $\sO_{L_i}\setminus \{0\}$.
Then $\eqref{para:modulus2}$, says that $x/y\in \Frac(B)$ is a root of a monic polynomial $P(T)\in B[T]$.
It follows that $\bar{x}/\bar{y}\in L_i$ is a root of the image of $P(T)$ under $B[T]\to \sO_{L_i}[T]$,
i.e., $\bar{x}/\bar{y}$ is integral over $\sO_{L_i}$, i.e., $v_{L_i}(\bar{x})\ge v_{L_i}(\bar{y})$.

Let $j$ be an index with $c_L(f_{j*}\rho_{j}^*a) =\max_i\{c_L(f_{i*}\rho_{i}^*a) \}$. We obtain
\begin{align}
c_L(\rho^* V^* a) & = c_{L}(\sum_i m_i\cdot f_{i*}\rho_i^* a), &  & \text{by }\eqref{prop:cCI2}\label{prop:cCI3.1}\\
                          & \le  c_L(f_{j*}\rho_{j}^*a) , & &\text{by }\ref{defn:cond}\ref{c2}\notag\\
                          &\le  \left\lceil \frac{c_{L_{j}}(\rho_{j}^*a)}{e(L_j/L)}\right\rceil, &  &
                                                                        \text{by } \ref{defn:cond}\ref{c3}\notag\\
                          & \le \left\lceil \frac{v_{L_j}(Y_\infty)}{e(L_j/L)}\right\rceil, &  &a\in F_c(\sY)\notag\\
                          & \le \left\lceil \frac{v_{L_j}(X_\infty)}{e(L_j/L)}\right\rceil, & &\text{by }\eqref{prop:cCI3}\notag\\
                          & = v_{L}(X_\infty), & &\notag
\end{align}
where the last equality follows from $ v_{L_j}(X_\infty)=e(L_j/L)v_L(X_\infty)$.
This proves \eqref{prop:cCI1} and hence that $F_c$ is in $\ulMPST$.

Next, we prove \ref{prop:cCI01}.
Let $\sX=(\ol{X}, X_\infty)$ be a modulus pair with  $X= \ol{X}\setminus |X_\infty|$.
Denote by $\pi: X\times \A^1_k\to X$ the projection and by $i_0: X\inj X\times \A^1_k$ the zero section.
These define morphisms $\pi\in \ulMCor(\sX\otimes\bcube,\sX)$
and $i_0\in \ulMCor(\sX, \sX\otimes \bcube)$. We have to show that $\pi^*: F_{c}(\sX)\to F_c(\sX\otimes \bcube)$
is an isomorphism. Since $i_0^*\pi^*=\id_{F_c(\sX)}$, it suffices to show that $\pi^*$ is surjective.
Take $a\in F_c(\sX\otimes \bcube)$. For any henselian dvf point $\rho:\Spec L\to (\P^1_X, \{\infty\}_X)$, with $\td(L/k)\le n$,
we have
\[c_{L}(\rho^* a)\le v_{L}(X_{\infty}\times \P^1 +\ol{X}\times\{\infty\} )= v_L(X\times\{\infty\}). \]
Hence by \ref{defn:cond}\ref{c4}, there exists an element $b\in F(X)$ with  $\pi^*(b)=a$.
We have to check that $b\in F_c(\sX)$. Take $\rho: \Spec L\to \sX$ a henselian dvf point with $\td(L/k)\le n$.
Then $i_0\circ \rho: \Spec L\to \sX\otimes \bcube$ is a henselian dvf point and thus
\begin{align*}
c_{L}(\rho^*b) &= c_L(\rho^*i_0^*\pi^*b)=c_L((i_0\circ\rho)^*a)\\
                     &\le v_L(X_\infty\times \P^1+ \ol{X}\times \{\infty\})
                     = v_L(X_\infty).
\end{align*}
Hence $b\in F_c(\sX)$.  
Statement \ref{prop:cCI02} follows directly from \eqref{para:modulusII1} and
\ref{defn:cond}\ref{c5}. Finally \ref{prop:cCI03}.
For $\sX=(\ol{X}, X_\infty)$, the presheaf $F_{c, \sX}$ on $\ol{X}_{\et}$ (see \eqref{para:modulusIII0}) is given by 
\[(U\xr{u} \ol{X})\mapsto \{a\in F(U\setminus |u^*X_\infty|)\,|\, c_{U}(a)\le u^*X_\infty\}.\]
We have to show that this is a Nisnevich sheaf. Since $F$ is a Nisnevich sheaf it suffices 
to show the following: Let  $u:U\to \ol{X}$ be an \'etale map, $a\in F(U\setminus |u^*X_\infty|)$
 and assume there is a Nisnevich cover $\sqcup_i U_i\xr{\sqcup u_i} U$  
 so that $c_{U_i}(u_i^*a)\le u_i^*u^*X_\infty$, all $i$.
Then we have to show $c_{U}(a)\le u^*X_\infty$.
To this end, observe that 
if $\rho: \Spec L\to (U, u^*X_\infty)$ is a henselian dvf point with $\td(L/k)\le n$ 
 and $x\in U$ is the image point of the closed point of 
$\Spec \sO_L$, then by the functoriality of henselization $\ol{\rho}$ factors via
$\Spec \sO_L\to \Spec \sO_{U,x}^h\to U$. Hence there is an $i$ such that $\ol{\rho}$
factors via $\Spec \sO_L\to U_i\xr{u_i} U$. Thus 
$c_L(\rho^*a)\le v_L(u_i^*v^*X_\infty)=v_L(v^*X_\infty)$.
This completes the proof.
\end{proof}

\begin{para}\label{para:tFc}
Let $F\in \PST$ and let $c$ be a conductor of some level for $F$.
Let $F_c\in \ulMPST$ be as in Proposition \ref{prop:cCI}.
We set (see \ref{para:modulusII} for notation)
\[\tF_c:=\tau_!\tau^*F_c\in\ulMPST.\]
By adjunction we have a natural map
\[\tF_c\to F_c\]
which is injective. Indeed, on $\sX=(\ol{X}, X_\infty)\in\ulMCor$ it is given by the  inclusion inside 
$F(\ol{X}\setminus|X_\infty|)$
\[\tF_c(\sX)=\varinjlim_{\sY\in \Comp(\sX)}F_c(\sY)\to F_c(\sX).\]
By Proposition \ref{prop:cCI} and  \cite[Lem 4.2.5]{KMSY-MotModII} 
(or a similar argument as in the proof of \ref{prop:cCI}\ref{prop:cCI03}) we have 
\eq{para:tFc1}{F\in\NST\Rightarrow \tF_c\in\ulMNST.}
\end{para}

\begin{para}\label{para:CIF}
Let $F\in \RSC$. 
Denote by $\CI(F)$ the partially ordered set consisting  of those
subobjects  $G\subset \omega^{\CI} F$ in $\MPST$,
such that the induced map $\omega_!G\to \omega_!\omega^{\CI}F=F$ is an isomorphism, and where the partial order
is given by inclusion $G_1\subset G_2$ 
We set 
\[\CI(F)_{\Nis}:= \CI(F)\cap \MNST.\]
\end{para}

\begin{lemma}\label{lem:CItsp}
Let $F\in \RSC$ and $G\in \CI(F)$. Then $G_1=\tau_!G\in \ulMPST$ has the following properties:
\begin{enumerate}[label= (\arabic*)]
\item\label{para:CI-tau-sp1} the unit  $G_1\inj \ul{\omega}^*\ul{\omega}_!G_1$ 
               of the adjunction $(\ul{\omega}_!,\ul{\omega}^*)$ is injective;
\item\label{para:CI-tau-sp2} the counit $\tau_!\tau^*G_1\xr{\simeq} G_1$ of the adjunction $(\tau_!,\tau^*)$ is an isomorphism;
\item\label{para:CI-tau-sp3} for all $\sX\in \ulMCor$ the pullback $G_1(\sX)\xr{\simeq} G_1(\sX\otimes \bcube)$
                                       is an isomorphism.
\end{enumerate}
\end{lemma}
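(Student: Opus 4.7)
\medskip

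\noindent\emph{Proof plan.} All three items will be proved by unwinding the colimit formula $\tau_!G(\sU)=\varinjlim_{\sX\in\Comp(\sU)}G(\sX)$ from \eqref{para:modulusII2} together with the two identities $\ul{\omega}_!\tau_!=\omega_!$ and $\omega_!G=F$ (which is the defining property of $G\in\CI(F)$). The first observation I want to record, and which will be used in (3), is that $G$ is itself cube invariant in the sense of $\CI$: indeed for any proper modulus pair $\sX$ the projection $p\colon \sX\otimes\bcube\to\sX$ and the zero section $i_0\colon \sX\to\sX\otimes\bcube$ are morphisms in $\MCor$, so $p^*$ and $i_0^*$ are defined on the subpresheaf $G\subset\omega^{\CI}F$; since on $\omega^{\CI}F$ one has $p^*i_0^*=\id$ and $i_0^*p^*=\id$, the same holds on $G$, so $G\in\CI$.

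For \ref{para:CI-tau-sp1}, note that $\ul{\omega}_!G_1=\ul{\omega}_!\tau_!G=\omega_!G=F$, so the unit is the map $G_1\to\ul{\omega}^*F$ that sends $\sU\in\ulMCor$ to the natural map $\varinjlim_{\sX\in\Comp(\sU)}G(\sX)\to F(U)$, where $U=\ol{U}\setminus|U_\infty|$. Each transition map $G(\sX)\hookrightarrow\omega^{\CI}F(\sX)\subset F(U)$ in the colimit is an inclusion into the fixed target $F(U)$; since $\Comp(\sU)$ is filtered, a filtered colimit of subsets of a fixed set is a subset, proving injectivity.

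For \ref{para:CI-tau-sp2}, the inclusion $\tau\colon\MCor\hookrightarrow\ulMCor$ is fully faithful, hence so is its left Kan extension $\tau_!\colon\MPST\to\ulMPST$; concretely, the unit $\id\to\tau^*\tau_!$ is a natural isomorphism. The triangle identity for $(\tau_!,\tau^*)$ then forces the counit evaluated on any object of the form $\tau_!G$ to be an isomorphism, i.e.\ $\tau_!\tau^*\tau_!G\xrightarrow{\sim}\tau_!G$, which is exactly the assertion.

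For \ref{para:CI-tau-sp3}, the plan is to reduce cube invariance of $G_1=\tau_!G$ on $\ulMCor$ to cube invariance of $G$ on $\MCor$ via compactifications. The projection $\sU\otimes\bcube\to\sU$ and the zero section $\sU\to\sU\otimes\bcube$ give a split injection $p^*\colon G_1(\sU)\hookrightarrow G_1(\sU\otimes\bcube)$ with retraction $i_0^*$. To get surjectivity, I would represent a given class $a\in G_1(\sU\otimes\bcube)=\varinjlim_{\sY\in\Comp(\sU\otimes\bcube)}G(\sY)$ by an element of $G(\sX'\otimes\bcube)$ for some $\sX'\in\Comp(\sU)$, using the cofinality of the functor $\Comp(\sU)\to\Comp(\sU\otimes\bcube)$, $\sX'\mapsto\sX'\otimes\bcube$. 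Granted this cofinality, the isomorphism $G(\sX')\xrightarrow{\sim}G(\sX'\otimes\bcube)$ coming from $G\in\CI$ lifts $a$ to a class in $G_1(\sU)$. The main obstacle is precisely this cofinality: given an arbitrary compactification $\sY$ of $\sU\otimes\bcube$, one needs to dominate it by a product compactification $\sX'\otimes\bcube$ with $\sX'\in\Comp(\sU)$. I expect this to follow by applying standard domination arguments in the category of modulus pairs (along the lines of \cite[Lem.~2.4.2]{KMSY-MotModI} and the blow-up techniques of \cite{KMSY-MotModII}), first dominating $\sY$ by a compactification whose underlying scheme maps to some $\ol{X'}\times\P^1$, and then verifying that the modulus inequalities are preserved; this is the only step that requires genuine geometric input as opposed to a formal categorical manipulation.
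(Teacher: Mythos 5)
Your items (1) and (2) are correct and are essentially the paper's arguments: (2) is exactly the observation that $\tau^*\tau_!=\id$, and your proof of (1) is a direct unwinding of the paper's diagram argument, which obtains the same injectivity from the chain $G_1\inj\tau_!\omega^{\CI}F\inj\ul{\omega}^*F$ (exactness of $\tau_!$ plus \eqref{para:modulusII3}); your filtered-colimit-of-subgroups computation is the same content made explicit. Your preliminary remark that $G$ itself is cube invariant, via the section $i_0$ of the projection, is also correct and is genuinely needed for your plan.

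The gap is in (3), and it sits exactly where you flagged it. Everything in your reduction is formal except the domination claim: every $\sY\in\Comp(\sU\otimes\bcube)$ must receive an admissible morphism, restricting to the identity of $U\times\A^1$, from some $\sX'\otimes\bcube$ with $\sX'\in\Comp(\sU)$. You defer this to ``standard domination arguments,'' but your sketch (dominate $\sY$ by something mapping to $\ol{X}'\times\P^1$, then ``verify the modulus inequalities'') does not address the one point where the claim could conceivably fail: the cube factor contributes the divisor $\ol{X}'\times\{\infty\}$ always with multiplicity one, and this coefficient cannot be raised by changing the compactification of $\sU$. What makes the argument work is the following: writing $\sX'=(\ol{X}',\ol{U}_\infty+n\Sigma)$ and letting $E$ run through the boundary components of the normalized closure of the graph of $\id_{U\times\A^1}$ in $(\ol{X}'\times\P^1)\times\ol{Y}$, uniqueness of centers (separatedness of $\ol{Y}$) shows that any $E$ whose center on $\ol{X}'\times\P^1$ lies in the common open subscheme $\ol{U}\times\P^1$ has the \emph{same} center on $\ol{Y}$, where the two modulus divisors literally coincide (both restrict to $U_\infty\times\P^1+\ol{U}\times\{\infty\}$); hence a discrepancy can only occur along the finitely many $E$ lying over $|\Sigma|\times\P^1$, and there the inequality is achieved by taking $n\gg0$ (left properness is automatic since $\ol{Y}$ is proper). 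Without supplying this, your proof of (3) is incomplete. Note also that the paper takes a different route: it does not redo the geometry but deduces (3) from the injectivity $G_1\inj\tau_!\omega^{\CI}F$ obtained in the course of (1) together with \cite[Lem 1.15, 1.16]{Saito-Purity-Rec}, so the geometric input is outsourced there (and the subobject step is again your retraction trick, now applied in $\ulMPST$); if you want a self-contained argument along your lines, the scaling argument above is the missing piece, and the analogue of \cite[Lem 2.4.2]{KMSY-MotModI} guarantees the compactifications needed to start it.
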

\begin{proof}
Note that \ref{para:CI-tau-sp2} follows directly from $\tau^*\tau_!=\id$. 
We show  \ref{para:CI-tau-sp1} and \ref{para:CI-tau-sp3}.
The inclusion $G\inj \omega^{\CI}F$ yields a commutative diagram
\[\xymatrix{
 G_1\ar@{^(->}[r]\ar[d] & \tau_!\omega^{\CI}F\ar@{^(->}[d]\\
\ul{\omega}^*\ul{\omega}_!G_1\ar[r] & \ul{\omega}^*\ul{\omega}_!\tau_!\omega^{\CI}F=\ul{\omega}^*F.
}\]
Here the top horizontal row is injective by the exactness of $\tau_!$, 
the vertical maps are induced by adjunction,
the vertical map on the right is injective by \eqref{para:modulusII3}.
It follows that the vertical map on the left is injective; furthermore the injectivity of the top horizontal map and 
\cite[Lem 1.15, 1.16]{Saito-Purity-Rec} imply that $G_1$ is $\bcube$-invariant. 
\end{proof}

\begin{remark}\label{rmk:CItsp}
The above lemma says that $\tau_!\CI(F)\subset  {}^\tau\mathbf{CI}^{sp}$, in the notation of \cite{Saito-Purity-Rec}.
\end{remark}

\begin{lemma}\label{lem:MspCI}
 Let $F\in \PST$  and let $c$ be a conductor of some level for $F$. 
Then $\tau^*\tF_c=\tau^*F_c\in \CI(F)$  (see \ref{para:tFc} for notation). If $F\in \NST$, then $\tau^*F_c\in \CI(F)_{\Nis}$.
\end{lemma}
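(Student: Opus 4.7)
The plan is to verify the three things asserted by the lemma — the identity $\tau^*\tF_c = \tau^*F_c$, the membership $\tau^*F_c \in \CI(F)$, and the Nisnevich refinement — each time invoking an output of Proposition \ref{prop:cCI}. The only non-formal ingredient will be checking the modulus condition that upgrades $F_c \subset \ulomega^*F$ to $\tau^*F_c \subset \omega^{\CI}F$.

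The first equality is formal: since $\tau \colon \MCor \to \ulMCor$ is a fully faithful inclusion, we have $\tau^*\tau_! = \id_{\MPST}$, and combining this with the definition $\tF_c = \tau_!\tau^*F_c$ (see \ref{para:tFc}) gives $\tau^*\tF_c = \tau^*\tau_!\tau^*F_c = \tau^*F_c$.

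For the membership $\tau^*F_c \in \CI(F)$, the isomorphism $\omega_!\tau^*F_c \cong F$ is immediate from Proposition \ref{prop:cCI}\ref{prop:cCI02}, so the real content is the inclusion $\tau^*F_c \subset \omega^{\CI}F$ inside $\omega^*F = \tau^*\ulomega^*F$. Fix a proper modulus pair $\sX = (\ol{X}, X_\infty)$ with open part $X$; I need to show that every $a \in F_c(\sX)$ has modulus $\sX$ in the sense of Definition \ref{defn:rec}. Given $S \in \Sm$ and $\gamma \in \Cor_{\rm adm}(\bcube \otimes S, \sX)$, the crucial observation is that $\gamma$ is automatically left proper: its closure $\ol{\gamma} \subset \P^1 \times S \times \ol{X}$ sits inside a scheme which is proper over $\P^1 \times S$ (because $\ol{X}$ is proper over $k$), and hence is itself proper over $\P^1 \times S$. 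Viewing $S$ as the modulus pair $(S, \emptyset) \in \ulMCor$, this places $\gamma$ in $\ulMCor(\bcube \otimes S, \sX)$ and produces a well-defined pullback $\gamma^* \colon F_c(\sX) \to F_c(\bcube \otimes S)$. By the $\bcube$-invariance of $F_c$ in Proposition \ref{prop:cCI}\ref{prop:cCI01}, the projection $\pi \colon \bcube \otimes S \to S$ induces an isomorphism $\pi^* \colon F_c(S) \xrightarrow{\sim} F_c(\bcube \otimes S)$, and since $i_0^*\pi^* = \id = i_1^*\pi^*$, the two restrictions agree on $F_c(\bcube \otimes S)$. Applying this to $\gamma^*a$ yields $i_0^*\gamma^*a = i_1^*\gamma^*a$ in $F(S)$, so $a$ has modulus $\sX$.

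For the Nisnevich part, if $F \in \NST$ then $\tF_c \in \ulMNST$ by \eqref{para:tFc1}, and since $\tau_!\tau^*F_c = \tF_c$, this is exactly the condition for $\tau^*F_c$ to lie in $\MNST$; hence $\tau^*F_c \in \CI(F) \cap \MNST = \CI(F)_\Nis$. The main obstacle in the whole argument is the modulus check in the second paragraph: one must recognize that properness of $\sX$ forces admissibility to imply left-properness, thereby making the modulus condition of Definition \ref{defn:rec} directly accessible from the $\bcube$-invariance of $F_c$ on the enlarged category $\ulMCor$.
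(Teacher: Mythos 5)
Your proof is correct, and all three assertions (the identity $\tau^*\tF_c=\tau^*F_c$, the membership in $\CI(F)$, the Nisnevich refinement) are handled soundly; in particular the observation that an admissible correspondence into a \emph{proper} modulus pair is automatically left proper is exactly right, and it is what lets $\gamma$ itself act on $F_c$. Where you differ from the paper is in how cube-invariance of $F_c$ gets converted into the SC-modulus condition of Definition \ref{defn:rec}: the paper argues categorically, factoring the Yoneda map $a:\Ztr(\sX)\to\tau^*F_c$ through $h_0^{\bcube}(\sX)$ and then applying the exact functor $\omega_!$, using the identifications $\omega_!\Ztr(\sX)=\Ztr(X)$ and $\omega_! h_0^{\bcube}(\sX)=h_0(\sX)$ cited from \cite{KMSY-MotModI} and \cite{KSY-RecII}, whereas you verify the defining condition $i_0^*\gamma^*a=i_1^*\gamma^*a$ directly for each admissible $\gamma\in\Cor_{\rm adm}(\bcube\otimes S,\sX)$, promoting $\gamma$ to a morphism in $\ulMCor(\bcube\otimes S,\sX)$ via the left-properness remark and then invoking Proposition \ref{prop:cCI}\ref{prop:cCI01} for $(S,\emptyset)\otimes\bcube$. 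Your route is more elementary and self-contained (no appeal to the computation of $\omega_!$ on $\Ztr(\sX)$ and $h_0^{\bcube}(\sX)$); in effect your left-properness observation re-proves the piece of those cited lemmas that the paper's argument relies on, while the paper's formulation keeps the proof shorter by outsourcing that comparison of admissible versus left-proper admissible correspondences. The treatment of the Nisnevich statement via \eqref{para:tFc1} and $\tau_!\tau^*F_c=\tF_c$ coincides with what the paper intends.
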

\begin{proof}
By Proposition \ref{prop:cCI}\ref{prop:cCI02}, it suffices to show that there is an inclusion
$\tau^*F_c\inj \omega^{\CI}F$ inside $\omega^*F$. 
For $\sX$ a proper modulus pair set $\Ztr(\sX):= \MCor(-,\sX)$, and 
\[h_0^{\bcube}(\sX)= \Coker \left(\Ztr(\sX)(\bcube\otimes -) \xr{ i_0^*-i_1^*} \Ztr(\sX)\right).\]
By \cite[Lem 1.1.3]{KMSY-MotModI} and \cite[Lem 2.2.2]{KSY-RecII} we have (see \ref{para:modulus} and \ref{para:modulusII} 
for notation)
\[\omega_!\Ztr(\sX)= \Ztr(X), \quad \omega_! h_0^{\bcube}(\sX)= h_0(\sX),\]
where $X=\ol{X}\setminus|X_\infty|$.
Take $a\in F_c(\sX)\subset F(X)$. 
Since $F_c$ is cube invariant, by Proposition \ref{prop:cCI},  the Yoneda map $a: \Ztr(\sX)\to \tau^*F_c$ factors
via the quotient map $\Ztr(\sX) \to h_0^{\bcube}(\sX)$. Applying $\omega_!=\ul{\omega}_!\tau_!$
 we see that  the Yoneda map 
$a: \Ztr(X)\to F$ in $\PST$ defined by $a\in F(X)$ factors via $\Ztr(X)\to h^0(\sX)$, i.e., 
$a\in \omega^{\CI}F(\sX)$. This proves the lemma.  
\end{proof}

\begin{nota}\label{nota:Gmod}
Let $L\in\Phi$. Denote by $s\in S:=\Spec \sO_L$ the closed point.
For all $n\ge 1$ we have $(S, n\cdot s)\in\ulMCor^{\rm pro}$ (see \ref{para:ulMCorpro}).
Let $G\in \ulMPST$; we extend it to a presheaf on $\ulMCor^{\rm pro}$.
For $n\ge 0$ we introduce the following notation:
\[G(\sO_L,\fm_L^{-n}):=\begin{cases} 
\ul{\omega}_!G(S)= G(S,\emptyset) & \text{if }n=0\\ 
G(S, n\cdot s) & \text{if }n\ge 1. \end{cases}\]
\end{nota}


\begin{defn}\label{defn:mot-cond}
Let $F\in \RSC_{\Nis}$ and $G\in \CI(F)$ (see \ref{para:CIF}).
We denote by $c^G=\{c^G_L\}$ 
the family of maps $c_L^G: F(L)\to \N_0$, $L\in\Phi$, defined as follows
\[c^G_L(a):= \min\{n\ge 0\mid a\in \tau_!G(\sO_L,\fm_L^{-n})\}. \]
This is well-defined  since  
\[F(L)=\ul{\omega}_!\tau_!(G)(L)= \tau_!(G)(L,\emptyset) = \bigcup_n \tau_!G(\sO_L,\fm^{-n}).\]
In case $G=\omega^{\CI}F$  we write
\eq{defn:mot-cond1}{c^F:=c^{\omega^{\CI}F},}
and  call $c^F$ the {\em motivic conductor} of $F$. 
\end{defn}

\begin{thm}\label{thm:rec-cond}
Let $F$ be a presheaf with transfers. 
\begin{enumerate}[label=(\arabic*)]
\item\label{thm:rec-cond1}  If $F$ has a conductor $c$ of some level, then $F\in \RSC$.
\item\label{thm:rec-cond2} If $F\in \RSC_\Nis$ and $G\in \CI(F)$ (see \ref{para:CIF}),
then the family $c^G=\{c^G_L\}$ (see Definition \ref{defn:mot-cond})
                                     is a conductor for $F$ in the sense of Definition \ref{defn:cond}.
In particular, $c^F$ is a conductor for $F$.
\item\label{thm:rec-cond3}
Let $F\in\RSC_{\Nis}$ and  $G\in \CI(F)$.
Then in $\ulMPST$ 
\[\tau_!G\subset \tF_{c^G}\]
and for all $L\in \Phi$ and $n\ge 0$, we have
\[\tau_!G(\sO_L,\fm_L^{-n})= \tF_{c^G}(\sO_L,\fm_L^{-n}).\]
\item\label{thm:rec-cond4}
Let $F\in \RSC_{\Nis}$ and let $c$ be a conductor for $F$ (of some level). Then
\[\tF_c\subset \tau_!\omega^{\CI}F=\tF_{c^F},\]
where $c^F$ is the motivic conductor, see \eqref{defn:mot-cond1}.
\end{enumerate}
In particular, 
\[F\in \RSC_{\Nis} \Longleftrightarrow    F\in \NST \text{ and  $F$ has a conductor (of some level).}\]
\end{thm}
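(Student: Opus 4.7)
The plan is to prove the four parts in order; the equivalence at the end follows formally from (1) and (2). Claims (1) and (4) are short consequences of Lemma~\ref{lem:MspCI}, which identifies $\tau^*F_c$ as an object of $\CI(F)$ whenever $c$ is a conductor on $F$. For (1), axiom~\ref{c5} applied to any $a\in F(X)$ produces a proper modulus pair $\sX$ with $a\in F_c(\sX)$; the argument inside the proof of Lemma~\ref{lem:MspCI} then factors the Yoneda map $a\colon \Ztr(X)\to F$ through $h_0(\sX)$, which is exactly SC-reciprocity, so $F\in\RSC$. For (4), Lemma~\ref{lem:MspCI} places $\tau^*F_c$ in $\CI(F)\subset\omega^{\CI}F$ inside $\omega^*F$; applying the exact functor $\tau_!$ yields $\tF_c=\tau_!\tau^*F_c\subset\tau_!\omega^{\CI}F=\tF_{c^F}$.

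For (2), the task is to verify the five axioms of Definition~\ref{defn:cond} for $c^G$. Axiom~\ref{c1} follows from the identity $\ul\omega_!\tau_!G=\omega_!G=F$, which gives $\tau_!G(\sO_L,\fm_L^0)=F(\sO_L)$; so $c^G_L(a)=0$ means $a$ lifts to $F(\sO_L)$. Axiom~\ref{c2} is immediate since $\tau_!G(\sO_L,\fm_L^{-n})$ is a subgroup of $F(L)$ for every $n$. For~\ref{c3}, I would check directly that the transpose of the graph of a finite morphism $\Spec L'\to\Spec L$ extends to an admissible correspondence $(\Spec\sO_{L'},n\cdot s')\to(\Spec\sO_L,\lceil n/e\rceil\cdot s)$ in $\ulMCor^{\rm pro}$, using the modulus inequality \eqref{para:modulus2} and $e=e(L'/L)$. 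Axiom~\ref{c5} follows from the colimit formula \eqref{para:modulusII1} for $\omega_!G=F$: any $a\in F(X)$ lies in $G(\sX)$ for some $\sX=(\ol X,X_\infty)\in\MSm(X)$, and pulling back along any henselian dvf point $\rho\colon\Spec L\to\sX$ gives $c^G_L(\rho^*a)\le v_L(X_\infty)$ at once.

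The main obstacle is axiom~\ref{c4}. Given $a\in F(\A^1_X)$ with $c^G_{k(x)(t)_\infty}(\rho_x^*a)\le 1$ at all henselian dvf points of the required transcendence degree lying over $X\times\{\infty\}$, my plan is to show first that $a\in\tau_!G(\P^1_X,X\times\{\infty\})$. For this the pointwise conductor bound at generic points of the boundary divisor must be promoted to a bound at every henselian dvf point; this is a semi-continuity statement relying on the Nisnevich sheaf property inherited from $F\in\NST$ and the construction of $\tau_!$ in terms of compactifications \eqref{para:modulusII2}. Once this is done, the cube invariance of $\tau_!G$ from Lemma~\ref{lem:CItsp}\ref{para:CI-tau-sp3} identifies $\tau_!G(\P^1_X,X\times\{\infty\})$ with $F(X)$ via pullback along the zero section, so $b=i_0^*a\in F(X)$ satisfies $\pi^*b=a$.

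For (3), the inclusion $\tau_!G\subset\tF_{c^G}$ is the computation from~\ref{c5} packaged by adjunction: each $a\in\tau_!G(\sX)$ satisfies $c^G_{\ol X}(a)\le X_\infty$, hence defines an element of $F_{c^G}(\sX)=\tau^*F_{c^G}(\sX)$, and applying $\tau_!$ to the resulting $G\to\tau^*F_{c^G}$ yields the claimed inclusion. The equality $\tau_!G(\sO_L,\fm_L^{-n})=\tF_{c^G}(\sO_L,\fm_L^{-n})$ then holds tautologically: both sides agree with $\{a\in F(L)\mid c^G_L(a)\le n\}$ by the very definition of the motivic conductor. Finally, the equivalence $F\in\RSC_\Nis\Leftrightarrow F\in\NST$ with a conductor is immediate: the forward direction is (2) together with $\RSC_\Nis\subset\NST$, and the reverse direction is (1) combined with the hypothesis $F\in\NST$ to deduce $F\in\RSC\cap\NST=\RSC_\Nis$.
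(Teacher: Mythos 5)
Your outline for (1), (3), (4) and for axioms \ref{c1}, \ref{c2}, \ref{c3}, \ref{c5} in (2) runs parallel to the paper's proof, but the verification of \ref{c4} for $c^G$ --- which is the mathematical heart of part (2) --- has a genuine gap. You propose to promote the hypothesis at the points $k(x)(t)_\infty$ to the statement $a\in\tau_!G(\P^1_X,X\times\{\infty\})$ and then invoke cube invariance of $\tau_!G$ (Lemma \ref{lem:CItsp}). The promotion step is not an argument, and as a route it cannot work as stated: pointwise conductor bounds at henselian dvf points can, by the very definition of $F_{c^G}$, only ever place $a$ in $\tF_{c^G}(\P^1_X,X\times\{\infty\})$, never in $\tau_!G(\P^1_X,X\times\{\infty\})$; for a general $G\in\CI(F)$ the sections of $\tau_!G$ over a global modulus pair are \emph{not} cut out by conductor conditions (this is exactly the distinction between $\CI(F)_{\Nis}$ and $\CI(F)_{\Nis}^*$ in Corollary \ref{cor:cond-vs-CI}), so the cube invariance of $\tau_!G$ does not apply. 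If you replace $\tau_!G$ by $\tF_{c^G}$ you would need cube invariance of $F_{c^G}$, i.e.\ Proposition \ref{prop:cCI}\ref{prop:cCI01}, whose proof uses \ref{c4} for $c^G$ --- circular; likewise the ``semi-continuity'' you appeal to is Corollary \ref{cor:mot-cond-c6}, which is itself deduced from the present theorem, and the Nisnevich sheaf property of $\tau_!G$ is not available either (only $\tF\in\ulMNST$, Corollary \ref{cor:omegaCI}, which again comes after this theorem). The paper's proof needs two substantive external inputs that your sketch omits: the injectivity $F(\A^1_X)\inj F(\A^1_K)$, $K=k(X)$, from \cite{KSY-Rec} and \cite{KSY-RecII}, to reduce to the generic point, and then, with $G_{1,\Nis}=\ul{a}_{\Nis}\tau_!G$, the Nisnevich localization sequence on $\P^1_K$ combined with purity $G_{1,\Nis}(\A^1_K,\emptyset)=F(\A^1_K)$ and cube invariance of $G_{1,\Nis}$ from \cite{Saito-Purity-Rec}; the hypothesis $c^G_{K(t)_\infty}(a_K)\le 1$ enters precisely as the vanishing of the image of $a_K$ in $G_1(K(t)_\infty,\emptyset)/G_1(\sO_{K(t)_\infty},\infty)$.

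Three smaller points. In (3), the identity $\tF_{c^G}(\sO_L,\fm_L^{-n})=\{a\in F(L)\mid c^G_L(a)\le n\}$ is not ``tautological'': only the inclusion $\subset$ holds by definition, and the reverse inclusion is exactly the semi-continuity of $c^G$ (Lemma \ref{lem:c6}); the correct argument is the sandwich $\tau_!G(\sO_L,\fm_L^{-n})\subset\tF_{c^G}(\sO_L,\fm_L^{-n})\subset\{a\mid c^G_L(a)\le n\}=\tau_!G(\sO_L,\fm_L^{-n})$, using the inclusion you proved just before. In (4), the equality $\tau_!\omega^{\CI}F=\tF_{c^F}$ is an assertion, not notation; it follows by combining your inclusion (applied with $c=c^F$) with part (3) for $G=\omega^{\CI}F$, and you should say so. Finally, in \ref{c3} the admissible correspondence in $\ulMCor^{\rm pro}$ goes $(\Spec\sO_L,\lceil n/e\rceil\cdot s)\to(\Spec\sO_{L'},n\cdot s')$ (contravariance of $\tau_!G$ then yields $f_*$ on the filtrations); the arrow you wrote is reversed, although the intended admissibility computation is the right one.
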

\begin{proof}
\ref{thm:rec-cond1}. We have $F=\omega_!\tau^*F_c\in \omega_!(\CI)\subset\RSC$,
by Proposition \ref{prop:cCI} and \cite[Prop 2.3.7]{KSY-RecII}.
Next \ref{thm:rec-cond2}. We check the properties from Definition \ref{defn:cond}. Set $G_1:=\tau_!G$.
\ref{c1} follows from $\ul{\omega}_!G_1(\sO_L)=\omega_!G(\sO_L)=F(\sO_L)$; \ref{c2} is obvious.
As for \ref{c3}, let $L'/L$ be a finite extension of henselian dvf's
with ramification index $e$. The induced finite morphism $f:\Spec \sO_{L'}\to \Spec \sO_L$ induces
 a morphism in $\ulMCor^{\rm pro}$:
\[ (\Spec \sO_L, n\cdot s_L)\rightarrow  (\Spec\sO_{L'},en\cdot s_{L'}),\]
where $s_L$ (resp. $s_{L'}$) are the closed points. This yields
the commutative diagram
\[ \xymatrix{
G_1(\sO_{L'},\emptyset)\ar[d]^{f_*}\ar[r] &G_1(\sO_{L'}, \fm_{L'}^{-en}) \ar[r]\ar[d]^{f_*} & 
                                                          \ul{\omega}^*\ul{\omega}_!G_1(\sO_{L'},\fm_{L'}^{-en})=F(L') \ar[d]^{f_*}\\ 
G_1(\sO_L,\emptyset)\ar[r] &G_1(\sO_L,\fm_L^{-n}) \ar[r] & 
\ul{\omega}^*\ul{\omega}_!G_1(\sO_{L}, \fm_L^{-n})=F(L).}\]
Hence, we obtain the following inequality which implies \ref{c3}:
\[ c^G_{L}(f_*a)\leq \min\{n\mid a\in G_1(\sO_{L'},\fm_{L'}^{-en})\} =\min\{n \mid c^G_{L'}(a)\leq en\}. \]

The following claim clearly implies \ref{c4}:
\begin{claim}\label{claim:rec-cond}
Let $X\in \Sm$ and $a\in F(\A^1_X)$. 
Assume $X$ connected with function field $K$.
Set 
$K(t)_\infty:=\Frac (\sO_{\P^1_K, \infty}^h)$
 inducing the henselian dvf point 
$\Spec K(t)_\infty \to (\P^1_K, \infty)$.
Assume $c^G_{K(t)_\infty}(a_K)\leq 1$, where $a_K\in F(\A^1_K)$ is the restriction of $a$. Then $a\in F(X)$.
\end{claim}
{\em Proof of Claim \ref{claim:rec-cond}.} The restriction map $F(\A^1_X)\to F(\A^1_K)$ is injective, 
by \cite[Thm 6]{KSY-Rec} and \cite[Cor 3.2.3]{KSY-RecII}; thus
it suffices to show $a_K\in F(K)$.
Set  $G_{1,\Nis}:=\ul{a}_\Nis(G_1)$ (see \ref{para:modulusIII}).
Consider the Nisnevich localization exact sequence
\[ G_{1,\Nis}(\P^1_K,\infty) \to G_{1,\Nis}(\A^1_K,\emptyset) \to 
G_1(K(t)_\infty,\emptyset)/G_1(\sO_{K(t)_\infty},\infty).\]
By \cite[Thm 4.1]{Saito-Purity-Rec}, we have 
$G_{1,\Nis}(\A^1_K,\emptyset)=G_1(\A^1_K,\emptyset)=F(\A^1_K)$.
Hence our assumption implies $a_K$ comes from $G_{1,\Nis}(\P^1_K,\infty)$ and
the desired assertion follows from the cube invariance of $G_{1,\Nis}$, see \cite[Thm 10.1]{Saito-Purity-Rec} 
(and Remark \ref{rmk:CItsp}),
\[ G_{1,\Nis}(\P^1_K,\infty)\simeq G_{1,\Nis}(K,\emptyset)=G_1(K,\emptyset)\simeq F(K).\]

Next we prove \ref{c5}. Let $X\in \Sm$ and $a\in F(X)$. We can assume that $X$ is not proper over $k$.
Take any $\sX=(\Xb,\Xinf)\in \MCor$ such that $X=\Xb-|\Xinf|$.
We have
\[ F(X) = \omega_!G(X)=\varinjlim_{n>0} G(\Xb,n\cdot \Xinf),\]
and hence $a\in G(\Xb,n\cdot \Xinf)$, for some $n$.
Then, for any henselian dvf point $\Spec L \to (\ol{X},n\cdot X_\infty)$, 
we get $a\in G_1(\sO_L,\fm_L^{-n v_L(\Xinf)})$ so that $c^F_L(a)\leq n\cdot v_L(\Xinf)$.
This completes the proof of \ref{thm:rec-cond2}.

\ref{thm:rec-cond3}. It follows directly from the definition of $F_{c^G}$ in Proposition \ref{prop:cCI}, that 
we have $\tau_!G\subset F_{c^G}$; hence also 
$\tau_!G=\tau_!\tau^*\tau_!G\subset \tau_!\tau^* F_{c^G}=\tF_{c^G}$.
Furthermore,  the equality  in the second part of the statement comes from the inclusions
\[\tau_!G(\sO_L,\fm_L^{-n})\subset \tF_{c^G}(\sO_L,\fm_L^{-n})\subset \{a\in F(L)\mid a\in \tau_!G(\sO_L,\fm_L^{-n})\},\]
where the first inclusion comes from the above  and the second holds by definition.
Finally \ref{thm:rec-cond4}. The inclusion $\tF_{c}\subset \tau_!\omega^{\CI}F$ follows from
Lemma \ref{lem:MspCI}. The equality $\tF_{c^F}=\tau_!\omega^{\CI}F$, now
follows from this and \ref{thm:rec-cond3}. This completes the proof.  
\end{proof}

\begin{cor}\label{cor:omegaCI}
The functor $\omega^{\CI}: \RSC\to \CI$ restricts to a functor
$\omega^{\CI}: \RSC_{\Nis}\to \CI_{\Nis}:=\CI\cap \MNST$.
\end{cor}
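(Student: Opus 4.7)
The plan is to combine three ingredients already established in the excerpt, all applied to the motivic conductor $c^F$ of a given $F\in\RSC_{\Nis}$. By the definition of $\CI_{\Nis}=\CI\cap\MNST$, what I need to check is that $\omega^{\CI}F$ is a Nisnevich sheaf in the sense of \ref{para:modulusIII}, i.e.\ that $\tau_!\omega^{\CI}F\in\ulMNST$.

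First, I would invoke Theorem \ref{thm:rec-cond}\ref{thm:rec-cond2}: since $F\in\RSC_{\Nis}\subset\RSC$, the motivic conductor $c^F=c^{\omega^{\CI}F}$ is a (level $\infty$) conductor for $F$ in the sense of Definition \ref{defn:cond}. This puts me in a position to form the associated modulus presheaves $F_{c^F}$ and $\tF_{c^F}=\tau_!\tau^*F_{c^F}$ from Proposition \ref{prop:cCI} and \ref{para:tFc}.

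Second, since $F\in\NST$, the implication \eqref{para:tFc1} (which is part of the setup at the end of \ref{para:tFc} and comes from \cite[Lem 4.2.5]{KMSY-MotModII} together with Proposition \ref{prop:cCI}\ref{prop:cCI03}) gives
\[
\tF_{c^F}\in\ulMNST.
\]

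Third, I would apply Theorem \ref{thm:rec-cond}\ref{thm:rec-cond4}, specialized to $c=c^F$, which yields the identification
\[
\tF_{c^F}=\tau_!\omega^{\CI}F
\]
in $\ulMPST$. Combined with the previous step this gives $\tau_!\omega^{\CI}F\in\ulMNST$, and by the definition of $\MNST$ in \ref{para:modulusIII} this means $\omega^{\CI}F\in\MNST$. Since we already know $\omega^{\CI}F\in\CI$ by construction, we conclude $\omega^{\CI}F\in\CI_{\Nis}$, which is the required statement. Functoriality in $F$ is automatic because $\omega^{\CI}\colon\RSC\to\CI$ is already a functor and we are simply restricting its target.

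There is no real obstacle here beyond bookkeeping: the entire content of the corollary has been quietly packaged into the conductor formalism, and the only subtle point is that Theorem \ref{thm:rec-cond}\ref{thm:rec-cond4} identifies $\tau_!\omega^{\CI}F$ with the object $\tF_{c^F}$, whose $\ulMNST$-property is the whole reason the conductor construction was designed to be well-behaved under Nisnevich sheafification in Proposition \ref{prop:cCI}\ref{prop:cCI03}.
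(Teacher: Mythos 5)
Your argument is correct and is essentially the paper's own proof: both use Theorem \ref{thm:rec-cond} (parts \ref{thm:rec-cond2} and \ref{thm:rec-cond4}) together with Proposition \ref{prop:cCI}\ref{prop:cCI03} and \eqref{para:tFc1} to get $\tau_!\omega^{\CI}F=\tF_{c^F}\in\ulMNST$, and then conclude $\omega^{\CI}F\in\MNST$ directly from the definition in \ref{para:modulusIII}.
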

\begin{proof}
Take $F\in \RSC_\Nis$. By Theorem \ref{thm:rec-cond}, Proposition \ref{prop:cCI}\ref{prop:cCI03}, and \eqref{para:tFc1} 
we have $\tau_! \omega^{\CI}F=\tilde{F}_{c^F}\in\ulMNST$. Hence $\omega^{\CI} F\in \MNST$,
by definition, see \ref{para:modulusIII}. 
\end{proof}

\begin{nota}\label{not:ExtendingRSC}
Let $F\in \RSC_{\Nis}$. In the following we will simply write 
\[\tF:= \tF_{c^F}=\tau_!\omega^{\CI}F.\]
By Corollary \ref{cor:omegaCI} we have $\tau^*\tF\in \CI(F)_{\Nis}$ (see \ref{para:CIF}).
\end{nota}

\begin{cor}\label{cor:extending-cond}
Let $F\in \RSC_{\Nis}$.
Denote by $(c^{F})^{\le n}$ the restriction of the motivic conductor to $\td\le n$.
Assume $(c^{F})^{\le n}$ is a conductor of level $n$. Then 
\[\tF_{(c^{F})^{\le n}}= \tF.\]
\end{cor}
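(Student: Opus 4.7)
The strategy is to establish two inclusions, each of which is essentially immediate from material already in place. Recall that by Notation \ref{not:ExtendingRSC} we have $\tF = \tF_{c^F} = \tau_!\omega^{\CI}F$, so the task is to show $\tF_{(c^F)^{\le n}} = \tF_{c^F}$.

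First I would establish $\tF_{c^F} \subset \tF_{(c^F)^{\le n}}$ directly from the definitions. For any $\sX = (\ol{X}, X_\infty) \in \ulMCor$, Definitions \ref{defn:condmod} and Proposition \ref{prop:cCI} unwind to
\[F_{c^F}(\sX) = \{a \in F(X) \mid c^F_L(\rho^* a) \le v_L(X_\infty) \text{ for all } \rho\colon \Spec L \to \sX,\ L \in \Phi\},\]
whereas $F_{(c^F)^{\le n}}(\sX)$ imposes the same inequality only for $L \in \Phi_{\le n}$. Since $(c^F)^{\le n}_L = c^F_L$ for $L \in \Phi_{\le n}$ and $\Phi_{\le n} \subset \Phi$, the first set is a subset of the second; applying $\tau_! \tau^*$ preserves this inclusion, yielding $\tF_{c^F} \subset \tF_{(c^F)^{\le n}}$.

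For the reverse inclusion I would invoke Theorem \ref{thm:rec-cond}\ref{thm:rec-cond4}: for any conductor $c$ of some level for $F \in \RSC_{\Nis}$, one has $\tF_c \subset \tau_!\omega^{\CI} F = \tF_{c^F}$. By hypothesis $(c^F)^{\le n}$ is a conductor of level $n$, so this result applies and gives $\tF_{(c^F)^{\le n}} \subset \tF_{c^F}$. Combining the two inclusions with $\tF = \tF_{c^F}$ yields the claim.

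There is essentially no obstacle to overcome here; the content has already been done in Theorem \ref{thm:rec-cond}\ref{thm:rec-cond4}, and the only thing to verify is that the definition of $F_c$ genuinely becomes more restrictive when $c$ ranges over more henselian dvf points, which is transparent. The role of the hypothesis that $(c^F)^{\le n}$ is a conductor of level $n$ is precisely to license the use of Theorem \ref{thm:rec-cond}\ref{thm:rec-cond4}, whose conclusion requires a bona fide conductor in the sense of Definition \ref{defn:cond}.
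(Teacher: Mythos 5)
Your proof is correct and follows exactly the same two-inclusion strategy as the paper: the forward inclusion because fewer test fields give a weaker constraint, and the reverse by Theorem \ref{thm:rec-cond}\ref{thm:rec-cond4}. You merely spell out the "clear" inclusion that the paper leaves to the reader.
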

\begin{proof}
Clearly $\tF_{c^F}\subset \tF_{(c^{F})^{\le n}}$, and '$\supset$' holds by Theorem \ref{thm:rec-cond}\ref{thm:rec-cond4}.
\end{proof}

\begin{proposition}\label{prop:mot-cond-sub-sheaf}
Let $F_1\subset F_2$ be an inclusion in $\RSC_\Nis$.
Then the restriction of the motivic conductor of $F_2$ to $F_1$ is equal to the motivic conductor on $F_1$, i.e.,
\[c^{F_1}= (c^{F_2})_{|F_{1}}.\] 
\end{proposition}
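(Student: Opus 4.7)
The plan is to derive the equality from a cartesian square in $\ulMPST$ relating $\tF_1$ and $\tF_2$, and then evaluate this square at $(\Spec\sO_L, n\cdot s)$ in the pro-sense.

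First I would observe that $\omega^{\CI}$, being the composition $h^0_{\bcube}\circ \omega^*$ of two right adjoints, is left exact, so applied to the monomorphism $F_1\hookrightarrow F_2$ it produces a monomorphism $\omega^{\CI}F_1 \hookrightarrow \omega^{\CI}F_2$. More precisely, the description \eqref{para:modulusII3} shows that the condition for $a\in F_1(X)\subset F_2(X)$ to have modulus $\sX$ amounts to a family of equalities $i_0^*\gamma^*a=i_1^*\gamma^*a$ in $F_i(S)$ for all admissible $\gamma\in \Cor_{\rm adm}(\bcube\otimes S, \sX)$; since $F_1\hookrightarrow F_2$ is injective, such equalities hold in $F_1$ iff they hold in $F_2$. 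This yields a cartesian square
\[
\omega^{\CI}F_1 \;=\; \omega^{\CI}F_2 \times_{\omega^*F_2} \omega^*F_1 \quad\text{in } \MPST.
\]

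Next I would apply the exact functor $\tau_!$. The identification $\tau_!\omega^*=\ul{\omega}^*$ follows from \eqref{para:modulusII2}: for any $\sU\in \ulMCor$ the category $\Comp(\sU)$ is non-empty and connected, so $\tau_!\omega^*F(\sU)=\colim_{\sX\in\Comp(\sU)} F(U)=F(U)=\ul{\omega}^*F(\sU)$. Exactness of $\tau_!$ then turns the square above into a cartesian square in $\ulMPST$:
\[
\tF_1 \;=\; \tF_2 \times_{\ul{\omega}^*F_2} \ul{\omega}^*F_1.
\]

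Extending both sides to $\ulMCor^{\rm pro}$ via \ref{para:MPSTpro} and evaluating at $(\Spec\sO_L, n\cdot s)$ preserves the cartesian property, since filtered colimits commute with finite limits in $\Ab$. For $n\ge 1$, we have $\ul{\omega}^*F_i(\sO_L,\fm_L^{-n})=F_i(L)$, so the evaluated identity reads
\[
\tF_1(\sO_L,\fm_L^{-n}) \;=\; \tF_2(\sO_L,\fm_L^{-n}) \cap F_1(L) \quad\text{in } F_2(L).
\]
Hence for $a\in F_1(L)$ and $n\ge 1$, $a\in \tF_1(\sO_L,\fm_L^{-n})$ iff $a\in \tF_2(\sO_L,\fm_L^{-n})$, so $c^{F_1}_L(a)=c^{F_2}_L(a)$ whenever the common value is $\ge 1$. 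The case where both are $0$ follows from the same cartesian square evaluated at $(\Spec\sO_L,\emptyset)$, combined with the injectivity $F_i(\sO_L)\hookrightarrow F_i(L)$ for $F_i\in \RSC_{\Nis}$ which already underpins the inclusion $\tF(\sO_L,\fm_L^{-n})\hookrightarrow F(L)$ recorded in the introduction.

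The main obstacle is establishing the cartesian property of $\omega^{\CI}$ on the inclusion $F_1\hookrightarrow F_2$; once this is extracted from \eqref{para:modulusII3} using the fact that a monomorphism reflects equalities, the remaining steps are a routine chase through the adjunction formalism $(\tau_!,\tau^*)$, the identification of $\tau_!\omega^*$, and the pro-object extension of Section \ref{sec:modulus}.
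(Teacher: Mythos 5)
The bulk of your argument is correct and is, in substance, the paper's own proof: the only real input is that the modulus condition of Definition \ref{defn:rec} consists of equalities $i_0^*\gamma^*a=i_1^*\gamma^*a$ and is therefore reflected by the monomorphism $F_1\hookrightarrow F_2$. Your cartesian square $\omega^{\CI}F_1=\omega^{\CI}F_2\times_{\omega^*F_2}\omega^*F_1$ is a categorical packaging of exactly this observation, and pushing it through the exact functor $\tau_!$, the identification $\tau_!\omega^*\cong\ul{\omega}^*$, and the pro-extension does give $\tilde{F}_1(\sO_L,\fm_L^{-n})=\tilde{F}_2(\sO_L,\fm_L^{-n})\cap F_1(L)$ for all $n\ge 1$, hence agreement of the two conductors at all positive values; the paper records the same point in one line and calls it obvious.

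Where your write-up actually fails is the value-$0$ case. By Notation \ref{nota:Gmod} and \eqref{para:modulusII4} one has $\tilde{F}_i(\sO_L,\fm_L^{0})=F_i(\sO_L)$, so what must be shown is that an element $a\in F_1(L)$ lying in $\Im\big(F_2(\sO_L)\to F_2(L)\big)$ already lies in $\Im\big(F_1(\sO_L)\to F_1(L)\big)$. Evaluating your cartesian square at $(\Spec \sO_L,\emptyset)$ only yields $F_1(\sO_L)=F_2(\sO_L)\times_{F_2(\sO_L)}F_1(\sO_L)$, a tautology: the fibre product there is taken over $F_2(\sO_L)$, not over $F_2(L)$, so it says nothing about whether a section $b\in F_2(\sO_L)$ whose restriction to $L$ happens to land in $F_1(L)$ comes from $F_1(\sO_L)$; and the injectivity of $F_i(\sO_L)\to F_i(L)$ that you invoke does not bridge this either. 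The assertion is true, but it needs an input beyond modulus-reflection, for instance: set $Q=(F_2/F_1)_{\Nis}$, note $Q\in\RSC_{\Nis}$ (quotients in $\PST$ of reciprocity presheaves have SC-reciprocity, and Nisnevich sheafification preserves it by \cite[Thm 0.1]{Saito-Purity-Rec}), use left exactness of sections in $0\to F_1\to F_2\to Q$ over the \'etale neighbourhoods defining $\sO_L$, and then the injectivity of $Q(\sO_L)\to Q(L)$ for reciprocity sheaves (\cite[Thm 6]{KSY-Rec}, \cite[Cor 3.2.3]{KSY-RecII}) to conclude that $b$ dies in $Q(\sO_L)$ and hence lies in $F_1(\sO_L)$. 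So the gap is confined to the unramified level, but the specific argument you give for it would not work and should be replaced by something of this kind.
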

\begin{proof}
Let $a\in F_1(X)$.
By the definition of the motivic conductor it suffices to show:
$a$ has modulus $(\ol{X}, X_\infty)$ as an element
in $F_2(X)$, if and only if it has the same modulus as an element in $F_1(X)$. This is obvious, see Definition \ref{defn:rec}. 
\end{proof}

\begin{lemma}\label{lem:cond-sum}
Let $F_1,F_2\in \RSC_\Nis$. Let $L\in\Phi$ and $a_i\in F_i(L)$.
Then $c_L^{F_1\oplus F_2}(a_1+a_2)=\max\{c^{F_1}_L(a_1), c^{F_2}_L(a_2)\}$.
\end{lemma}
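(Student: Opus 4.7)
The plan is to reduce the statement to the fact that $\omega^{\CI}$ and $\tau_!$ both preserve finite direct sums, so that $\widetilde{F_1 \oplus F_2} = \widetilde{F_1} \oplus \widetilde{F_2}$ in $\ulMPST$, after which the formula is purely formal from the definition of the motivic conductor.

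First I would check that $\omega^{\CI}$ commutes with finite direct sums. Unwinding the definition $\omega^{\CI} = h^0_{\bcube} \circ \omega^*$, we have that $\omega^*$ is both a left and a right adjoint, so it preserves all limits and colimits; and $h^0_{\bcube}$ is a right adjoint by \cite[Lem 2.1.7]{KSY-RecII}, so it preserves products, hence finite direct sums. Alternatively, one can argue directly from the explicit description \eqref{para:modulusII3}: for $(a_1,a_2) \in (F_1 \oplus F_2)(\ol{X}\setminus|X_\infty|)$, having modulus $\sX$ is equivalent, via the factorization through $h_0(\sX)$ of the Yoneda map, to each $a_i$ having modulus $\sX$ individually. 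Hence $\omega^{\CI}(F_1\oplus F_2) = \omega^{\CI}(F_1) \oplus \omega^{\CI}(F_2)$ inside $\omega^*(F_1 \oplus F_2) = \omega^*F_1 \oplus \omega^*F_2$.

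Next I would invoke that $\tau_!$ is exact (as recalled in \ref{para:modulusII}); in particular it commutes with finite direct sums, so that
\[
\widetilde{F_1 \oplus F_2} = \tau_!\omega^{\CI}(F_1 \oplus F_2) = \tau_!\omega^{\CI}(F_1) \oplus \tau_!\omega^{\CI}(F_2) = \widetilde{F_1} \oplus \widetilde{F_2}
\]
as subsheaves of $\ul{\omega}^*(F_1 \oplus F_2) = \ul{\omega}^*F_1 \oplus \ul{\omega}^*F_2$. Evaluating at any modulus pro-object $(\sO_L, \fm_L^{-n})$, the resulting group decomposes compatibly, so the condition $a_1 + a_2 \in \widetilde{F_1 \oplus F_2}(\sO_L,\fm_L^{-n})$ is equivalent to the conjunction $a_1 \in \widetilde{F_1}(\sO_L,\fm_L^{-n})$ and $a_2 \in \widetilde{F_2}(\sO_L,\fm_L^{-n})$.

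From this the formula follows immediately from the definition \eqref{defn:mot-cond1}: the minimal $n$ for which both conditions hold simultaneously is the maximum of the individual minima, yielding $c^{F_1 \oplus F_2}_L(a_1+a_2) = \max\{c^{F_1}_L(a_1), c^{F_2}_L(a_2)\}$. No real obstacle is expected; the only point that requires a small check is the decomposition of $\omega^{\CI}$ on the direct sum, which is either a formal consequence of adjointness or a direct application of Definition \ref{defn:rec}.
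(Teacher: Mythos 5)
Your proposal is correct and is essentially the paper's argument: the paper's proof is the one-line "Direct from Definition \ref{defn:mot-cond}", and what you have written out (that $\omega^{\CI}$ and $\tau_!$ preserve finite direct sums, so $\widetilde{F_1\oplus F_2}(\sO_L,\fm_L^{-n})=\widetilde{F_1}(\sO_L,\fm_L^{-n})\oplus\widetilde{F_2}(\sO_L,\fm_L^{-n})$ inside $F_1(L)\oplus F_2(L)$) is exactly the implicit decomposition that makes the max formula immediate.
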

\begin{proof}
Direct from Definition \ref{defn:mot-cond}.
\end{proof}

\begin{proposition}\label{prop:mot-cond-restr}
Let $k_1/k$ be an algebraic (hence separable) field extension and  let $F\in \RSC_{\Nis, k_1}$
(i.e. $F$ is a contravariant functor $\Cor_{k_1}\to \Ab$ which is a Nisnevich sheaf on $\Sm_{k_1}$ and has SC-reciprocity).
Denote by $R_{k_1/k}F: \Sm=\Sm_k\to \Ab$ the functor given by
\[X\mapsto R_{k_1/k}F(X):= F(X_{k_1}),\]
where $X_{k_1}= X\times_{\Spec k} \Spec k_1$. Then $R_{k_1/k}F\in \RSC_{\Nis}$ and its motivic conductor is given by
\[c_L^{R_{k_1/k}F}(a)= \max_i\{c^{F}_{L_i}(a_i)\}, \]
where $L\otimes_k k_1\cong \prod_i L_i$ and $a=(a_i)\in R_{k_1/k}F(L)=\prod_i F(L_i)$.
\end{proposition}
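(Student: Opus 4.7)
The plan is to identify the ``restriction of scalars'' $R_{k_1/k}$ with a pullback along base change, both at the level of $\PST$ and of $\MPST$, and then to compute the motivic conductor on the pro-modulus pair $(\Spec \sO_L, n\cdot s)$ using the decomposition of $\sO_L\otimes_k k_1$ into henselian dvrs.

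First I would extend $R_{k_1/k}$ to a functor $\PST_{k_1}\to\PST_k$ as the pullback along the base-change functor $b\colon\Cor_k\to\Cor_{k_1}$, $X\mapsto X_{k_1}$, and similarly define $R^M_{k_1/k}\colon\MPST_{k_1}\to\MPST_k$ (and an $\ulMPST$-version) as pullback along the base-change functor $b^M\colon\MCor_k\to\MCor_{k_1}$, $(\ol{X},X_\infty)\mapsto(\ol{X}_{k_1},X_{\infty,k_1})$. Here I use that $k_1/k$ is separable algebraic so that smoothness, Nisnevich covers, Cartier divisors, admissibility, and left-properness are all preserved by base change; $b^M$ sends $\bcube_k$ to $\bcube_{k_1}$, commutes with the tensor product $\otimes$, and satisfies $b\circ\omega_k=\omega_{k_1}\circ b^M$. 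The sheaf property of $R_{k_1/k}F$ is then automatic.

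Next I would show that $R_{k_1/k}$ preserves reciprocity via the identity
\[
\omega^{\CI,k}\circ R_{k_1/k}= R^M_{k_1/k}\circ \omega^{\CI,k_1}\quad\text{in }\MPST_k.
\]
This follows from the description \eqref{para:modulusII3}: $a\in R_{k_1/k}F(X)=F(X_{k_1})$ has modulus $(\ol{X},X_\infty)$ over $k$ iff for every admissible $\gamma\in\Cor_{\rm adm}(\bcube_k\otimes S,(\ol{X},X_\infty))$ with $S\in\Sm_k$ one has $i_0^*\gamma^*a=i_1^*\gamma^*a$; and base change along $k_1/k$ identifies the pro-system of such $\gamma$ with a cofinal sub-system of admissible correspondences over $k_1$ (using that $k_1/k$ is separable algebraic, every $S'\in\Sm_{k_1}$ is pro-smooth over some $S\in\Sm_k$). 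Applying $\omega_{!,k}$ and using \eqref{para:modulusII4} then exhibits $R_{k_1/k}F$ as lying in $\RSC_k$, so by Corollary~\ref{cor:omegaCI} also in $\RSC_{\Nis,k}$.

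For the conductor formula, I would extend the base-change compatibility to $\ulMCor^{\rm pro}$ and evaluate on the pro-modulus pair $(\Spec\sO_L,n\cdot s)$. Since $k_1/k$ is separable algebraic, $\sO_L\otimes_k k_1$ is a filtered colimit of finite \'etale $\sO_L$-algebras and hence a product of henselian dvrs $\sO_{L_i}$ with ramification index $e(L_i/L)=1$; thus
\[
(\Spec\sO_L,n\cdot s)\times_k k_1\;\cong\;\coprod_i(\Spec\sO_{L_i},n\cdot s_i)\quad\text{in }\ulMCor^{\rm pro}_{k_1}.
\]
Combining the identity of Step~2 with the additivity of $\tau_!\omega^{\CI,k_1}F=\tF$ on disjoint unions (and passing $\tau_!$ through the limit formula \eqref{para:modulusII2} defining it on pro-objects), I obtain
\[
\widetilde{R_{k_1/k}F}(\sO_L,\fm_L^{-n})\;=\;\prod_i\tF(\sO_{L_i},\fm_{L_i}^{-n}),
\]
from which the formula $c_L^{R_{k_1/k}F}(a)=\max_i c^F_{L_i}(a_i)$ follows directly from Definition~\ref{defn:mot-cond}, since $a=(a_i)$ lies in the left-hand side iff each $a_i$ lies in the $n$-th piece on the right.

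The main obstacle is the interaction of $\tau_!$ with base change: the clean commutation $\omega^{\CI,k}\circ R_{k_1/k}=R^M_{k_1/k}\circ\omega^{\CI,k_1}$ takes place in $\MPST$, but Step~3 needs $\tau_!$ evaluated on $(\sO_L,n\cdot s)$, which is a pro-object. One must check by direct inspection of the colimit formula \eqref{para:modulusII2} that the filtered system of compactifications of $(\sO_L,n\cdot s)$ over $k$ becomes, after base change and using cofinality over a tower of finite subextensions of $k_1/k$, the analogous system over $k_1$ for each $(\sO_{L_i},n\cdot s_i)$. This bookkeeping (especially when $k_1/k$ is infinite algebraic) is the only non-formal ingredient; everything else is a formal manipulation of adjunctions.
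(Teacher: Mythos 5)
Your approach matches the paper's: both reduce to the identity $\widetilde{R_{k_1/k}F}(\sO_L,\fm_L^{-n})=\prod_i\tF(\sO_{L_i},\fm_{L_i}^{-n})$, which holds because $L\otimes_k k_1=\prod_i L_i$ is unramified over $L$ (as $k_1/k$ is separable algebraic). The paper records just this observation and its consequence in two sentences, whereas you spell out the base-change functoriality at the $\MPST$ and pro-modulus-pair levels that the paper leaves implicit.
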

\begin{proof}
The first statement follows from the definition of $\RSC_{\Nis}$; for the second observe that for $L\in\Phi$ 
the $k_1$-algebra $L\otimes_k k_1=\prod_i L_i$ is unramified over $L$, hence (see \ref{not:ExtendingRSC} for notation)
\[\widetilde{R_{k_1/k}F}(\sO_L, \fm_L^{-n})=\prod_i \tF(\sO_{L_i}, \fm_{L_i}^{-n}).\]
This yields the statement.
\end{proof}

\subsection{Semi-continuous conductors}\label{subsec:c6}
\begin{definition}\label{defn:c6}
Let $F\in \PST$ and let $c$ be a conductor of level $n\in [1,\infty]$ for $F$.
We say $c$ is  {\em semi-continuous} if it satisfies the following condition:
\begin{enumerate}[label = {(c6)}]
\item\label{c6} Let $X\in \Sm$ with $\dim(X)\le n$ and $Z\subset X$ a smooth prime divisor with 
         generic point $z$  and $K=\Frac(\sO_{X,z}^h)$.  
            Then for any $a\in F(X\setminus Z)$ with $c_K(a_K)\le r$ there exists a Nisnevich neighborhood $u:U\to X$ of $z$ 
          and a compactification $\sY=(\ol{Y}, Y_\infty)$ of $(U, r\cdot u^*Z)$ such that 
(see Definition \ref{defn:condmod} for notation)
\[c_{\ol{Y}}(a_{U})\le Y_\infty,\]
 where $a_U$ (resp. $a_K$) denotes the restriction of $a$ to $U$ (resp. $K$).
\end{enumerate}
\end{definition}

\begin{lemma}\label{lem:c6}
Let $F\in \PST$ and let $c$ be a conductor of level $n$ for $F$.
The following statements are equivalent:
\begin{enumerate}[label=(\arabic*)]
\item\label{lem:c61} $c$ is semi-continuous;
\item\label{lem:c62} $\tF_c(\sO_L,\fm_L^{-r})=\{a\in F(L) \mid c_L(a)\le r\}$, for $L\in\Phin$,  $r\ge 0$.
\end{enumerate}
\end{lemma}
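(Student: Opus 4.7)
My plan is to make the definition of $\tF_c=\tau_!\tau^*F_c$ explicit on the pro-modulus pair $(\sO_L,r\cdot s)$ and compare it to the pointwise condition. By \eqref{para:modulusII2} and Notation \ref{nota:Gmod}, one has
\[\tF_c(\sO_L,\fm_L^{-r})=\bigcup_{V,\,\sY}F_c(\sY)\;\subseteq\;F(L),\]
where $V\to X_0$ runs over Nisnevich neighborhoods of the closed point $x$ in a chosen smooth model $X_0$ of $L$, and $\sY=(\ol Y,Y_\infty)$ runs over compactifications of $(V,r\cdot D_x)$, with $D_x$ the closure of $x$ in $V$. Concretely, $a\in\tF_c(\sO_L,\fm_L^{-r})$ iff some representative $\alpha\in F(V\setminus D_x)$ of $a$ satisfies $\alpha\in F_c(\sY)$ for suitable $V,\sY$.

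The inclusion $\tF_c(\sO_L,\fm_L^{-r})\subseteq\{a\in F(L)\mid c_L(a)\le r\}$ holds unconditionally: the canonical map $\rho:\Spec\sO_L\to V\hookrightarrow\ol Y$ is a henselian dvf point of $\sY$ with $v_L(Y_\infty)=r\cdot v_L(D_x)=r$, so for $\alpha\in F_c(\sY)$ representing $a$ one gets $c_L(a)=c_L(\rho^*\alpha)\le v_L(Y_\infty)=r$. The reverse inclusion is exactly where \ref{c6} is used: given $a\in F(L)$ with $c_L(a)\le r$, pick a representative $\alpha\in F(X_0\setminus D)$ with $\dim X_0=\td(L/k)\le n$ and $D\subset X_0$ a smooth prime divisor whose generic point $x$ satisfies $\Frac\sO_{X_0,x}^h=L$. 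Applying \ref{c6} to $(X_0,D,\alpha,r)$ produces $u:U\to X_0$ and $\sY\in\Comp(U,r\cdot u^*D)$ with $\alpha|_U\in F_c(\sY)$, which exhibits $a$ as an element of $\tF_c(\sO_L,\fm_L^{-r})$.

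For $(2)\Rightarrow(1)$, take the data $(X,Z,a,K,r)$ of \ref{c6}. By (2), $a_K\in\tF_c(\sO_K,\fm_K^{-r})$, so there exist a Nisnevich neighborhood $v:V\to X$ of $z$, a compactification $\sY=(\ol Y,Y_\infty)$ of $(V,r\cdot v^*Z)$, and $\beta\in F_c(\sY)$ mapping to $a_K$ in $F(K)=\colim_{w:W\to V}F(W\setminus(vw)^*Z)$. After shrinking by a further Nisnevich neighborhood $w:U\to V$ of $z$, we may assume $w^*\beta=(vw)^*a$ in $F(U\setminus(vw)^*Z)$; set $u:=vw$. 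The key step is then to produce a compactification $\sY'=(\ol Y',Y'_\infty)$ of $(U,r\cdot u^*Z)$ with an admissible left proper morphism $p:\sY'\to\sY$ in $\ulMCor$ restricting to $w$ on the opens. For this, fix a proper $\ol U\supseteq U$, form the closure $\Gamma\subseteq\ol U\times\ol Y$ of the graph of $U\xr{w}V\hookrightarrow\ol Y$, and after a suitable blowup $\ol Y'\to\Gamma$ assume that $U\subseteq\ol Y'$ is open dense, $p^*Y_\infty$ is an effective Cartier divisor, and $\ol Y'\setminus U$ is the support of an effective Cartier divisor $\Sigma$. Setting $Y'_\infty=p^*Y_\infty+m\cdot\Sigma$ with $m\gg 0$ chosen to force $\ol Y'\setminus|Y'_\infty|=U\setminus u^*Z$, we obtain the desired compactification; $p$ is admissible since $p^*Y_\infty\le Y'_\infty$ and left proper since $\ol Y'$ is proper. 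Functoriality of $F_c\in\ulMPST$ (Proposition \ref{prop:cCI}) then yields $p^*\beta\in F_c(\sY')$, and since $p^*\beta=w^*\beta=u^*a$ on $U\setminus u^*Z$ we conclude $c_{\ol Y'}(u^*a)\le Y'_\infty$, verifying \ref{c6}.

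The hard part will be the construction of $\sY'$: ensuring by birational modifications of $\Gamma$ that both $p^*Y_\infty$ and the boundary $\Sigma$ are effective Cartier divisors with smooth complement $U\setminus u^*Z$, and that a sufficiently large $m$ forces $|Y'_\infty|$ to cover the entire boundary $\ol Y'\setminus U$.
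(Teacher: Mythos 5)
Your proposal is correct and takes essentially the same route as the paper's (much terser) proof: unwind the colimit description of $\tF_c(\sO_L,\fm_L^{-r})$ as a union of $F_c(\sY)$ over spreads and compactifications, obtain the inclusion ``$\subset$'' by evaluating at the canonical henselian dvf point (where $v_L(Y_\infty)=r$), and identify the inclusion ``$\supset$'' with axiom \ref{c6}. The only remark worth making is that the construction of $\sY'\to\sY$ which you flag as the hard part is not needed: since $\tF_c=\tau_!\tau^*F_c$ is already a presheaf on $\ulMCor$ (see \ref{para:tFc}, resting on \cite[Lem 2.4.2]{KMSY-MotModI}), you may simply pull back along the morphism $(U,r\cdot u^*Z)\to (V,r\cdot v^*Z)$ induced by $w$ and choose a compactification of $(U,r\cdot u^*Z)$ representing the image, which yields $c_{\ol{Y}'}(a_U)\le Y'_\infty$ directly.
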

\begin{proof}
Let $a\in F(L)$. Then $a\in \tF_c(\sO_L,\fm_L^{-r})$ if and only if
there exists a smooth scheme $X$, a smooth prime  divisor $Z$ on $X$ with generic point $z$,  
 a $k$-isomorphism $\sO_L\cong\sO_{X,z}^h$, an element $\tilde{a}\in F(X\setminus Z)$ 
restricting to $a$, and a compactification
$\sY=(\ol{Y}, Y_\infty)$ of $(X, r\cdot Z)$, such that $c_{\ol{Y}}(\tilde{a})\le Y_\infty$.   
From this description we see that this '$\subset$' inclusion in \ref{lem:c62} always holds, while this '$\supset$' inclusion 
is equivalent to the semi-continuity of $c$.
\end{proof}

\begin{corollary}\label{cor:mot-cond-mini}
Let $F\in \RSC_{\Nis}$ and let $c$ be a semi-continuous conductor of level $n$ for $F$.
Then $(c^{F})^{\le n}\le c$, i.e., for all $L\in\Phin$ and all $a\in F(L)$ we have $c_L^F(a)\le c_L(a)$.
\end{corollary}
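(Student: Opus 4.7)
The plan is to combine the two key inputs already available: Theorem \ref{thm:rec-cond}\ref{thm:rec-cond4}, which compares $\tF_c$ to $\tF = \tau_!\omega^{\CI}F$, and Lemma \ref{lem:c6}, which under the semi-continuity hypothesis identifies the filtration on $F(L)$ induced by $\tF_c$ with the filtration given by $c$ itself.

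More precisely, fix $L\in\Phin$ and $a\in F(L)$, and set $r=c_L(a)$. Since $c$ is a conductor of level $n$ (hence a conductor in the sense of Definition \ref{defn:cond} restricted to $\Phin$), Theorem \ref{thm:rec-cond}\ref{thm:rec-cond4} gives the inclusion $\tF_c\subset \tF$ in $\ulMPST$; evaluating at the pro-object $(\sO_L,\fm_L^{-r})\in \ulMCor^{\rm pro}$ yields
\[
\tF_c(\sO_L,\fm_L^{-r})\subset \tF(\sO_L,\fm_L^{-r}).
\]
By Lemma \ref{lem:c6}, the semi-continuity of $c$ means the left-hand side coincides with $\{b\in F(L)\mid c_L(b)\le r\}$, which contains $a$ by the choice of $r$. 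Hence $a\in \tF(\sO_L,\fm_L^{-r})$, and by the definition of the motivic conductor in \eqref{defn:mot-cond1},
\[
c^F_L(a)=\min\{m\mid a\in \tF(\sO_L,\fm_L^{-m})\}\le r = c_L(a).
\]
This is precisely the claimed inequality $(c^F)^{\le n}\le c$.

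There is really no obstacle here: both ingredients, the inclusion $\tF_c\subset \tF$ and the description of $\tF_c(\sO_L,\fm_L^{-r})$, have been established in the preceding results, and the corollary is essentially a matter of unwinding the definitions. The only mild subtlety is checking that the two constructions interact correctly on pro-objects of the form $(\sO_L,\fm_L^{-r})$, but this follows from Notation \ref{nota:Gmod} and the extension of presheaves on $\ulMCor$ to $\ulMCor^{\rm pro}$ described in \ref{para:MPSTpro}.
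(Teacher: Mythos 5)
Your proof is correct and is exactly the paper's argument: the paper's proof is the one-line "Follows from Theorem \ref{thm:rec-cond}\ref{thm:rec-cond4} and Lemma \ref{lem:c6}," and you have simply unwound it, using Lemma \ref{lem:c6} to place $a$ in $\tF_c(\sO_L,\fm_L^{-c_L(a)})$ and the inclusion $\tF_c\subset\tF$ from Theorem \ref{thm:rec-cond}\ref{thm:rec-cond4} to conclude $c^F_L(a)\le c_L(a)$. No issues.
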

\begin{proof}
Follows from Theorem \ref{thm:rec-cond}\ref{thm:rec-cond4} and  Lemma \ref{lem:c6}.
\end{proof}

We can summarize part of the above as follows:

\begin{thm}\label{thm:summary}
Let $F\in \RSC_{\Nis}$.
\begin{enumerate}[label=(\arabic*)]
\item\label{thm:summary1} Any $G\in \CI(F)$ (see \ref{para:CIF}) 
defines a semi-continuous conductor $c^G$ (see \ref{defn:mot-cond}).
                                       For $G_1\subset G_2$ in $\CI(F)$ we have $c^{G_2}\le c^{G_1}$.
\item\label{thm:summary2} Let $c$ be a conductor of level $n\in [0,\infty]$. Then $\tau^*F_c\in\CI(F)_\Nis$.
        For  $c_1\le c_2$ we have $\tau^*F_{c_2}\subset \tau^*F_{c_1}$.
       If  furthermore $c$ is semi-continuous, then $c=(c^{\tau^*F_c})^{\le n}$. 
\item\label{thm:summary3} Assume $G\in\CI(F)_\Nis$ satisfies 
\[G(\sX)=\left\{a\in F(X)\middle \vert 
                     \rho^*a\in \tau_!G(\sO_L,\fm_L^{-v_L(X_\infty)}), \quad
\begin{minipage}{3.3cm} $\forall \,\rho: \Spec L\to \sX$,\\ with $L\in \Phi$\end{minipage}\right\},\]
for all proper modulus pairs  $\sX=(\ol{X}, X_\infty)$ with $X=\ol{X}\setminus|X_\infty|$. 
Then $\tau^*F_{c^G}=G$.
\item\label{thm:summary4} Let $c$ be a semi-continuous conductor of level $n$ for $F$ (possibly only defined on $\td\le n$).
Then there exists a unique semi-continuous conductor $c^\infty$ for $F$  with the following properties:
\[\tau^*F_c=\tau^*F_{c^\infty} \quad \text{and} \quad c=(c^\infty)^{\le n}.\]
We call $c^\infty$ the {\em canonical extension of $c$.}
\item\label{thm:summary5} Assume the restriction $(c^F)^{\le n}$ of the motivic conductor to $\td \le n$ is a conductor.
Then its canonical extension is the motivic conductor, i.e., $((c^F)^{\le n})^\infty=c^F$.
\end{enumerate}
\end{thm}
\begin{proof}
\ref{thm:summary1} holds Theorem \ref{thm:rec-cond}\ref{thm:rec-cond3} and Lemma \ref{lem:c6}.
\ref{thm:summary2} follows from Lemma \ref{lem:MspCI} and Lemma \ref{lem:c6}.
\ref{thm:summary3} holds by the definitions involved.
\ref{thm:summary4}. Set $G:=\tau^*F_c$. Then $G\in \CI(F)$ by \ref{thm:summary2} and it satisfies 
the condition from \ref{thm:summary3} by Lemma \ref{lem:c6}. Set $c^\infty:=c^{G}$.
Then $c^\infty$ has the wanted properties by \ref{thm:summary3} and  \ref{thm:summary2}.
Finally \ref{thm:summary5} follows from Corollary \ref{cor:extending-cond}.
\end{proof}

We finish this section with some lemmas which are needed later on.

\begin{definition}\label{defn:properRSC}
Let $F\in \RSC_\Nis$. We say $F$ is \em{proper} if the following equivalent conditions are satisfied:
\begin{enumerate}[label=(\arabic*)]
\item\label{defn:properRSC1}
 For all $X\in\Sm$ and any dense open $U\subset X$ the restriction map $F(X)\xr{\simeq} F(U)$ is an isomorphism.
\item\label{defn:properRSC2} Any conductor $c$ on $F$ is trivial, i.e., $c_L=0$ for all  $L$.
\end{enumerate}
(For this  $\ref{defn:properRSC2}\Rightarrow \ref{defn:properRSC1}$ implication use that \ref{c4} implies that $F\in \HI_{\Nis}$
and then the statement follows from Voevodsky's Gersten resolution, cf. \cite[Lem 10.3]{KY}.)
\end{definition}

\begin{lemma}\label{lem:prop-exact}
Let $0\to F_1\xr{\varphi} F\xr{\psi} F_2\to 0$
be an exact sequence in $\NST$ and with $F_1$, $F_2\in \RSC_{\Nis}$ and assume $F_1$ is {\em proper}.

Then $F\in \RSC_{\Nis}$. Any  (semi-continuous) conductor $c$ of level $n$ on $F_2$, 
 induces a (semi-continuous) conductor $c\psi=\{c_L\circ \psi\}_L$ of level $n$ on $F$. 
Furthermore, the motivic conductor of $F$ is given by $c^F= c^{F_2}\psi$
\end{lemma}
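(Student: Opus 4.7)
The plan is to show directly that $c\psi := \{c_L\circ \psi_L\}$ satisfies axioms \ref{c1}--\ref{c5} (and \ref{c6} in the semi-continuous case), apply Theorem \ref{thm:rec-cond}\ref{thm:rec-cond1} to deduce $F\in\RSC_\Nis$, and finally identify $c^F$ using the minimality Corollary \ref{cor:mot-cond-mini} together with functoriality of $\tau_!\omega^{\CI}$. The crucial input is that, since $F_1, F, F_2\in\NST$ and every henselian local scheme has vanishing positive Nisnevich cohomology, the exactness of $0\to F_1\to F\to F_2\to 0$ in $\NST$ yields short exact sequences of sections on both $\Spec\sO_L$ and $\Spec L$. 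For \ref{c1}: if $c_L(\psi_L(a))=0$, lift $\psi_L(a)$ to $b\in F_2(\sO_L)$, then to $\tilde b\in F(\sO_L)$; the difference $\tilde b|_L-a$ lies in $\Im\varphi_L$, say $\varphi_L(a_1)$. Properness of $F_1$ (condition \ref{defn:properRSC}\ref{defn:properRSC1}) applied to the étale neighborhoods computing $F_1(\sO_L)$ and $F_1(L)$ gives $F_1(\sO_L)\cong F_1(L)$, so $a_1$ extends and $a\in\Im(F(\sO_L)\to F(L))$.

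Axioms \ref{c2}, \ref{c3}, \ref{c5}, and \ref{c6} are formal consequences, since $\psi$ is a morphism in $\PST$ and therefore commutes with addition, finite pushforwards, and pullback along henselian dvf points; for \ref{c5} one uses the very modulus provided by \ref{c5} for $c$ applied to $\psi(a)$. The main obstacle is \ref{c4}. Given $a\in F(\A^1_X)$ with the prescribed ramification bound, axiom \ref{c4} for $c$ applied to $\psi(a)$ furnishes $b\in F_2(X)$ with $\pi^*b=\psi(a)$. Choose a Nisnevich cover $\{u_i:U_i\to X\}$ over which $b$ lifts to $\tilde b_i\in F(U_i)$; then $a|_{\A^1_{U_i}}-\pi^*\tilde b_i$ lies in $\ker\psi=\Im\varphi$, say equals $\varphi(c_i)$ for some $c_i\in F_1(\A^1_{U_i})$. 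Now properness of $F_1$ combined with Definition \ref{defn:properRSC}\ref{defn:properRSC2} and the parenthetical remark forces $F_1\in\HI_{\Nis}$ (since the trivial conductor is then valid and its axiom \ref{c4} is $\A^1$-invariance); therefore $c_i=\pi^*c_i'$, and $a|_{\A^1_{U_i}}=\pi^*(\tilde b_i+\varphi(c_i'))$. The local sections $\tilde b_i+\varphi(c_i')\in F(U_i)$ glue on fiber products: on $U_i\times_X U_j$ their two pullbacks coincide after $\pi^*$, and $\pi^*$ is injective by $i_0^*\pi^*=\id$. Hence $a\in\pi^*F(X)$, finishing the verification that $c\psi$ is a (semi-continuous) conductor of level $n$.

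From the existence of this conductor and Theorem \ref{thm:rec-cond}\ref{thm:rec-cond1} we obtain $F\in\RSC$, which combined with $F\in\NST$ gives $F\in\RSC_\Nis$. Since $c^{F_2}$ is semi-continuous (Corollary \ref{cor:mot-cond-c6}), so is $c^{F_2}\psi$ by the above, and Corollary \ref{cor:mot-cond-mini} yields $c^F\le c^{F_2}\psi$. For the reverse inequality, functoriality of $\tau_!\omega^{\CI}$ applied to $\psi$ produces a morphism $\widetilde{\psi}:\widetilde{F}\to\widetilde{F}_2$ in $\ulMPST$; extending to $\ulMCor^{\rm pro}$ and evaluating at $(\sO_L,\fm_L^{-n})$ and $(\sO_L,\emptyset)$ gives a commutative square whose right-hand column is $\psi_L:F(L)\to F_2(L)$ (using $\ul{\omega}_!\widetilde{\psi}=\psi$ via \eqref{para:modulusII4}), and whose horizontal arrows are the defining inclusions of the filtrations. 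Thus $a\in\widetilde{F}(\sO_L,\fm_L^{-n})$ forces $\psi_L(a)\in\widetilde{F}_2(\sO_L,\fm_L^{-n})$, so $c^{F_2}_L(\psi(a))\le c^F_L(a)$, giving the desired equality $c^F=c^{F_2}\psi$. The only step requiring genuine effort is \ref{c4}, where both the promotion of properness to $\A^1$-invariance of $F_1$ and the Nisnevich descent for the locally constructed lifts through $\psi$ are essential.
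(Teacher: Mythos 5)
Your proof is essentially correct and follows the paper's overall strategy (verify \ref{c1}--\ref{c6} for $c\psi$, apply Theorem \ref{thm:rec-cond} to conclude $F\in\RSC_{\Nis}$, then identify $c^F$), but the arguments for \ref{c4} and for the final identification differ. For \ref{c4}, the paper sets $a' := a - \pi^*i_0^*a$ and notes that $\psi(a')$ satisfies the hypothesis of \ref{c4} for $c$ (using \ref{c2}), hence lies in $\pi^*F_2(X)$; since $i_0^*\psi(a')=0$ it follows that $\psi(a')=0$, so $a'\in F_1(\A^1_X)=\pi^*F_1(X)$ and applying $i_0^*$ once more gives $a'=0$. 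This makes your Nisnevich cover, the local lifts $\tilde b_i$, and the gluing step superfluous: they arise only because you do not observe that the $b$ with $\pi^*b=\psi(a)$ is forced to be $i_0^*\psi(a)$, which allows one to subtract $\pi^*i_0^*a$ globally rather than lift $b$ locally. Both routes work, but the paper's is shorter and stays inside $F(\A^1_X)$. Conversely, for the final step your observation that $c^{F_2}\psi\le c^F$ follows from mere functoriality of $\tau_!\omega^{\CI}$ (no properness needed) is a cleaner way to get the reverse inequality than the paper's argument, which instead establishes $\tF_{c^{F_2}\psi}(\sO_L,\fm_L^{-n})=\tF(\sO_L,\fm_L^{-n})$ in one stroke by showing that the composite of the two injections
\[\tF_{c^{F_2}\psi}(\sO_L,\fm_L^{-n})/F_1(\sO_L)\inj\tF(\sO_L,\fm_L^{-n})/F_1(\sO_L)\inj\tF_2(\sO_L,\fm_L^{-n})\]
is an isomorphism (the second injection uses properness). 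The verification of \ref{c1} also differs in form --- the paper simply notes the isomorphism $F(L)/F(\sO_L)\cong F_2(L)/F_2(\sO_L)$ coming from properness of $F_1$ and the five lemma --- but this is cosmetic; your explicit chain of lifts proves the same thing.
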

\begin{proof}
Let $c$ be a conductor of level $n$ on $F_2$. Then
$c\psi$ clearly satisfies \ref{c2}, \ref{c3}, \ref{c5} (and \ref{c6} if $c$ does). By the properness of $F_1$
we have an isomorphism $F(L)/F(\sO_L)\cong F_2(L)/F_2(\sO_L)$, which implies \ref{c1}.
Assume $a\in F(\A^1_X)$ satisfies the assumption in $\ref{c4}$ for $c\psi$. Let $\pi:\A^1_X\to X$ be the projection and
$i: X\inj \A^1_X$ the zero-section. Then $\psi (a-\pi^*i^*a)= \psi(a)-\pi^*i^*\psi(a)\in F_2(\A^1_X)$ satisfies the 
assumption from \ref{c4} for $c$; hence it lies in $\pi^* F_2(X)$, hence is zero; 
therefore $a-\pi^*i^*a\in F_1(\A^1_X)= \pi^*F_1(X)$, 
hence it is zero, i.e., $a=\pi^*i^*a$. This shows that $c\psi$ satisfies \ref{c4}. Therefore, $c\psi$ is a conductor of level $n$.
Thus Theorem \ref{thm:rec-cond} yields $F\in \RSC_\Nis$ and  
$\tF_{c^{F_2}\psi}(\sO_L,\fm_L^{-n})\subset\tF(\sO_L,\fm_L^{-n})$. We have inclusions 
\[\tF_{c^{F_2}\psi}(\sO_L,\fm_L^{-n})/F_1(\sO_L)\inj \tF(\sO_L,\fm_L^{-n})/F_1(\sO_L)\inj \tF_2(\sO_L,\fm_L^{-n}), \]
where the second map is injective by  the properness of $F_1$.
Since $\tF_2=\tF_{2, c^{F_2}}$, the composition is an isomorphism; hence $c^F=c^{F_2}\psi$.
\end{proof}

\begin{lemma}\label{lem:im-cond}
Let $\varphi: F\surj G$ be a surjection in  $\NST$. 
Let $c=\{c_L: F(L)\to \N\}_{L\in \Phi_{\le n}}$ be a collection of maps.
Define $\bar{c}=\{\bar{c}_L: G(L)\to \N\}_{L\in \Phi_{\le n}}$ by
\[\bar{c}_L(a):=\min\{c_L(\tilde{a})\mid \tilde{a}\in F(L) \text{ with }\varphi(\tilde{a})=a\}.\]
If $c$ satisfies \ref{c1} (resp. \ref{c2}, \ref{c3}, \ref{c6} ),  then so does $\bar{c}$.

Furthermore, if $\varphi$ has the following property:
For all $X\in \Sm$ there exists a proper modulus pair $(\ol{X},X_\infty)$ with $X=\ol{X}\setminus X_\infty$,
such that for all $x\in \ol{X}$ the map $\varphi$ induces a surjection 
\eq{lem:im-cond1}{F(\ol{X}_{(x)}^h\setminus X_{\infty, (x)})\surj G(\ol{X}_{(x)}^h\setminus X_{\infty, (x)}),}
where $\ol{X}_{(x)}^h=\Spec \sO_{\ol{X},x}^h$  and $X_{\infty, (x)}$ denotes the restriction of $X_\infty$
to $\ol{X}^h_{(x)}$.
Then $\bar{c}$ satisfies \ref{c5}, if $c$ does.
\end{lemma}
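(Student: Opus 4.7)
The plan is uniform across the six axioms: for any $a$ in $G$, I pick a lift $\tilde a \in F$ (globally, on a Nisnevich neighborhood, or along a henselian dvf point, as appropriate), apply the corresponding axiom for $c$ to $\tilde a$, and push the conclusion back to $a$ through $\varphi$. Since $\varphi$ is a morphism in $\PST$, it commutes with pullback along morphisms in $\Cor^{\rm pro}$ and with pushforward along finite morphisms, so every structural operation descends from $F$ to $G$.

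The axioms \ref{c1}, \ref{c2}, \ref{c3}, \ref{c6} will be immediate from this principle. For \ref{c1}, any lift $\tilde a$ with $c_L(\tilde a)=0$ lies in the image of $F(\sO_L)$, and its $\varphi$-image lives in $G(\sO_L)$ and maps to $a$. For \ref{c2} and \ref{c3}, lifts $\tilde a,\tilde b$ attaining the minima produce lifts $\tilde a + \tilde b$ of $a+b$ and $f_*\tilde a$ of $f_*a$, and the axioms for $c$ apply directly. For \ref{c6}, I lift $a_K$ to $\tilde a_K \in F(K)$ with $c_K(\tilde a_K)\le r$; since $F \in \NST$ and $F(K) = \colim_V F(V\setminus v^*Z)$ over Nisnevich neighborhoods $v\colon V\to X$ of $z$, this spreads to $\tilde a_V \in F(V\setminus v^*Z)$, and shrinking $V$ arranges $\varphi(\tilde a_V)=a|_{V\setminus v^*Z}$. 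Applying \ref{c6} for $c$ to $\tilde a_V$ produces a Nisnevich neighborhood $u\colon U\to V$ of $z$ and a compactification $\sY=(\ol{Y},Y_\infty)$ of $(U,r\cdot u^*Z)$ with $c_{\ol{Y}}(\tilde a_U)\le Y_\infty$; at any henselian dvf point $\rho\colon \Spec L\to\sY$ naturality gives $\bar c_L(\rho^*a_U)\le c_L(\rho^*\tilde a_U)\le v_L(Y_\infty)$, which is exactly \ref{c6} for $\bar c$.

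The hard part is \ref{c5}, because the hypothesis supplies lifts of $a$ only locally on $\ol X$, so I have to glue the local conductor bounds into a single proper modulus on $\ol X$. Let $(\ol X, X_\infty)$ be the proper modulus pair furnished by the extra assumption and fix $a\in G(X)$. For each $x\in\ol X$ the surjectivity \eqref{lem:im-cond1}, combined with the colimit description of $F(\sO_{\ol X,x}^h\setminus X_{\infty,(x)})$, provides a Nisnevich neighborhood $V_x\to\ol X$ of $x$ and a lift $\tilde a_{V_x}\in F(V_x\setminus X_{\infty,V_x})$ of $a|_{V_x\setminus X_{\infty,V_x}}$. The family $\{V_x\}_{x\in\ol X}$ is then a Nisnevich cover of $\ol X$, and by quasi-compactness I may reduce it to a finite Nisnevich cover $V_1,\dots,V_s$. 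For each $i$, I choose by Nagata compactification relative to $\ol X$, followed if necessary by further modification, a proper modulus pair $\sV_i=(\widetilde V_i, E_i)$ with $\widetilde V_i\setminus |E_i|=V_i\setminus X_{\infty,V_i}$, $\widetilde V_i\to\ol X$ proper, and $E_i$ equal to the sum of (the pullback of) $X_\infty$ and the boundary $\widetilde V_i\setminus V_i$ (the latter with a large enough multiplicity so that the complement is smooth and the divisor Cartier). Lemma \ref{lem:indepmod} applied to $c$ on $\tilde a_{V_i}$ with the pair $\sV_i$ produces an integer $m_i\ge 1$ with $c_{\widetilde V_i}(\tilde a_{V_i})\le m_i E_i$; set $m:=\max_i m_i$.

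I then claim that the proper modulus pair $(\ol X, m X_\infty)$ verifies \ref{c5} for $\bar c$ on $a$. For a henselian dvf point $\rho\colon \Spec L\to(\ol X, m X_\infty)$ with closed point $x$, the Nisnevich cover property yields an index $i_0$ for which $V_{i_0}$ is a Nisnevich neighborhood of $x$; Hensel's lemma lifts $\rho$ uniquely through $V_{i_0}\hookrightarrow \widetilde V_{i_0}$, exhibiting $\rho$ as a henselian dvf point of $\sV_{i_0}$. Because the lift lands in $V_{i_0}$ the boundary divisor $\widetilde V_{i_0}\setminus V_{i_0}$ pulls back to $0$, and étaleness of $V_{i_0}\to\ol X$ gives $v_L(E_{i_0})=v_L(X_\infty)$. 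Hence $c_L(\rho^*\tilde a_{V_{i_0}})\le m_{i_0}v_L(X_\infty)\le m\,v_L(X_\infty)$, and naturality of $\varphi$ yields $\bar c_L(\rho^*a)\le c_L(\rho^*\tilde a_{V_{i_0}})\le m\,v_L(X_\infty)$, completing the verification of \ref{c5} for $\bar c$.
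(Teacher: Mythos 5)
Your proof is correct and follows essentially the same strategy as the paper: for \ref{c1}--\ref{c3} and \ref{c6} you lift a section achieving the minimum and invoke the corresponding axiom for $c$, and for \ref{c5} you use hypothesis \eqref{lem:im-cond1} to produce a finite Nisnevich cover of $\ol X$ carrying lifts of $a$, compactify each piece so the modulus divisor restricts to $X_\infty$ over the cover, bound the conductor of each lift uniformly by Lemma~\ref{lem:indepmod}, and pass a given henselian dvf point through the cover via Hensel's lemma. The paper's proof uses the same decomposition, the same appeal to (the proof of) Lemma~\ref{lem:indepmod} for a uniform bound, and the same Nisnevich lifting argument; your added use of Nagata compactification to realize the compactifications $\sY_i$ is just one explicit way to implement what the paper leaves implicit.
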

\begin{proof}
\ref{c1}. If $\bar{c}_L(a)=0$, then there exists a lift $\tilde{a}\in F(L)$ with $c_L(\tilde{a})=0$, hence $\tilde{a}\in F(\sO_L)$, 
by \ref{c1} for $c$, hence $a\in G(\sO_L)$. 

\ref{c2}. Let $a,b\in G(L)$. Take lifts $\tilde{a}, \tilde{b}\in F(L)$ with $c_L(\tilde{a})=\bar{c}_L(a)$ and 
$c_L(\tilde{b})=\bar{c}_L(b)$. Then by $\ref{c2}$ for $c$
\[\bar{c}_L(a+b)\le c_L(\tilde{a}+\tilde{b})\le \max\{c_L(\tilde{a}), c_L(\tilde{b})\}= \max\{\bar{c}_L(a), \bar{c}_L(b)\}.\]

\ref{c3}. Let $f:\Spec L'\to \Spec L$ be a finite  extension with ramification index $e$
 and let $a\in G(L')$. Take a lift $\tilde{a}\in F(L')$ with $\bar{c}_{L'}(a)=c_{L'}(\tilde{a})$. Then by \ref{c3} for $c$
\[\bar{c}_L(f_*a)\le c_L(f_*\tilde{a})\le \left\lceil\frac{c_{L'}(\tilde{a})}{e}\right\rceil
= \left\lceil\frac{\bar{c}_{L'}(a)}{e}\right\rceil.\]

\ref{c6}. Let $X, z\in Z, K$ be as in \ref{c6} and $a\in G(X\setminus Z)$ with $\bar{c}_K(a_K)\le r$.
Let $\tilde{a}_K\in F(K)$ be a lift of $a_K$ with $c_K(\tilde{a}_K)=\bar{c}_K(a_K)$.
Since $\Spec K=\Spec  \sO_K\setminus Z_{\sO_K}$, we find a Nisnevich neighborhood
$U\to X$ of $z$ and an element $\tilde{a}\in F(U\setminus Z)$ which restricts to $\tilde{a}_K$.
After possibly shrinking $U$ around $z$, we may assume that $\varphi(\tilde{a})= a_{|U\setminus Z_U}$.
By \ref{c6} for $F$, we may shrink $U$  further around $z$ to obtain a compactification $\sY=(\ol{Y}, Y_\infty)$
of $(U, r\cdot Z_U)$ such that 
\[\bar{c}_{\ol{Y}}(a_U)\le c_{\ol{Y}}(\tilde{a}_U)\le Y_\infty.\]

\ref{c5}(assuming \eqref{lem:im-cond1}). 
Let $X\in\Sm$ and $a\in G(X)$.  Let $\sX=(\ol{X}, X_\infty)$ be a proper modulus pair with
$X=\ol{X}\setminus |X_\infty|$ as in \eqref{lem:im-cond1}. 
This condition implies that we find a finite Nisnevich cover  $\{U_i\to \ol{X}\}_i$ and $\tilde{a_i}\in F(U_{i,X})$,
 such that  $\varphi(\tilde{a}_i)=a_{|U_{i,X}}$ in $G(U_{i,X})$, where $\{U_{i,X}\to X\}_i$ is the induced Nisnevich cover of $X$.
Let  $\sY_i=(\ol{Y}_i, Y_{i,\infty})$ be a compactification of $(U_i, X_{\infty|U_i})$ 
which admits a  morphism $\ol{Y_i}\to \ol{X}$ extending $U_i\to \ol{X}$ and inducing a morphism of  proper modulus pairs
$\sY_i\to \sX$. By $\ref{c5}$ for $c$ and (the proof of) Lemma \ref{lem:indepmod} we find an integer $N>>0$, such that 
$c_{L}(\rho^*\tilde{a}_i)\le N\cdot v_L(Y_{i,\infty})$, for all $\rho: \Spec L\to U_{i,X}= \ol{Y}_i\setminus|Y_{i,\infty}|$, 
$L\in \Phi_{\le n}$.
Let $\rho:\Spec L\to X$ be any henselian dvf point with $L\in \Phi_{\le n}$; denote by $s\in \ol{X}$ the image of the closed
point under the induced map $\bar{\rho}:\Spec \sO_L\to \ol{X}$. By the Nisnevich property,
 there exists an $i$ and a point $s_i\in U_i$ such that $U_i\to \ol{X}$ induces an isomorphism $s_i\xr{\simeq}s$.
Hence $\bar{\rho}$ factors via $U_i\inj \ol{Y}_i\to \ol{X}$.
Thus
\[\bar{c}_L(\rho^*a)\le c_L(\rho^* \tilde{a}_i)\le N \cdot v_L(Y_{i,\infty})= N\cdot v_L(X_\infty),\]
where for the equality we used $(Y_{i,\infty})_{|U_i}= (X_\infty)_{|U_i}$.
Thus $a$ satisfies \ref{c5} for $(\ol{X}, N\cdot X_\infty)$.
\end{proof}

\subsection{Homotopy invariant subsheaves}

\begin{corollary}\label{cor:HI}
Let $F\in \NST$ be $\A^1$-invariant (in particular $F\in \RSC_{\Nis}$). Then the motivic conductor of $F$ is given by
\[c^F_L(a)=\begin{cases} 0 & \text{if }a\in F(\sO_L)\\ 1 & \text{else.}\end{cases}\]
\end{corollary}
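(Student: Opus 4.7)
The plan is to compute $\tilde F(\sO_L, \fm_L^{-n})$ explicitly for every $n \ge 0$ and then apply Definition \ref{defn:mot-cond} directly. The first key step is to observe that for an $\A^1$-invariant $F$ one has an equality $\omega^{\CI} F = \omega^* F$ in $\MPST$. To verify this I unfold Definition \ref{defn:rec}: a section $a \in F(X)$ has modulus $\sX = (\ol{X}, X_\infty)$ (with $X = \ol{X}\setminus|X_\infty|$) if and only if for every $S \in \Sm$ and every admissible correspondence $\gamma \in \Cor_{\rm adm}(\bcube \otimes S, \sX) \subset \Cor(\A^1 \times S, X)$ one has $i_0^*\gamma^*a = i_1^*\gamma^*a$. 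But $\gamma^*a \in F(\A^1_S)$, and $\A^1$-invariance makes the projection $\pi : \A^1_S \to S$ induce an isomorphism $\pi^* : F(S) \xr{\simeq} F(\A^1_S)$; since $i_0, i_1$ are both sections of $\pi$, their pullbacks agree with $(\pi^*)^{-1}$ and in particular with each other. Hence every $a \in F(X)$ admits every proper modulus pair $\sX$ with $X = \ol{X}\setminus|X_\infty|$ as a modulus, giving $\omega^{\CI} F(\sX) = F(X) = \omega^* F(\sX)$.

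Next I compute $\tilde F = \tau_!\omega^{\CI}F = \tau_!\omega^* F$ on the relevant objects. For any $(V, nD) \in \ulMCor$ with $V \in \Sm$, $D$ an effective Cartier divisor and $n \ge 1$, every compactification $\sX \in \Comp(V, nD)$ has open part $V \setminus |D|$; so $\omega^* F(\sX) = F(V\setminus|D|)$ is independent of $\sX$ and formula \eqref{para:modulusII2} gives $\tilde F(V, nD) = F(V \setminus |D|)$. Passing to the pro-limit along \'etale neighborhoods $V \to X$ of a codimension-one point $x$, with $D = D_x$, as in \ref{para:MPSTpro}, and using $F(L) = \varinjlim_V F(V \setminus D_x)$ from \ref{para:PST}, yields
\[\tilde F(\sO_L, \fm_L^{-n}) = F(L) \qquad \text{for all } n \ge 1.\]
For $n=0$, Notation \ref{nota:Gmod} combined with \eqref{para:modulusII4} gives
\[\tilde F(\sO_L, \fm_L^0) = \ul{\omega}_!\tilde F(\sO_L) = \omega_!\omega^{\CI}F(\sO_L) = F(\sO_L).\]

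Plugging these two formulas into the definition $c^F_L(a) = \min\{n \ge 0 \mid a \in \tilde F(\sO_L, \fm_L^{-n})\}$ delivers the claim: if $a \in F(\sO_L)$ then $c^F_L(a) = 0$; otherwise $a \in F(L) = \tilde F(\sO_L, \fm_L^{-1})$ but $a \notin \tilde F(\sO_L, \fm_L^0)$, forcing $c^F_L(a) = 1$. I do not expect any serious obstacle: the one point deserving care is that the identification $\tilde F(V, nD) = F(V\setminus|D|)$ transfers correctly to the pro-limit defining $\tilde F(\sO_L, \fm_L^{-n})$, but this is a formal consequence of the definition of the extension of presheaves with transfers to $\Cor^{\rm pro}$ recalled in \ref{para:PST} and \ref{para:MPSTpro}.
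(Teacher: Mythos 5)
Your argument is correct, but it proceeds differently from the paper's. The paper's proof is essentially three lines: by Remark~\ref{rmk:cond}\ref{rmk:cond1} the right-hand side of the claimed formula is a conductor, one checks it is semi-continuous, Corollary~\ref{cor:mot-cond-mini} (minimality of the motivic conductor among semi-continuous conductors) gives $c^F \le$ the right-hand side, and axiom \ref{c1} for $c^F$ supplies the reverse inequality (if $a\notin F(\sO_L)$ then $c^F_L(a)\neq 0$, hence $=1$). You instead compute $\tilde F(\sO_L,\fm_L^{-n})$ outright: first observing that $\A^1$-invariance of $F$ forces $\omega^{\CI}F=\omega^*F$ (since for every $\gamma\in\Cor_{\rm adm}(\bcube\otimes S,\sX)$ the class $\gamma^*a$ lies in $F(\A^1_S)\cong F(S)$, so $i_0^*$ and $i_1^*$ agree on it), and then unwinding $\tau_!$ over compactifications of $(V,nD_x)$ to get $\tilde F(\sO_L,\fm_L^{-n})=F(L)$ for $n\ge1$ and $=F(\sO_L)$ for $n=0$. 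Your route is longer but more informative: the identity $\omega^{\CI}F=\omega^*F$ and the explicit description $\tilde F(\sO_L,\fm_L^{-n})$ are structural facts not isolated in the paper, and they illuminate \emph{why} the motivic conductor cannot see more than the $\{0,1\}$ dichotomy on a homotopy-invariant sheaf. The paper's route is slicker and leans on the general conductor axioms already in place, which is why they prefer it. One small point you gloss over (and rightly call formal): the filtered colimit over $\Comp(V,nD_x)$ has constant value and identity transition maps, so it collapses to $F(V\setminus|D_x|)$; this does require that the category of compactifications of $(V,nD_x)$ is nonempty and filtered, which is guaranteed by \cite[Lem 2.4.2]{KMSY-MotModI} and is already used in \ref{para:modulusII}.
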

\begin{proof}
The right hand side defines a conductor, as already remarked in \ref{rmk:cond}; it is clearly semi-continuous.
By Corollary \ref{cor:mot-cond-mini} we get '$\le$' in the statement and \ref{c1} forces it to be an equality.
\end{proof}

\begin{para}\label{para:HIsub}
We denote by $\HI$ the category of $\A^1$-invariant presheaves with transfers and set $\HI_{\Nis}:=\HI\cap \NST$.
It follows immediately from Definition \ref{defn:rec} that we have $\HI\subset\RSC$ and $\HI_{\Nis}\subset \RSC_{\Nis}$.

Let $F\in \PST$. For $X\in \Sm$, we denote by 
\[h^0_{\A^1}(F)(X)\]
 the subset of $F(X)$ formed by those sections $a\in F(X)$ for  which
the Yoneda map $a:\Ztr(X)\to F$ factors via 
\[h_0^{\A^1}(X)= \Coker(\Ztr(X)(-\times\A^1_k)\xr{i_0^*- i_1^*}\Ztr(X))\in \PST.\]
We immediately see that $X\mapsto h^0_{\A^1}(F)(X)$ defines a sub-presheaf with transfers of $F$, 
since $h_0^{\A^1}(X)\in \HI$  (see, e.g., \cite[Prop 3.6]{VoPST})
we have $h^0_{\A^1}(F)\in \HI$; furthermore,  it has the following universal
property: any morphism  $H\to F$ in $\PST$ with $H\in \HI$ factors uniquely via
a morphism $H\to h^0_{\A^1}(F)$ in $\HI$. 
Note,  if $F\in\NST$, then $h^0_{\A^1}(F)\in \HI_{\Nis}$. Indeed, by \cite[Thm 3.1.12]{VoDM}
Nisnevich sheafification induces an exact functor $\HI\to \HI_{\Nis}$, thus we obtain natural inclusions in $\PST$
\[h^0_{\A^1}(F)\inj h^0_{\A^1}(F)_{\Nis}\inj F_{\Nis}=F,\]
since $h^0_{\A^1}(F)_{\Nis}\in \HI$ the second inclusion factors via $h^0_{\A^1}(F)$;
hence $h^0_{\A^1}(F)=h^0_{\A^1}(F)_{\Nis}$.
\end{para}

\begin{proposition}\label{prop:HIc}
Let $F\in \PST$ and let $c$ be a conductor of level $n$ for $F$.
Then 
\[ X\mapsto F^{c\le 1}(X):=\left\{a\in F(X)\;\middle\vert\; c_L(\rho^*a)\le 1,\quad 
\begin{minipage}{3.2cm} $\forall \rho: \Spec L\to X$\\ with $L\in\Phin$\end{minipage}\right\}\]
defines a homotopy invariant sub-presheaf with transfers of $F$. 
If $F\in\NST$, then $F^{c\le 1}\in \HI_{\Nis}$.
\end{proposition}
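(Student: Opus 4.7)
The plan is to verify three properties of $F^{c\le 1}$ in turn: that it is stable under pullback by finite correspondences, that it is $\A^1$-invariant, and that it is a Nisnevich sheaf when $F$ is. The transfer compatibility is the main computation, the other two steps are direct applications of \ref{c4} and of standard functoriality of henselization.

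For the transfer check, let $V \in \Cor(X, Y)$ be a prime correspondence, $a \in F^{c\le 1}(Y)$, and $\rho: \Spec L \to X$ a henselian dvf point with $L \in \Phi_{\le n}$. I would recycle the decomposition argument from the proof of Proposition \ref{prop:cCI}: since $V \to X$ is finite, $(\Spec L \times_X V)_{\red} = \bigsqcup_i \Spec L_i$ with $L_i/L$ finite of ramification index $e_i$, all lying in $\Phi_{\le n}$, yielding henselian dvf points $\rho_i : \Spec L_i \to Y$ and the identity $\rho^* V^* = \sum_i m_i f_{i*}\rho_i^*$ analogous to \eqref{prop:cCI3.1}. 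Combining axioms \ref{c2} and \ref{c3} with $c_{L_i}(\rho_i^* a) \le 1$ gives
\[
c_L(\rho^* V^* a) \le \max_i \Bigl\lceil \frac{c_{L_i}(\rho_i^* a)}{e_i}\Bigr\rceil \le 1,
\]
so $V^* a \in F^{c\le 1}(X)$ as required.

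Next I would deduce $\A^1$-invariance. The transfer property just established shows that pullback along the projection $\pi: X \times \A^1 \to X$ and along the zero-section $i_0: X \to X \times \A^1$ both preserve $F^{c\le 1}$, and $i_0^*\pi^* = \id$ forces $\pi^*: F^{c\le 1}(X) \to F^{c\le 1}(X \times \A^1)$ to be injective. For surjectivity, take $a \in F^{c\le 1}(X\times\A^1)$; since $k(x)(t)_\infty \in \Phi_{\le n}$ whenever $\td(k(x)/k) \le n-1$, the defining condition of $F^{c\le 1}$ supplies $c_{k(x)(t)_\infty}(\rho_x^* a) \le 1$, so axiom \ref{c4} produces $b \in F(X)$ with $\pi^* b = a$. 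Applying $i_0^*$ identifies $b$ with $i_0^* a$, which lies in $F^{c\le 1}(X)$ by the transfer step.

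Finally, assume $F \in \NST$. As $F^{c\le 1} \subset F$ is a sub-presheaf and $F$ is a sheaf, I only need to verify the gluing axiom: if $\{u_i: U_i \to X\}$ is a Nisnevich cover and $a \in F(X)$ satisfies $u_i^* a \in F^{c\le 1}(U_i)$ for every $i$, then $a \in F^{c\le 1}(X)$. Given $\rho:\Spec L \to X$ with $L \in \Phi_{\le n}$, let $x \in X$ be the image of the closed point of $\Spec \sO_L$; choose $i$ and $x_i \in U_i$ with $u_i$ inducing a residue-field isomorphism $x_i \xrightarrow{\sim} x$. By the universal property of henselization, $\Spec \sO_L \to X$ factors through $U_i$, giving $\rho_i: \Spec L \to U_i$ with $u_i \circ \rho_i = \rho$; hence $c_L(\rho^* a) = c_L(\rho_i^* u_i^* a) \le 1$.

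I do not foresee a serious obstacle: the transfer step is the most delicate but is a direct transcription of the calculation in Proposition \ref{prop:cCI} (it is only the inequalities in \ref{c3} combined with $c_{L_i}(\rho_i^* a) \le 1$ that are needed, not the more elaborate modulus bound). The only bookkeeping worth flagging is confirming that the auxiliary henselian dvf's $L_i$ produced by the pullback decomposition and those arising from the henselized local schemes along $U_i$ really lie in $\Phi_{\le n}$; transcendence degree is preserved under finite extensions, and the geometric-type condition is preserved by normalization and by passing to Nisnevich neighborhoods, so this is routine.
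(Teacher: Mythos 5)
Your proof is correct and follows essentially the same route as the paper: the transfer compatibility is exactly the computation \eqref{prop:cCI3.1} with the modulus bound replaced by the constant bound $1$, the $\A^1$-invariance is the intended application of \ref{c4} (with the identification $b=i_0^*a$ making the paper's terse "follows from \ref{c4}" precise), and the sheaf statement is handled as in Proposition \ref{prop:cCI}\ref{prop:cCI03}.

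One small repair is needed in the Nisnevich step: a henselian dvf point of $X$ in the sense of Definition \ref{defn-hdvf} is just a morphism $\rho:\Spec L\to X$, with no given extension to $\Spec\sO_L$ (unlike the modulus-pair setting of Proposition \ref{prop:cCI}\ref{prop:cCI03}, from which you transcribed the argument), so "the image of the closed point of $\Spec\sO_L$" is not defined — e.g.\ $\rho$ may hit the generic point of $X$. The fix is immediate: let $y\in X$ be the image of $\Spec L$ itself; since a Nisnevich cover admits, over every point $y$, a point $y_i\in U_i$ with $k(y_i)\xr{\sim} k(y)$, the field map $k(y)\to L$ lifts to $k(y_i)\to L$ and $\rho$ factors through $U_i$, whence $c_L(\rho^*a)=c_L(\rho_i^*u_i^*a)\le 1$ as you conclude.
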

\begin{proof}
To show  $F^{c\le 1}\in \PST$ is equivalent to  the following: let $V\in \Cor(X, Y)$ 
be a finite prime correspondence and $a\in F^{c\le 1}(Y)$; then for all henselian dvf points 
$\rho: \Spec L\to X$ with $\td(L/k)\le n$, we have 
\[c_L(\rho^*V^*a)\le 1.\]
This follows from the calculation in \eqref{prop:cCI3.1}.
The $\A^1$-invariance of $F^{c\le 1}$ follows directly from \ref{c4}.
The last statement is proven similarly as in Proposition \ref{prop:cCI}\ref{prop:cCI03}.
\end{proof}

\begin{cor}\label{cor:HIc}
Let $F\in \RSC_{\Nis}$ with motivic conductor $c^F$. Then 
\[h^0_{\A^1}(F)= F^{c^F\le 1}.\]
\end{cor}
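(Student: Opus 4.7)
The plan is to prove the two inclusions separately, using the material assembled in the preceding subsection.

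For the inclusion $F^{c^F \le 1} \subset h^0_{\A^1}(F)$, I would invoke Proposition \ref{prop:HIc}, which tells us that $F^{c^F \le 1}$ is an $\A^1$-invariant sub-presheaf with transfers of $F$ (and in fact lies in $\HI_\Nis$ since $F \in \NST$). Then the universal property of $h^0_{\A^1}(F)$ recalled in \ref{para:HIsub} — namely that any morphism $H \to F$ in $\PST$ with $H \in \HI$ factors uniquely through $h^0_{\A^1}(F)$ — gives the desired inclusion immediately.

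For the reverse inclusion $h^0_{\A^1}(F) \subset F^{c^F \le 1}$, I would first note that $H := h^0_{\A^1}(F)$ lies in $\HI_{\Nis} \subset \RSC_{\Nis}$ (this is part of \ref{para:HIsub}). The key then is to compare the motivic conductor of $H$ with that of $F$ via the inclusion $H \hookrightarrow F$: by Proposition \ref{prop:mot-cond-sub-sheaf}, we have $c^H = (c^F)_{|H}$, i.e., for any $L \in \Phi$ and any $a \in H(L)$, $c^F_L(a) = c^H_L(a)$. On the other hand, Corollary \ref{cor:HI} computes the motivic conductor of an $\A^1$-invariant sheaf explicitly: $c^H_L$ takes only the values $0$ and $1$. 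Hence for any $X \in \Sm$, any $a \in h^0_{\A^1}(F)(X)$, and any henselian dvf point $\rho \colon \Spec L \to X$, the restriction $\rho^* a \in H(L)$ satisfies $c^F_L(\rho^*a) = c^H_L(\rho^*a) \le 1$, i.e., $a \in F^{c^F \le 1}(X)$.

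There is no real obstacle: both inclusions follow formally from results already established in the excerpt. The only subtle point is making sure we may apply Proposition \ref{prop:mot-cond-sub-sheaf}, which requires $h^0_{\A^1}(F) \hookrightarrow F$ to be an inclusion in $\RSC_{\Nis}$; this is guaranteed by the observation in \ref{para:HIsub} that $h^0_{\A^1}(F) \in \HI_\Nis$ whenever $F \in \NST$, together with the general inclusion $\HI_\Nis \subset \RSC_\Nis$.
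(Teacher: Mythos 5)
Your argument is correct and follows essentially the same route as the paper: the inclusion $F^{c^F\le 1}\subset h^0_{\A^1}(F)$ via Proposition \ref{prop:HIc} and the universal property of $h^0_{\A^1}$, and the reverse inclusion via Proposition \ref{prop:mot-cond-sub-sheaf} combined with Corollary \ref{cor:HI}. Your extra remark that $h^0_{\A^1}(F)\in \HI_{\Nis}\subset \RSC_{\Nis}$ (so Proposition \ref{prop:mot-cond-sub-sheaf} applies) just makes explicit a point the paper leaves implicit.
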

\begin{proof}
By Proposition \ref{prop:HIc} we have $F^{c^F\le 1}\subset h^0_{\A^1}(F)$. 
By Proposition \ref{prop:mot-cond-sub-sheaf} and Corollary \ref{cor:HI}
we have $(c^F)_{|h^0_{\A^1}(F)}=c^{h^0_{\A^1}(F)}\le 1$; hence $h^0_{\A^1}(F)\subset F^{c^F\le 1}$.
\end{proof}

\begin{cor}\label{cor:bir-inv-h0}
Let $F\in \RSC_\Nis$. Assume  for all $L\in\Phi$ we have 
\eq{cor:bir-inv-h01}{\tF(\sO_L,\fm_L^{-1})=F(\sO_L).}
Let $X\in \Sm$ be proper over $k$ and $U\subset X$ dense open. Then
\[h^0_{\A^1}(F)(U)=F(X).\]
In particular, if $F$ satisfies \eqref{cor:bir-inv-h01}, then $X\mapsto F(X)$ is a birational invariant on smooth proper schemes.
\end{cor}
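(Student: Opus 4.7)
By Corollary \ref{cor:HIc}, $G := h^0_{\A^1}(F) = F^{c^F \le 1}$ as a subsheaf of $F$. The hypothesis \eqref{cor:bir-inv-h01} says precisely that the motivic conductor $c^F_L$ never takes the value $1$: if $c^F_L(a) \le 1$, then $a \in \tF(\sO_L, \fm_L^{-1}) = F(\sO_L)$ and so $c^F_L(a) = 0$. Hence for any $V \in \Sm$,
\[ G(V) = \{a \in F(V) \mid \rho^* a \in F(\sO_L) \text{ for every } \rho\colon \Spec L \to V,\ L \in \Phi\}. \]
Applying this to $V = X$ and invoking the valuative criterion of properness, every henselian dvf point of $X$ extends to $\Spec \sO_L \to X$, so the defining condition becomes vacuous and $G(X) = F(X)$. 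The corollary thus reduces to the equality $G(X) = G(U)$ of sections of the $\A^1$-invariant Nisnevich sheaf with transfers $G$ over the smooth proper $X$ and its dense open $U$.

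For this, I would invoke the Gersten--Voevodsky resolution available for $G \in \HI_\Nis$, which on smooth $X$ identifies $G(X)$ with the intersection $\bigcap_{x \in X^{(1)}} G(\sO_{X,x}^h)$ inside $G(k(X))$, and analogously for $G(U)$ over the points $x \in U^{(1)}$. Injectivity of $G(X) \to G(U)$ is then automatic, and it remains to show, for every $a \in G(U)$ and every $x \in X^{(1)} \setminus U$, that $a \in G(\sO_{X,x}^h)$. Setting $L_x := \Frac(\sO_{X,x}^h)$, the canonical henselian dvf point $\rho_x\colon \Spec L_x \to U$ satisfies $c^F_{L_x}(\rho_x^* a) \le 1$ by the assumption $a \in G(U)$, so the hypothesis \eqref{cor:bir-inv-h01} yields $\rho_x^* a \in F(\sO_{L_x}) = F(\sO_{X,x}^h) = \varinjlim_V F(V)$, the colimit running over \'etale neighborhoods $V$ of $x$.

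The principal obstacle is then to upgrade this $F$-theoretic extension to a $G$-theoretic one: one must show that, after descending $\rho_x^* a$ to a lift $\tilde a \in F(V)$ (which agrees with $a$ on $U \cap V$ by the generic injectivity of $F$ invoked in the proof of Claim \ref{claim:rec-cond}), one in fact has $\tilde a \in G(V) = F^{c^F \le 1}(V)$. For henselian dvf points of $V$ whose composite with $V \to X$ factors through $U$, the desired bound on $c^F_L$ is inherited directly from $a \in G(U)$; the delicate case is when the composite lands in the exceptional locus $X \setminus U$, where a careful analysis exploiting the reciprocity structure of $F$ and the specific way $\tilde a$ arose from the canonical henselian dvf point at $x$ is required. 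Once this has been carried through, Gersten gives $a \in G(X) = F(X)$. The ``in particular'' assertion then follows immediately: two smooth proper birational $k$-schemes share a common dense open $U$, so the preceding equality yields $F(X) = G(U) = F(X')$.
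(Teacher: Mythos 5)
Your reduction is set up correctly (the identification $h^0_{\A^1}(F)=F^{c^F\le 1}$ via Corollary \ref{cor:HIc}, the observation that \eqref{cor:bir-inv-h01} forces $c^F_L\le 1\Rightarrow c^F_L=0$, and the inclusion $F(X)\subset h^0_{\A^1}(F)(U)$ from properness), but the proof is not complete: the step you yourself flag as ``the principal obstacle'' is a genuine gap, not a routine verification. Having produced a lift $\tilde a\in F(V)$ over a Nisnevich neighborhood $V$ of $x\in X^{(1)}\setminus U$, you must check $c^F_L(\rho^*\tilde a)\le 1$ for \emph{every} henselian dvf point $\rho:\Spec L\to V$ in order to conclude $\tilde a\in G(V)=F^{c^F\le 1}(V)$, and this includes points whose canonical extension $\Spec\sO_L\to X$ has center outside (the image of) $V$, as well as points centered in the boundary $V\times_X(X\setminus U)$ away from $x$. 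For such points neither $\tilde a\in F(V)$ nor $a\in G(U)$ gives any bound on $c^F_L$, so the Gersten argument for the $\A^1$-invariant subsheaf $G$ cannot be closed with the tools you invoke; in effect you would need an unramifiedness statement of the form $G(\sO^h_{X,x})=F(\sO^h_{X,x})\cap G(L_x)$, which is not established anywhere and is essentially equivalent to what is being proved.

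The paper closes exactly this gap by never leaving $F$: it invokes the purity theorem for reciprocity sheaves, \cite[Cor 0.3]{Saito-Purity-Rec}, which gives $\bigcap_{x\in X^{(1)}}F(\sO^h_{X,x})=F(X)$ for the smooth proper $X$. With that input the argument is a three-line sandwich: properness gives $F(X)\subset h^0_{\A^1}(F)(U)=F^{c^F\le 1}(U)$, the hypothesis \eqref{cor:bir-inv-h01} gives $F^{c^F\le 1}(U)\subset\bigcap_{x\in X^{(1)}}F(\sO^h_{X,x})$ (for $x\in U^{(1)}$ trivially, for boundary points because $c^F_{L_x}\le 1$ forces a section of $F$ over $\sO^h_{X,x}$), and purity identifies the right-hand side with $F(X)$. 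In particular no statement about $G$-sections over $\sO^h_{X,x}$ is ever needed, and the Gersten resolution for $G$ plays no role. If you want to salvage your route, the missing ingredient is precisely this purity result for $F\in\RSC_\Nis$; once you have it, your detour through $G(\sO^h_{X,x})$ becomes superfluous.
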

\begin{proof}
By Corollary \ref{cor:HIc} 
\[h^0_{\A^1}(F)(U)= F^{c^{F}\le 1}(U).\]
Hence $F(X)\subset h^0_{\A^1}(F)(U)$ and by \eqref{cor:bir-inv-h01} also
\[h^0_{\A^1}(F)(U)\subset \bigcap_{x\in X^{(1)}} F(\sO_{X,x}^h).\]
By \cite[Cor 0.3]{Saito-Purity-Rec} 
\[\bigcap_{x\in X^{(1)}} F(\sO_{X,x}^h)= F(X).\]
All together yields the statement.
\end{proof}

\subsection{Local symbols}\label{sec:symb-mod}
\begin{para}\label{para:symbol}
We recall the notion of local symbols for reciprocity sheaves, see  \cite[III, \S 1]{SerreGACC},
\cite[Prop 5.2.1]{KSY-Rec} or \cite[1.5]{IR} for details.

Let $F\in \RSC_{\Nis}$. If $L/K$ is a finite field extension of finitely generated fields over $k$, we denote by 
$\Tr_{L/K}: F(L)\to F(K)$ the map induced 
by the transfer structure on $F$. For $X\in \Cor^{\rm pro}$, $x\in X$, and $a\in F(X)$  we denote $a(x)\in F(x)$ 
the pullback of $a$ along  $x\inj X$.

Let $K$ be a function field over $k$ and $C$ a  regular projective $K$-curve. Note that  
$C\in \Cor^{\rm pro}$ by Lemma \ref{lem:regular-proj}.  For $x\in C_{(0)}$ a closed point 
we write $v_x$ for the corresponding normalized discrete valuation on 
$K(C)^\times$, $\fm_x\subset \sO_{C,x}$ for the maximal ideal, and set $U^{(n)}_x:= 1+\fm_x^n\subset \sO_{C,x}^\times$, 
$n\ge 1$.
Let $D=\sum n_x\cdot x$ be an effective Cartier divisor  on $C$ and $a\in\tF(C,D)$
 (see \ref{not:ExtendingRSC} for the notation $\tF$).
Then  there exists a family of maps
\[\{(a,-)_{C/K,x}: K(C)^\times\to F(K)\}_{x\in C_{(0)}}\]
which is uniquely determined by the following properties:
\begin{enumerate}[label=(LS\arabic*)]
\item\label{LS1} $(a, -)_{C/K,x}: K(C)^\times\to F(K)$ is a group homomorphism;
\item \label{LS2} $(a, f)_{C/K,x}= v_x(f) \Tr_{K(x)/K}(a(x))$, for $x\in C\setminus|D|$;
\item \label{LS3} $(a, U^{(n_x)}_x)_{C/K,x}=0$;
\item\label{LS4} $\sum_{x\in C_{(0)}} (a,f)_{C/K,x}=0$.
\end{enumerate}
It follows from the uniqueness that the family $\{(a,-)_{C/K,x}\}$ does not depend on the chosen modulus $D$.
Furthermore, from the uniqueness one can deduce the following properties:
\begin{enumerate}[label=(LS\arabic*), resume]
\item\label{LS5}  $(-,-)_{C/K,x}: F(K(C))\times K(C)^\times\to F(K)$ is bilinear;
\item\label{LS6} let $h: F\to G$ be a morphism in $\RSC_{\Nis}$, then in $G(K)$
                      \[h((a,f)_{C/K,x})=(h(a), f)_{C/K,x}, \quad \text{all }a\in F(K(C)), f\in K(C)^\times.\]
\end{enumerate}
Let $K'/K$ be a finite field extension, $C'/K'\in \Cor^{\rm pro}$ a projective curve,  
 and $\pi: C'\to C$ a finite morphism over $\Spec K' \to \Spec K$, then:
\begin{enumerate}[label= (LS\arabic*), resume]
\item\label{LS7}  for  $b\in F(K'(C'))$, $f\in K(C)^\times$, and $x\in C_{(0)}$ we have 
\[(\pi_*(b), f)_{C/K, x}=\sum_{y/x} \Tr_{K'/K} (b,  \pi^*f)_{C'/K',y};\]
\item\label{LS8} for $a\in F(K(C))$, $g\in K'(C')^\times$, and $x\in C_{(0)}$ we have 
\[(a, \pi_*g)_{C/K,x}=\sum_{y/x} \Tr_{K'/K}(\pi^*(a), g)_{C'/K',y};\]
\end{enumerate}
where in both cases the sum is over all $y\in C'$ mapping to $x$.
\end{para}
\begin{lemma}\label{lem:symbol-bc}
Let $F\in \RSC_{\Nis}$,  $C$ be a regular projective and geometrically connected $K$-curve.
Let $K'/K$ be a finitely generated field extension, 
denote by $\tau: \Spec K'\to \Spec K$ the induced map, and by $\tau_C: C_{K'}= C\otimes_K K'\to C$ the projection.
Then 
\[\sum_{y\in \tau_{C}^{-1}(x)}(\tau^*_C a, \tau_C^*f)_{C_{K'}/K', y} = \tau^*(a,f)_{C/K,x},\quad \text{in } F(K')\]
for all $a\in F(K(C))$, $f\in K(C)^\times$, and $x\in C_{(0)}$. 
\end{lemma}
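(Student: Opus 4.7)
My plan is to prove the formula by the difference method combined with weak approximation on the function field $K(C)$. Choose an effective Cartier divisor $D=\sum n_x\cdot x$ on $C$ with $a\in\tF(C,D)$; pulling back along $\tau_C$ gives $\tau_C^*a\in\tF(C_{K'},\tau_C^*D)$, so both sides of the asserted identity are defined. For each $x\in C_{(0)}$ and each $f\in K(C)^\times$ I set
\[
d_x(f):=\tau^*(a,f)_{C/K,x}-\sum_{y\in\tau_C^{-1}(x)}(\tau_C^*a,\tau_C^*f)_{C_{K'}/K',y}\in F(K').
\]
By \ref{LS1} and \ref{LS5} each $d_x(-)$ is a group homomorphism, and summing over all $x\in C_{(0)}$ and applying \ref{LS4} on $C$ and on $C_{K'}$ yields $\sum_x d_x(f)=0$ for every $f$. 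The aim is to show $d_x\equiv 0$.

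The main step is the vanishing of $d_x$ for $x\notin|D|$. In this case every $y\in\tau_C^{-1}(x)$ lies outside $|\tau_C^*D|$, so \ref{LS2} together with the ramification identity $v_y(\tau_C^*f)=e(y/x)\cdot v_x(f)$ reduces $d_x(f)=0$ to the base-change identity
\[
\tau^*\Tr_{K(x)/K}(a(x))=\sum_{y/x}e(y/x)\cdot\Tr_{K'(y)/K'}((\tau_C^*a)(y))
\]
in $F(K')$. I would obtain this by applying the base-change formula \eqref{lemCorpro-proCor-bc} for finite correspondences to the (tor-independent, since everything is over fields) cartesian square with top row $\Spec(K(x)\otimes_K K')\to\Spec K'$ and bottom row $\Spec K(x)\to\Spec K$, using the decomposition $K(x)\otimes_K K'=\prod_{y/x}B_y$ with $B_y\cong\sO_{C_{K'},y}/\fm_x\sO_{C_{K'},y}$ and the identification of the length $\ell(B_y)$ as a local Artin ring with the ramification index $e(y/x)$. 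This identification of multiplicities is the main technical point of the whole argument.

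For $x\in|D|$ with multiplicity $n_x$, the equality $\tau_C^*D=\sum_{y/x}e(y/x)n_x\cdot[y]$ shows that the coefficient of $y$ in $\tau_C^*D$ is $n_y=e(y/x)n_x$; hence for $f\in U_x^{(n_x)}$ one has $\tau_C^*f\in U_y^{(n_y)}$ for every $y/x$, and \ref{LS3} applied on $C$ and on $C_{K'}$ yields $d_x(U_x^{(n_x)})=0$.

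To conclude, given any $x_0\in|D|$ and any $f\in K(C)^\times$, weak approximation on $K(C)$ at the finite set of places $\{v_x\}_{x\in|D|}$ produces $f'\in K(C)^\times$ with $f/f'\in U_{x_0}^{(n_{x_0})}$ and $f'\in U_x^{(n_x)}$ for all $x\in|D|\setminus\{x_0\}$. The previous two paragraphs then give $d_x(f')=0$ for all $x\neq x_0$, and $\sum_x d_x(f')=0$ forces $d_{x_0}(f')=0$, whence $d_{x_0}(f)=d_{x_0}(f')=0$. The only nontrivial point in this plan is the base-change identity for transfers in the second paragraph; everything else is a formal manipulation of the axioms \ref{LS1}--\ref{LS5}.
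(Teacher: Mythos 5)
Your strategy is correct in outline but takes a genuinely different route from the paper's. The paper also begins with an approximation step, but then rewrites $-(a,f)_{C/K,x}=(i_0^*-i_\infty^*)f_*a$ by viewing $f$ as a finite morphism $C\to\P^1_K$, applies $\tau^*$, and uses the base-change formula once, in the form $\tau_{\P^1}^*\circ f_*=(\tau_C^*f)_*\circ\tau_C^*$ for the cartesian square over $\P^1$, before running \ref{LS1}--\ref{LS4} backwards on $C_{K'}$. You instead argue pointwise: \ref{LS2} on both curves plus the cycle-level base-change formula \eqref{lemCorpro-proCor-bc} applied to the zero-dimensional fibre $\Spec(K(x)\otimes_K K')$, together with the identification of the length of $(K(x)\otimes_KK')_y$ with $e(y/x)$. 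Both proofs rest on the same two pillars (base change for correspondences in $\Cor^{\rm pro}$, and approximation plus the reciprocity law \ref{LS4}); yours makes the fibre decomposition and multiplicities explicit, while the paper's packaging through $f_*$ avoids that bookkeeping.

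Two points in your write-up need patching. First, when $\td(K'/K)\ge 1$ the curve $C_{K'}$ has closed points $y$ lying over the \emph{generic} point $\eta$ of $C$ (for instance the diagonal point when $K'=K(C)$), so \ref{LS4} on $C_{K'}$ sums over strictly more points than your double sum $\sum_{x\in C_{(0)}}\sum_{y/x}$; the identity $\sum_x d_x(f)=0$ therefore does not follow from \ref{LS4} alone, as you assert. It is nevertheless true: for such $y$ the induced map $\sO_{C,\eta}=K(C)\to\sO_{C_{K'},y}$ is a local homomorphism from a field, so every $g\in K(C)^\times$ pulls back to a unit at $y$, i.e.\ $v_y(\tau_C^*g)=0$; since $y\notin\tau_C^{-1}(|D|)$ one may choose a modulus for $\tau_C^*a$ supported in $\tau_C^{-1}(|D|)$, and \ref{LS2} then gives $(\tau_C^*a,\tau_C^*g)_{C_{K'}/K',y}=0$. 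This short argument must be added. Second, your application of \ref{LS3} at the points $y/x$ with the multiplicities $n_y=e(y/x)n_x$ uses the unproved claim $\tau_C^*a\in\tF(C_{K'},\tau_C^*D)$; this is correct --- the graph of $\tau_C$ defines an admissible left proper correspondence $(C_{K'},\tau_C^*D)\to(C,D)$ in $\ulMCor^{\rm pro}$, admissibility holding with equality of divisors --- but it is not automatic from \ref{para:symbol} and should be justified (or circumvented by choosing a possibly larger modulus supported in $\tau_C^{-1}(|D|)$ and strengthening the approximation accordingly). With these two repairs the argument goes through.
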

\begin{proof}
Let $U\subset C$ be open with $a\in F(U)$. Using the Approximation Lemma, \ref{LS1}, and \ref{LS3}
 we can assume that for a given 
$m\ge 1$ we have $f\in U^{(m)}_z$, for all $z\in C\setminus (U\cup\{x\})$; in particular choosing $m$ large enough we get
$(a, f)_{C/K,z}=0$. Identifying $f$ with the finite $K$-morphism $C\to \P^1_K$ 
we obtain $a\in F(f^{-1}(\P^1_K\setminus\{1\})\setminus\{x\})$
and \ref{LS2}, \ref{LS4} yield
\[-(a, f)_{C/K,x}= (i_0^*-i_\infty^*)f_*a, \quad \text{in } F(K).\]
The formula in the statement now follows by applying $\tau^*$ to this equality, using the base change formula 
$\tau_{\P^1}^* \circ f_*= (\tau_C^*f)_* \tau_C^*$ induced by the cartesian diagram
\[\xymatrix{
C_{K'}\ar[r]^{\tau_{C}}\ar[d]_{\tau_C^*f} & C\ar[d]^f\\ 
 \P^1_{K'}\ar[r]^{\tau_{\P^1}} & \P^1_K,
}\]
and using \ref{LS1} - \ref{LS4} backwards.
\end{proof}

\begin{lemma}\label{lem:approx-h}
Let $L\in\Phi$. Let $C$ be a regular curve over a $k$-function field $K$.
Assume there exists a closed point $x\in C$ and a $k$-morphism $u: \Spec \sO_L\to C$ 
inducing an isomorphism $\sO_{C,x}^h\cong \sO_L$.
Then there is an isomorphisms induced via pullback along $u$
\[F(K(C))/F(\sO_{C,x})\xr{\simeq} F(L)/F(\sO_L).\]
If $\sO_{C,x}$ has a coefficient field then we have an isomorphism 
\[F(K(C))/F(\sO_{C,x},\fm)\xr{\simeq} F(L)/F(\sO_L,\fm), \]
where for a local ring $A\in \Cor^{\rm pro}$ with maximal ideal $\fm$ we set
\[F(A,\fm):=\Ker(F(A)\to F(A/\fm)).\]
\end{lemma}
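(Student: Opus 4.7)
The map is well-defined since $u$ factors as $\Spec\sO_L\to\Spec\sO_{C,x}\to C$, so pullback sends $F(\sO_{C,x})\to F(\sO_L)$ and $F(K(C))\to F(L)$, hence descends to the quotients. The plan is to realise both sides of the first isomorphism as, respectively, the Zariski and Nisnevich stalks at $x$ of a single presheaf on $C$, and to identify them using Nisnevich descent. Let $j\colon C\setminus\{x\}\hookrightarrow C$ and consider the presheaf quotient $G:=j_*j^*F/F$. For a Zariski open $U\ni x$ one has $G(U)=F(U\setminus x)/F(U)$, whence $G^{\Zar}_x=F(K(C))/F(\sO_{C,x})$. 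For a Nisnevich neighborhood $(V,y)\to(C,x)$---that is, $V\to C$ \'etale with $y$ the unique point over $x$ and $k(y)=k(x)$, attainable by shrinking---we have $G(V)=F(V\setminus y)/F(V)$, and the colimit description of $F$ on $\ulMCor^{\rm pro}$ from \S\ref{para:MPSTpro} gives $G^{\Nis}_x=F(L)/F(\sO_L)$. The first iso thus amounts to $G^{\Zar}_x\iso G^{\Nis}_x$.

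For injectivity, each Nisnevich neighborhood $(V,y)$ fits into a distinguished Nisnevich square, and Nisnevich descent for $F\in\NST$ yields the cartesian diagram $F(C)=F(V)\times_{F(V\setminus y)}F(C\setminus x)$. A cartesian square of abelian groups forces injectivity on the vertical cokernels, giving $F(C\setminus x)/F(C)\hookrightarrow F(V\setminus y)/F(V)$, and filtered colimits preserve injections. For surjectivity, given $a\in F(V\setminus y)$, the plan is to produce, after a Nisnevich refinement $(V',y')\to(V,y)$, elements $b\in F(V')$ and $c\in F(C\setminus x)$ with $a|_{V'\setminus y'}=b|_{V'\setminus y'}+c|_{V'\setminus y'}$. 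At each fixed stage the obstruction lies in the Cech group $\check H^1(\{V\to C,\;C\setminus x\hookrightarrow C\},F)=F(V\setminus y)/(F(V)+F(C\setminus x))$, whose colimit over Nisnevich refinements is controlled by $H^1_{\Nis}(\sO_L,F)$; the latter vanishes because $\sO_L$ is henselian local, so the obstruction dies in the limit. This surjectivity step is the main obstacle: identifying the Cech obstruction with a Nisnevich $H^1$-class and then invoking henselian vanishing will be the crux of the argument.

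For the second iso, a coefficient field $k(x)\hookrightarrow\sO_{C,x}$ also gives one for $\sO_L$ via the inclusion $\sO_{C,x}\hookrightarrow\sO_L$, yielding compatible splittings $F(\sO_{C,x})=F(\sO_{C,x},\fm)\oplus F(k(x))$ and $F(\sO_L)=F(\sO_L,\fm)\oplus F(k(x))$ through the resulting retractions. Combining the first iso and the identity on $F(k(x))$ in the 5-lemma applied to the short exact sequence
\[0\to F(k(x))\to F(K(C))/F(\sO_{C,x},\fm)\to F(K(C))/F(\sO_{C,x})\to 0\]
and its $L$-analogue, the second iso follows.
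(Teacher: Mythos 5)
Your reformulation of the first isomorphism as $G^{\Zar}_x \iso G^{\Nis}_x$ for the presheaf $G = j_*j^*F/F$ is a clean setup, and your injectivity argument via the cartesian Nisnevich square is correct: for each elementary distinguished square the sheaf condition gives $F(U) = F(U\setminus x)\times_{F(V\setminus y)}F(V)$, hence an injection on vertical cokernels, and filtered colimits preserve monomorphisms. Your treatment of the second isomorphism (coefficient field splitting plus the five lemma) is correct and coincides with what the paper does.

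The surjectivity step, however, contains a genuine gap, which you half-acknowledge. Your claim that the colimit of the \v{C}ech obstruction groups over Nisnevich refinements of $V$ ``is controlled by $H^1_{\Nis}(\sO_L,F)$'' is not correct. What the Mayer--Vietoris sequence attached to the square $(U; U\setminus x, V)$ actually gives is that the cokernel of $F(V)\oplus F(U\setminus x)\to F(V\setminus y)$ injects into $\ker\bigl(H^1_{\Nis}(U,F)\to H^1_{\Nis}(V,F)\oplus H^1_{\Nis}(U\setminus x,F)\bigr)$. Passing to the colimit over Zariski opens $U\ni x$ and Nisnevich neighborhoods $V$, the right-hand $H^1$ terms vanish (henselian local, resp.\ field), but the term $H^1_{\Nis}(U,F)$ converges to $H^1_{\Nis}(\sO_{C,x},F)$, and $\sO_{C,x}$ is a DVR that is \emph{not} henselian; its Nisnevich $H^1$ does not vanish for an arbitrary Nisnevich sheaf. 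In other words, the obstruction you identify lives in $H^1_{\Nis}$ of the \emph{non-henselian} local ring, and arguing it dies there is precisely the nontrivial content of the lemma, not a formal consequence of Nisnevich descent. (Trying to interpret the \v{C}ech colimit itself as the surjectivity statement makes the argument circular.) This is exactly the point where the reciprocity hypothesis $F\in\RSC_{\Nis}$ must enter: the paper first reduces to $k$ infinite via the pro-$\ell$-extension trick, applies Gabber's Presentation Theorem to obtain an \'etale map to $\P^1_E$, shows $(U,n\cdot x)$ is a $V$-pair using \cite[Lem 4.2, 4.3]{Saito-Purity-Rec}, and then invokes \cite[Lem 4.4 (3)]{Saito-Purity-Rec}, which asserts the desired invariance of $F(K(U))/F(\sO_{U,x})$ under affine Nisnevich neighborhoods --- a purity statement specific to reciprocity sheaves. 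Your argument never uses anything beyond $F\in\NST$ and so cannot close the gap.
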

\begin{proof}
We prove the first isomorphism.
The natural map in the statement is compatible with pullbacks and pushforwards on both sides. Thus
we can apply the standard trick replacing $k$ by its maximal pro-$\ell$ extensions for various primes $\ell$,
 to  assume $k$ is infinite.
By  Gabber's Presentation Theorem (see, e.g., \cite[3.1.2]{CTHK}) 
we find an open $U\subset C$ containing $x$, a $k$-function field $E$ and an \'etale morphism 
$\varphi: U\to \P^1_E$ such that $x=\varphi^{-1}(\varphi(x))$ and $\varphi$ induces an 
isomorphism $x\xr{\simeq} \varphi(x)$.
It follows from \cite[Lem 4.2, Lem 4.3]{Saito-Purity-Rec}, that $(U, n\cdot x)$ is a $V$-pair, for all $n\ge 1$, 
in the sense of \cite[Def 2.1]{Saito-Purity-Rec}. If 
$v: U'\to U$ is an affine Nisnevich neighborhood of $x$  with $v^{-1}(x)=\{x'\}$, then 
the pullback $v^*:F(K(U))/F(\sO_{U,x})\xr{\simeq} F(K(U'))/F(\sO_{U',x,})$ is an isomorphism, by
\cite[Lem 4.4, (3)]{Saito-Purity-Rec}.
We obtain the first isomorphism of the statement  by taking the limit over all Nisnevich neighborhoods $v$.
For the second isomorphism observe that if a coefficient field $\sigma: \kappa\inj \sO_{C,x}$ exists, then $\sigma^*$
induces a splitting of  the restriction to the closed point $F(\sO_{C,x})\to F(\kappa)$, in particular it is surjective.
We obtain isomorphisms
$F(\sO_{C,x})/F(\sO_{C,x},\fm_x)\cong F(\kappa)\cong F(\sO_L)/F(\sO_L,\fm_L)$
which together with the first statement and the five lemma  yield the second isomorphism in the statement.
\end{proof}

\begin{para}\label{para:symbol-hdvf}
Let $F\in \RSC_{\Nis}$.  Let $L\in\Phi$ have residue field $\kappa=\sO_L/\fm_L$, and let
$\sigma: K\inj \sO_L$ be a $k$-homomorphism such that the induced map  $K\inj \kappa$ is a finite field extension
 (e.g., $\sigma$ could be a coefficient field.)
We define the local symbol
\[(-,-)_{L,\sigma}: F(L)\times L^\times\to F(K),\]
as follows:
We find a regular projective $K$-curve $C$ and a  $\kappa$-point  $x\in C(\kappa)$ satisfying 
\eq{para:symbol-hdvf1}{ L= \Frac (\sO_{C,x}^h), \quad \sigma: K\to \sO_{C,x}\xr{\rm nat.} \sO_L.}
Additionally we assume that {\em $\sO_{C,x}$ has a coefficient field}.
Denote by $u: \Spec \sO_L\to \Spec \sO_{C,x}$ the induced map.
The  symbol  $(-,-)_{L,\sigma}$ is defined as the composition
\mlnl{(-,-)_{L,\sigma}: F(L)\times L^\times\longrightarrow 
F(L)/F(\sO_L,\fm_L)\times \varprojlim_{n} L^\times/ U_L^{(n)}\\
\stackrel{\ref{lem:approx-h}}{\cong} F(K(C))/F(\sO_{C,x},\fm_x)\times \varprojlim_{n} K(C)^\times/ U_x^{(n)}
\xr{\ol{(-,-)}_{C/K,x}} F(K),}
where the last map is given by 
\[\ol{(a, (f_n))}_{C/K,x}:=(\tilde{a}, \tilde{f}_r)_{C/K,x}\]
with $\tilde{a}\in \tF(\sO_{C,x},\fm^{-r})$ a lift of $a$ and $\tilde{f}_r\in K(C)^\times$ a lift of $f_r$;
this is well-defined and bilinear by \ref{LS2}, \ref{LS3}, and \ref{LS5}. 
\end{para}
\begin{lemma}\label{lem:symbol-well-def}
The symbol $(-, -)_{L,\sigma}$ defined in \ref{para:symbol-hdvf} above is  independent of the choice
of the presentation \eqref{para:symbol-hdvf1}.
\end{lemma}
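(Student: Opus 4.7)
The plan is to produce, from the two presentations $(C_1, x_1)$ and $(C_2, x_2)$, a common regular projective refinement $(C, x)$ equipped with finite $K$-morphisms $\pi_i : C \to C_i$ that are étale at $x$ and send $x$ to $x_i$. Once such a refinement is in hand, the transfer compatibility formulas \ref{LS7} and \ref{LS8}, combined with strong approximation, will identify the symbol computed via $(C_i, x_i)$ with the one computed via $(C, x)$, from which independence follows.

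First I would construct the refinement. The extensions $\ol{\sigma}_i : \Spec \sO_L \to C_i$ from Definition \ref{defn-hdvf} combine into a morphism $\Spec \sO_L \to C_1 \times_K C_2$. Let $Z \subset C_1 \times_K C_2$ be its reduced scheme-theoretic image; this is an integral one-dimensional closed subscheme, proper over $\Spec K$. Let $C \to Z$ be the normalization, a regular integral projective $K$-curve. The image $x \in C$ of the closed point of $\Spec \sO_L$ is a $\kappa$-rational point, and by construction the natural map $\sO_L \to \sO_{C, x}^h$ is an isomorphism. The projections $\pi_i: C \to C_i$ are finite $K$-morphisms sending $x$ to $x_i$ and inducing $\kappa \xr{\sim} \kappa(x)$, hence étale at $x$; a coefficient field of $\sO_{C_i, x_i}$ induces one of $\sO_{C, x}$ through the isomorphism of henselizations, so $(C, x)$ satisfies the hypotheses of \ref{para:symbol-hdvf}.

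Next I would compare the symbols. Fix $a \in F(L)$ of modulus $r$ and $f \in L^\times$, and choose lifts $\tilde a_i \in \tF(\sO_{C_i, x_i}, \fm_{x_i}^{-r})$ and $\tilde f_i \in K(C_i)^\times$. Since $\pi_i$ is étale at $x$ with trivial residue extension, $\pi_i^* \tilde a_i$ agrees with a chosen lift $\tilde a \in \tF(\sO_{C, x}, \fm_x^{-r})$ modulo $F(\sO_L, \fm_L)$. Using strong approximation in $K(C)^\times$, I would construct $g \in K(C)^\times$ with: (a) $g$ agrees with (the image of) $\tilde f_i$ at $x$ modulo $U_x^{(R)}$ for large $R$, via the identification of the henselizations; and (b) $g \in U_y^{(n_y + 1)}$ at every $y \neq x$ in the finite set of places of $C$ where $\pi_i^* \tilde a_i$ has modulus $n_y$, including the remaining preimages of $x_i$. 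By \ref{LS3} condition (b) makes the contributions at $y \neq x$ in \ref{LS8} vanish, leaving
\[(\tilde a_i, \pi_{i,*} g)_{C_i/K, x_i} = (\pi_i^* \tilde a_i, g)_{C/K, x}.\]
Condition (a) and the fact that the local factor of the norm at $x$ is the identity imply $\pi_{i,*} g \equiv \tilde f_i \pmod{U_{x_i}^{(R)}}$; by \ref{LS3}, \ref{LS5}, and the well-definedness established via Lemma \ref{lem:approx-h}, the left side equals $(\tilde a_i, \tilde f_i)_{C_i/K, x_i}$ while the right side equals $(\tilde a, \tilde f)_{C/K, x}$. Doing this for $i = 1, 2$ and comparing yields the lemma.

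The hard part will be arranging (b) so that the sum of contributions at the other preimages of $x_i$, as well as at the other points where $\pi_i^* \tilde a_i$ has a pole, vanishes in \ref{LS8}: one needs to track the modulus of $\pi_i^* \tilde a_i$ at every relevant point of $C$ and to control the norm map on henselizations, then invoke strong approximation for the function field of the projective curve $C$. The étaleness of $\pi_i$ at $x$ with trivial residue extension is crucial, since it makes the local factor of the norm at $x$ the identity, so that (a) and (b) together pin down $\pi_{i,*} g$ at $x_i$ to the required precision.
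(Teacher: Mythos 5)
Your proposal is correct and follows the same strategy as the paper's proof: reduce to comparing a presentation $(C_i,x_i)$ with a refinement $(C,x)$ equipped with a $K$-morphism that is \'etale at the marked point with trivial residue extension, then compare the two symbols via \ref{LS8}, the Approximation Lemma, and \ref{LS3}. You go further than the paper in one respect: the paper states only the comparison formula \eqref{lem:symbol-well-def0} for a map $v:C'\to C$ and leaves the reduction of the general case (two arbitrary presentations) to this formula implicit, whereas you explicitly construct the common refinement $C$ as the normalization of the scheme-theoretic image in $C_1\times_K C_2$ and check that it inherits a coefficient field; that is a genuine (if routine) step worth spelling out. A couple of very minor points: in your condition (b) it is enough, by \ref{LS8}, to control $g$ at the preimages of $x_i$ other than $x$ — the other poles of $\pi_i^*\tilde a_i$ on $C$ do not appear in that formula, though requiring extra conditions does no harm; and the exponent should be $U_y^{(n_y)}$ rather than $U_y^{(n_y+1)}$ to match \ref{LS3}, again harmless since approximation allows any finite set of congruence conditions.
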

\begin{proof}
Let $v: C'\to C$ be  a $K$-morphism between regular projective $K$-curves, 
 let $x\in C$ and $x'\in C'$ be  closed points such that $v$ is \'etale in a neighborhood of $x'$ and induces
an isomorphism $x'\xr{\simeq}x$. Assume that $\sO_{C,x}$ has a coefficient field.
Let $E=K(C)$ and $E'=K(C')$ be the function fields.
Then it suffices to show, that for all $a\in F(E)$ and $f\in E^\times$ we have 
\eq{lem:symbol-well-def0}{(a, f)_{C/K,x}= (v^*a, v^*f)_{C'/K, x'}. }
We denote $E^\times_x:=\varprojlim E^\times / U^{(n)}_x$ etc.
Then the composition 
\[E_x^\times\xr{1\otimes\id} \big(E_x\otimes_{E} E'\big)^\times\cong \prod_{y/x} {E'}_y^\times
\xr{\rm proj} {E'}_{x'}^\times\]
is induced by $v^*$ and is an isomorphism  with inverse induced by the norm. 
Thus we can use the Approximation Lemma, \ref{LS3}, and the continuity of the norm map
to choose $g\in {E'}^\times$ close to $v^*f$ at $x'$ and 
close to $1$ at all $y\in v^{-1}(x)\setminus\{x'\}$ to obtain 
\begin{enumerate}[label= (\arabic*)]
\item\label{lem:symbol-well-def1} $(v^*a, v^*f)_{C'/K, x'}= (v^*a, g)_{C'/K, x'}$;
\item\label{lem:symbol-well-def3} $(v^*a, g)_{C'/K, y'}=0$, for all $y'\in v^{-1}(x)\setminus\{x'\}$;
\item\label{lem:symbol-well-def2} $(a, f)_{C/K,x}= (a, {\rm Nm}_{E'/E}(g))_{C/K,x}$.
\end{enumerate}
We obtain
\[ (a, f)_{C/K,x}  \stackrel{\ref{LS8},\, \ref{lem:symbol-well-def2}}{=} 
\sum_{y'\in v^{-1}(x)} (v^*a, g)_{C'/K, y'} 
\stackrel{\ref{lem:symbol-well-def1}, \ref{lem:symbol-well-def3}}{=} (v^*a, v^*f)_{C'/K, x'},
\]
which yields the statement.
\end{proof}
\begin{remark}
Note that if the composition $K\xr{\sigma} \sO_L\to \kappa$ is purely inseparable, then there does not need to exist 
a coefficient field of $\sO_L$ which contains $K$. This is why in \ref{para:symbol-hdvf} it does in general not suffice
to consider coefficient fields. (In characteristic zero it does.)
For coefficient fields $\sigma: K\inj \sO_L$ the symbol $(-,-)_{L,\sigma}$ 
will in general depend on the choice of $\sigma$.
\end{remark}

\begin{corollary}\label{cor:localLS3}
 Let $L_1/ L$   be an extension of henselian dvf's  of ramification index $e$, i.e., 
$\fm_L\sO_{L_1}=\fm_{L_1}^e$. (The extension $L_1/L$ does not need be algebraic or finitely generated.)
 Let $\sigma_1: K\to \sO_{L_1}$ be a $k$-homomorphism inducing
a finite field extension $K\inj \sO_{L_1}/\fm_{L_1}$. 
Let $F\in \RSC_\Nis$ and $a\in \tF(\sO_{L},\fm_L^{-r})$, $r\ge 0$. Then
\[(a_{L_1}, U^{(er)}_{L_1})_{L_1,\sigma_1}=0,\]
where $a_{L_1}\in F(L_1)$ is the pullback of $a$.
\end{corollary}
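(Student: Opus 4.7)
\emph{Proof plan.} The strategy is to combine a ramification-based shift of the modulus filtration with the vanishing property \ref{LS3} (equivalently, the bilinearity) of the local symbol on a curve. The inclusion $\sO_L \inj \sO_{L_1}$ induces a morphism of schemes $\iota: \Spec\sO_{L_1}\to\Spec\sO_L$; since $\fm_L\sO_{L_1}=\fm_{L_1}^e$, the pullback of the Cartier divisor $r\cdot s_L$ equals $er\cdot s_{L_1}$, so $\iota$ defines a morphism
\[(\Spec\sO_{L_1},\, er\cdot s_{L_1})\longrightarrow (\Spec\sO_L,\, r\cdot s_L)\]
in $\ulMCor^{\rm pro}$ (admissibility is an equality and left-properness is automatic for a graph). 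Evaluating $\tF$, extended to $\ulMCor^{\rm pro}$ as in \ref{para:MPSTpro}, yields
\[a_{L_1}=\iota^*a \in \tF(\sO_{L_1},\fm_{L_1}^{-er}),\]
so the ramification raises the relevant level of the modulus filtration from $r$ to $er$.

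Next fix $f\in U^{(er)}_{L_1}$. By the definition of $(-,-)_{L_1,\sigma_1}$ in \ref{para:symbol-hdvf} together with Lemma \ref{lem:symbol-well-def}, we may choose any presentation $(C,x)$ with $L_1=\Frac\sO_{C,x}^h$, $\sigma_1$ the natural map $K\to\sO_{C,x}\to\sO_{L_1}$, and $\sO_{C,x}$ admitting a coefficient field. Since $\omega^{\CI}F\in\MNST$ by Corollary \ref{cor:omegaCI}, we have $\tF\in\ulMNST$; its Nisnevich sheaf property combined with $\sO_{L_1}\cong\sO_{C,x}^h$ produces a lift $\tilde{a}\in\tF(\sO_{C,x},\fm_x^{-er})\subset F(K(C))$ of $a_{L_1}$. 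On the multiplicative side, the compatibility of henselization with truncation by $\fm^{er}$ gives a canonical isomorphism $L_1^\times/U^{(er)}_{L_1}\cong K(C)^\times/U^{(er)}_x$, under which the class of $f$ is trivial. Hence we may take the lift $\tilde{f}_{er}=1\in K(C)^\times$, and the bilinearity \ref{LS1} of the (extended) local symbol gives
\[(a_{L_1},f)_{L_1,\sigma_1}=(\tilde{a},1)_{C/K,x}=0.\]

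The only subtle point is that $\tilde{a}$ is merely a Nisnevich-local section, rather than a global section of $\tF$ on $C$, so one must know that the local symbol $(\tilde{a},-)_{C/K,x}$ remains well-defined and bilinear; this is precisely the well-definedness statement in \ref{para:symbol-hdvf}, which rests on \ref{LS2}, \ref{LS3}, and \ref{LS5}. If preferred, one can instead globalize $\tilde{a}$ to a section of $\tF(C',\, er\cdot x'+\Sigma)$ on a regular projective completion $C'$ of a Nisnevich neighborhood of $x$, with $\Sigma$ supported away from $x'$---using reciprocity and Lemma \ref{lem:indepmod} to preserve the coefficient $er$ at $x'$---and then invoke \ref{LS3} on $C'$ directly for any $\tilde{f}_{er}\in U^{(er)}_{x'}$.
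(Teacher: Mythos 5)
Your proposal is correct and takes essentially the same route as the paper, whose proof is the one-line observation that $a_{L_1}\in \tF(\sO_{L_1},\fm_{L_1}^{-er})$ (exactly your admissible morphism $(\Spec \sO_{L_1}, er\cdot s_{L_1})\to (\Spec \sO_{L}, r\cdot s_{L})$ in $\ulMCor^{\rm pro}$) combined with the construction in \ref{para:symbol-hdvf} and \ref{LS3}. The one imprecision is that Nisnevich sheafness of $\tF$ gives a level-$er$ lift of $a_{L_1}$ only over a Nisnevich neighborhood of $x$, not over $\sO_{C,x}$ itself, so the change-of-presentation/globalization step you add at the end (via Lemma \ref{lem:symbol-well-def}) is genuinely needed, just as it is implicit in the paper's construction of the symbol.
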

\begin{proof}
We have $a_{L_1}\in \tF(\sO_{L_1},\fm_{L_1}^{-er})$ and hence the statement follows from
the construction of the symbol in \ref{para:symbol-hdvf} and \ref{LS3} .
\end{proof}

\begin{lemma}\label{lem:CRT-RS}
Let $F\in \RSC_{\Nis}$.
Let $K/k$ be a function field, $X$ a normal affine integral finite type $K$-scheme with function field $E$.
Let $x_i\in X^{(1)}$, $i=1,\ldots, r$, be distinct one codimensional  points.
Then for all integers $n_i\ge 0$ the natural map
\[\frac{F(E)}{\cap_{i=1}^r \tilde{F}(\sO_{X,x_i}, \fm_{x_i}^{-n_i})}\xr{\simeq} 
       \prod_{i=1}^r \frac{F(E)}{\tF(\sO_{X, x_i}, \fm_{x_i}^{-n_i})}\]
is an isomorphism, where ${\tF(\sO_{X, x_i}, \fm_{x_i}^{0})}:= F(\sO_{X, x_i})$.
\end{lemma}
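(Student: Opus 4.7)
The injectivity of the natural map is tautological from the definition of the intersection in the denominator. For surjectivity, the plan is to pass to the semi-localization at $\{x_1,\ldots,x_r\}$ and run a \v{C}ech-cohomological argument powered by the Nisnevich sheaf property of $\tF$.

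Let $A$ denote the localization of $\sO_X$ at the finite set $\{x_1, \ldots, x_r\}$ and set $Y := \Spec A$. By Lemma \ref{lem:regular-proj}, $Y$ is a pro-smooth $K$-scheme and $(Y, D) \in \ulMCor^{\rm pro}$ for the Cartier divisor $D := \sum_i n_i x_i$; the closed points of $Y$ are precisely $x_1, \ldots, x_r$ and its generic point is $\Spec E$. The principal opens $V_i := \Spec \sO_{X, x_i} \subset Y$ form a Zariski cover $\mathcal U = \{V_i\}_{i=1}^{r}$ with pairwise intersections $V_i \cap V_j = \Spec E$ for $i \neq j$. Since $\tF$ is a Nisnevich (hence Zariski) sheaf by Corollary \ref{cor:omegaCI}, the \v{C}ech complex of $\mathcal U$ with values in $\tF_{(Y, D)}$ begins
\begin{equation*}
0 \to \tF(Y, D) \to \bigoplus_i \tF(V_i, n_i x_i) \xrightarrow{d} \bigoplus_{i<j} F(E) \to \cdots,
\end{equation*}
and exactness at the middle term identifies $\tF(Y, D)$ with $\bigcap_i \tF(\sO_{X, x_i}, \fm_{x_i}^{-n_i})$ inside $F(E)$. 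For any tuple $(a_i) \in \prod_i F(E)$ the differences $(a_j - a_i)_{i<j}$ form a $1$-cocycle; writing it as $(b_j|_E - b_i|_E)_{i<j}$ for some $(b_i) \in \bigoplus_i \tF(V_i, n_i x_i)$ yields an element $a := a_i - b_i|_E \in F(E)$ (independent of $i$) satisfying $a - a_i = -b_i|_E \in \tF(\sO_{X, x_i}, \fm_{x_i}^{-n_i})$ for each $i$. Thus surjectivity of the lemma's map is equivalent to the vanishing $\check H^1(\mathcal U, \tF_{(Y, D)}) = 0$.

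The main obstacle is therefore this \v{C}ech vanishing on the semi-local pro-smooth scheme $Y$. Via the \v{C}ech-to-derived-functor spectral sequence it reduces to $H^1_{\rm Zar}(Y, \tF_{(Y, D)}) = 0$, a cohomological vanishing statement in the spirit of the Gersten/Cousin resolution for reciprocity sheaves; it should follow from the purity framework of \cite{Saito-Purity-Rec} applied to the semi-local structure of $Y$. An alternative route is to induct on $r$ and treat $r=2$ directly, constructing an explicit decomposition $b = b_1 + b_2$ with $b_i \in \tF(\sO_{X, x_i}, \fm_{x_i}^{-n_i})$ via Nisnevich patching on a compactification of $Y$; this isolates a two-term sum decomposition but ultimately invokes the same local-to-global principle.
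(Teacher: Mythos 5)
Your framing is right and you correctly reduce the lemma to a Čech $\check H^1$-vanishing statement on the semi-local curve $Y=\Spec A$, but the proposal stops exactly where the actual work begins: you assert that the vanishing ``should follow from the purity framework of \cite{Saito-Purity-Rec} applied to the semi-local structure of $Y$'' without proving it, and that assertion is nontrivial --- $\tF_{(Y,D)}$ is not quasi-coherent, so the usual semi-local Zariski vanishing does not apply, and one genuinely needs reciprocity-sheaf-specific machinery.

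The paper's proof does not try to prove a cohomological vanishing directly. After establishing (as you do, though the paper does it via Nisnevich descent along $\Spec E\sqcup_i \Spec\sO_{X,x_i}\to U$ rather than a Čech complex) the identification $\tF(U,D)=\cap_i\tF(\sO_{X,x_i},\fm_{x_i}^{-n_i})$, it first reduces to $k$ infinite by the pro-$\ell$ trick (which you omit but which is required for the next step). It then invokes Gabber's Presentation Theorem to produce an essentially \'etale map $U\to\A^1_{K_1}$ separating the $x_i$, uses \cite[Lem 4.2, 4.3]{Saito-Purity-Rec} to conclude that $(U,\sum m_i x_i)$ is a V-pair, and then applies \cite[Lem 4.4 (2),(3)]{Saito-Purity-Rec} to replace $U$ by its henselization $U^h$, for which the quotient $F(U^h\setminus\{x_i\})/\tF(U^h,D^h)$ is unchanged. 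Since $U^h=\sqcup_i\Spec\sO_{X,x_i}^h$ is a disjoint union, the ``CRT'' surjectivity becomes a tautology there, and Lemma \ref{lem:approx-h} transports it back. To turn your proposal into a proof you would need to actually carry out some version of this: the vanishing of your $\check H^1$ is precisely what these V-pair and henselization lemmas deliver, so citing them vaguely does not close the gap. If you want to pursue the cohomological route, the concrete target to prove is that $F(U\setminus\{x_i\})/\tF(U,D)\to F(U^h\setminus\{x_i\})/\tF(U^h,D^h)$ is an isomorphism, and that is exactly \cite[Lem 4.4]{Saito-Purity-Rec} once the V-pair structure is established via Gabber.
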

\begin{proof}
Let $A$ be  the semi-localization of $X$ at the points $x_i$ and denote by $D=\sum_i n_i x_i$ the divisor on $U:=\Spec A$.
(Note that we allow $|D|\varsubsetneq\{x_1,\ldots, x_r\}$.)
 We claim 
\eq{lem:CRT-RS1}{\tF(U,D)= \cap_{i=1}^r \tF(\sO_{X,x_i}, \fm_{x_i}^{-n_i}).}
Indeed, by definition $\tF(U,D)= \tF_{(U,D)}(U)$; furthermore
$\tF_{(U,D)}$ is a sheaf on $U_{\Nis}$ and is a subsheaf of the constant sheaf $F(K)$ 
(by \cite[Thm 6]{KSY-Rec} and \cite[Cor 3.2.3]{KSY-RecII});
since $\Spec E \sqcup_i \Spec \sO_{X, x_i}\to U$ is a Nisnevich cover the claim \eqref{lem:CRT-RS1} follows.

The natural map in the statement is compatible with pullbacks and pushforwards on both sides. Thus
we can apply the standard trick replacing $k$ by its maximal pro-$\ell$ extensions for various primes $\ell$,
 to  assume $k$ is infinite.
By  Gabber's Presentation Theorem (see, e.g., \cite[3.1.2]{CTHK}) 
we find a function field $K_1/k$ and an essentially \'etale morphism $\varphi: U\to \A^1_{K_1}$ such that
$\{x_1,\ldots, x_r\}=\varphi^{-1}\varphi(\{x_1,\ldots, x_r\})\cong \varphi(\{x_1,\ldots, x_r\})$.
 By \cite[Lem 4.2, Lem 4.3]{Saito-Purity-Rec} 
$(U, \sum_{i} m_i x_i)$ is a V-pair, for all $m_i\ge 0$.
 Let $U^h$ be the henselization of $U$ with respect to the radical in $A$ (see \cite[XI, \S2, Thm 2]{Raynaud})
and set $D^h:=D_{|U^h}$;
by \cite[Lem 4.4, (2), (3)]{Saito-Purity-Rec} we have an isomorphism
\[F(U\setminus\{x_1,\ldots, x_r\})/\tilde{F}(U,D)\xr{\simeq} F(U^h\setminus \{x_1,\ldots, x_r\})/\tilde{F}(U^h, D^h).\]
Now the statement follows from $U^h= \sqcup_i \Spec \sO_{X, x_i}^h$, see \cite[XI, \S2, Prop 1, 1)]{Raynaud}, 
\eqref{lem:CRT-RS1}, and Lemma \ref{lem:approx-h}.
\end{proof}
\begin{lemma}\label{lem:locLS45}
Let $F\in \RSC_{\Nis}$ and $\pi:\Spec L'\to \Spec L$ be a finite extension of henselian dvf's. 
Denote by $\sigma: K\to \sO_L$ a $k$-morphism, such that the composition $L\to \sO_L/\fm_L$ is a finite field extension;
denote by $\sigma': K\to \sO_{L'}$ the induced map.
Then we have 
\begin{enumerate}[label=(\arabic*)]
\item\label{locLS4} $(\pi_*b, f)_{L,\sigma} = (b, \pi^*f)_{L',\sigma'}$, \, $b\in F(L')$, $f\in \sO_{L}^\times$;  
\item\label{locLS5} $(a, \pi_*g)_{L,\sigma}= (\pi^*a, g)_{L',\sigma'}$,\, $a\in F(L)$, $g\in \sO_{L'}^\times$.
\end{enumerate}
\end{lemma}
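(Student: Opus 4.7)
The two identities (i) and (ii) are the local incarnations of the global projection formulas \ref{LS7} and \ref{LS8}. The plan is to reduce to those formulas by realizing the finite extension $\pi\colon \Spec L' \to \Spec L$ as the germ, at a distinguished pair of points $x' \mapsto x$, of a finite morphism $\pi_C \colon C' \to C$ between regular projective $K$-curves, and then to use approximation arguments to discard the auxiliary summands appearing in \ref{LS7}, \ref{LS8}.

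\textbf{Geometric model.} Choose, as in \ref{para:symbol-hdvf}, a regular projective $K$-curve $C$ and a closed point $x \in C$ realizing $(L,\sigma)$, with $\sO_{C,x}$ admitting a coefficient field. Writing $L' = L[\alpha]$ and approximating the minimal polynomial of $\alpha$ over $\sO_L$ by a polynomial with coefficients in $K(C)$ (Krasner's lemma, together with an extra step in characteristic $p$ to handle an inseparable factor), we obtain a finite extension $E'/K(C)$. Taking $C'$ to be the normalization of $C$ in $E'$, exactly one point $x' \in \pi_C^{-1}(x)$ satisfies $\sO_{C',x'}^h \cong \sO_{L'}$ compatibly with $\pi$ and $\sigma'$; since $k$ is perfect and we are in equicharacteristic, $\sO_{C',x'}$ also admits a coefficient field extending that of $\sO_{C,x}$. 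Write $y_1,\ldots,y_m$ for the remaining points of $\pi_C^{-1}(x)$, corresponding to parasitic finite extensions of $L$.

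\textbf{Killing auxiliary contributions and conclusion.} For (i), given $b \in F(L')$ and $f \in \sO_L^\times$, use Lemma \ref{lem:CRT-RS} to produce a lift $\tilde b \in F(K(C'))$ which represents $b$ modulo $\tF(\sO_{C',x'},\fm_{x'}^{-r})$ near $x'$ and which is regular at every $y_i$, and lift $f$ to a $\tilde f \in K(C)^\times$ which is a unit at $x$. The pullback $\pi_C^* \tilde f$ is then a unit at each $y_i$, so \ref{LS2} yields
\[(\tilde b, \pi_C^*\tilde f)_{C'/K, y_i} = v_{y_i}(\pi_C^*\tilde f)\,\Tr_{\kappa(y_i)/K}(\tilde b(y_i)) = 0,\]
and \ref{LS7} collapses to $(\pi_{C*}\tilde b, \tilde f)_{C/K, x} = (\tilde b, \pi_C^*\tilde f)_{C'/K, x'}$, which by the construction of \ref{para:symbol-hdvf} unfolds to identity (i). The argument for (ii) is dual: lift $a$ to a rational section $\tilde a$ with modulus at $x$, and use the Approximation Lemma on $C'$ to choose a lift $\tilde g \in K(C')^\times$ of $g$ with $\tilde g \in U^{(N)}_{y_i}$ for $N$ exceeding the modulus of $\pi_C^* \tilde a$ at $y_i$; then \ref{LS3} kills the auxiliary summands in \ref{LS8}, and the remaining equality unfolds to (ii). The main technical obstacle is the geometric step — producing the curve model $C' \to C$ with a distinguished preimage $x'$ realizing $L'/L$ and compatibly positioning the coefficient fields — which is routine in characteristic zero but requires additional care in positive characteristic when $L'/L$ is not separable.
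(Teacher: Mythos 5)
Your overall route is exactly the paper's: spread the extension out to a finite morphism of regular projective $K$-curves, use Lemma \ref{lem:CRT-RS} (for (1)) resp.\ the Approximation Lemma together with \ref{LS3} (for (2)) to kill the contributions of the auxiliary points in \ref{LS7}, \ref{LS8}, and then descend to the henselian statement. However, as written there is a genuine misstep in (1): you only require the lift $\tilde b$ to agree with $b$ modulo $\tF(\sO_{C',x'},\fm_{x'}^{-r})$ at $x'$. That is not enough to conclude $(\tilde b,\pi_C^*\tilde f)_{C'/K,x'}=(b,\pi^*f)_{L',\sigma'}$: the discrepancy is an element of $\tF(\sO_{C',x'},\fm_{x'}^{-r})$ paired against a \emph{general} unit $f$, and \ref{LS3} only kills pairings against $U^{(r)}$. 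For instance for $F=\G_a$ the class of $c/t$ paired with the unit $1+\lambda t$ gives residue $c\lambda\neq 0$. What is needed (and what Lemma \ref{lem:CRT-RS}, applied with modulus $0$ at $x'$, actually provides --- this is the choice made in the paper) is a representative congruent to $b$ modulo $F(\sO_{C',x'})$ and regular at the $y_i$; then \ref{LS2} kills the difference at $x'$ because $v_{x'}$ of a unit is zero.

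Two further points are hidden in ``unfolds to identity (1)/(2)''. For (1) one must still compare the pushforward along $C'\to C$ with the local $\pi_*$: using $E'\otimes_E L\cong\prod_{y/x}E'_y$, the summands at $y\neq x'$ land in $F(\sO_L)$ by the regularity you imposed and then die against the unit $f$ by \ref{LS2}; this comparison is the actual content of the descent and the paper writes it out. For (2), making $\tilde g\in U^{(N)}_{y_i}$ with $N$ beyond the modulus of $\pi_C^*\tilde a$ at $y_i$ handles \ref{LS3}, but you must in addition arrange that $\Nm_{E'/E}(\tilde g)$ agrees with $\Nm_{L'/L}(g)$ at $x$ to an order exceeding the modulus of $\tilde a$ there, which forces $\tilde g$ to be taken even closer to $1$ at the $y_i$ (continuity of the local norms), again as in the paper. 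Finally, the claim that $\sO_{C',x'}$ automatically admits a coefficient field extending that of $\sO_{C,x}$ ``since $k$ is perfect'' is not justified: local rings of curves over the (imperfect) function field $K$ need not admit coefficient fields at all when the residue extension is inseparable (this is precisely the caveat after Lemma \ref{lem:symbol-well-def}); the correct device here is the independence of the symbol from the chosen presentation (Lemma \ref{lem:symbol-well-def}), not an explicit coefficient field on $C'$.
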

\begin{proof}
We can spread out the situation as follows: There exists a finite and surjective morphism 
$\ol{\pi}: C'\to C$ between regular and projective $K$-curves, with function fields $E'=K(C')$, $E=K(C)$,
points $x'\in C'$ and $x=\ol{\pi}(x')\in C$,
and elements $\tilde{a}\in F(E)$, $\tilde{b}\in F(E')$, $\tilde{f}\in \sO_{C,x}^\times$, $g\in \sO_{C',x'}^\times$
inducing $\pi$, $\sigma$, $\sigma'$, $a$, $b$, $f$, $g$, respectively.
We prove \ref{locLS4}: 
By Lemma \ref{lem:CRT-RS} we find an element $b_1\in F(E')$ with 
$\tilde{b}-b_1\in F(\sO_{C',x'})$,  and  $b_1\in F(\sO_{C',y})$, for all $y\in \ol{\pi}^{-1}(x)\setminus\{x'\}$.
Since $\ol{\pi}^*\tilde{f}\in \sO^\times_{C', y}$, for all $y/x$, we obtain
\eq{lem:locLS451}{(b, \pi^*f)_{L',\sigma'} \stackrel{\ref{LS1}, \ref{LS2}}{=} \sum_{y/x}(b_1, \ol{\pi}^*f)_{C'/K, y}
\stackrel{\ref{LS7}}{=} (\ol{\pi}_*(b_1), \tilde{f})_{C/K,x}.}
Note $E'\otimes_E L\cong \prod_{y/x} E'_y$, where $E'_y$ is the henselization of $E'$ at $y$. 
Thus in $F(L)$ we have 
\[\ol{\pi}_*b_1= \sum_{y/x} \pi_{y*} b_1,\]
where $\pi_{y}: \Spec L\to \Spec E'_y$ is the natural map; 
in particular $\pi_{x'}=\pi$. Hence in $F(L)$
\[\ol{\pi}_* b_1\equiv \pi_*b\text{ mod } F(\sO_L);\]
this together with \eqref{lem:locLS451} and \ref{LS1} implies formula \ref{locLS4}.

Now \ref{locLS5}:
By the Approximation Lemma we find $g_1\in {E'}^\times$ such that 
\[(\ol{\pi}^*\tilde{a}, g_1)_{C'/K,y}=0, \quad y\in \pi^{-1}(x)\setminus\{x'\},\]
 and 
\[(\ol{\pi}^*\tilde{a}, g_1)_{C'/K,x'}= (\pi^*a, g)_{L',\sigma'}.\]
Furthermore we have the following equality in $L^\times$ 
\[\Nm_{E'/E}(g_1)= \prod_{y/x} \Nm_{E'_y/L}(g_1).\]
If $g_1$ is close enough to 1 at the points $y\in \ol{\pi}^{-1}(x)\setminus\{x'\}$
 we have $\Nm_{E'_y/L}(g_1)\in U^{(N)}_L$ for $N>>0$. 
Thus we can choose $g_1$ with the additional property 
\[(\tilde{a}, \Nm_{E'/E}(g_1))_{C/K,x}= (a, \Nm_{L'/L}(g))_{L,\sigma}.\]
The formula \ref{locLS5} now follows from \ref{LS8} and the above.
\end{proof}

\part{Applications }
\section{Algebraic groups and the local symbol}
In this section $k$ is a perfect field and $G$ is a commutative algebraic $k$-group.
Note that as sheaves on $\Sm$ we have $G=G_{\red}$ and hence we can always identify
$G$ with the smooth commutative $k$-group $G_{\red}$.
We fix an algebraic closure $\bar{k}$ of $k$; note $\Spec \bar{k}\in \Cor^{\rm pro}$.

\begin{para}\label{para:alg}
Let $G$ be a commutative algebraic $k$-group. Then $G\in \RSC_\Nis$, 
by \cite[Cor 3.2.5]{KSY-RecII}.
Let $L\in\Phi_{\le 1}$ have  residue field $\kappa$. 
Let $\iota: \kappa \inj  \bar{k}$ be a $k$-embedding.
We denote by  $L^{sh}_\iota$ the strict henselization of $L$ with respect to $\iota$.
Note that $L^{sh}_\iota$ is a henselian dvf of geometric type over $\bar{k}$.
We write 
\eq{para:RoSe-cond2}{(-,-)_{L^{sh}_\iota}:  G(L^{sh}_\iota)\times {L^{sh}_\iota}^\times 
                         \to G(\bar{k})}
for the symbol $(-,-)_{L^{sh}_\iota,\sigma}$ from \ref{para:symbol-hdvf}
with $\sigma :\bar{k}\inj \sO_{L,\iota}^{sh}$  the unique
coefficient field; in this case this {\em is} the symbol defined by Rosenlicht-Serre, see \cite[III, \S 1]{SerreGACC}.
If we choose a different $k$-embedding $\iota': \kappa\inj \bar{k}$, 
then we find an automorphism $\tau:\bar{k}\to \bar{k}$
with $\tau\circ \iota=\iota'$ inducing a (unique) isomorphism of $\sO_L$-algebras 
$\tau: \sO_{L,\iota}^{sh}\xr{\simeq} \sO_{L,\iota'}^h$ and by \eqref{lem:symbol-well-def0}
\[\tau( (a, f)_{L^{sh}_\iota}) = (\tau(a), \tau(f))_{L^{sh}_{\iota'}}.\]
We will usually drop the $\iota$ from the notation and write $L^{sh}= L^{sh}_\iota$.
We define the Rosenlicht-Serre conductor of $a\in G(L)$ by 
\[\RoSe_L(a):=\begin{cases} 
0, & \text{if } a\in G(\sO_L),\\ 
\min\{n\ge 1\mid (a, U^{(n)}_{L^{sh}})_{L^{sh}}=0\}, & \text{else}.
\end{cases}\]
Note that it is independent of the choice of $\iota: \kappa\inj \bar{k}$.
\end{para}

\begin{thm}\label{thm:rose-cond}
Let $G$ be a commutative algebraic $k$-group. 
\begin{enumerate}[label=(\arabic*)]
\item\label{thm:rose-cond01} The Rosenlicht-Serre conductor
 $\RoSe=\{\RoSe_L\}_{\td(L/k)=1}$ is a semi-continuous conductor of level 1 on $G$ (in the sense of Definitions 
\ref{defn:cond} and \ref{defn:c6}).
\item\label{thm:rose-cond02} Let $c^{G}$ be the motivic conductor of $G$ (see Definition \ref{defn:mot-cond})
 and denote by $(c^{G})^{\le 1}$ its restriction to $\Phi_{\le 1}$. Then $\RoSe=(c^{G})^{\le 1}$.
\end{enumerate}
In particular, the motivic conductor extends the Rosenlicht-Serre conductor to henselian dvf's over $k$
with non-perfect residue field and we have $\tG= \tG_{\RoSe}$ (see \ref{para:tFc} and \ref{not:ExtendingRSC} for notation).
\end{thm}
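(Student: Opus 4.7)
The plan is to verify (1) --- that $\RoSe$ is a semi-continuous conductor of level $1$ on $G$ --- and then deduce (2) and the concluding statement from the general machinery. For (2): the inequality $(c^G)^{\le 1}\le \RoSe$ is Corollary \ref{cor:mot-cond-mini} applied to the semi-continuous conductor $\RoSe$; the reverse $\RoSe_L(a)\le c^G_L(a)$ follows from Corollary \ref{cor:localLS3}, since $a\in \tG(\sO_L,\fm_L^{-n})$ pulls back to $\tG(\sO_{L^{sh}},\fm_{L^{sh}}^{-n})$, forcing $(a_{L^{sh}},U^{(n)}_{L^{sh}})_{L^{sh}}=0$. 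The identity $\tG=\tG_{\RoSe}$ is then immediate from Corollary \ref{cor:extending-cond}.

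For (1), axioms \ref{c1} and \ref{c2} are immediate from the definition of $\RoSe$ and the bilinearity \ref{LS1} of the Rosenlicht--Serre symbol. For \ref{c3}, given finite $f:\Spec L'\to \Spec L$ of ramification index $e$, I would decompose $L'\otimes_L L^{sh}\cong \prod_i (L')^{sh}$ (so that $f_*a$ restricted to $L^{sh}$ becomes a sum of pushforwards via maps $\pi_i:\Spec(L')^{sh}\to \Spec L^{sh}$) and invoke Lemma \ref{lem:locLS45}\ref{locLS4}: for $g\in U^{(\lceil n/e\rceil)}_{L^{sh}}$ we have $\pi_i^* g\in U^{(n)}_{(L')^{sh}}$, which the hypothesis $\RoSe_{L'}(a)\le n$ kills summand by summand. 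Axiom \ref{c5} follows from the classical Rosenlicht--Serre existence of a modulus for any section of $G$ \cite[III]{SerreGACC}, combined with Lemma \ref{lem:indepmod} to pass from the bound at generic points of boundary components to all henselian dvf points of $\sX$. Axiom \ref{c6} in dimension one reduces to the observation that $\RoSe_K(a_K)$ is by construction computed on $\sO^h_{U,z}$: on a suitable Nisnevich neighborhood of $z$ admitting a smooth compactification $\ol{Y}$, the modulus at $z$ equals $r$ by hypothesis, and other boundary points can be absorbed by enlarging $Y_\infty$ there.

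The crux is \ref{c4}. Given $a\in G(\A^1_X)$ with $\RoSe_{k(x)(t)_\infty}(\rho_x^* a)\le 1$ for every $x\in X$ with $\td(k(x)/k)=0$, set $b:=a-\pi^* i_0^* a$ where $\pi:\A^1_X\to X$ is the projection and $i_0$ the zero section. By \ref{c2}, each restriction $b_x\in G(\A^1_{k(x)})$ has $\RoSe$-conductor at $\infty$ bounded by $1$ and satisfies $b_x(0)=0$. After base change to $\overline{k(x)}$, the classical Rosenlicht--Serre theorem \cite[III, \S 1]{SerreGACC} asserts that any morphism $\A^1_{\overline{k(x)}}\to G_{\overline{k(x)}}$ of conductor $\le 1$ at $\infty$ is constant; thus $b_x=0$. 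Since $G$ is separated, the locus $b^{-1}(e_G)\subset \A^1_X$ is closed and contains $\pi^{-1}(x)$ for every closed $x\in X$; closed points being dense in the finite-type $k$-scheme $X$, this locus equals $\A^1_X$, so $b=0$ and $a\in \pi^* G(X)$.

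The principal obstacle is the classical input for \ref{c4}: the statement that a morphism $\A^1\to G$ over an algebraically closed field of conductor $\le 1$ at $\infty$ is constant requires the structure theory of commutative algebraic groups (abelian varieties, tori, and unipotent groups --- with Witt-vector and Artin--Schreier--type contributions in positive characteristic), which is packaged in \cite[III]{SerreGACC}. The density/separateness argument then promotes fibre-wise vanishing over closed points to vanishing on $\A^1_X$. Axioms \ref{c5} and \ref{c6} are standard consequences of the same classical theory; the remaining axioms are essentially formal manipulations with the local symbol.
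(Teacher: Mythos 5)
Most of your plan coincides with the paper's: part (2) and the final assertion are deduced exactly as in the paper from Corollaries \ref{cor:mot-cond-mini}, \ref{cor:localLS3} and \ref{cor:extending-cond}; your verifications of \ref{c1}--\ref{c3} are correct (for \ref{c3} the decomposition of $L'\otimes_L L^{sh}$ plus Lemma \ref{lem:locLS45}\ref{locLS4} is precisely what is needed); and for \ref{c4} you reduce to closed fibres just as the paper does, differing only in the fibrewise input: you quote the generalized-Jacobian fact that a morphism $\A^1_{\bar k}\to G_{\bar k}$ with reduced modulus at infinity is constant, whereas the paper gives a self-contained two-line symbol computation with $f=(t-x)/(t-y)\in U^{(1)}_{\bar k(t)_\infty}$ using \ref{LS2} and \ref{LS4}. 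Both routes are fine.

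The genuine gap is your treatment of \ref{c5} (and, less seriously, \ref{c6}). You cannot use Lemma \ref{lem:indepmod} to establish \ref{c5}: the proof of that lemma begins by invoking \ref{defn:cond}\ref{c5} for the given collection, so the appeal is circular. Moreover, the classical Rosenlicht--Serre theory only produces a modulus for maps from \emph{curves}, while \ref{c5} for a level-$1$ conductor quantifies over $a\in G(X)$ with $X$ of arbitrary dimension and demands a single proper modulus pair $(\ol X,X_\infty)$ bounding $\RoSe_L(\rho^*a)$ for \emph{every} henselian dvf point with $\td(L/k)\le 1$, including arbitrarily ramified ones; this uniformity is exactly the nontrivial input the paper takes from $G\in\RSC_{\Nis}$ (\cite[Cor 3.2.5]{KSY-RecII}, i.e.\ the existence of an SC-modulus in the sense of Definition \ref{defn:rec}), which gives $\rho^*a\in\tG(\sO_L,\fm_L^{-v_L(X_\infty)})$ and then Corollary \ref{cor:localLS3} converts this into $(\rho^*a, U^{(v_L(X_\infty))}_{L^{sh}})_{L^{sh}}=0$. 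The same issue affects your sketch of \ref{c6}: bounding the conductor at the canonical points over $z$ and at the boundary of $\ol Y$ is not enough, since the axiom requires the bound $\RoSe_L(a_L)\le v_L(Y_\infty)$ at every dvf point lying over a boundary point with ramification index $e$; the paper obtains this from the norm compatibility Lemma \ref{lem:locLS45}\ref{locLS5}, using that ${\rm Nm}_{L^{sh}/L^{sh}_y}(U^{(n_y e)}_{L^{sh}})\subset U^{(n_y)}_{L^{sh}_y}$, a step absent from your argument.
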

\begin{proof}
The last statement follows from Corollary \ref{cor:extending-cond}.
For \ref{thm:rose-cond01} we check that $\RoSe$ satisfies the properties from Definition \ref{defn:cond}.
\ref{c1} and \ref{c2} are obvious.  
Let $L'/ L$ be a finite extension of henselian dvf's  with $\td(L/k)=1$ and $a\in G(L')$.
Let $\kappa\inj \kappa'$ be the induced map on the residue fields and fix an embedding $\kappa'\subset \bar{k}$.
Then ${L'}^{sh}$ is finite over $L^{sh}$ and $e({L'}^{sh}/L^{sh})=e(L'/L)$.
Thus \ref{c3} follows directly from Lemma \ref{lem:locLS45}\ref{locLS4}.
To check \ref{c4}  
first observe, if $a\in G(\A^1_X)$ is not in $G(X)$ (via pullback), then
we find a closed point $x\in X$ such that $a_{\A^1_x}$ is not in $G(x)$. 
(Since $G$ is a finite type $k$-scheme and $X$ is Jacobson.) Thus it  suffices to show the following:
\begin{claim*}
 Let $\kappa/k$ be a finite field extension and set $\kappa(t)_\infty=\Frac(\sO_{\P^1_\kappa,\infty}^h)$.
Assume $a\in G(\A^1_\kappa)$ has $\RoSe_{\kappa(t)_\infty}(a)\le 1$. Then $a\in G(\kappa)$.
\end{claim*}
Else $a\not\in G(\kappa)$. Then its pullback $a_{\bar{k}}\in G(\A^1_{\bar{k}})$ is not in $G(\bar{k})$
and we can thus find two points $x, y\in \A^1(\bar{k})=\bar{k}$ such that $a_{\bar{k}}(x)\neq a_{\bar{k}}(y)$.
Take $f= (t-x)/(t-y)\in \bar{k}(t)$. Then $f\in U^{(1)}_{\bar{k}(t)_\infty}$ and we obtain
\[0  = (a_{\bar{k}}, f)_{\bar{\kappa}(t)_\infty} = - a_{\bar{k}}(x) + a_{\bar{k}}(y),\]
where the first equality follows from $\RoSe_{\kappa(t)_\infty}(a)\le 1$ and the second from \ref{LS4} and \ref{LS2}.
This yields a contradiction and thereby proves the claim.
\ref{c5} follows from the fact that $G$ is a reciprocity sheaf and Corollary \ref{cor:localLS3}.
Finally \ref{c6} (semi-continuity for $n=1$).
Assume $C$ is a  smooth  $k$-curve, $x\in C$ a closed point and $a\in G(C\setminus\{x\})$
with $\RoSe_{L_x}(a_x)\le n$, where $L_x=\Frac(\sO_{C,x}^h)$ and $a_x\in G(L_x)$ denotes the pullback of $a$.
Let $\ol{C}$ be the smooth compactification of $C$ and let $C_\infty= (\ol{C}\setminus C)_\red$.
Choose $N$ such that $\RoSe_{L_y}(a_y)\le N$, for all $y\in |C_\infty|$.
Then $(\ol{C}, n\cdot \{x\}+ N\cdot C_\infty)$ is a compactification of 
$(C, n\cdot\{x\})$ and we claim 
\eq{thm:rose-cond3}{\RoSe_{\ol{C}}(a)\le (n\cdot \{x\}+ N\cdot C_\infty).}
Indeed, let $\Spec L\to C\setminus\{x\}$ be a henselian dvf point with $\td(L/k)=1$.
If $\Spec \sO_{L}$ maps to $C\setminus\{x\}$, then
$\RoSe_{L}(a_{L})=0$. 
Else we get a finite extension $L^{sh}/ L^{sh}_y$, for some $y \in \{x\}\cup |C_\infty|$, 
say of ramification index $e$.
Let $u\in U^{(n_y e)}_{{L}^{sh}}$, where $n_x=n$ and $n_y=N$, for $y\neq x$.
 By Lemma \ref{lem:locLS45}\ref{locLS5} we have 
\[(a_{L}, u)_{L^{sh}} =(a_{L_y}, {\rm Nm}_{{L}^{sh}/L^{sh}_y} (u))_{L^{sh}_y},\]
which vanishes by  ${\rm Nm}_{{L}^{sh}/L^{sh}_y} (u)\in U^{(n_y)}_{L^{sh}_y}$ and $\RoSe_{L_y}(a_y)\le n_y$.
This proves  the claim \eqref{thm:rose-cond3}, hence 
\ref{c6}, and finishes the proof of \ref{thm:rose-cond01}.

By  Corollary \ref{cor:mot-cond-mini} we have $c^{G,1}\le \RoSe$.
Thus for \ref{thm:rose-cond02} it suffices to show:
 If $a\in \tilde{G}(\sO_L,\fm_L^{-r})$, for some  $L\in\Phi_{\le 1}$ and $r\ge 1$, 
then $\RoSe_L(a)\le r$. This follows from Corollary \ref{cor:localLS3}. 
\end{proof}

\begin{remark}
An extension of $\RoSe$ to dvf's of higher transcendence degree over $k$ was also constructed in
\cite{Kato-Russell12} (char 0)  and  \cite{Kato-Russell} (char $p>0$). 
The construction essentially coincides with the extension  from Theorem \ref{thm:rose-cond}, 
but in {\em loc. cit.} the log version is considered, whereas here non-log one,
c.f. Theorem \ref{thm:filF-motivic} below.
\end{remark}

\section{Differential forms and irregularity of rank 1 connections}\label{sec:Conn1}
In this section we assume that the base field $k$ has {\em characteristic $0$}.
We fix a ring homomorphism $R\to k$ which induces the structure of an $R$-scheme on any $k$-scheme.
(Of main interest are $R= k$ or $\Z$.)

\subsection{K\"ahler differentials}
\begin{para}\label{para:dR}
Let $X\in \Sm$.
We denote by $\Omega_{X/R}^\bullet$ the de Rham complex on $X$ relative to $R$ and
by $d: \Omega_{X/R}^\bullet\to \Omega_{X/R}^{\bullet+1}$ the differential. 
We set $\Omega^\bullet_{X}:=\Omega^\bullet_{X/\Z}$.
We have an exact sequence 
\eq{para:dR1}{\Omega^1_{R}\otimes_R\Omega^{\bullet-1}_{X}\to \Omega^\bullet_X\to \Omega^\bullet_{X/R}\to 0.}
We denote the Nisnevich sheaf on $\Sm$ given by
$X\mapsto H^0(X, \Omega^q_{X/R})$ by $\Omega^q_{/R}$ and set $\Omega^q:=\Omega^q_{/\Z}$.
By \cite[Thm A.6.2]{KSY-Rec}  and \cite[Cor 3.2.5]{KSY-RecII} we have
$\Omega^q\in \RSC_{\Nis}$.  Since the action of finite correspondences
on $\Omega^\bullet$ is $\Omega^\bullet_k$-linear (a fortiori it is $\Omega^\bullet_R$-linear),
the morphism $\alpha^*: \Omega^q(Y)\to \Omega^q(X)$, $\alpha\in \Cor(X,Y)$, induces 
via \eqref{para:dR1} the structure of a Nisnevich sheaf with transfers on $\Omega^q_{/R}$ and we obtain
$\Omega^q_{/R}\in \RSC_{\Nis}$.
\end{para}

\begin{lemma}\label{lem:d-map-PST}
The differential $d: \Omega^q_{/R}\to \Omega^{q+1}_{/R}$ is a map in $\RSC_{\Nis}$.
\end{lemma}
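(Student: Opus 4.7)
The plan is to verify that $d\colon \Omega^q\to \Omega^{q+1}$ commutes with the action of every finite correspondence; since both sides already lie in $\RSC_{\Nis}$ and $d$ is plainly a morphism of Nisnevich sheaves on $\Sm$, this will suffice.

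First I would reduce to a single prime correspondence. Let $V\subset X\times Y$ be integral and finite surjective over a connected component of $X$, with projections $p\colon V\to X$ and $q\colon V\to Y$. In characteristic zero one may choose a resolution of singularities $\pi\colon \widetilde V\to V$ with $\widetilde V$ smooth, and the transfer structure on $\Omega^q$ provided by \cite[Thm A.6.2]{KSY-Rec} then takes the form $V^* = \Tr_{p\circ\pi}\circ (q\circ\pi)^*$, where $\Tr_{(-)}$ denotes the Grothendieck trace on differentials along a finite surjective morphism between smooth $k$-schemes. Pullback of differential forms commutes with $d$ by functoriality of the de Rham complex, so the compatibility of $V^*$ with $d$ reduces to that of $\Tr_f$ for a finite surjective $f\colon W\to X$ of smooth $k$-schemes.

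Next I would show $d\circ \Tr_f = \Tr_f\circ d$. Since $X$ is smooth (hence normal), both $\Omega^q_X$ and $f_*\Omega^q_W$ inject into their stalks at the generic point $\eta\in X$, so the equality can be tested after restricting to $\eta$. Over $\eta$, because $\ch(k)=0$, the morphism $f$ is \'etale; after passing to a finite extension of $K(X)$ that splits $f$, the trace becomes the sum over the $K(X)$-embeddings $K(W)\hookrightarrow \overline{K(X)}$, which manifestly commutes with $d$. Descending back to $X$ via the injection into the generic stalk gives $d\circ \Tr_f=\Tr_f\circ d$ on $X$.

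Combining the two steps shows that $d$ is $\Cor$-linear, hence a morphism in $\PST$; together with $\Omega^q,\Omega^{q+1}\in \RSC_{\Nis}$ this yields the lemma. The only delicate point in this plan is the compatibility of the Grothendieck trace with the de Rham differential, which, as sketched, follows from generic \'etaleness (available because $\ch(k)=0$) combined with the torsion-freeness of $\Omega^q_X$ as an $\sO_X$-module for $X$ smooth.
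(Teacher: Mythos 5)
Your overall strategy is the same as the paper's: check $\Cor$-linearity of $d$ by testing at the generic point of $X$ (legitimate since $\Omega^q$ is locally free, hence torsion-free, on smooth schemes), reduce a prime correspondence to a pullback followed by a finite pushforward, identify that pushforward with the trace on differentials, and check that the trace commutes with $d$. Two remarks. First, your opening formula $V^*=\Tr_{p\circ\pi}\circ(q\circ\pi)^*$ is not correct as stated: after a resolution $\pi\colon\widetilde V\to V$ the map $p\circ\pi\colon\widetilde V\to X$ is proper and generically finite but in general not finite, so the trace you invoke (defined for finite surjective morphisms of smooth schemes) does not apply to it; moreover, the identification of the abstract transfer structure of \cite[Thm A.6.2]{KSY-Rec} with such a trace--pullback formula is exactly the input that must be justified rather than read off. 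The paper sidesteps both points by first shrinking $X$ around its generic points (using injectivity of $\Omega^q(X)\to\Omega^q(U)$ for $U\subset X$ dense open), so that in characteristic zero the prime correspondence $Z$ becomes finite \'etale over $X$; then $Z^*=f_*g^*$ for the two projections $f\colon Z\to X$, $g\colon Z\to Y$, and $f_*=\Tr_f\otimes\id$ on $f_*\Omega^q_Z=f_*\sO_Z\otimes_{\sO_X}\Omega^q_X$ by \cite[Prop 2.2.23]{CR11}. Your generic-point reduction makes this repair painless, and you should phrase the first step that way. Second, for the identity $d\circ\Tr_f=\Tr_f\circ d$ your argument (localize at the generic point, where $f$ is \'etale, split it by a finite extension so the trace becomes a sum over embeddings, and descend by torsion-freeness) is a perfectly good self-contained replacement for the paper's citation of \cite[II, Proof of Prop (2.2), case 2]{HaDR}.
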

\begin{proof}
We have to show, that if $\alpha\in\Cor(X, Y)$ is a finite correspondence, $X,Y\in \Sm$,
then $\alpha^* d=d \alpha^*$ as maps $\Omega^q_{/R}(Y)\to \Omega^q_{/R}(X)$.
Since the restriction $\Omega^q_{/R}(X)\to \Omega^q_{/R}(U)$ is injective for any dense open $U\subset X$
(by \cite[Thm 6]{KSY-Rec}),
it suffices to verify the equality after shrinking $X$ arbitrarily around its generic points. In particular we can assume, that 
$X$ is connected and $\alpha=Z\subset X\times Y$ is a prime correspondence which is finite \'etale over  $X$ 
(here we use ${\rm char}(k)=0$).
Denote by $f:Z\to X$ and $g:Z\to Y$ the maps induced by projection. Then $Z^*= f_*g^*$. The compatibility of $d$
with $g^*$ is clear. Hence it remains to show $f_*d=df_*$ for a finite \'etale map $f:Z\to X$ between smooth schemes.
In this case, we have $f_*\Omega^q_{Z/R}=f_*\sO_Z\otimes_{\sO_X}\Omega^q_{X/R}$ and 
$f_*=\Tr_{f}\otimes \id_{\Omega^q_{/R}}$,
by \cite[Prop 2.2.23]{CR11}. Thus the looked for compatibility is shown 
as in \cite[II, Proof of Prop (2.2), case 2]{HaDR}. 
\end{proof}

\begin{para}\label{para:cond-dR}
Let $L\in\Phi$, with local parameter $t\in\fm_L\subset \sO_L$.
We denote $\Omega^\bullet_{\sO_L/R}(\log)$ the dga of logarithmic differentials, i.e.,
the graded subalgebra of $\Omega^\bullet_{L/R}$ generated by $\Omega^\bullet_{\sO_L/R}$
and $\dlog t$. In particular, $\Omega^0_{\sO_L/R}(\log)=\sO_L$.
For $q\ge 0$ and $a\in \Omega^q_{L/R}$, we define
\[c_L^{\dR}(a):=\begin{cases}
0, & \text{if }a\in \Omega^q_{\sO_L/R},\\
\min\left\{n\ge 1\mid a\in \frac{1}{t^{n-1}}\cdot\Omega^q_{\sO_L/R}(\log)\right\}, & \text{else}.
\end{cases}\]
\end{para}

\begin{thm}\label{thm:cdr-mot}
For all $q\ge 0$, the collection $c^{\dR}=\{c^{\dR}_L\}$ defined in \ref{para:cond-dR} 
coincides with the motivic conductor,
i.e., (see Definition \ref{defn:mot-cond})
\[c^{\dR}= c^{\Omega^q_{/R}}.\]
Furthermore, the restriction $(c^{\dR})^{\le q+1}$ 
 is a semi-continuous conductor.
\end{thm}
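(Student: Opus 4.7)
The plan is to verify the axioms of Definition \ref{defn:cond} and Definition \ref{defn:c6} for $c^{\dR}$ at level $q+1$ (and at level $\infty$), deduce one inequality from Corollary \ref{cor:mot-cond-mini}, and obtain the reverse via an identification of the local symbol on $\Omega^q$ with the classical residue pairing together with Corollary \ref{cor:localLS3}. Axioms (c1) and (c2) are immediate from the definition of $c^{\dR}$. For (c3), if $L'/L$ is a finite extension of henselian DVFs of ramification index $e$ with uniformizer $t' \in L'$ and $t = u \cdot {t'}^e \in L$, $u$ a unit, a direct computation on K\"ahler differentials gives
\[ \Tr_{L'/L}\Bigl(\tfrac{1}{{t'}^{m-1}}\Omega^q_{\sO_{L'}}(\log)\Bigr) \subset \tfrac{1}{t^{\lceil m/e\rceil - 1}}\Omega^q_{\sO_L}(\log). \]
Axiom (c5) follows by extending $\omega \in \Omega^q(X)$ with log poles of some uniform order $N$ along a simple-normal-crossings compactification, and (c6) is a Nisnevich-local property of log-pole filtrations.

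The technical heart of the conductor verification is axiom (c4) at level $q+1$. Write $\omega \in \Omega^q(\A^1_X)$ uniquely as $\omega = \alpha(t) + dt \wedge \beta(t)$ with $\alpha(t) = \sum_i \alpha_i\, t^i \in \Omega^q(X)[t]$ and $\beta(t) = \sum_j \beta_j\, t^j \in \Omega^{q-1}(X)[t]$. Substituting $s = 1/t$ at a point $x \in X$ and expanding at $\infty \in \P^1_{k(x)}$, the condition $c^{\dR}_{k(x)(t)_\infty}(\omega_x) \le 1$ is equivalent to the vanishing of the images $(\alpha_i)_x \in \Omega^q_{k(x)/k}$ for all $i \ge 1$ and $(\beta_j)_x \in \Omega^{q-1}_{k(x)/k}$ for all $j \ge 0$. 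Assuming this for every $x \in X$ with $\td(k(x)/k) \le q$ and using that any nonzero section of $\Omega^r(X)$ for $r \le \dim X$ is detected by its restriction to the generic point of some smooth closed subvariety of dimension $r \le q$ through an appropriate closed point, we conclude $\alpha_i = 0$ for $i \ge 1$ and $\beta_j = 0$ globally on $X$; hence $\omega = \pi^* \alpha_0 \in \pi^* \Omega^q(X)$. Since the analogous hypothesis with \emph{all} $x$ is strictly stronger, the same argument verifies (c4) at level $\infty$ as well, so that $c^{\dR}$ is a semi-continuous conductor at both levels. Corollary \ref{cor:mot-cond-mini} then yields $c^{\Omega^q} \le c^{\dR}$ globally.

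For the reverse inequality, let $\omega \in \widetilde{\Omega^q}(\sO_L, \fm_L^{-n})$. Since $\mathrm{char}(k) = 0$, the residue field $\kappa$ of $\sO_L$ lifts canonically to a coefficient field $\sigma : \kappa \hookrightarrow \sO_L$, and Corollary \ref{cor:localLS3} (with $L_1 = L$, $e = 1$) gives $(\omega, u)_{L, \sigma} = 0$ for every $u \in U^{(n)}_L$. The key step is to identify the local symbol on $\Omega^q$ with the classical residue pairing
\[ (\omega, f)_{L, \sigma} \;=\; \operatorname{res}_L(\omega \wedge \dlog f) \quad \text{in } \Omega^q_\kappa, \]
where $\operatorname{res}_L : \Omega^{q+1}_L \to \Omega^q_\kappa$ is the standard residue; this is done by checking that the right-hand side satisfies the characterizing axioms \ref{LS1}--\ref{LS4} of \ref{para:symbol} and invoking uniqueness. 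Testing against $u = 1 + a\, t^m$ for varying $m \ge n$ and $a$ (and exploiting bilinearity and $\fm_L$-adic continuity), the vanishing of all such residues forces the expansion $\omega = \eta + \xi \wedge dt$ to satisfy $\eta \in \frac{1}{t^{n-1}}\Omega^q_{\sO_L}$ and $\xi \in \frac{1}{t^n}\Omega^{q-1}_{\sO_L}$, i.e., $\omega \in \frac{1}{t^{n-1}}\Omega^q_{\sO_L}(\log)$, so $c^{\dR}_L(\omega) \le n$. Combining with the previous step, we obtain $c^{\dR} = c^{\Omega^q}$.

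The main obstacle is the symbol-residue identification together with the ensuing residue computation in the last step: verifying the four local-symbol axioms for the trace-residue pairing on $\Omega^q$, and then extracting the precise pole structure of $\omega$ (including the log part) from the vanishing of all residues against $U^{(n)}_L$. The secondary delicate point is the density argument in axiom (c4) at level $q+1$, ensuring that restrictions to subvarieties of dimension $\le q$ genuinely detect all global sections of $\Omega^q(X)$ and $\Omega^{q-1}(X)$.
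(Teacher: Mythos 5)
Your overall architecture (verify \ref{c1}--\ref{c6} for $c^{\dR}$ at level $q+1$, get $c^{\Omega^q}\le c^{\dR}$ from Corollary \ref{cor:mot-cond-mini}, then prove the reverse inequality by identifying the local symbol on $\Omega^q$ with a residue pairing and invoking Corollary \ref{cor:localLS3}) is the paper's route, and the conductor axioms are handled in essentially the same way (your (c3), (c5), (c6) are glossed but follow the same lines). The genuine gap is in the last step: you apply Corollary \ref{cor:localLS3} only with $L_1=L$ and test $\omega$ against units of $L$ itself, claiming that the vanishing of $\Res_L(\omega\wedge\dlog u)$ for all $u\in U^{(n)}_L$ forces $\omega\in \fil{n}=\tfrac{1}{t^{n-1}}\Omega^q_{\sO_L}(\log)$. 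This implication is false. Writing $\hat L\cong K((t))$ via a coefficient field, for $u\in L^\times$ the form $\dlog u$ only involves $dt$ and exact forms $da$ with $a\in K$; hence the $dt$-part $\xi\wedge dt$ of $\omega$ enters every residue only through terms $\xi_j\wedge da$ with $a\in K$. If $\td(K/k)\le q-1$ (which is unavoidable, since $\Phi$ contains fields of every transcendence degree $\ge 1$), these terms vanish identically and the polar part of $\xi$ is invisible. Concretely, for $q=1$, $L=\Frac(k[t]^h_{(t)})$ and $\omega=dt/t^{r+1}$ one has $(\omega,u)_{L,\sigma}=0$ for \emph{every} $u\in L^\times$ (since $\hat\Omega^2_{K((t))}=0$ here), yet $c^{\dR}_L(\omega)=r+1$; your argument would conclude $c^{\Omega^1}_L(\omega)\le n$ for all $n$, which is wrong.

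The paper's proof avoids this by enlarging $L$: it adjoins an indeterminate and works in $L_x=\Frac(\sO_L[x]^h_{(t)})$, an extension of ramification index $1$ with residue field $K(x)$ --- note that Corollary \ref{cor:localLS3} is deliberately stated for arbitrary (not necessarily algebraic or finitely generated) extensions precisely so that it applies here. For $a\in\fil{r+1}$, writing $a\equiv \tfrac{1}{t^r}\alpha+\beta\tfrac{dt}{t^{r+1}}$ mod $\fil{r}$ with $\alpha\in\Omega^q_K$, $\beta\in\Omega^{q-1}_K$, one tests against the single unit $1-xt^r\in U^{(r)}_{L_x}$ and computes
\[(a,1-xt^r)_{L_x,\sigma}=\Res_t\bigl(a\,\dlog(1-xt^r)\bigr)=-rx\alpha+\beta\,dx \quad\text{in }\Omega^q_{K(x)},\]
and the fresh variable $x$ makes the two summands independent, so the vanishing forces $\alpha=0$ and $\beta=0$, i.e.\ $a\in\fil{r}$; a downward induction then gives $\widetilde{\Omega}^q(\sO_L,\fm_L^{-n})\subset\fil{n}$. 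Your residue--symbol identification via uniqueness of \ref{LS1}--\ref{LS4} is fine and is also what the paper uses, but without the passage to $L_x$ (or some equivalent enlargement of the test class) the $\dlog t$-component of $\omega$ cannot be controlled, so the reverse inequality does not follow as you have written it.
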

\begin{proof}
We start by showing that $c^{\dR}$ is a semi-continuous conductor of level $q+1$.
Properties \ref{c1} and \ref{c2} of Definition \ref{defn:cond} are obvious.

\ref{c3}. Let $L'/L$ be a finite extension of henselian dvf with ramification index $e=e(L'/L)$, 
and denote by $f: \Spec L'\to \Spec L$ the induced map. Let $a\in \Omega^q_{L'/R}$. We have to show:
\eq{thm:cdr-mot0}{c_L^{\dR}(f_*a)\le \left\lceil\frac{c_{L'}^{\dR}(a)}{e}\right\rceil.}
We know that $f_*$ restricts to $\Omega^q_{\sO_{L'}/R}\to \Omega^q_{\sO_L/R}$ and by the well-known formula
$f_*\dlog=\dlog\circ \Nm_{L'/L}$ also to
\eq{thm:cdr-mot1}{f_*:\Omega^q_{\sO_{L'}/R}(\log)\to \Omega^q_{\sO_L/R}(\log).}
Moreover, $f_*$ is continuous with respect to the
$\fm_L$-adic topology (which on $\Omega^q_{L'/R}$ is the same as the $\fm_{L'}$-adic topology).
We may therefore replace $\Omega^q_{L'/R}$ and $\Omega^q_{L/R}$ by the  corresponding completed modules.
Furthermore, it suffices to treat the two cases separately 
in which $L'/L$ is either totally ramified or unramified.

{\em 1st case: $e=1$.} In this case a local parameter $t\in \sO_L$ is also a local parameter of $\sO_{L'}$ and 
hence \eqref{thm:cdr-mot0} follows directly from \eqref{thm:cdr-mot1} and the  $L$-linearity of $f_*$.

{\em 2nd case: $e>1$, $L$, $L'$ complete and $\sO_L/\fm_L=\sO_{L'}/\fm_{L'}$.}
Let $K\inj \sO_L$ be a coefficient field; it also defines a coefficient field of $\sO_{L'}$. 
Let $\tau\in \sO_{L'}$ and $t\in \sO_L$ be local parameters. Then we can identify $L'=K((\tau))$ and
$\tfrac{1}{\tau^{n-1}}\cdot \widehat{\Omega}^q_{\sO_{L'}/R}(\log)$ with the $\tau$-adic completion of
\[\tfrac{1}{\tau^{n-1}}\cdot\left((K[\tau]\otimes_K\Omega^q_{K/R})\oplus 
 (K[\tau]\dlog\tau\otimes_K\Omega^{q-1}_{K/R})\right).\]
Furthermore, observe that $\frac{1}{\tau^i}\dlog\tau= -\frac{1}{i} d(\frac{1}{\tau^i})$, $i\ge 1$.
 Since $f_*$ commutes with the differential
(by \ref{lem:d-map-PST}) we are reduced to show:
\eq{thm:cdr-mot2}{ f_*(\tfrac{1}{\tau^i})\in \tfrac{1}{t^{r-1}}\sO_L, \quad 
r:= \lceil \tfrac{n}{e}\rceil, \text{ for all }i\in [1,n-1]. }
We compute for $i\in[1,n-1]$
\begin{align*}
\fm_L^r\cdot df_*(\tfrac{1}{\tau^i}) & = \fm_L^r\cdot  f_*(-i\tau^{-i-1}d\tau)\\     
                                                       &\subset  f_*(\fm_{L'}^{er-i-1} d\tau)\\
                                                        &\subset f_*(\Omega^1_{\sO_{L'}/R})\subset \Omega^1_{\sO_L/R}.
\end{align*}
This implies \eqref{thm:cdr-mot2}, once we observe that in characteristic zero we have
$\fm_L^r \cdot da\in \Omega^1_{\sO_L/R}$ if and only if $\fm_L^{r-1}\cdot a\in \sO_L$, for any $a\in L=K((t))$.

\ref{c4} for $c^{\dR, q+1}$ follows directly from the following  facts, where $A$ is a finite type smooth $k$-algebra:
\begin{enumerate}[label=(\roman*)]
\item $\Omega^q_{A[t]/R}=
(k[t]\otimes_k\Omega^q_{A/R}) \oplus (\Omega^{q-1}_{A/R}\otimes_k \Omega^1_{k[t]/k})$;
\item\label{thm:cdr-mot:c4ii}
 for any non-zero $\alpha\in \Omega^{q}_{A/R}$ there exists a prime ideal $\fp\subset A$ with $\td(k(\fp)/k)=q$,
where $k(\fp)= A_\fp/\fp$, such that the image of $\alpha$ in $\Omega^{q}_{k(\fp)/R}$ is non-zero;
\item $H^0(\P^1_k, \Omega^1_{\P^1/k}(\log \infty))=0$, $H^0(\P^1, \sO_{\P^1})=k$.
\end{enumerate}
(Note,  \ref{thm:cdr-mot:c4ii} is easy for $R=k$ and follows in general from the natural map 
$\Omega^q_{/R}\to \Omega^q_{/k}$.)

For \ref{c5} it suffices to observe that if $a\in H^0(X, \Omega^q_{X/R} \otimes_{\sO_X}\sO_X(D))$,
for some proper modulus pair $(X,D)$, then $c^{\dR}_X(a)\le D$. 

Finally, \ref{c6}. Let $U=\Spec A$ be  smooth affine  and $Z\subset U$ a smooth divisor which we can assume to be
principal $Z=\Div(t)$. Let 
\[a= \frac{1}{t^{r-1}} a_1 + \frac{1}{t^{r-1}}a_2 \dlog t, \quad 
a_1\in\Omega^q_{A/R}, a_2\in \Omega^{q-1}_{A/R}, r\ge 1.\]
Let $(\ol{Y}, \ol{Z}+\Sigma)$ be a compactification of $(U,Z)$ with $\ol{Z}_{|U}=Z$ and $\ol{Y}$ normal.
Let $\ol{Y}=\cup V_i$ be an open covering
such that $V_i=\Spec B_i$, $\Sigma_{|V_i}=\Div (f_i)$, and $\ol{Z}_{|V_i}=\Div(\tau_i)$, with
$\tau_i, f_i\in B_i$. Note that $\Spec B_{i}[1/f_i]\subset U$ is open,  for all $i$.
Hence, in $B_{i}[1/f_i]$ we can write $t=\tau_i e_i$, with $e_i\in (B_i[1/f_i])^\times$. 
Let $E_i$ be the Cartier divisor on $V_i$ defined by $e_i$. We have $|E_i|\subset |\Sigma_{|V_i}|$.
By Lemma \ref{lem:Cart-div} below,  there exists $N_1>>0$, such that $v_L(E_i)\le N_1  v_L(\Sigma_{|V_i})$, for all 
$\Spec L\to U$ and all $i$.
Furthermore, there exists an $N_2\ge 0$ such that 
$f_i^{N_2} a_1\in \Omega^q_{B_i/R}$ and $f_i^{N_2} a_2\in \Omega^{q-1}_{B_i/R}$, for all $i$.
Choose $N\ge r\cdot N_1+N_2$. Let $\rho: \Spec L\to U$, $L\in \Phi$. 
Assume the closed point of $\Spec \sO_L$ maps into $|\ol{Z}+\Sigma|\cap V_i$, for some $i$.
Then 
\begin{align*}
c_L(\rho^*a) & \le (r-1)v_L(\ol{Z}) +(r-1)v_L(E_i)+ N_2 v_L(\Sigma)+1\\
                  & \le (r-1)v_L(\ol{Z}) +(r-1)N_1 v_L(\Sigma) +N_2v_L(\Sigma)+1\\
                  &\le v_L(r\cdot \ol{Z}+ N\cdot \Sigma).
\end{align*}
Hence 
$c^{\dR}_{\ol{Y}}(a)\le (r\cdot \ol{Z} + N\cdot \Sigma)$, which proves \ref{c6}. 

Thus $c^{\dR}$ is a semi-continuous conductor on $\Omega^q_{/R}$ 
and Theorem \ref{thm:rec-cond}\ref{thm:rec-cond3} yields
for $n\ge 1$
\[\fil{n}:=\tfrac{1}{t^{n-1}}\cdot\Omega^q_{\sO_L/R}(\log)\subset \widetilde{\Omega^q_{/R}}(\sO_L,\fm^{-n}_L),\]
for any  $L\in\Phi$ with local parameter $t\in \sO_L$.
It remains to show the other inclusion. By Corollary \ref{cor:localLS3}
it suffices to show the following:
Let $K\inj \sO_L$ be some coefficient field and extend it in the canonical way to $\sigma: K(x)\inj \sO_{L_x}$,
where $x$ is a variable and $L_x=\Frac(\sO_L[x]_{(t)}^h)$. Assume $a\in \fil{r+1}$.
Then the following implication holds
\eq{thm:cdr-mot3}{(a, 1-xt^{r})_{L_x,\sigma}=0 \,\Rightarrow\, a\in \fil{r},}
where the local symbol on the left hand side is the one from \ref{para:symbol-hdvf} for $\Omega^q_{/R}$.
Since the local symbol for $\Omega^q_{/R}$ is uniquely determined by \ref{LS1} - \ref{LS4}, we see that it is given by 
\[(a, 1-xt^{r})_{L_x,\sigma}= \Res_t(a\dlog (1-xt^{r})),\]
where we use the isomorphism $L_x=K(x)((t))$ defined by $\sigma$ to compute the residue symbol on the right.
To prove the implication \eqref{thm:cdr-mot3} it suffices to consider $a$ modulo $\fil{r}$;  we have 
\[a\equiv \frac{1}{t^r}\alpha + \beta\frac{dt}{t^{r+1}}\quad \text{mod } \fil{r},\]
for $\alpha\in \Omega^{q}_{K/R}$, $\beta\in \Omega^{q-1}_{K/R}$.
We compute in $\Omega^{q}_{K(x)/R}$
\[\Res_t(a\dlog (1-xt^{r}))=-rx\alpha+\beta dx.\]
This shows \eqref{thm:cdr-mot3} and completes the proof.
\end{proof}

\begin{lemma}\label{lem:Cart-div}
Let $X$ be a noetherian integral normal scheme, $E$, $F$ two Cartier divisors on $X$ and assume $F$ is effective.
If $|E|\subset |F|$, then there exists $N\ge 1$, such that for all maps $\Spec \sO\to X$ 
whose image is not contained in $|F|$,
with $\sO$ a DVR with valuation $v$, we have $v(E)\le N\cdot v(F)$.
\end{lemma}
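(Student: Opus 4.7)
The plan is to reduce to an affine local problem and then invoke the fact that a noetherian normal domain is a Krull domain, so its elements are characterized by the non-negativity of all height-one valuations. First I would cover $X$ by finitely many affine opens $U_i=\Spec A_i$ on each of which both $E$ and $F$ admit local equations $e_i\in\Frac(A_i)^\times$ and $f_i\in A_i$ (the latter because $F$ is effective). Any morphism $\rho\colon\Spec\sO\to X$ from a DVR factors through some $U_i$, by choosing an affine neighborhood of the image of the closed point of $\Spec\sO$; hence it suffices to produce a bound $N_i$ valid for all such $\rho$ factoring through $U_i$, and take $N=\max_i N_i$.

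Fix $i$. Since $A_i$ is noetherian and normal, it is a Krull domain, so
\[A_i=\bigcap_{P\in\Spec(A_i)^{(1)}}(A_i)_P.\]
The assumption $|E|\subset|F|$ translates, for height-one primes $P$ of $A_i$, into the implication $v_P(e_i)\ne 0 \Rightarrow v_P(f_i)\ge 1$. Since $f_i\ne 0$ in the noetherian domain $A_i$, only finitely many height-one primes contain $f_i$. Choosing any integer $N_i\ge 1$ with $N_i\ge v_P(e_i)/v_P(f_i)$ for each of these finitely many primes, we obtain $v_P(e_i)\le N_i\,v_P(f_i)$ for every height-one prime $P$, and therefore $e_i^{-1}f_i^{N_i}\in A_i$ by Krull's theorem.

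Finally, for $\rho\colon\Spec\sO\to U_i$ whose generic point avoids $|F|$ (hence also $|E|\subset|F|$), the elements $e_i$ and $f_i$ pull back to nonzero elements of $\Frac(\sO)^\times$ representing the pullbacks of the Cartier divisors $E$ and $F$, respectively. Pulling back the membership $e_i^{-1}f_i^{N_i}\in A_i$ to $\sO$ and applying $v$ yields $v(E)=v(e_i)\le N_i\,v(f_i)=N_i\,v(F)$, and taking $N=\max_i N_i$ finishes the argument. There is no real obstacle; the only small subtlety is confirming that $e_i$ pulls back to $\Frac(\sO)^\times$ rather than to $0$, which is immediate from the hypothesis that the generic point of $\Spec\sO$ does not lie in $|E|$.
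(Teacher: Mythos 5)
Your argument is correct and is essentially the paper's own proof: both reduce to affine opens where $E$ and $F$ are principal, bound the Weil divisor of $e$ by a multiple of that of $f$ using that only finitely many height-one primes contain $f$, invoke normality (the Krull/Serre criterion) to conclude $f^N/e$ is regular, and then pull this containment back along $\Spec\sO\to X$. The only difference is presentational — the paper phrases the localization as "the question is local on $X$" and works with $\Div(e)\le N\cdot\Div(f)$ directly, while you spell out the finite affine cover and the intersection over height-one primes.
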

\begin{proof}
The question is local on $X$; hence we can assume $E$ and $F$ are given by functions $e,f\in k(X)^\times$.
Let $\Div(e)$, $\Div(f)$ be the associated Weil divisors. Since $|E|\subset |F|$ and $F$ is effective we find 
$N\ge 1$, such that $\Div(e)\le N\cdot \Div(f)$, which by the normality of $X$ implies $f^N/e\in \Gamma(X,\sO_X)$.
This yields the statement. 
\end{proof}

\begin{remark}\label{rmk:cdR-level}
The proof of Theorem \ref{thm:cdr-mot} also shows that 
\[c^{\dR'}_L(a)=\begin{cases} 
0, &\text{if } a\in \Omega^q_{\sO_L/R},\\
\min\{n\ge 2\mid a\in \frac{1}{t^{n-1}}\cdot \Omega^q_{\sO_L/R}\}, & \text{else},
\end{cases}\]
defines a semi-continuous conductor  on $\Omega^q$, but it coincides with the motivic one, only for $q=0$.
\end{remark}

\begin{cor}\label{cor:cZdR}
Set $Z\Omega^q_{/R}=\Ker(d: \Omega^q_{/R}\to \Omega^{q+1}_{/R})$. Then $Z\Omega^q_{/R}\in \RSC_{\Nis}$
and its motivic conductor $c^{Z\Omega^q_{/R}}= (c^{\Omega^q_{/R}})_{|Z\Omega^q_{/R}}$ restricts to
a conductor of level $q$.
\end{cor}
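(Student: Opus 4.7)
The plan is in three stages. First, to show $Z\Omega^q \in \RSC_\Nis$: by Lemma \ref{lem:d-map-PST} the differential $d\colon \Omega^q \to \Omega^{q+1}$ is a morphism of presheaves with transfers, so its pointwise kernel is a sub-presheaf with transfers of $\Omega^q$, and as the kernel of a morphism of Nisnevich sheaves it lies in $\NST$. Since $\RSC$ is closed under subobjects (immediate from Definition \ref{defn:rec}: a factorisation through $h_0(\sX)$ for a section of the ambient sheaf restricts to one for the subsheaf), we conclude $Z\Omega^q \in \RSC_\Nis$. The identity $c^{Z\Omega^q} = (c^{\Omega^q})_{|Z\Omega^q}$ is then a direct application of Proposition \ref{prop:mot-cond-sub-sheaf}.

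For the level-$q$ statement, by Remark \ref{rmk:cond}\ref{rmk:cond3} it is enough to verify axiom \ref{c4} at level $q$, since \ref{c1}, \ref{c2}, \ref{c3}, \ref{c5} are inherited from the conductor $c^{\Omega^q}$ and continue to hold after restriction to $\Phi_{\le q}$. So let $X = \Spec A$ be smooth affine (the general case reduces to this by working Zariski-locally) and let $a \in Z\Omega^q(\A^1_X)$ satisfy $c^{Z\Omega^q}_{k(x)(t)_\infty}(\rho_x^* a) \le 1$ for every $x \in X$ with $\td(k(x)/k) \le q-1$. Using fact (i) from the proof of Theorem \ref{thm:cdr-mot}, write
\[ a \;=\; \sum_{i \ge 0} f_i\, t^i \;+\; \Bigl(\sum_{j \ge 0} g_j\, t^j\Bigr) \wedge dt, \qquad f_i \in \Omega^q_A,\ g_j \in \Omega^{q-1}_A, \]
and compute $da$ via the Leibniz rule: $da = 0$ is equivalent to $d_A f_i = 0$ for every $i$ together with a recursion $(i+1) f_{i+1} = \pm d_A g_i$ for $i \ge 0$ (the precise sign depending on the parity of $q$ is harmless, since we work in characteristic zero and may divide by $i+1$).

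The crux — and the point where the drop from level $q+1$ to level $q$ actually happens — is to deduce $g = 0$ from the hypothesis. The local computation at $\infty$ in the parameter $s=1/t$, carried out in the proof of Theorem \ref{thm:cdr-mot}, shows that for a point $x \in X$ the condition $c^{\Omega^q}_{k(x)(t)_\infty}(\rho_x^* a) \le 1$ forces $\bar f_{i,x} = 0$ for every $i \ge 1$ and $\bar g_{j,x} = 0$ for every $j \ge 0$ in $\Omega^\bullet_{k(x)}$. Hence each $g_j \in \Omega^{q-1}_A$ restricts to zero at every $x \in X$ with $\td(k(x)/k) \le q-1$; applying fact (ii) from the proof of Theorem \ref{thm:cdr-mot} with $q$ replaced by $q-1$, one concludes $g_j = 0$ for all $j$, and thus $g = 0$. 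The closedness recursion then forces $f_{i+1} = 0$ for $i \ge 0$, so $a = f_0 \in Z\Omega^q(A) = Z\Omega^q(X)$, i.e.\ $a \in \pi^* Z\Omega^q(X)$. The reason this succeeds at level $q$ rather than $q+1$ is precisely that for \emph{closed} forms the non-$dt$ part is already determined by the $dt$-part through the recursion, so only the coefficients in $\Omega^{q-1}_A$ need to be detected by fact (ii), which requires only points of transcendence degree $q-1$ — as opposed to $q$ in the $\Omega^q$-case.
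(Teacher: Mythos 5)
Your proposal is correct and follows essentially the same route as the paper: the conductor formula via Proposition \ref{prop:mot-cond-sub-sheaf} (after noting $Z\Omega^q$ is a subobject in $\RSC_\Nis$ via Lemma \ref{lem:d-map-PST}), and for level $q$ the verification of \ref{c4} by showing the hypothesis at points of transcendence degree $\le q-1$ kills the $dt$-part of $a$, after which closedness forces $a$ to be constant in $t$. Your recursion $(i+1)f_{i+1}=\pm d_A g_i$ merely spells out the paper's terse step ``$a\in H^0(X,k[t]\otimes_k\Omega^q_X)\cap Z\Omega^q(\A^1_X)$, hence $a\in Z\Omega^q(X)$''.
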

\begin{proof}
The formula for $c^{Z\Omega^q_{/R}}$ follows from Proposition \ref{prop:mot-cond-sub-sheaf}.
It remains to show that it has level $q$.
Let $a\in Z\Omega^q_{/R}(\A^1_X)$ with $c^{\dR}_{k(x)(t)_\infty}(a)\le 1$, for all 
points $x\in X$ with $\td(k(x)/k)\le q-1$. 
This implies 
$a\in H^0(X, k[t]\otimes_k\Omega^q_{X/R})\cap Z\Omega^q_{/R}(\A^1_X)$,
cf. the proof of \ref{c4} in Theorem \ref{thm:cdr-mot}.
Hence $a\in Z\Omega^q_{/R}(X)$. This shows that $(c^{Z\Omega^q_{/R}})^{\le q}$ satisfies \ref{c4}. 
\end{proof}

\begin{cor}\label{cor:oCi-dr-global}
\begin{enumerate}[label= (\arabic*)]
\item\label{cor:oCi-dr-global1} Let $\sX=(X,D)\in\MCor$ be a proper modulus pair. Then 
\[\widetilde{\Omega^q_{/R}}(\sX)= H^0(X_1, \Omega^q_{X_1/R}(\log D_1)(D_1-D_{1,\red})),\]
where $\pi: X_1\to X$ is any resolution of singularities which is an isomorphism over $X\setminus D$ and 
such that $D_1:=\pi^*D$ is supported on a simple normal crossings divisor. 
(See \ref{not:ExtendingRSC}, for the notation $\widetilde{\Omega^q_{/R}}$.)
\item\label{cor:oCi-dr-global2} Let $h^0_{\A^1}(\Omega^q_{/R})$ be the maximal $\A^1$-invariant subsheaf of 
$\Omega^q_{/R}$.
Then for $X\in \Sm$
\[h^0_{\A^1}(\Omega^q_{/R})(X)= H^0(\ol{X}, \Omega^q_{\ol{X}/R}(\log D)),\]
where $\ol{X}$ is any smooth compactification  of $X$ with simple normal crossing divisor $D$ at infinity.
\end{enumerate}
\end{cor}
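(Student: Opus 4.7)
The strategy for (1) is three-fold: first express $\widetilde{\Omega}^q(X,D)$ via a henselian dvf conductor condition; next verify the formula when $(X,D)$ is itself log smooth; and finally deduce the general case by showing birational invariance under a log resolution. For a proper modulus pair $(X,D)\in \MCor$, the category $\Comp(X,D)$ is trivial (up to isomorphism consisting of $(X,D)$ alone), whence $\widetilde{\Omega}^q(X,D)=\omega^{\CI}\Omega^q(X,D)$; combining the identity $\omega^{\CI}F=\tau^*F_{c^F}$ from Corollary \ref{cor:sction}\ref{cor:sction1} with Theorem \ref{thm:cdr-mot} (which gives $c^{\Omega^q}=c^{\dR}$) yields
\[\widetilde{\Omega}^q(X,D)=\{a\in\Omega^q(X\setminus|D|)\mid c^{\dR}_L(\rho^*a)\le v_L(D)\text{ for all } \rho:\Spec L\to(X,D)\}.\]

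Now assume $X$ is smooth and $D=\sum n_iZ_i$ is simple normal crossings. The inclusion $H^0(X,\Omega^q(\log D)(D-D_{\red}))\subseteq \widetilde{\Omega}^q(X,D)$ is immediate by pulling a section back along $\rho:\Spec\sO_L\to X$ and expressing it in local coordinates: if $\rho^*t_i$ has valuation $m_i$ with respect to a uniformizer $\tau$ of $L$, then a local generator $\prod t_i^{-(n_i-1)}\omega$ (with $\omega\in \Omega^q(\log D)$) pulls back to $\tau^{-\sum m_i(n_i-1)}\Omega^q_{\sO_L}(\log\tau)$, giving $c^{\dR}_L(\rho^*a)\le \sum m_i(n_i-1)+1\le \sum m_in_i=v_L(D)$. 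For the reverse inclusion I use that $\Omega^q(\log D)(D-D_{\red})$ is a locally free $\sO_X$-module and therefore satisfies the reflexivity condition $\sF=\bigcap_{\eta\in X^{(1)}}\sF_\eta$: membership reduces to a check at the generic point $\eta_Z$ of each component $Z\subset D$ of multiplicity $n$, where the conductor bound $c^{\dR}_{L_Z}\le n$ at $L_Z=\Frac(\sO_{X,\eta_Z}^h)$ unwinds, by the definition in \ref{para:cond-dR}, to exactly the condition $a\in\tfrac{1}{t^{n-1}}\Omega^q_{\sO_{X,\eta_Z}}(\log t)$.

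For general $(X,D)$ choose a log resolution $\pi:(X_1,D_1)\to (X,D)$ with $\pi^*D=D_1$; this is a morphism in $\MCor$ inducing an isomorphism on open complements. By the valuative criterion applied to $\pi$, every henselian dvf point $\rho:\Spec\sO_L\to (X,D)$ lifts uniquely to $\rho_1:\Spec\sO_L\to(X_1,D_1)$ with $v_L(D)=v_L(\pi^*D)=v_L(D_1)$ and $\rho^*a=\rho_1^*a$, and conversely any henselian dvf point of $(X_1,D_1)$ descends; thus the two conductor descriptions coincide and $\widetilde{\Omega}^q(X,D)=\widetilde{\Omega}^q(X_1,D_1)$. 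The SNC case applied to $(X_1,D_1)$ now proves (1).

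For (2), Corollary \ref{cor:HIc} gives $h^0_{\A^1}(\Omega^q)(X)=\{a\in\Omega^q(X)\mid c^{\dR}_L(\rho^*a)\le 1\text{ for all }\rho:\Spec L\to X\}$. Since $\ol{X}$ is proper, henselian dvf points of $X$ extend uniquely to henselian dvf points of $(\ol{X},D)$. For such an extension $\bar\rho:\Spec\sO_L\to\ol{X}$: if the closed point lands in $X$ then $v_L(D)=0$ and $\rho^*a\in \Omega^q_{\sO_L}$, forcing $c^{\dR}_L=0$; if it lands in $D$ then $v_L(D)\ge 1$, so $c^{\dR}_L\le 1$ implies $c^{\dR}_L\le v_L(D)$, while conversely any section of $H^0(\ol{X},\Omega^q(\log D))$ pulls back (by the same local coordinate calculation as in the SNC case) to an element of $\Omega^q_{\sO_L}(\log)$, forcing $c^{\dR}_L\le 1$ at every $\rho$. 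Hence $h^0_{\A^1}(\Omega^q)(X)=\widetilde{\Omega}^q(\ol{X},D)$, and (1) yields the formula. The main obstacle lies in the codimension-one $S_2$ extension step in the SNC case together with verifying the birational invariance carefully; the rest consists of straightforward bookkeeping with the explicit formula for $c^{\dR}$.
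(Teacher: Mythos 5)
Your proof is correct and takes essentially the same route as the paper: reduce to the SNC case by birational invariance, then use the conductor description of $\widetilde{\Omega}^q(X,D)$ together with the explicit formula for $c^{\dR}$ at generic points of components and reflexivity of $\Omega^q(\log D)(D-D_{\red})$. The one small stylistic difference is in the birational-invariance step: you recover $\widetilde{\Omega}^q(X,D)=\widetilde{\Omega}^q(X_1,\pi^*D)$ by the valuative criterion applied to the conductor description, whereas the paper observes directly that $(X,D)\cong(X_1,\pi^*D)$ in $\MCor$ (the proper transform furnishes an inverse admissible correspondence), which gives the equality formally without re-invoking conductors; either route is fine.
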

\begin{proof}
First note, that $\widetilde{\Omega^q_{/R}}(\sX)=\widetilde{\Omega^q_{/R}}(X_1, \pi^*D)$,  
where $\pi: X_1\to X$ is any blow-up with center in $D$, since
$(X,D)\cong (X_1, \pi^*D)$ in $\MCor$.
Let $\sX=(X,D)$ be a proper modulus pair  with $D_{\red}$ 
a simple normal crossings divisor. Write $D=\sum_i r_i \cdot \ol{\eta}_i$, with $\eta_i\in X^{(1)}$ and set
$L_{\eta_i}:=\Frac(\sO_{X,\eta_i}^h)$. Then  it is direct  to check
that we have $c_L^{\dR}(\rho^*a)\le v_L(D)$, for all henselian dvf points $\rho:\Spec L\to \sX$ if and only if
$c^{\dR}_{L_{\eta_i}}(a)\le r_i$, for all $i$.
Thus the corollary follows from Theorem \ref{thm:cdr-mot}, Theorem \ref{thm:rec-cond}\ref{thm:rec-cond4},
and Corollary \ref{cor:HIc}.
\end{proof}

\subsection{Rank 1 connections and irregularity}

\begin{lemma}\label{lem:dlog-PST}
The homomorphism $\dlog: \sO_X^\times\to \Omega^1_{X/R}$, $X\in \Sm$, induces a
morphism $\dlog: \sO^\times\to \Omega^1_{/R}$ in $\RSC_\Nis$
\end{lemma}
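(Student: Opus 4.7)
The plan is to adapt the strategy of Lemma~\ref{lem:d-map-PST}. Since $\sO^\times = \G_m$ is a smooth commutative $k$-group we have $\sO^\times \in \RSC_\Nis$ by \ref{para:alg}, and $\Omega^1 \in \RSC_\Nis$ by \ref{para:dR}; as $\RSC_\Nis$ is a full subcategory of $\PST$, it suffices to check that $\dlog$ is a morphism in $\PST$, i.e.\ that $\alpha^* \circ \dlog = \dlog \circ \alpha^*$ as maps $\sO^\times(Y) \to \Omega^1(X)$ for every finite correspondence $\alpha \in \Cor(X,Y)$ with $X,Y \in \Sm$. Because $\Omega^1(X) \hookrightarrow \Omega^1(U)$ is injective for any dense open $U \subset X$, I may shrink $X$ around its generic points. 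Using $\ch(k)=0$, I may therefore reduce to the case where $X$ is connected and $\alpha = Z \subset X \times Y$ is a prime correspondence which is finite \'etale over $X$.

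Denote by $f: Z \to X$ and $g: Z \to Y$ the projections, so $\alpha^* = f_* \circ g^*$. Compatibility of $\dlog$ with $g^*$ is immediate from the naturality of both $\sO^\times$ and $\Omega^1$ with respect to morphisms of smooth schemes. Thus the problem reduces to showing that for a finite \'etale map $f: Z \to X$ between smooth $k$-schemes one has
\[
f_* \circ \dlog \;=\; \dlog \circ \Nm_f,
\]
where $\Nm_f: f_*\sO_Z^\times \to \sO_X^\times$ is the norm (which is the transfer structure on the sheaf $\sO^\times = \G_m$ associated to the finite \'etale map $f$, cf.~\ref{para:alg}), and $f_*: f_*\Omega^1_Z \to \Omega^1_X$ is the trace map already identified with the transfer structure on $\Omega^1$ in the proof of Lemma~\ref{lem:d-map-PST}.

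This is the classical identity $\Tr_f(\dlog u) = \dlog \Nm_f(u)$ for $u \in \sO_Z^\times$. It may be verified after an \'etale surjective base change $X' \to X$ that splits $f$: then $Z_{X'} = \bigsqcup_{i=1}^n X'$, a section $u$ of $\sO^\times_Z$ becomes an $n$-tuple $(u_1,\dots,u_n)$ of units on $X'$, and both sides equal $\sum_i \dlog u_i$. Equivalently, if $u_1,\dots,u_n$ denote the roots (on an \'etale cover) of the characteristic polynomial of multiplication by $u$ on the locally free $\sO_X$-module $f_*\sO_Z$, then $\Nm_f(u) = \prod_i u_i$ and $\Tr_f(\dlog u) = \sum_i \dlog u_i = \dlog(\prod_i u_i)$. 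There is no substantive obstacle; the only point worth emphasising is that the two uses of $f_*$ really are the transfer structures, which was established for $\Omega^1$ in Lemma~\ref{lem:d-map-PST} and is standard for $\G_m$.
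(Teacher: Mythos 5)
Your proposal is correct and follows essentially the same route as the paper: reduce, as in Lemma \ref{lem:d-map-PST}, to a finite \'etale map $f:Z\to X$ (using $\ch(k)=0$ and injectivity of restriction to dense opens), and then invoke the identity $f_*\dlog=\dlog\Nm_{Z/X}$, which is exactly the replacement the paper indicates for $f_*d=df_*$. Your \'etale-splitting verification of that identity is a fine way to make the "classical" formula explicit.
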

\begin{proof}
The proof is similar to the one of Lemma \ref{lem:d-map-PST}, except that we have to 
replace the formula $f_* d=df_*$ by $f_*\dlog= \dlog \Nm_{Z/X}$, where
$f:Z\to X$ is a finite \'etale map between smooth schemes.
\end{proof}

\begin{para}\label{para:conn1}
Denote by ${\rm Conn}^1(X)$ the group of isomorphism classes of rank 1 connections on $X\in\Sm$, and 
by ${\rm Conn}^1_{\rm int}(X)$ the subgroup of integrable connections.
We have canonical group isomorphisms
\[{\rm Conn}^1(X)\cong H^1(X_{\Zar}, \sO^\times_X\xr{\dlog}\Omega^1_{X/k})\cong
 H^0(X, (\Omega^1_{/k}/\dlog\sO_X^\times)_\Nis)\]
and 
\mlnl{{\rm Conn}^1_{\rm int}(X)\cong H^1(X_{\Zar}, \sO^\times_X\xr{\dlog}Z\Omega^1_{X/k})\\
\cong H^0(X, (Z\Omega^1_{/k}/\dlog\sO^\times)_{\Nis}).}
Indeed, the first isomorphism is well-known (use that the first Zariski cohomology 
can be computed as \v{C}ech cohomology);
we show the  second as follows: Let $\bar{k}^{X}$ 
be the algebraic closure of $k$ in $k(X)$; we consider it as a constant sheaf on $X$.
We obtain the isomorphism  
 \[ [\sO_X^\times/(\bar{k}^{X})^\times\xr{\dlog}\Omega^1_{X/k}] \cong 
 (\Omega^1_{/k}/\dlog\sO_X^\times)_{\Zar}[-1], \]
 in the derived category of abelian sheaves on $X_{\Zar}$; similar with $Z\Omega^1_{/k}$.
Observe that $\Omega^1_{/k}$ and $\sO^\times$ are already Nisnevich sheaves, hence
\[(\Omega^1_{/k}/\dlog\sO_X^\times)_{\Zar}=(\Omega^1_{/k}/\dlog\sO_X^\times)_{\Nis}.\]
Since $H^i(X_{\Zar}, \bar{k}^X)=0$ for all $i\ge 1$, we obtain 
\[H^1(X_{\Zar}, \sO_X^\times\to \Omega^1_{X/k})
= H^1(X_{\Zar}, \sO_X^\times/(\bar{k}^X)^\times\to \Omega^1_{X/k}).\]
Similar with $Z\Omega^1_{/k}$. This yields the second isomorphisms.

By Lemma \ref{lem:dlog-PST} and \cite[Thm 0.1]{Saito-Purity-Rec} we obtain
\[{\rm Conn}^1,\, {\rm Conn}^1_{\rm int}\in \RSC_\Nis.\]
For $E\in {\rm Conn}^1(X)$ we denote by $\omega_E\in H^0(X, (\Omega^1_{/k}/\dlog\sO^\times)_{\Nis})$, 
the element corresponding to $E$ under the above isomorphism.
 
Let $L\in\Phi$ and let $t\in \sO_L$ be a local parameter.
Recall (e.g. from \cite[Def. 1.12]{Kato94}) that the irregularity of 
$E\in {\rm Conn}^1(\Spec L)\cong \Omega^1_{L/k}/\dlog L^\times$ is defined by
\[\irr_L(E)=\min\big\{n\ge 0\mid 
\omega_E\in \Im\big(\tfrac{1}{t^n}\cdot \Omega^1_{\sO_L/k}(\log)\to 
\Omega^1_{L/k}/\dlog L^\times\big)\big\}.\]
\end{para}

\begin{thm}\label{thm:irr-mot}
Notations are as in \ref{para:conn1}.
The motivic conductor of $E\in{\rm Conn}^1(L)$ is given by 
\[c^{{\rm Conn}^1}_L(E)=\begin{cases} 0, &\text{if $E$ extends to an $\sO_L$-connection,}\\
                                               \irr_L(E)+1,& \text{else.}  \end{cases}\]
Moreover, on ${\rm Conn}^1$ the motivic conductor restricts to a level 2 conductor and
on ${\rm Conn}^1_{\rm int}$ it restricts to a level 1 conductor.
\end{thm}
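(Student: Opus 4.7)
The plan is to use the surjections of Nisnevich sheaves $\pi:\Omega^1 \to {\rm Conn}^1$ and $\pi_{\rm int}: Z\Omega^1 \to {\rm Conn}^1_{\rm int}$ from \ref{para:conn1}, both morphisms in $\RSC_{\Nis}$ with known motivic conductors by Theorem \ref{thm:cdr-mot} and Corollary \ref{cor:cZdR}. Since ${\rm Conn}^1_{\rm int}$ is a subsheaf of ${\rm Conn}^1$, Proposition \ref{prop:mot-cond-sub-sheaf} reduces the formula for the conductor to the ${\rm Conn}^1$ case. For $L \in \Phi$, the map $\pi_L: \Omega^1_L \to {\rm Conn}^1(L)$ is surjective with kernel $\dlog L^\times$, so every $E$ admits a lift $\tilde\omega_E \in \Omega^1_L$. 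The case $E \in \Im({\rm Conn}^1(\sO_L)\to{\rm Conn}^1(L))$ gives $c^{{\rm Conn}^1}_L(E) = 0$ by Remark \ref{rmk:cond}\ref{rmk:cond2}.

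For the upper bound in the non-extending case, set $r = \irr_L(E) \geq 0$ and take a lift $\tilde\omega_E \in \tfrac{1}{t^r}\Omega^1_{\sO_L}(\log) = \fil{r+1}$. Theorem \ref{thm:cdr-mot} gives $\tilde\omega_E \in \widetilde{\Omega^1}(\sO_L, \fm_L^{-(r+1)})$, and since $\pi$ is a morphism in $\RSC_{\Nis}$ it preserves SC-modulus. Hence $E = \pi(\tilde\omega_E) \in \widetilde{{\rm Conn}^1}(\sO_L, \fm_L^{-(r+1)})$, so $c^{{\rm Conn}^1}_L(E) \leq r+1 = \irr_L(E)+1$.

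The main obstacle is the lower bound. Set $m := c^{{\rm Conn}^1}_L(E) \geq 1$. Since $\dlog L^\times \subset \Omega^1_{\sO_L}(\log) \subset \fil{n}$ for all $n\geq 1$, modifying a lift by $\dlog L^\times$ preserves membership in each $\fil{n}$, so $s := \min_{\tilde\omega_E} c^{\Omega^1}_L(\tilde\omega_E)$ over all lifts equals $\irr_L(E)+1$; it suffices to show $s \leq m$. Assume $s \geq m+1$, take a lift realizing $s$, fix a coefficient field $K \inj \sO_L$, and set $L_x = \Frac(\sO_L[x]_{(t)}^h)$ with induced coefficient field $\sigma: K(x) \inj \sO_{L_x}$. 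By Corollary \ref{cor:localLS3} and \ref{LS6}, $\pi((\tilde\omega_E|_{L_x}, u)_{L_x,\sigma}) = 0$ in ${\rm Conn}^1(K(x))$ for every $u \in U^{(m)}_{L_x}$. Test with $u = 1 - xt^{s-1} \in U^{(s-1)}_{L_x} \subset U^{(m)}_{L_x}$ and write
\[
\tilde\omega_E \equiv \tfrac{\alpha}{t^{s-1}} + \tfrac{\beta}{t^{s-1}}\dlog t \pmod{\fil{s-1}}, \quad \alpha \in \Omega^1_K,\ \beta \in K;
\]
the residue computation from the proof of Theorem \ref{thm:cdr-mot} (with parameter $r := s-1$) gives $(\tilde\omega_E, 1-xt^{s-1})_{L_x,\sigma} \equiv -(s-1)x\alpha + \beta\,dx$, which must lie in $\dlog K(x)^\times$. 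Since $\dlog K(x)^\times$ is contained in the $K(x)\cdot dx$ summand of $\Omega^1_{K(x)} = (\Omega^1_K \otimes_K K(x)) \oplus K(x)\cdot dx$, the vanishing forces $\alpha = 0$ (as $s-1 \geq 1$); then $\beta\,dx = \dlog g$ for some $g\in K(x)^\times$ forces $\beta = 0$ (any $dg/g$ has at worst simple poles on $\P^1_K$, whereas $\beta\,dx$ has a double pole at $\infty$ unless $\beta=0$, which is equivalent in $\ch(k)=0$ to $e^{\beta x}\notin K(x)^\times$). This contradicts $\tilde\omega_E\notin \fil{s-1}$, completing the lower bound.

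For the level statements, axioms \ref{c1}--\ref{c3}, \ref{c5}, \ref{c6} at the relevant level transfer from $\Omega^1$ (level $2$) and $Z\Omega^1$ (level $1$) via Lemma \ref{lem:im-cond} applied to $\pi$ and $\pi_{\rm int}$; the surjectivity hypothesis \eqref{lem:im-cond1} holds after Zariski-shrinking since $\Pic(\A^1_X) = \Pic(X)$ becomes trivial on a suitable cover of $X$. Axiom \ref{c4} at the stated level does not follow formally from Lemma \ref{lem:im-cond} and requires a direct argument: combining the formula $c^{{\rm Conn}^1}_L = \irr_L + 1$ just proven with local lifting of $E$ to $\Omega^1$ (resp.\ $Z\Omega^1$), one translates the hypothesis $c \leq 1$ at the listed codimension-$(n-1)$ points into the corresponding condition on the lift $\tilde\omega_E$, possibly after adjustment by $\dlog$ of a rational function; axiom \ref{c4} for $\Omega^1$ at level $2$ (resp.\ $Z\Omega^1$ at level $1$) then forces $\tilde\omega_E$, and hence $E$, to be a pullback from $X$.
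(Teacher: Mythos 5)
Your overall route is the paper's: reduce to $\sH^1=(\Omega^1/\dlog\sO^\times)_{\Nis}$, bound the conductor above by lifting to $\Omega^1$ and using Theorem \ref{thm:cdr-mot}, and bound it below by testing against $1-xt^{s-1}$ via Corollary \ref{cor:localLS3} and the residue computation. Your upper bound (pushing the modulus of a lift forward along $\pi$, using functoriality of $\omega^{\CI}$) is correct and in fact a little more direct than the paper, which instead verifies that the min-over-lifts function $c^{\irr}$ is a semi-continuous conductor and invokes Corollary \ref{cor:mot-cond-mini}. However, the decisive step of your lower bound is wrong as written: you claim that $\dlog K(x)^\times$ is contained in the summand $K(x)\,dx$ of $\Omega^1_{K(x)}=(\Omega^1_K\otimes_K K(x))\oplus K(x)\,dx$ and deduce $\alpha=0$ from this. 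That containment is false unless $\Omega^1_K=0$: for any non-constant $a\in K^\times$ one has $\dlog a=da/a\in\Omega^1_K$, and e.g.\ $\dlog(1-ax)$ has the nonzero component $-x\,da/(1-ax)$ in $\Omega^1_K\otimes_K K(x)$. Since the new content of the theorem is precisely the case $\td(L/k)\ge 2$, i.e.\ $K$ a genuine function field, your argument only establishes the formula for $L\in\Phi_{\le 1}$. The assertion that $-(s-1)x\alpha+\beta\,dx\in\dlog K(x)^\times$ forces $\alpha=\beta=0$ is true but needs an argument: the paper passes to $\widehat{\Omega}^1_{K((x))}$, observes that $\dlog h=x\gamma$ with $\gamma\in\Omega^1_K$ forces $\gamma=0=\dlog h$, applies this to $h=f\exp(-\beta x)$ to get $\beta=0$ and then $\alpha=0$ (see \eqref{thm:irr-mot4}). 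Alternatively your pole-order idea works if you reorder it: first project to $\Omega^1_{K(x)/K}$, which kills the $\alpha$-term and gives $\beta\,dx$ equal to the relative $\dlog$ of $g$; the double pole at infinity then forces $\beta=0$ and that the derivative of $g$ in $x$ vanishes, so $g\in K^\times$; then $-(s-1)x\alpha=\dlog g\in\Omega^1_K$ forces $\alpha=0$ by comparing powers of $x$. As written, though, the step fails.

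Two smaller points on the level statements. Since $c^{{\rm Conn}^1}$ is already a conductor by Theorem \ref{thm:rec-cond}\ref{thm:rec-cond2}, by Remark \ref{rmk:cond}\ref{rmk:cond3} only \ref{c4} needs to be checked for the restriction, so your detour through Lemma \ref{lem:im-cond} is not needed; moreover your justification of the hypothesis \eqref{lem:im-cond1} via ``$\Pic(\A^1_X)=\Pic(X)$ trivial on a cover'' does not address what that hypothesis asks (surjectivity over $\ol{X}^h_{(x)}\setminus X_{\infty,(x)}$ for a proper modulus pair; the paper gets it from a smooth compactification in characteristic zero and factoriality of $\sO^h_{\ol{X},x}$). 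Your sketch of \ref{c4} is the paper's argument in outline, but the lifting step should be made explicit: one subtracts $\pi^*i^*a$, lifts to $H^0(\A^1_X,\Omega^1)$ using the sequence \eqref{thm:irr-mot2} and $\A^1$-invariance of $\Pic$, notes that any lift still satisfies the $\le 1$ bound at the relevant points because $\dlog$-terms lie in $\fil{1}$, and then applies \ref{c4} for $(c^{\Omega^1})^{\le 2}$, resp.\ for $Z\Omega^1$ at level $1$ via Corollary \ref{cor:cZdR}.
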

\begin{proof}
Set $\sH^1:= (\Omega^1_{/k}/\dlog \sO^\times)_{\Nis}$, 
$\sH^1_{\rm int}:=  (Z\Omega^1_{/k}/\dlog \sO^\times)_{\Nis}$.
For $a\in \sH^1(L)$ we define
\[c^{\irr}_L(a):=\min\{c^{\dR}_L(\tilde{a})\mid \tilde{a}\in \Omega^1_{L/k} \text{ lift  of }a\},\]
see \ref{para:cond-dR} for the definition of $c^{\dR}$.  It suffices to prove the following identity 
for the motivic conductor of $\sH^1$
\eq{thm:irr-mot0}{c^{\sH^1}=c^{\irr},}
and that $(c^{\irr})^{\le 2}$ and $(c^{\irr})_{|\sH^1_{\rm int}}^{\le 1}$ satisfy \ref{c4}.
It follows directly form Theorem \ref{thm:cdr-mot} and Lemma \ref{lem:im-cond}, 
that $c^{\irr}$ satisfies \ref{c1}-\ref{c6} except maybe \ref{c4} and \ref{c5}.
For \ref{c5}, note that given $X\in \Sm$ we find by resolution of singularities a compactification
$\sX=(\ol{X}, X_\infty)$ with $\ol{X}\in\Sm$. In particular,
for all $x\in \ol{X}$ the local ring $\sO_{\ol{X},x}^h$ is factorial and hence so is any of its localizations.
Therefore, it follows from the exact sequence 
\[H^0(Y,\Omega^1_{Y/k})\to \sH^1(Y)\to \Pic(Y),\]
for any integral scheme $Y$ over $k$, that the condition \eqref{lem:im-cond1} from Lemma \ref{lem:im-cond} is satisfied;
hence $c^{\irr}$ satisfies \ref{c5}.
Next \ref{c4}. Take $a\in \sH^1(\A^1_X)$ with 
\eq{thm:irr-mot1}{c^{\irr}_{k(x)(t)_\infty}(a_x)\le 1,\quad \text{for all } x\in X \text{ with } \td(k(x)/k)\le 1,}
where $a_x$ is the restriction of $a$ to $k(x)(t)_\infty$. Consider the exact sequence (using the $\A^1$-invariance
of $X\mapsto H^i(X, \sO_X^\times)$)
\eq{thm:irr-mot2}{H^0(X, \sO_X^\times)\xr{\dlog} H^0(\A^1_X, \Omega^1_{\A^1_X/k})\to \sH^1(\A^1_X)\to 
H^1(X,\sO_X^\times).}
Let $\pi:\A^1_X\to X$ be the projection and $i:X\inj \A^1_X$  a section.
By \eqref{thm:irr-mot2} there exists an $\tilde{a}\in H^0(\A^1_X, \Omega^1_{\A^1_X/k})$ mapping to $a-\pi^*i^*a$
 and any such lift
satisfies \eqref{thm:irr-mot1} with $c^{\irr}$ replaced by $c^{\Omega^1_{/k}}$. 
Thus $\tilde{a}\in H^0(X,\Omega^1_{X/k})$,
by \ref{c4} for $(c^{\Omega^1_{/k}})^{\le 2}$;
hence $(c^{\irr})^{\le 2}$ satisfies \ref{c4}. Similarly, one proves \ref{c4} and \ref{c5} 
for $(c^{\irr})_{|\sH^1_{\rm int}}^{\le 1}$.

Hence $c^{\irr}$ is a semi-continuous conductor and we obtain $c^{\sH^1}\le c^{\irr}$. We show the other inequality.
Let $L\in\Phi$ and let $\sigma: K\inj \sO_L$ be a coefficient field. 
Denote by $\ol{\fil{n}}\subset \sH^1(L)$ the image of $\fil{n}=\frac{1}{t^{n-1}}\Omega^1_{\sO_L/k}(\log)$. 
Take $a\in \ol{\fil{r+1}}$. 
Similar as in the proof of Theorem \ref{thm:cdr-mot}
(around \eqref{thm:cdr-mot3}, and with the notation from there) it suffices to show the implication
\eq{thm:irr-mot3}{(a, 1-xt^r)_{L_x,\sigma}=0 \text{ in } \sH^1(K(x))\,\Rightarrow \, a\in \ol{\fil{r}}.}
Let $\tilde{a}\in \fil{r+1}$ be a lift of $a$; write
\[\tilde{a}= \frac{1}{t^r}\alpha +\beta \frac{dt}{t^{r+1}} \quad \text{mod }\fil{r} \]
with $\alpha\in \Omega^1_{K/k}$ and $\beta\in K$. 
Then the left hand side of \eqref{thm:irr-mot3} is equivalent to 
\[\Res_t(\tilde{a}\dlog(1-xt^r))= \dlog f, \quad \text{for some }f\in K(x)^\times.\]
Computing the residue symbol yields
\eq{thm:irr-mot4}{-r x\alpha +\beta dx =\dlog f \quad\text{in }\Omega^1_{K(x)/k}.}
We claim this can only happen if $\alpha=\beta=0$. Indeed, 
first observe that if $h\in K((x))^\times$ is a formal Laurent series such that there exists a form 
$\gamma\in \Omega^1_{K/k}$
with $\dlog(h)= x\cdot \gamma\quad \text{in }\widehat{\Omega}^1_{K((x))/k}$,
then $\gamma=0=\dlog(h)$. Thus \eqref{thm:irr-mot4} implies that $\dlog(f\cdot \exp(-\beta x))=0$ in 
$\widehat{\Omega}^1_{K((x))/k}$. Hence there exists an element $\lambda\in k_1$ the algebraic closure of $k$ in $K$ such that
\[\lambda\cdot \exp(\beta x)= f\in K(x)^\times,\]
which is only possible if $\beta=0$; it follows $\alpha=0$. Thus $a\in \ol{\fil{r}}$, which proves \eqref{thm:irr-mot3} 
and completes the proof.
\end{proof}

\begin{cor}\label{cor:h0-reg}
Let $X\in \Sm$.
Then $h^0_{\A^1}({\rm Conn}^1_{\rm int})(X)$ is the group of isomorphism classes of regular singular rank 1 connections on $X$
(see \ref{para:HIsub} for notation).
\end{cor}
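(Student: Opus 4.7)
The plan is as follows. Applying Corollary~\ref{cor:HIc} to $F = {\rm Conn}^1_{\rm int}$ together with Proposition~\ref{prop:mot-cond-sub-sheaf} and Theorem~\ref{thm:irr-mot} characterizes $h^0_{\A^1}({\rm Conn}^1_{\rm int})(X)$ as the set of connections with vanishing irregularity at every henselian dvf point. It then remains to identify this pointwise condition with the classical notion of regular singularity.

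In detail, Corollary~\ref{cor:HIc} gives
$$h^0_{\A^1}({\rm Conn}^1_{\rm int})(X) = \{E \in {\rm Conn}^1_{\rm int}(X) \mid c^{{\rm Conn}^1_{\rm int}}_L(\rho^*E) \leq 1, \ \forall \rho: \Spec L \to X \text{ with } L \in \Phi\}.$$
Since ${\rm Conn}^1_{\rm int}\subset {\rm Conn}^1$ is an inclusion in $\RSC_\Nis$, Proposition~\ref{prop:mot-cond-sub-sheaf} says that $c^{{\rm Conn}^1_{\rm int}}$ is the restriction of $c^{{\rm Conn}^1}$. By Theorem~\ref{thm:irr-mot}, $c^{{\rm Conn}^1}_L$ takes the value $0$ when the connection extends to $\sO_L$ and $\irr_L+1$ otherwise; in either case $c^{{\rm Conn}^1}_L(\rho^*E) \leq 1$ is equivalent to $\irr_L(\rho^*E) = 0$.

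It then remains to match the pointwise condition ``$\irr_L(\rho^*E)=0$ for every hdvf point $\rho$'' with the classical definition: a rank one integrable connection $E$ on $X$ is \emph{regular singular} iff for some smooth compactification $(\bar X, D)$ of $X$ with $D$ a strict normal crossings divisor, $\omega_E$ lifts to a global section of the Nisnevich sheaf $Z\Omega^1_{\bar X}(\log D)/\dlog \sO^\times_{\bar X}$. The implication ``regular singular $\Rightarrow$ all irregularities vanish'' follows from the valuative criterion: properness of $\bar X$ extends any hdvf point $\rho: \Spec L \to X$ to $\bar\rho: \Spec \sO_L \to \bar X$, and pulling back the log extension along $\bar\rho$ produces a logarithmic extension of $\rho^*E$ on $\sO_L$, whence $\irr_L(\rho^*E) = 0$. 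For the converse, the hdvf points $L_i = \Frac(\sO^h_{\bar X, \eta_i})$ attached to the generic points $\eta_i$ of the components of $D$ provide log extensions of $\omega_E$ at each $\eta_i$, and hence a section of $(Z\Omega^1_{\bar X}(\log D)/\dlog \sO^\times_{\bar X})_\Nis$ defined on the complement in $\bar X$ of a closed subset of codimension $\geq 2$. A Hartogs / codimension-one purity argument on the smooth variety $\bar X$ (consequence of the purity of $\Omega^1_{\bar X}(\log D)$) then extends this to a global section, providing the required logarithmic extension.

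I expect the main technical obstacle to be this final purity/Hartogs step, where one glues the local logarithmic data at the generic points of $D$ to a global section of the quotient sheaf on $\bar X$; the rest of the argument is a direct combination of the previously established structural results.
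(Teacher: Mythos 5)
Your first step is exactly the paper's: Corollary \ref{cor:HIc}, Proposition \ref{prop:mot-cond-sub-sheaf} and Theorem \ref{thm:irr-mot} identify $h^0_{\A^1}({\rm Conn}^1_{\rm int})(X)$ with the connections $E$ such that $\irr_L(\rho^*E)=0$ for all henselian dvf points $\rho:\Spec L\to X$. After that you diverge. The paper simply takes Deligne's definition of regular singularity (\cite[II, Def 4.5]{Deligne-ED}), which is literally the vanishing of the irregularity at all points with $\td(L/k)=1$, and then uses the fact that ${\rm Conn}^1_{\rm int}$ has a level~$1$ conductor (Theorem \ref{thm:irr-mot}) together with Corollary \ref{cor:extending-cond} to pass from the $\td\le 1$ condition to the condition over all of $\Phi$; with that definition the corollary is a two-line consequence of the structural results. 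You instead adopt the characterization ``regular singular $=$ existence of a global logarithmic extension on an SNC compactification'' and prove its equivalence with the pointwise condition over all of $\Phi$ by the valuative criterion (easy direction, correct) and a codimension-two extension argument (hard direction). This buys you independence from the level-$1$ statement, but at the cost of reproving a piece of Deligne's theory; if ``regular singular'' is meant in Deligne's curve-theoretic sense, you still owe a citation or argument for the classical equivalence of the two characterizations, which is precisely what the paper's choice of definition avoids.

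There is also a genuine gap in the step you yourself flag. Extending a section of the quotient sheaf $(Z\Omega^1_{\ol{X}}(\log D)/\dlog\sO_{\ol{X}}^\times)_{\Nis}$ from the complement of a codimension $\ge 2$ closed subset $Z$ to all of $\ol{X}$ is \emph{not} a consequence of purity for the locally free sheaf $\Omega^1_{\ol{X}}(\log D)$ alone: a section of the quotient over $\ol{X}\setminus Z$ need not lift to a section of $Z\Omega^1_{\ol{X}}(\log D)$ there, and the local log representatives you obtain at the generic points $\eta_i$ of $D$ only agree with $\omega_E$ up to $\dlog$ of units, so the gluing/lifting obstruction lives in $H^1(-,\sO^\times)$. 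One needs, in addition to Hartogs for closed log forms and for units, that $\Pic(\ol{X})\to\Pic(\ol{X}\setminus Z)$ is bijective (true here because $\ol{X}$ is smooth, hence locally factorial), and then a five-lemma argument on the exact sequence $0\to\sO^\times/{\rm const}\xr{\dlog} Z\Omega^1(\log D)\to Q\to 0$. With that supplement your route does close up, but as written the purity claim is the missing ingredient rather than a routine consequence.
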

\begin{proof}
Let $E\in {\rm Conn}^1_{\rm int}(X)$. Then by definition (see \cite[II, Def 4.5]{Deligne-ED}) $E$ is regular singular 
if and only if $\irr(\rho^*E)=0$, for all henselian dvf points $\rho:\Spec L\to X$ with $\td(L/k)=1$.
By Theorem \ref{thm:irr-mot} and Corollary \ref{cor:extending-cond}, this is equivalent 
$c_L^{{\rm Conn}^1_{\rm int}}(\rho^*a)\le 1$, for all $L$. Thus the statement follows from Corollary \ref{cor:HIc}.
\end{proof}

\section{Witt vectors of finite length}\label{subsec:Witt}
In this section we assume that $k$ is a perfect field of characteristic $p>0$.
Denote by $W_n$ the ring scheme of $p$-typical Witt vectors of length $n$. 
We will denote by $W_n\sO_X$ the (Zariski-, Nisnevich-, \'etale-) sheaf on $X$ defined by $W_n$.
The restriction of $W_n$ to $k$-schemes, which - by abuse of notation - we will again denote by $W_n$,
is in particular a smooth commutative group over $k$. Hence $W_n\in \RSC_{\Nis}$ (see \ref{para:alg}).

\begin{para}\label{para:Witt-trace}
Let $A$ be a ring. Recall, that there is an isomorphism of groups
\eq{para:Witt-trace1}{W_n(A)\xr{\simeq} 
(1+TA[[T]])^\times/ \{\prod_{s\not\in \{1,p, \ldots, p^{n-1}\}} (1-b_sT^s)\mid b_s\in A\},}
\[ (a_0,\ldots, a_{n-1})\mapsto \prod_{i=0}^{n-1} (1-a_i T^{p^i}).\]
Assume $A$ is normal and we have an inclusion of rings $A\inj B$ making $B$ a finite $A$-module.
Then $B[[T]]$ is finite over the normal ring $A[[T]]$ and hence the norm map,
$\Nm: B[[T]]^\times\to A[[T]]^\times$ induces a trace
$\Tr: W_n(B) \to W_n(A)$, see e.g. \cite[Prop A.9]{Ru}. 

Now assume $f: Y\to X$ is a finite and surjective $k$-morphism, where  $X$ is a normal $k$-scheme.
Then the local traces above glue to give 
\[\Tr_f: W_n(Y)\to W_n(X).\]
\end{para}
\begin{lemma}\label{lem:Witt-trace}
In the situation above, $\Tr_f$ equals $f_*:W_n(Y)\to W_n(X)$, the map used to define the transfer structure on
the group scheme $W_n$.
\end{lemma}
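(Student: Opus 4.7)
The plan is to reduce the equality to a computation on a Galois closure of the generic fiber and then compare the two maps factor by factor.

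By Nisnevich localization on $X$ and the additivity of both maps under product decompositions of $B$ (which arises once $A$ is strictly henselian), one may assume $X = \Spec A$ with $A$ a strictly henselian regular local ring and $B$ a local finite $A$-algebra. Since $A$ is a normal integral domain, the coordinate-wise inclusion $W_n(A) \hookrightarrow W_n(\Frac A)$ intertwines both $\Tr_f$ and $f_*$ with their generic-fiber analogues, reducing the problem to verifying the equality as maps $W_n(L) \to W_n(K)$ with $K = \Frac A$ and $L = \Frac B$. Both transfers are compatible with base change on $K$ (for $f_*$ via Lemma \ref{lem:symbol-bc}, for $\Tr_f$ via the base-change property of the norm on power series), so after base change to a Galois closure $K'/K$ of the separable part of $L/K$, one obtains a decomposition $L \otimes_K K' = \prod_j L_j$ in which each $L_j$ is either isomorphic to $K'$ or is a purely inseparable extension of $K'$ of degree $p^{r_j}$; both transfers split as sums over $j$.

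For an \'etale factor $L_j \cong K'$, both $\Tr_f$ and $f_*$ are the identity. For a purely inseparable factor $L_j/K'$ of degree $p^{r_j}$, one uses that $W_n(L_j)$ is generated as an abelian group by the Verschiebung shifts $V^i([\ell])$ of Teichm\"uller representatives $[\ell] \in W_n(L_j)$ for $\ell \in L_j$ and $0 \le i \le n-1$, and that both transfers commute with $V$ (for $\Tr_f$ by the compatibility of the norm with the substitution $T \mapsto T^p$ on power series; for $f_*$ by the naturality of $V$ in $\PST$), to reduce to comparing the two maps on Teichm\"uller elements. For $\Tr_f$, a direct computation using $(1-\ell T)^{p^{r_j}} = 1 - \ell^{p^{r_j}} T^{p^{r_j}}$ in characteristic $p$ together with the identification \eqref{para:Witt-trace1} yields $\Tr_f([\ell]) = V^{r_j}([\ell^{p^{r_j}}])$ in $W_n(K')$ (using $\ell^{p^{r_j}} \in K'$). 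The same expression must then be established for $f_*$, which is the substance of the matter.

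The main obstacle is to verify $f_*([\ell]) = V^{r_j}([\ell^{p^{r_j}}])$ on the PST side. I would factor the purely inseparable morphism $\Spec L_j \to \Spec K'$ as a composition of $r_j$ copies of the relative Frobenius, use the identification of the PST transfer along Frobenius on $W_n$ with the Verschiebung (a consequence of Yoneda and the definition of composition in $\Cor$ applied to graph correspondences of Frobenius), and conclude by induction. The base case $n=1$ reduces to the classical fact that the PST transfer on $\G_a$ is the usual trace of multiplication on $L_j/K'$; for general $n$, the short exact sequence $0 \to V^{n-1}\G_a \to W_n \to W_{n-1} \to 0$ allows an induction on $n$, with both $\Tr_f$ and $f_*$ being natural with respect to this filtration.
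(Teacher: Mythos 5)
Your reductions (to the generic point, to a Galois closure, to purely inseparable factors $L_j/K'$, and to Teichm\"uller generators via compatibility with $V$) are sound, but the step you yourself single out as the substance of the matter fails as sketched, and it is where all the content lies. First, $\Spec L_j\to \Spec K'$ cannot in general be factored into relative Frobenii: after your base change, $K'$ is a function field of transcendence degree $d=\dim X$ over $k$, so the Frobenius of $\Spec K'$ has degree $p^{d}$ and a composition of $r_j$ Frobenii has degree $p^{dr_j}$, whereas $[L_j:K']=p^{r_j}$; already $K'=k(x,y)$ and $L_j=K'(x^{1/p})$ gives a purely inseparable degree-$p$ extension that is not a Frobenius. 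Second, the identification you invoke is itself false in this generality: for the Frobenius $\phi$ of $\Spec E$ with $\td(E/k)=d$ one has $\phi^*=F$ on $W_n$ and $\phi_*\phi^*=\deg(\phi)=p^{d}$, while $VF=p$; hence $\phi_*\neq V$ as soon as $d\ge 2$ (the statement you have in mind is the classical one for $d=1$), and after your reduction $d$ is arbitrary. Third, the induction on $n$ through $0\to V^{n-1}\G_a\to W_n\to W_{n-1}\to 0$ does not close: if the two transfers agree on the sub and after restriction to $W_{n-1}$, their difference can still be an arbitrary additive map $W_{n-1}(L_j)\to K'$, so naturality with respect to this filtration is not enough (this is exactly the difficulty that Lemma \ref{lem:DRW-comp}\ref{lem:DRW-comp2} avoids by using the injective map $\ul{p}$ instead of such a d\'evissage). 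A minor point in the same direction: Lemma \ref{lem:symbol-bc} is about local symbols on curves and is not what gives base-change compatibility of $f_*$; what you need there is the compatibility of composition of finite correspondences with the flat base change $\Spec K'\to\Spec K$ (the cycle $\Spec(L\otimes_K K')$ is reduced, so no multiplicities occur).

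What is missing, then, is an actual computation of $f_*([\ell])$ along an extension with inseparability, and the efficient way to obtain it --- the route the paper takes --- bypasses the field-theoretic case analysis entirely: after shrinking $X$ so that $f$ is finite free, extend both maps to the big Witt vectors $\mathbb{W}_r$, identify $\mathbb{W}_r$ with $\Ker(\Res_{S_r/S}(\G_m)\to \G_m)$ for $S_r=\Spec k[t]/(t^{r+1})$, and reduce the equality $\Tr_f=f_*$ to the fact that the transfer on $\G_m$ is the norm. Unwinding this in your situation, the $K'$-point of $\Sym^{q}_{K'}\Spec L_j$ composed with the sum map yields $\det(1-TM_\ell)=(1-\ell T)^{q}$, where $M_\ell$ is multiplication by $\ell$ on the $K'$-vector space $L_j$, i.e.\ exactly the value $V^{r_j}([\ell^{p^{r_j}}])$ you need; but this determinant computation is the content of the $\G_m$/Weil-restriction argument, not of a Frobenius factorization, so your proposal must be repaired at this point before it constitutes a proof.
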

\begin{proof}
Let $a\in W_n(Y)$ and $d=\deg(f)$. Recall the element $f_*(a)$ is defined by the composition 
\[X\to \Sym^d_X Y\xr{\sum^d a} W_n.\]
It suffices to check that $\Tr_f(a)$ and $f_*(a)$ coincide on a dense open subset.
Thus we can assume that $X$ is affine integral and $f: Y\to X$ is finite free.
Furthermore $W_n$ is a direct factor of the scheme of big Witt vectors $\mathbb{W}_{p^n}$
and $\Tr$ and $f_*$ extend to the big Witt vectors.
Thus it suffices to show the equality on the big Witt vectors $\mathbb{W}_r$, for $r\ge 1$.
Let $S_r=\Spec k[t]/(t^{r+1})$ and denote by $\epsilon: S= \Spec k\inj S_{r}$ the $S$-section.
We have the following isomorphism of $S$-group schemes (cf. \eqref{para:Witt-trace1})
\[\mathbb{W}_r\cong \Ker(\Res_{S_r/S}(\G_m)\xr{\epsilon^*} \G_m),\]
where $\Res_{S_r/S}(\G_m)$ denotes the Weil restriction.
Denote by $f_r: Y_r\to X_r$ the base change of $f$ along $S_r\to S$.
Let $b\in \mathbb{W}_r(Y)$ which we can view as an element in $\Res_{S_r/S}(\G_m)(Y)$.
Then the image of $f_*(b)$ in $\mathbb{W}_r(X)\subset \Res_{S_r/S}(\G_m)(X)$
is equal to the $S_r$-morphism
\[f_{r*}(b) : X_r\to \Sym^d_{X_r} Y_r=X_r\times_X \Sym^d_X (Y)\xr{\prod^d b} \G_{m, S_r}. \]
Now the statement follows from the fact that $f_*=\Nm$ on $\G_m$, see \cite[Exp. XVII, Ex 6.3.18 ]{SGA4III}.
\end{proof}

\begin{para}\label{para:BMK}
Let $L\in\Phi$.  Denote by  $\fillog{j} W_n(L)$, $j\ge 0$, 
the Brylinski-Kato filtration (see \cite{Brylinski}, \cite{Kato-Swan}), i.e.,
\begin{align*}
\fillog{j} W_n(L) &=\{a\in W_n(L)\mid [x]\cdot F^{n-1}(a)\in W_n(\sO_{L}),\text{ all }x\in \fm_{L}^j\}\\
                      &= \{(a_0,\ldots, a_{n-1})\in W_n(L)\mid p^{n-1-i} v(a_i)\ge -j, \, \text{all } i\},
\end{align*}
where $[x]$ denotes the Teichm\"uller lift of $x$ and $F: W_n(L)\to W_n(L)$ is the Frobenius, which by 
contravariant functoriality is induced by the Frobenius of $L$ (or by covariant functoriality by
the base change over $\Spec k$ of the Frobenius on the $\Spec(\F_p)$-ring scheme $W_n$).
 We observe that  for $s\ge 0$ we have
\eq{para:BKM0}{\wV^s(\fillog{j}W_n(L))\subset \fillog{j}W_{n+s}(L),}
where $\wV$ is the Verschiebung on the Witt vectors.
The non-log version introduced by Matsuda in \cite[3.1]{Matsuda}, 
is given by (with the conventions from \cite[2.1]{KeS})
\[\fil{j}W_n(L)= \fillog{j-1} W_n(L)+ \wV^{n-r}(\fillog{j}W_r(L)),\quad j\ge 1,\]
where $r=\min\{n, {\rm ord}_p(j)\}$.
(This is equal to Matsuda's ${\rm fil}'_{j-1}W_n(L)$.)
Assume $r={\rm ord}_p(j)<n$, then  $(a_0,\ldots, a_{n-1})\in \fil{j}W_n(L)$ 
\[\Longleftrightarrow 
p^{n-1-i}v(a_i) \begin{cases}  
\ge -j,  &\text{if } i\neq n-1-r,\\
> -j, & \text{else.} 
\end{cases}\]
This is the description given in \cite[4.7]{Kato-Russell}.
(They denote by ${}^\flat\fil{j}W_n(L)$ what we call $\fil{j}W_n(L)$.)
One directly checks that 
\eq{para:BKM2}{\wF^{n-1}d(\fil{j}W_n(L))\subset \fm_L^{-j}\cdot \Omega^1_{\sO_L},}
where $\wF^{n-1}d$ is the map
\eq{para:BKM3}{\wF^{n-1}d : W_n(L)\to \Omega^1_L, 
\quad (a_0,\ldots, a_{n-1})\mapsto \sum_{i=0}^{n-1} a_i^{p^{n-1-i}-1}d a_i.}
\end{para}

\begin{para}\label{para:KR-cond}
Let $L\in\Phi$. The $F$-saturation of $\fillog{j}W_n(L)$ and $\fil{j}W_n(L)$
is introduced in \cite{Kato-Russell}:
\eq{para:KR-cond1}{{\rm fil}^{{\rm log}, F}_{j}W_n(L)=
\sum_{r\ge 0} F^r (\fillog{j}W_n(L)), \quad j\ge 0,}
and 
\eq{para:KR-cond2}{{\rm fil}^F_jW_n(L)=\sum_{r\ge 0} F^r(\fil{j}W_n(L)), \quad j\ge 1.}
Let $\kappa$ be the residue field of $\sO_L$. Denote by 
$\kappa[F]$ the non-commutative polynomial ring in the variable $F$ and with coefficients in $\kappa$  
with relation $F a= a^p F $ in $\kappa[F]$, for $a\in \kappa$.
By \cite[4.7]{Kato-Russell}, there is an injective homomorphism for $j\ge 1$
\eq{para:KR-cond3}{\bar{\theta}_j: \frac{{\rm fil}^{{\rm log},F}_jW_n(L)}{ {\rm fil}^{F}_jW_n(L)}\inj
 \kappa[F]\otimes_\kappa \left(\frac{\Omega^1_{\sO_L}(\log)\otimes_{\sO_L} \fm_L^{-j}/\fm_L^{-j+1}}
 {\Omega^1_{\sO_L}\otimes_{\sO_L} \fm_L^{-j}/\fm_L^{-j+1}}\right)}
 induced by (cf. \ref{para:BKM2})
\[\sum_{r\ge 0} F^r(a_r)\mapsto \sum_{r\ge 0}( F^r\otimes \wF^{n-1}da_r). \]

For $a\in W_n(L)$, we define the Brylinski-Kato-Matsuda-Russell conductor $\gamma_{n,L}(a)$ 
(cf. \cite[Thm 8.7]{Kato-Russell}) by 
\[\gamma_{n,L}(a):=
\begin{cases} 
0, & \text{if } a\in W_n(\sO_L),\\ \min\{j\ge 1\mid a\in {\rm fil}^{F}_jW_n(L)\}, & \text{else.}
\end{cases}
\]
Note that ${\rm fil}^{F}_1W_n(L)=W_n(\sO_L)$. Thus $\gamma_{n,L}(a)=0$ or $\ge 2$.
\end{para}

\begin{proposition}\label{prop:cond-Witt}
The collection 
\[\gamma_n=\{\gamma_{n,L}: W_n(L)\to \N_0\mid L \in\Phi\}\]
 is a semi-continuous conductor on $W_n$, as is its restriction $\gamma_n^{\le 1}$.
\end{proposition}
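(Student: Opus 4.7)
The plan is to verify each of the axioms \ref{c1}--\ref{c6} of Definitions \ref{defn:cond} and \ref{defn:c6} for the collection $\gamma_n$, using the explicit description of $\filF{j} W_n(L)$ in \ref{para:KR-cond}; the level-$1$ restriction $\gamma_n^{\le 1}$ will be handled by the same arguments (the verification of \ref{c4} does not depend on the level). Axioms \ref{c1} and \ref{c2} are immediate: direct computation gives $\filF{1} W_n(L) = W_n(\sO_L)$, and each $\filF{j} W_n(L)$ is a subgroup, with the family increasing in $j$. For \ref{c4}, I would use that $\gamma_{n,L}$ takes values in $\{0\} \cup \N_{\ge 2}$, so the hypothesis $\gamma_{n, k(x)(t)_\infty}(\rho_x^* a) \le 1$ forces $\rho_x^* a \in W_n(\sO_{\P^1_x,\infty}^h)$. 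Writing $a = (a_0, \ldots, a_{n-1})$ with $a_i \in \sO(X)[t]$, this means each polynomial $a_i(x, t)$ must be regular at $\infty$ on $\P^1_{k(x)}$, hence constant in $t$, for every closed point $x \in X$, and Jacobsonness of $X$ then gives $a_i \in \sO(X)$. For \ref{c5}, I would choose a smooth compactification $(\ol X, D)$ of $X$ with $D$ a simple normal crossings divisor, and an integer $N \ge 1$ with $a_i \in H^0(\ol X, \sO_{\ol X}(N \cdot D))$ for all $i$; then setting $X_\infty := (p^{n-1} N + 1) \cdot D$, the pole bound $v_L(\rho^* a_i) \ge -N \cdot v_L(D)$ yields $\rho^* a \in \fillog{p^{n-1} N \cdot v_L(D)} W_n(L) \subset \filF{p^{n-1} N v_L(D) + 1} W_n(L) \subset \filF{v_L(X_\infty)} W_n(L)$ for every henselian dvf point $\rho: \Spec L \to (\ol X, X_\infty)$ whose closed point lies on $|D|$.

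For \ref{c3}, let $f: \Spec L' \to \Spec L$ be a finite extension of henselian dvf's with ramification index $e$. By Lemma \ref{lem:Witt-trace}, the transfer $f_*: W_n(L') \to W_n(L)$ coincides with the Witt-vector trace, which is induced via the isomorphism \eqref{para:Witt-trace1} by the norm map $\Nm: L'[[T]]^\times \to L[[T]]^\times$. The classical estimate $\Nm_{L'/L}(1 + \fm_{L'}^j) \subset 1 + \fm_L^{\lceil j/e \rceil}$ translates into $f_*(\fillog{j} W_n(L')) \subset \fillog{\lceil j/e \rceil} W_n(L)$, and an analogous computation using the refined form of $\fil{j}$ gives $f_*(\fil{j} W_n(L')) \subset \fil{\lceil j/e \rceil} W_n(L)$. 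Since $f_*$ commutes with the Frobenius $F$ (both being induced by morphisms of $\F_p$-ring schemes), the same bound passes to the $F$-saturation $\filF{j}$, giving \ref{c3}.

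For \ref{c6}, which is the main technical obstacle, I would write $a_K = \sum_{s = 0}^{S} F^s(b_s)$ with $b_s \in \fil{r} W_n(K)$ as a finite sum. Each $b_s$ is a tuple of elements of $K$ with prescribed pole orders at $z$; I would spread out each coordinate of each $b_s$ to a section of $W_n$ on some affine Nisnevich neighborhood $U$ of $z$ preserving the pole orders along $Z_U$, then shrink $U$ further so that the equality $a_U = \sum_s F^s \tilde b_s$ holds in $W_n(U \setminus Z_U)$ by the Nisnevich sheaf property of $W_n$. Next, choose a normal compactification $\ol Y$ of $U$ with reduced boundary $\ol Z_U + \Sigma$, where $\Sigma$ is a divisor at infinity of $U$, and set $Y_\infty := r \cdot \ol Z_U + N \cdot \Sigma$ for $N \gg 0$. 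Applying the estimate from \ref{c5} to each of the finitely many $F^s \tilde b_s$ produces a uniform $N$ bounding $\gamma_{n,L}(\rho^* (F^s \tilde b_s)) \le v_L(N \cdot \Sigma)$ for every henselian dvf point $\rho$ whose closed point lies on $\Sigma$, while by construction each $\tilde b_s$ lies in the $\fil{r}$-filtration along $\ol Z_U$ and therefore $F^s \tilde b_s \in \filF{r}$ along $\ol Z_U$. Combining these bounds via \ref{c2} yields $\gamma_{n, \ol Y}(a_U) \le Y_\infty$. The hard part will be arranging the simultaneous control of all the $F^s \tilde b_s$ along both $\ol Z_U$ (where the bound must be exactly $r$) and $\Sigma$ (where there is freedom to enlarge $Y_\infty$); this relies crucially on the finiteness of the $F$-saturation decomposition and the precise structure of $\fil{r}$ recalled in \ref{para:BMK}.
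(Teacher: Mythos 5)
Your treatment of \ref{c1}, \ref{c2} and \ref{c4} agrees with the paper's, and your \ref{c5} argument is essentially right, except that you should not invoke a smooth compactification with simple normal crossings boundary, which is not available in characteristic $p$ in general; a normal compactification (or the paper's twist $W_n\sO_{\ol{X}}(N\cdot X_\infty)$ by the Teichm\"uller image of $\sO_{\ol{X}}(X_\infty)$) suffices for the pole bound. The genuine gap is in \ref{c3}. The inclusion $\Tr(\fillog{j}W_n(L'))\subset\fillog{\lceil j/e\rceil}W_n(L)$ is true, but it should be deduced from the projection formula $[x]\cdot\Tr(b)=\Tr([x]\cdot b)$ together with $F\Tr=\Tr F$, not from the norm estimate on principal units: under \eqref{para:Witt-trace1} the transfer becomes the norm on $1+TL'[[T]]$, and the $T$-adic filtration there has nothing to do with the $\fm_{L'}$-adic pole conditions defining $\fillog{j}$. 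Much more seriously, the sentence ``an analogous computation using the refined form of $\fil{j}$ gives $f_*(\fil{j}W_n(L'))\subset\fil{\lceil j/e\rceil}W_n(L)$'' asserts exactly the hard point without proof, and there is no analogous computation: $\fil{j}$ differs from $\fillog{j-1}$ by the extra piece $\wV^{n-r}(\fillog{j}W_r)$ with $r=\min\{n,{\rm ord}_p(j)\}$, and ${\rm ord}_p(\lceil j/e\rceil)$ bears no simple relation to ${\rm ord}_p(j)$, so the trace has no reason to respect this Verschiebung constraint term by term. The paper only obtains $\Tr(a)\in{\rm fil}^{\log,F}_sW_n(L)$ and then descends to $\filF{s}W_n(L)$ using the injectivity of the Kato--Russell map $\bar{\theta}_s$ of \eqref{para:KR-cond3}, which reduces the claim to $\fm_L^s\cdot\wF^{n-1}d\Tr(a_j)\subset\Omega^1_{\sO_L}$; proving that requires the trace on the de Rham--Witt complex of \cite{Ru}, compatible with $d$ and $F$, together with \eqref{para:BKM2}. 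Your sketch supplies no substitute for this input.

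Your \ref{c6} is also incomplete, and you say so yourself: the ``hard part'' you defer is precisely the content of the proof. At a henselian dvf point whose center meets both $\ol{Z}_U$ and $\Sigma$ you must bound the conductor of each single summand $F^s(\tilde{b}_s)$ by $v_L(r\cdot\ol{Z}+N\cdot\Sigma)=r\,v_L(\ol{Z})+N\,v_L(\Sigma)$; axiom \ref{c2} only lets you take the maximum over the index $s$, it cannot merge ``$\le r$ along $\ol{Z}$'' with ``$\le N$ along $\Sigma$'' into a bound at such a point, and your appeal to a \ref{c5}-type estimate $\gamma_{n,L}(\rho^*(F^s\tilde{b}_s))\le v_L(N\cdot\Sigma)$ is false there, because $\tilde{b}_s$ has poles along $\ol{Z}_U$ as well. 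The real difficulty is again the refined filtration: whether the Verschiebung part is permitted in $\fil{m}$ with $m=v_L(r\cdot\ol{Z}+N\cdot\Sigma)$ depends on ${\rm ord}_p(m)$. The paper resolves this by spreading out the decomposition $a_s+\wV^{n-r}(b_s)$ with the explicit integrality conditions \eqref{prop:cond-Witt4}, comparing local equations via Lemma \ref{lem:Cart-div}, and choosing $N$ divisible by $p^n$, so that $\min\{{\rm ord}_p(v_L(r\cdot\ol{Z}+N\cdot\Sigma)),n\}\ge\min\{{\rm ord}_p(r),n\}$ and the pulled-back Verschiebung terms remain in the refined filtration. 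Without these two ingredients (the $\bar{\theta}$/de Rham--Witt argument for \ref{c3} and the $p^n\mid N$ device for \ref{c6}) the proposal does not prove the proposition.
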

\begin{proof} Set $\gamma:=\gamma_n$.
Conditions \ref{c1} and \ref{c2} of Definition \ref{defn:cond} are clear.
\ref{c3}. Let $L'/ L$ be a finite extension  of henselian dvf's.
Let $e=e(L'/L)$ be the ramification index. Let $a\in W_n(L')$ and set $r:=\gamma_{L'}(a)$.
We have to show 
\eq{prop:cond-Witt2}{\Tr(a)\in {\rm fil}^{F}_sW_n(L), \quad \text{with } s:=\left\lceil\frac{r}{e}\right\rceil,}
where $\Tr=\Tr_{L'/L}$, see Lemma \ref{lem:Witt-trace}.
This is immediate if $r=0$. Thus we can assume $r\ge 2$ and
write $a=\sum_{j\ge 0} F^j(a_j)$, with $a_j\in \fil{r}W_n(L')$.
We have $\Tr(a_j)\in \fillog{s}W_n(L)$. Indeed, this follows from 
\begin{align*}
[\fm_L^s]\cdot F^{n-1}(\Tr(a_j)) &\subset  \Tr([\fm_{L'}^{se}]\cdot F^{n-1}(a_j)) \\
                                                 &\subset \Tr([\fm_{L'}^r]\cdot F^{n-1}(a_j))\\ 
                                                 &\subset \Tr(W_n(\sO_{L'}))\subset W_n(\sO_L),
\end{align*}
where for $b\in W_n(L)$ we denote $[\fm^j_L]\cdot b:=\{[x]\cdot b \mid x\in \fm^j_L\}$.
Hence
\[\Tr(a)=\sum_j F^j(\Tr(a_j))\in {\rm fil}^{\log, F}_sW_n(L).\]
By the injectivity of $\bar{\theta}_s$ in \eqref{para:KR-cond3} it suffices  to show
\eq{prop:cond-Witt3}{\fm^s_L\cdot F^{n-1}d\Tr(a_j)\in \Omega^1_{\sO_L},\quad \text{all } j\ge 0.}
By \cite[Thm 2.6]{Ru} the trace $\Tr$ extends to a trace between the de Rham-Witt complexes 
$\Tr: W_n\Omega^\cdot_{L'}\to W_n\Omega^\cdot_{L}$ which is compatible with the differential and Frobenius, is
$W_n\Omega^\cdot_L$-linear, and  equals the classical trace on K\"ahler differentials for $n=1$.
We obtain
\begin{align*}
\fm_L^s\cdot F^{n-1}d\Tr(a)  & = \fm^{s}_L\cdot \Tr(F^{n-1}d a) \\
                             & \subset \fm^{s}_L\cdot \Tr(\fm_{L'}^{-r}\cdot \Omega^1_{\sO_{L'}}),& 
                                                                                    &a\in \fil{r}W_n(L'), \text{ see } \eqref{para:BKM2}\\
                                        &\subset \Tr(\fm_{L'}^{es-r}\cdot \Omega^1_{\sO_{L'}})\\
                                       &\subset \Tr(\Omega^1_{\sO_{L'}})\subset \Omega^1_{\sO_L}.
\end{align*}
This completes the proof of \ref{c3}.

Next we show that the restriction of $\gamma$ to $\Phi_{\le 1}$ satisfies \ref{c4}. 
Let $X\in \Sm$ and $a\in W_n(\A^1_X)$ with 
\eq{thm:cond-Witt1}{\gamma_{k(x)(t)_\infty}(\rho_x^*a) \le 1,}
for  closed points $x\in X$, where $k(x)(t)_\infty=\Frac(\sO_{\P^1_x,\infty}^h)$.
 We have to show $a\in W_n(X)$.
We may assume $X=\Spec A$, and thus $a\in W_n(A[t])$.
If $a$ is not constant, then 
we find a closed point $x\in X$ such that the image of $a$ in $W_n(k(x)[t])$ is not constant.
Hence $a_{k(x)(t)_\infty}\not\in W_n(\sO_{k(x)(t)_\infty})$, i.e.,
$\gamma(a_{k(x)(t)_\infty})\ge 2$, contradicting our assumption \eqref{thm:cond-Witt1}.

\ref{c5}. Let $X\in\Sm$ and  $a\in W_n(X)=H^0(X, W_n\sO_X)$.
Let $\sX=(\ol{X}, X_\infty)$ be a proper modulus pair with $X=\ol{X}\setminus |X_\infty|$.
For an effective Cartier divisor $E$ on $\ol{X}$ denote by $W_n\sO_{\ol{X}}(E)$ the 
invertible subsheaf of $j_*W_n\sO_{\ol{X}\setminus|E|}$ corresponding to
the image of $[\sO_{\ol{X}}(E)]\in H^1_{\et}(\ol{X}, \sO_{\ol{X}}^\times)$ in 
$H^1_{\et}(\ol{X}, W_n\sO_{\ol{X}}^\times)$ under the map  induced by the Teichm\"uller lift.
If $e$ is an equation for $E$ at $x\in\ol{X}$, then $W_n\sO_{\ol{X},x}(E)=W_n\sO_{\ol{X},x}\cdot \frac{1}{[e]}$.
There exists an integer $N$ such that $a\in H^0(\ol{X}, W_n\sO_{\ol{X}}(N\cdot X_\infty))$. 
\begin{claim}\label{thm:cond-Witt:claim1}
 $(\ol{X}, r X_\infty)$ satisfies \ref{c5} for any $r>p^{n-1}N$.
\end{claim}
Indeed, let $\rho: \Spec L\to \sX$ be a henselian dvf point.
Assume that the closed point $s\in S_L$ maps into $X_\infty$ and let 
$f\in \sO_{\ol{X}, \ol{\rho}(s)}$ be a local equation for $X_\infty$.
Let $m=v_L(f)$. For $r> p^{n-1}N$ we find $[\fm_L^{rm-1}]\cdot F^{n-1}(a)\in W_n(\sO_L)$; 
hence (see \ref{para:BMK})
\[a\in\fillog{rm-1}W_n(L)\subset \fil{rm}W_n(L)\subset {\rm fil}^{F}_{rm}W_n(L),\]
i.e., $\gamma_L(\rho^*a)\le rm =v_L(r\cdot X_\infty)$,  proving Claim \ref{thm:cond-Witt:claim1}.
 
Finally, \ref{c6}. Let $X\in\Sm$ and $Z\subset X$ a smooth prime divisor with generic point $z$. 
Set $K=\Frac(\sO_{X,z}^h)$.
Let $a\in W_n(X\setminus Z)$. Assume $a_K\in {\rm fil}_{j}^{F}W_n(K)$, $j\ge 2$.
Then there exists an affine Nisnevich neighborhood $U=\Spec A\to X$ of $z$ such that $Z_U= \divi(t)$ on $U$
and $a_U=\sum_{s\ge 0} F^s(a_s+ \wV^{n-r}(b_s))$, where 
$r=\min\{{\rm ord}_p(j), n\}$ and  $a_s\in W_n(A[1/t])$, $b_s\in W_r(A[1/t])$ with 
\eq{prop:cond-Witt4}{[t]^{j-1}\cdot F^{n-1}(a_s)\in W_n(A), \quad [t]^j\cdot F^{r-1}(b_s)\in W_r(A).}
Let $(\ol{Y}, \ol{Z}+\Sigma)$ be a compactification of $(U,Z)$ with $\ol{Z}_{|U}=Z$ and $\ol{Y}$ normal.
Let $\ol{Y}=\cup V_i$ be an open covering
such that $V_i=\Spec B_i$, $\Sigma_{|V_i}=\Div (f_i)$, and $\ol{Z}_{|V_i}=\Div(\tau_i)$, with
$\tau_i, f_i\in B_i$. Note that $\Spec B_{i}[1/f_i]\subset U$ is open,  for all $i$.
Hence, in $B_{i}[1/f_i]$ we can write $t=\tau_i e_i$, with $e_i\in (B_i[1/f_i])^\times$. 
Let $E_i$ be the Cartier divisor on $V_i$ defined by $e_i$. We have $|E_i|\subset |\Sigma_{|V_i}|$.
By Lemma \ref{lem:Cart-div},  there exists $N_1\ge 0$, such that $f_i^{N_1}/e_i\in B_i$, for all $i$.
By \eqref{prop:cond-Witt4}, there exists an $N_2\ge 0$ such that  for all $i$ and all $s$
\[[f_i]^{N_2}  [t]^{j-1}\cdot F^{n-1}(a_s)\in W_n(B_i), \quad
[f_i]^{N_2} [t]^{j} \cdot F^{r-1}(b_s)\in W_r(B_i).\] 
Choose $N\ge j\cdot N_1+N_2$, such that $p^n\mid N$. We obtain  for all $i$
\[[\tau_i]^{j-1} [f_i]^{N-1}\cdot  F^{n-1}(a_s)\in W_n(B_i),\quad
   [\tau_i]^j [f_i]^{N}\cdot F^{r-1}(b_s)\in W_r(B_i).\]
Let $\rho: \Spec L\to U$, $L\in \Phi$. 
Assume the closed point of $\Spec \sO_L$ maps into $|\ol{Z}+\Sigma|$.
Then it follows from the above formula that 
\[\rho^*a_s\in \fillog{v_L((j-1)\cdot \ol{Z} +(N-1) \cdot\Sigma)}W_n(K)\subset \fil{v_L(j\cdot \ol{Z}+ N\cdot \Sigma)}W_n(K)\]
and 
\[\rho^*b_s\in \fillog{v_L(j\cdot \ol{Z}+ N\cdot \Sigma)}W_r(K).\]
By the choice of $N$ we have 
\[r_0:=\min\{{\rm ord}_p(v_L(j\cdot \ol{Z}+ N\cdot \Sigma)), n\}\ge r=\min\{{\rm ord}_p(j),n\};\]
hence 
\[\wV^{n-r}(\rho^*b_s)\in \wV^{n-r_0}\fillog{v_L(j\cdot \ol{Z}+ N\cdot \Sigma)}W_{r_0}(K)\subset 
\fil{v_L(j\cdot \ol{Z}+ N\cdot \Sigma)}W_n(K).\]
Running over all $\rho: \Spec L\to U$ yields
\[\gamma_{\ol{Y}}(a)\le j\cdot \ol{Z}+ N\cdot \Sigma.\]
This proves \ref{c6} and completes the proof of the proposition.
\end{proof}

The above proposition gives $c^{W_n}\le \gamma_n$ by Corollary \ref{cor:mot-cond-mini}. 
We show in Theorem \ref{thm:filF-motivic} below, that equality holds using symbol computations.
If we restrict to $\td(L/k)=1$ and $k$ is infinite, this follows, e.g., from \cite[Prop 6.4, (3)]{Kato-Russell}.
To handle the case of higher transcendence degree we need some preparations.
We start by identifying the local symbol for $W_n$ on regular projective curves over function fields.

\begin{para}\label{para:DRW}
Let $X\in \Sm$.
We denote by $W_n\Omega_X^\bullet$ the de Rham-Witt complex of  length $n$  on $X$ 
(see \cite{IlDRW}). 
By \cite[Cor 3.2.5]{KSY-RecII}  we have  $W_n\Omega^q\in \RSC_{\Nis}$.
See also \cite{Gros} and \cite{CR12} for details on how to define the transfers structure.
If $f: X\to Y$ is a morphism in $\Sm$, then the morphism 
\[\Gamma_f^*=f^*:  W_n\Omega^q(Y)\to  W_n\Omega^q(X)\]
induced by its graph $\Gamma_f\in \Cor(X,Y)$, is the natural pullback morphism induced 
by the functoriality of the de Rham-Witt complex. If $f$ is finite and surjective, then the transpose of the graph
defines an element $\Gamma^t_f\in \Cor(Y,X)$ and  $\Gamma_f^{t*}=f_*$, where $f_*$ is the pushforward defined using 
duality theory.
\end{para}

\begin{lemma}\label{lem:DRW-comp}
\begin{enumerate}[label= (\arabic*)]
\item\label{lem:DRW-comp1} The restriction, Verschiebung, Frobenius, and the differential
(which are part of the structure of the de Rham-Witt complex)
define morphisms in $\RSC_{\Nis}$
\[R: W_{n+1}\Omega^q\to W_n\Omega^q, \quad \wV: W_n\Omega^q\to W_{n+1}\Omega^q,\]
\[ F: W_{n+1}\Omega^q\to W_n\Omega^q, \quad d: W_n\Omega^q\to W_n\Omega^{q+1}.\]
\item\label{lem:DRW-comp2}
Let $W_n$ be the algebraic group of Witt vectors of length $n$ considered as a presheaf on $\Sm$.
Then there is a unique structure of presheaf with transfers on $W_{n}$, for all $n$,
which is unique with the following  properties
\begin{enumerate}
\item\label{lem:DRW-comp2.0} the restriction $R: W_{n+1}\to W_n$ is compatible with the transfer structure, for all $n$;
\item\label{lem:DRW-comp2.1} 
if $f:X\to Y$ is a morphism in $\Sm$ with graph $\Gamma_f\in \Cor(X,Y)$, then $\Gamma_f^*: W_n(Y)\to W_n(X)$
is the pullback from the presheaf structure.
\end{enumerate}
In particular, the Nisnevich sheaf with transfers $W_n\Omega^0=W_n\sO$ from \ref{para:DRW}
coincides with the Nisnevich sheaf with transfers
defined by the algebraic group $W_n$ (see \cite[Cor 3.2.5]{KSY-RecII}).
\end{enumerate}
\end{lemma}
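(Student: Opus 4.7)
The plan for part (1) is to reduce the compatibility of each of $R$, $\wV$, $F$, $d$ with transfers to their compatibility with pushforwards along finite surjective morphisms $f\colon Z\to X$ between smooth $k$-schemes. Pullback compatibility is immediate since all four are natural morphisms of Nisnevich sheaves on $\Sm_k$. For pushforward compatibility, the argument parallels the proof of Lemma \ref{lem:d-map-PST}: given a prime finite correspondence $V\in \Cor(X,Y)$, after shrinking $X$ around its generic points to exploit the separation property $W_n\Omega^q(X)\hookrightarrow W_n\Omega^q(U)$ for dense opens $U\subset X$, one may assume $V$ is smooth, and then $V^{*} = p_{1*} p_2^{*}$ with $p_1\colon V\to X$ finite surjective between smooth schemes. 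So the task reduces to showing that each of $R$, $\wV$, $F$, $d$ commutes with the pushforward $f_{*}$ on $W_n\Omega^\bullet$ for finite surjective $f\colon Z\to X$ between smooth schemes. This is classical de Rham--Witt theory: the pushforward/trace on $W_n\Omega^\bullet$ is constructed via local duality in a way that visibly commutes with the restriction $R$ and the differential $d$, while compatibility with $F$ and $\wV$ follows from the projection formula together with the standard relations $F\wV=\wV F=p$.

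For part (2), existence of the desired PST structure on $W_n$ follows from part (1) applied to $q=0$: the PST structure on $W_n\Omega^0=W_n\sO$ makes $R$ a PST morphism (property (a)), and property (b) is built into the construction of the transfers on the de Rham--Witt complex. For uniqueness, condition (b) fixes the pullback component of any such PST structure, and as in the reduction above it suffices to show that the pushforward along a finite surjective morphism $f\colon Z\to X$ between smooth $k$-schemes is uniquely determined. The identification \eqref{para:Witt-trace1} of $W_n$ with a direct factor of the group of truncated big Witt vectors, combined with property (a) and the algebraic-group structure on $W_n$, forces this pushforward to agree with the norm-induced trace $\Tr$ of \ref{para:Witt-trace}. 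By Lemma \ref{lem:Witt-trace}, $\Tr$ is precisely the pushforward in the canonical PST structure on $W_n$ coming from its algebraic-group structure; hence both PST structures coincide, which yields the ``in particular'' claim.

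The principal obstacle is the uniqueness argument in part (2). Properties (a) and (b) do not literally determine the pushforward along an arbitrary finite surjective morphism, so one must combine them with the explicit Witt-vector description \eqref{para:Witt-trace1} and the canonicity of the PST structure on a commutative algebraic $k$-group to conclude. The remainder of the proof is either a formal reduction of PST-compatibility to the case of finite surjective morphisms of smooth schemes, or invokes classical compatibilities within the de Rham--Witt complex.
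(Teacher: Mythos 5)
Your treatment of part (1) is essentially the same reduction the paper implicitly uses (it delegates the details to \cite[Proof of Prop 3.5.4]{CR12}): shrink around generic points using injectivity of restriction to dense opens, reduce to a finite surjective morphism $f:Z\to X$ between smooth schemes, and invoke the compatibility of the de Rham--Witt trace with $R$, $\wV$, $F$, $d$. Your treatment of existence in part (2) and the ``in particular'' clause is also fine.

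The gap is in the uniqueness argument of part (2). You write that the identification of $W_n$ with a direct factor of the big Witt vectors, ``combined with property (a) and the algebraic-group structure on $W_n$, forces this pushforward to agree with $\Tr$'' --- but this is an assertion, not an argument. The scheme structure of $W_n$ does not determine the transfer structure (transfers are extra data), and neither does the embedding into $\mathbb{W}_{p^n}$, so one cannot simply ``read off'' that any $\bigstar$-pushforward equals $\Tr$. You yourself flag this (``Properties (a) and (b) do not literally determine the pushforward\ldots'') but then punt exactly at the point where the work has to be done. The paper's actual uniqueness proof first reduces (as you do) to a finite field extension $L/K$ and then splits into two cases, each requiring a genuinely different idea. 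In the separable case it passes to a Galois hull $K'/K$: after base change to $K'$, the transfer $f_*$ becomes a sum of pullbacks $\sum_i\sigma_i^*$, which is pinned down by property (b) alone, and one concludes by injectivity of $W_n(K)\hookrightarrow W_n(K')$. In the purely inseparable degree-$p$ case it uses the PST morphism $\ul{p}:W_n\to W_{n+1}$ (lift-and-multiply-by-$p$): any transfer structure satisfies $f_\bigstar f^\bigstar = p\cdot(-)$ automatically, and property (a) guarantees $\ul{p}(f_\bigstar b)=f_\bigstar\ul{p}(b)$; choosing $a\in W_{n+1}(K)$ with $f^*a=\ul{p}(b)$ then forces $\ul{p}(f_\bigstar b)=\ul{p}R(a)$, independent of $\bigstar$, and one concludes by injectivity of $\ul{p}$. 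Note that property (a) is only actually used in the inseparable case, which your sketch does not reflect. Without this case analysis and these two specific mechanisms, the uniqueness claim is not established.
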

\begin{proof}
\ref{lem:DRW-comp1}. We have to show, that if $\alpha\in\Cor(X, Y)$ is a finite correspondence, 
then the following morphisms are equal on $H^0(Y, W_n\Omega^{q}_Y)$
\[\alpha^* R=R\alpha^*,\quad \alpha^*V=V\alpha^*, \quad \alpha^*F=F\alpha^*,\quad  \alpha^*d=d\alpha^*.\]
This follows from \cite[Proof of Prop 3.5.4]{CR12}.  

\ref{lem:DRW-comp2}. The existence of such a transfer structure follows, e.g., from \ref{para:DRW}.
The last part of the statement follows since the two transfer structures satisfy \ref{lem:DRW-comp2.0}, \ref{lem:DRW-comp2.1}.

It remains to prove the uniqueness. Assume we have two transfer actions on $W_n$ 
with \ref{lem:DRW-comp2.0}, \ref{lem:DRW-comp2.1}. For $\alpha\in \Cor(X,Y)$ a finite correspondence denote
by  $\alpha^*, \alpha^\bigstar : W_n(Y)\to W_n(X)$ the two actions. We have to show they are equal.
Let $f:X\to Y$ be a morphism. By assumption we have $\Gamma_f^*=\Gamma_t^\bigstar=:f^*$;
if $f$ is finite and and surjective we set $f_*:=(\Gamma_f^t)^*$ and $f_\bigstar:=(\Gamma_f^t)^\bigstar$.
In general for $\alpha$ as above we want to show $\alpha^*=\alpha^\bigstar$.
It suffices to check this after shrinking $X$ around its generic points. Hence we can assume, that
$X$ is connected and $\alpha=Z\subset X\times Y$ with $Z$ smooth, integral, and finite free over $X$.
Denote by $f:Z\to X$ and $g:Z \to Y$ the maps induced by the projections.  Then $\alpha^\bigstar=f_\bigstar g^*$
and $\alpha^*=f_* g^*$. It remains to show $f_\bigstar=f_*$.
We may shrink $X$ further and hence assume that $f:Z=\Spec L\to X=\Spec K$ is induced by a finite field extension
$L/K$ of function fields over $k$. By transitivity it suffices to consider the two cases where $L/K$ is either separable or purely
inseparable of degree $p$.

{\em 1st case: $L/K$ separable.} Let $K'/K$ be a Galois hull of $L/K$ and set $X'=\Spec K'$.
We obtain the cartesian diagram
\[\xymatrix{\coprod_{i=1}^n X'\ar[d]\ar[r]^-{\coprod_i \sigma_i}& Z\ar[d]^f \\
                 X'\ar[r]^u & X,
}\]
where the vertical map on the left is induced by the universal property of the coproduct from the identity on $X'$,
$u$ is induced by the inclusion $K\inj K'$, and the $\sigma_i: X'\to Z$, $i=1,\ldots, n$,
are induced by be all the $K$-embeddings $L\inj K'$. For $a\in W_n(L)$ we obtain
\[u^*f_*a= (\Gamma_f^t\circ \Gamma_u)^*= \sum_i \Gamma_{\sigma_i}^*\]
and similar with $u^*f_\bigstar$. Thus $u^*f_*=u^*f_\bigstar$ and since $u^*: W_n(K)\inj W_n(K')$ is injective
we have proven the claim in this case.

{\em 2nd case: $L/K$ purely inseparable of degree $p$.}
In this case we have 
\eq{lem:DRW-comp3}{f_*f^*(-)=[L:K]\cdot(-) =p\cdot(-) =f_\bigstar f^\bigstar(-)\quad \text{on }W_n(X).}
Let $\ul{p}: W_n\to W_{n+1}$ be the map {\em lift-and-multiply-by-$p$}; thus it sends a Witt vector
$(a_0,\ldots, a_{n-1})$ in $W_n(A)$, where $A$ is some $\F_p$-algebra, to $(0,a_0^p,\ldots, a_{n-1}^p)$.
Let $b\in W_n(L)$. Clearly we find an element $a\in W_{n+1}(K)$ such that $f^*a =\ul{p}(b)$.
We obtain
\[
\ul{p}(f_* b)  \stackrel{\ref{lem:DRW-comp2.0}}{=} f_*\ul{p}(b)
                = f_*(f^*a)
                  \stackrel{\eqref{lem:DRW-comp3}}{=}p\cdot a  = \ul{p}R(a).
\]
The same computation works for $f_\bigstar b$.
Thus $\ul{p}(f_* b)= \ul{p}(f_\bigstar b)$,
and the claim follows the injectivity of  $\ul{p}$.
\end{proof}

\begin{lemma}\label{lem:trace-dlog}
Let $f: Y\to X$ be a finite  and surjective morphism in $\Sm$.
Then for all $u\in H^0(Y, \sO_Y^\times)$ and all $n\ge 1$ we have 
\[f_*\dlog[u]= \dlog [\Nm_{Y/X}(u)] \quad \text{in } H^0(X, W_n\Omega^1_X),\]
where $\Nm_{Y/X}: f_*\sO^\times_Y\to \sO_X^\times$ is the usual norm.
\end{lemma}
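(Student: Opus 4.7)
The plan is to reduce the assertion to the case $n=1$, which is the classical trace formula for logarithmic forms on Kähler differentials, and then to bootstrap via the structure maps of the de Rham--Witt complex.

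First I would reduce to the local case. Since both $f_*$ and $\Nm_{Y/X}$ are Nisnevich-local constructions, we may shrink $X$ to assume $X=\Spec A$, $Y=\Spec B$ with $A\to B$ finite flat between smooth $k$-algebras. Then by Lemma \ref{lem:DRW-comp}\ref{lem:DRW-comp1} the pushforward $f_*$ on $W_n\Omega^1$ agrees with the trace $\Tr_{B/A}$ on the de Rham--Witt complex from \cite[Thm 2.6]{Ru}; in particular $\Tr_{B/A}$ is $W_n\Omega^\bullet_A$-linear, commutes with $d$, $F$, $R$, and $V$, and reduces to the classical trace on $\Omega^1_{B/A}$ when $n=1$.

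The base case $n=1$ follows from the classical identity $\Tr_{B/A}(du/u)=d\Nm_{B/A}(u)/\Nm_{B/A}(u)$ for $u\in B^\times$, which holds for any finite flat map between regular $k$-algebras and can be checked, for example, by writing $\Tr_{B/A}(du/u)=\Tr_{B/A}(d\log u)$ and using that the trace on $\Omega^1$ is compatible with the norm on $\sO^\times$ under $\dlog$ (combine Lemma \ref{lem:d-map-PST} with \cite[Prop 2.2.23]{CR11}).

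For the inductive step from $n$ to $n+1$, set $\eta:=f_*\dlog[u]-\dlog[\Nm_{B/A}(u)]\in W_{n+1}\Omega^1_A$. Since the Teichmüller lift, $d$, and $\dlog$ all commute with the restriction $R$, induction gives $R(\eta)=0$, so $\eta$ lies in the kernel of $R:W_{n+1}\Omega^1\to W_n\Omega^1$, which by \cite{IlDRW} is of the form $V^n\Omega^1_A+dV^n\sO_A$. Now I would apply the Frobenius: $\dlog[u]$ is $F$-fixed in the de Rham--Witt complex since $F[u]=[u^p]$ and $F(d[u])=[u]^{p-1}d[u]$, so $F\dlog[u]=\dlog[u]$; similarly $F\dlog[\Nm u]=\dlog[\Nm u]$. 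Combined with the relations $FV=p=0$ and $FdV=d$ in the kernel of $R$, iterating $F$ forces $\eta$ into a subspace on which a final direct comparison (for instance via the explicit description of $W_{n+1}\Omega^1_A$ in terms of generators $a\,d[b]$ and $\dlog[c]$) yields $\eta=0$.

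The main obstacle is the last step: the kernel of $R$ is not annihilated by a single application of $F$, so one must combine the $F$-fixedness of $\dlog$ with the projection formula $f_*(\omega\cdot f^*\xi)=f_*(\omega)\cdot\xi$ to pin down $\eta$ exactly. In practice the cleanest route is to invoke the compatibility of the de Rham--Witt trace with $\dlog$ proved in \cite[Cor.~B.2.9]{Ru}, which is exactly the statement we want; the inductive argument above is the self-contained alternative.
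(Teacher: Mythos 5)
Your setup (reduction to the affine case, identification of $f_*$ with a trace having the usual properties, and the classical base case $n=1$) is fine in substance, though two citations are off: Lemma \ref{lem:d-map-PST} is proved under the characteristic-zero hypothesis of Section \ref{sec:Conn1} and cannot be quoted here, and the trace of \cite[Thm 2.6]{Ru} is only constructed for finite field extensions, not for arbitrary finite flat maps of smooth algebras (the identification of $f_*$ with that trace is made in the paper only at the level of function fields, cf.\ \ref{para:Res}). The genuine gap is the inductive step. From $R(\eta)=0$ and $F(\eta)=0$ one cannot conclude $\eta=0$: the intersection $\Ker(R)\cap\Ker(F)$ inside $W_{n+1}\Omega^1_A$ is nonzero in general. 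For example, take $A=k[t]$ and $n=1$: the element $\eta:=dV(t^p)=p\,d[t]\in W_2\Omega^1_{k[t]}$ lies in $\Ker(R)$ (it is of the form $dV(a)$), satisfies $F\eta=d(t^p)=0$ in $\Omega^1_{k[t]}$, and yet is nonzero, since in the canonical decomposition of $W_2\Omega^1_{k[t]}$ (\cite[Cor 2.13]{LZ}, cf.\ the proof of Lemma \ref{lem:Witt-dlog}) it is the basis element $d[t]$ with coefficient $p\neq 0$ in $W_2(k)$. So the $F$-compatibility of $\dlog$ together with $FV=p$ and $FdV=d$ does not pin down $\eta$, exactly the obstacle you flag; and the fallback reference to a ``Cor.\ B.2.9'' of \cite{Ru} is not available (no such statement for finite flat morphisms of smooth schemes exists there), so as it stands the proof is incomplete.

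The paper closes the argument by a different reduction, which you could adopt to repair your proof: since sections of $W_n\Omega^1_X$ on smooth $X$ inject into the generic stalks, one may shrink $X$ around its generic points and assume $f$ comes from a finite extension $L/K$ of function fields; by transitivity it suffices to treat $L/K$ separable and $L/K$ purely inseparable of degree $p$. In the separable case $W_n\Omega^1_L=W_n(L)\otimes_{W_n(K)}W_n\Omega^1_K$ and $f_*=\Tr_{L/K}\otimes\id$, so after the injective base change to a separable closure the trace becomes the sum over the $K$-embeddings $\sigma_i$ and the identity reduces to $\sum_i\sigma_i(\dlog[u])=\dlog\bigl[\prod_i\sigma_i(u)\bigr]$. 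In the purely inseparable degree-$p$ case $\Nm_{L/K}(u)=u^p$, and one concludes by applying the injective map $\ul{p}\colon W_n\Omega^1_K\to W_{n+1}\Omega^1_K$ of \cite[I, Prop 3.4]{IlDRW} together with the projection formula: $\ul{p}(f_*\dlog[u]_n)=f_*\dlog[u^p]_{n+1}=f_*(1)\cdot\dlog[u^p]_{n+1}=\ul{p}\,\dlog[\Nm_{L/K}(u)]_n$. This route avoids any analysis of $\Ker(R)\cap\Ker(F)$ altogether.
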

\begin{proof}
Note that $f$ is flat by \cite[Thm 23.1]{Matsumura}, hence also finite locally free, so that $\Nm_{Y/X}$ is defined.
It suffices to prove the equality after shrinking $X$ around its generic points.
Thus we can assume that $f$ corresponds to a finite field extension $L/K$.
By transitivity it suffices to consider the cases where $L/K$ is separable or purely inseparable of degree $p$.

{\em 1st case: $L/K$ finite separable.} We have 
$W_n\Omega^q_{L}=W_n(L)\otimes_{W_n(K)}W_n\Omega^q_K$ (see \cite[I, Prop 1.14]{IlDRW}).
 By the projection formula and 
Lemma \ref{lem:DRW-comp}\ref{lem:DRW-comp2}, we have $f_*= \Tr_{L/K}\otimes \id$.
Let $K^{\rm sep}$ be a separable closure of $K$. Note that $W_n(K)\to W_n(K^{\rm sep})$
is faithfully flat (since it is ind-\'etale and $\Spec W_n(K)$ is one point). Hence by
\'etale base change and fppf descent the natural map $W_n\Omega^1_K\to W_n\Omega^1_{K^{\rm sep}}$
is injective. Thus it suffices to check the equality in $W_n\Omega^1_{K^{\rm sep}}$.
Let $\sigma_1,\ldots, \sigma_r: L\inj  K^{\rm sep}$ be all $K$-embeddings, then
by the above we have in $W_n\Omega^1_{K^{\rm sep}}$
\[f_*\dlog[u]=\sum_{i=1}^r \sigma_i(\dlog[u])= \dlog\left[\prod_{i=1}^r \sigma_i(u)\right]= \dlog[\Nm_{L/K}(u)].\]

{\em 2nd case: $L/K$ is purely inseparable of degree $p$.}
We have $\Nm_{L/K}(u)=u^p\in K$. Since the map {\em lift-and-multiply-by-$p$}, 
 $\ul{p}: W_n\Omega^1_K \to W_{n+1}\Omega^1_K$ is injective by \cite[I, Prop 3.4]{IlDRW} and commutes with $f_*$
the statement follows from the following  equality in $W_{n+1}\Omega^1_{K}$:
\mlnl{\ul{p}(f_*\dlog[u]_n)= f_*\dlog[u^p]_{n+1}= f_*(1)\cdot \dlog[u^p]_{n+1} \\ = \ul{p} \dlog[\Nm_{L/K}(u)]_n.}
This completes the proof of the lemma.
\end{proof}

\begin{para}\label{para:Res}
Let $A$ be a ring of characteristic $p$ and set $B:=A[[t]][\frac{1}{t}]$.
Recall from \cite[\S 2.2, Prop 3]{Kato-GLCFII} and \cite[Prop 2.12]{Ru}
that there is a residuum map
\eq{para:Res1}{\Res_t: W_n\Omega^*_{B}\to W_n\Omega^{*-1}_A}
which is $W_n\Omega^*_A$-linear (where we consider the left-module structures), commutes with 
$R$, $F$, $V$, and  $d$, is zero on $W_n\Omega^*_{A[[t]]}$, 
and satisfies the equality $\Res_t(\alpha \dlog [t])= \alpha(0)$, for  $\alpha\in W_n\Omega^*_{A[[t]]}$.

Let $K$ be a function field over $k$ and $C$ a regular projective connected curve over $K$ with function field $E=K(C)$.
Recall from \cite[Def-Prop 1]{RuErr} that the residue map 
\[\Res_{C/K,x}: W_n\Omega^*_{E}\to W_n\Omega^{*-1}_K\]
at a closed point $x\in C$ is defined as follows:
by a result of H\"ubel-Kunz we find an integer  $m_0\ge 0$ such that for all $m\ge m_0$ 
the curve $C_m:= \Spec (\sO_C\cap K(E^{p^m}))$ is smooth over $K$ and, 
if $x_m$ denotes the image of $x$ under the finite homeomorphism $C\to C_m$, then 
the residue field $K_m:=K(x_m)$ is separable over $K$. Hence $\sO_{C_m, x_m}^h$ has a unique coefficient field
containing $K$,  which we identify with $K_m$. Set $E_m:=K(C_m)= K(E^{p^m})$.
The  choice of a local parameter  $t\in \sO_{C_m, x_m}$ yields a canonical inclusion $E_m\inj  K_m((t))$.
We define $\Res_{C/K,x}$  as the composition
\[W_n\Omega^*_E\xr{\Tr_{E/E_m}} W_n\Omega^*_{E_m}\inj W_n\Omega^*_{K_m((t))}\xr{\eqref{para:Res1}}
W_n\Omega^*_{K_m}\xr{\Tr_{K_m/K}} W_n\Omega^{*-1}_K.\]
(Here we should observe that if $\pi: \Spec L\to \Spec K$ is a finite extension, 
then the trace $\Tr_{L/K}:W_n\Omega^q_L\to W_n\Omega^q_{K}$ from \cite[Thm 2.6]{Ru} 
is equal to the pushforward $\pi_*$ from \ref{para:DRW}.
Indeed in the case $q=0$ this follows from Lemma \ref{lem:DRW-comp}\ref{lem:DRW-comp2} and 
Lemma \ref{lem:Witt-trace}; by transitivity, the general case is reduced to a simple extension
$L=K[a]$ in which case it follows from the fact that both maps commute with $V$, $F$, $d$, satisfy a projection formula, 
and the equality $[a]^{i-1}d[a]= i_0^{-1}F^ed[a]^{i_0}$, where $i=p^e i_0\ge 1$ with $(i_0,p)=1$.)
\end{para}

\begin{remark}\label{rmk:Res-char-2}
In \cite[2.]{Ru} and \cite{RuErr}, where the trace and the residue symbol mentioned above are constructed it
is always assumed that the characteristic is not $2$. The reason for this that the structure theorem
by Hesselholt and Madsen which in {\em loc. cit.} is cited as Theorem 2.1 was only known for $\Z_{(p)}$-algebras,
with $p$ odd at that time. This theorem is used in Proposition 2.4 and Lemma 2.9 of {\em loc. cit.}
which are needed to define the trace and the formal residue symbol, respectively.
However, the Theorem 2.1 of {\em loc. cit.} is also available for $\Z_{(2)}$-algebras
by \cite[4.2]{Costeanu} hence all the results from {\em loc. cit.} extend to the case $p=2$.
\end{remark}

\begin{lemma}\label{lem:LS-DRW}
Let $C/K$ and $x\in C$ be as in \ref{para:Res}. 
Then the corresponding local symbol of  $W_n\Omega^q$ (see \ref{para:symbol}) is  given by
\[(a, f)_{C/K,x}= \Res_{C/K,x}(\alpha\cdot \dlog [f]),\quad \alpha\in W_n\Omega^q_{K(C)}, \, f\in K(C)^\times,\]
where $[f]=(f,0,\ldots, 0)\in W_n(K(C))$. 

In particular, if $L\in\Phi$ with coefficient field $\sigma: K\inj \sO_L$ and local parameter $t\in \sO_L$,
then the local symbol $( -, -)_{L,\sigma}: W_n\Omega^q_L\times L^\times \to W_n\Omega^q_K$ 
(see \ref{para:symbol-hdvf}) is given by the composition
\[W_n\Omega^q_L\times L^\times\xr{\hat{\sigma}\wedge \dlog\circ[-]\circ\hat{\sigma}}  
       W_n\widehat{\Omega}^{q+1}_{K((t))}\xr{\Res_{t}} W_n\Omega^q_K,\]
where we denote by $\hat{\sigma} : L\inj K((t))$ the canonical inclusion.
\end{lemma}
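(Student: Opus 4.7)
The plan is to verify that the assignment $(\alpha,f)\mapsto \Res_{C/K,x}(\alpha\cdot \dlog[f])$ satisfies the four axioms \ref{LS1}--\ref{LS4} characterizing the local symbol on $F = W_n\Omega^q$, and then to invoke the uniqueness from \ref{para:symbol}. For the ``in particular'' part, I would then reduce the henselian dvf case to a regular projective curve by using the defining construction in \ref{para:symbol-hdvf}, noting that Lemma \ref{lem:symbol-well-def} guarantees independence of this choice.

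First I would check that the formula makes sense: given $\alpha\in \tilde F(C,D)$ with $D = \sum_y n_y\cdot y$ a modulus, one has $\alpha\cdot\dlog[f]\in W_n\Omega^{q+1}_{K(C)}$, and its residue at $x$ is well-defined by \ref{para:Res}. Bilinearity \ref{LS5} follows from the $W_n\Omega^*_K$-linearity of $\Res_{C/K,x}$ recalled in \ref{para:Res}. Additivity in $f$ (\ref{LS1}) follows from the multiplicativity of the Teichm\"uller lift, $[fg]=[f][g]$, so that $\dlog[fg]=\dlog[f]+\dlog[g]$. For \ref{LS2}, at a point $x\notin|D|$ we have $\alpha\in W_n\Omega^q_{\sO_{C,x}}$, and writing $f=u\,t^{v_x(f)}$ with $u\in\sO_{C,x}^\times$ and $t$ a local parameter, we obtain $\dlog[f]=\dlog[u]+v_x(f)\dlog[t]$. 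Since $[u]$ is a unit in $W_n\sO_{C,x}$, the term $\alpha\cdot\dlog[u]$ lies in $W_n\Omega^{q+1}_{\sO_{C,x}}$ and contributes $0$ to the residue by the vanishing property of $\Res_t$ from \ref{para:Res}; the remaining term yields $v_x(f)\Tr_{K(x)/K}(\alpha(x))$ by the equality $\Res_{t}(\beta\dlog[t])=\beta(0)$ passed through $C_m$ in the definition of $\Res_{C/K,x}$.

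The two more delicate points are \ref{LS3} and \ref{LS4}. For \ref{LS3}, I would use that the SC-modulus condition $\alpha\in \tilde F(C,D)$ locally at $x$ implies a bound on the pole order of $\alpha$ in $W_n\Omega^q_{\sO_{C,x}}[1/t]$ together with allowed logarithmic poles. Combined with the fact that for $f\in U^{(n_x)}_x$ the section $\dlog[f]$ lies in $\fm_x^{n_x} \cdot W_n\Omega^1_{\sO_{C,x}}(\log)$ (one checks this after passing to the complete local ring $K_m[[t]]$ via $C_m\to C$ using the explicit expansion of $[1+h]$ in Witt vectors), the product $\alpha\cdot\dlog[f]$ becomes regular at $x$, hence has vanishing residue. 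For the reciprocity \ref{LS4}, the reduction of \ref{para:Res} via $C_m\to C$ reduces the assertion to the residue theorem on the smooth projective $K$-curve $C_m$, which for the de Rham-Witt complex is established in \cite[Thm 2.19]{Ru} and \cite{RuErr}, valid in all characteristics by Remark \ref{rmk:Res-char-2}. Uniqueness of the local symbol then identifies $(a,f)_{C/K,x}$ with $\Res_{C/K,x}(\alpha\cdot\dlog[f])$.

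Finally, for the statement about $L\in\Phi$ with coefficient field $\sigma: K\inj\sO_L$ and parameter $t$, I would choose a presentation $L=\Frac(\sO_{C,x}^h)$ as in \eqref{para:symbol-hdvf1} with $\sO_{C,x}$ admitting $\sigma$ as coefficient field (e.g.\ take an open of $\P^1_K$ and blow up if needed). Tracing through the construction in \ref{para:symbol-hdvf} and the first part of the lemma, $(a,f)_{L,\sigma}$ is computed as $\Res_{C/K,x}(\tilde\alpha\cdot\dlog[\tilde f])$ for suitable lifts $\tilde\alpha,\tilde f$; the identification of this residue with $\Res_t$ applied on $K((t))$ via $\hat\sigma$ follows because $\sO_{C,x}^h\cong\sO_L$ identifies the coefficient fields and parameters on both sides, so the completion $E_m\inj K((t))$ used in \ref{para:Res} coincides with $\hat\sigma$. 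Lemma \ref{lem:symbol-well-def} ensures this is independent of the chosen presentation. The main obstacle is Axiom \ref{LS3}: explicitly controlling the pole order of $\dlog[f]$ for $f\in U^{(n_x)}_x$ inside $W_n\Omega^1$ requires a careful Witt-vector computation with $[1+h]$, which is the step that is not purely formal.
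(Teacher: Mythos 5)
Your overall strategy is the paper's: verify \ref{LS1}--\ref{LS4} for the family $\Res_{C/K,x}(-\cdot\dlog[-])$ and invoke the uniqueness of the local symbol, cite the de Rham--Witt residue theorem for \ref{LS4}, handle \ref{LS3} by an explicit Witt-vector expansion of $\dlog[1-h]$ after enlarging the modulus (this is exactly Lemma \ref{lem:Witt-dlog}), and deduce the henselian statement from the construction in \ref{para:symbol-hdvf} together with Lemma \ref{lem:symbol-well-def}. However, you have mislocated the genuinely hard step: it is \ref{LS2}, not \ref{LS3}.

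Your argument for \ref{LS2} treats $\Res_{C/K,x}$ as if it were computed directly with a local parameter $t$ at $x$, but by its definition in \ref{para:Res} it factors through $\Tr_{E/E_m}$ down to the curve $C_m=\Spec(\sO_C\cap K(E^{p^m}))$, where the residue field extension $K(x)/K(x_m)$ is purely inseparable, say of degree $p^s$, and $x/x_m$ may be ramified. So after writing $f=u\,t^{v_x(f)}$ you still have to evaluate $\Tr_{E/E_m}(\alpha\,\dlog[t])$ and $\Tr_{E/E_m}(\alpha\,\dlog[u])$, and neither $\alpha$ nor $[t]$, $[u]$ is defined over $E_m$; in particular the claim that $\alpha\,\dlog[u]\in W_n\Omega^{q+1}_{\sO_{C,x}}$ has vanishing residue does not follow from the formal vanishing of $\Res_t$ on $W_n\Omega^*_{A[[t]]}$ alone, since that vanishing only applies after the trace to $C_m$. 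The paper's proof of \ref{LS2} consists precisely of this missing computation: it uses the injectivity of $\ul{p}^s$, the existence (by \cite[Thm 2.6(iii)]{Ru}) of $\beta\in W_{n+s}\Omega^q_{K_m}$ with $\beta\mapsto \ul{p}^s\alpha(x)$ and $\Tr_{K(x)/K_m}(\alpha(x))=R^s(\beta)$, the fact that $\ul{p}^s(\alpha)-\sigma(\beta)$ lies in the differential graded ideal generated by $W_{n+s}(\fm_x)$, the projection formula together with Lemma \ref{lem:trace-dlog} (namely $\Tr\,\dlog[u]=\dlog[\Nm(u)]$) to push $\dlog[f]$ down to $E_m$, and finally the identity $v_{x_m}(\Nm_{E/E_m}(f))=[K(x):K(x_m)]\cdot v_x(f)$. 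Only after this reduction to a constant coefficient $\beta$ over the coefficient field $K_m$ does the elementary computation you describe (the unit part is residue-free, the $\dlog$ of a parameter gives evaluation) apply, and it applies on $C_m$, not on $C$. Without these ingredients your verification of \ref{LS2}, and hence the identification with the local symbol, is incomplete; by contrast \ref{LS3} is disposed of by the single identity of Lemma \ref{lem:Witt-dlog}, essentially as you anticipated.
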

\begin{proof}
We have to show that the family of maps $\{\Res_{C/K,x}(-\cdot \dlog[-])\}_x$ with $x$ running through all the closed points of $C$,
satisfies the properties \ref{LS1} - \ref{LS4} from \ref{para:symbol}.
\ref{LS1} (linearity) is clear and since we can choose the modulus $D$ for \ref{LS3} as large as we want  
this condition is clear from Lemma \ref{lem:Witt-dlog} below; \ref{LS4} (the reciprocity law) holds by \cite[Thm 2]{RuErr}
(see also Remark \ref{rmk:Res-char-2}).
It remains to show \ref{LS2}, i.e., 
\[\Res_{C/K,x}(\alpha \dlog (f))= v_x(f)\Tr_{K(x)/K}(\alpha(x)),\quad \alpha\in W_n\Omega^q_{C,x}.\]
To this end choose $m$ as in \ref{para:Res} above. Then  $K(x)/K(x_m)$ is purely inseparable of degree, say, $p^s$ and 
we can write 
\[[E:E_m]= p^{s+e},\]
where $p^e$ is the ramification index of $x/x_m$.
Denote by $\ul{p}^s: W_n\Omega^q\to W_{n+s}\Omega^q$ the map {\em lifting-and-multiplying by $p^s$}; it is injective,
by \cite[I, Prop 3.4]{IlDRW}.
Denote by $\sigma : K_m:=K(x_m)\inj \sO_{C_m,x_m}^h\inj \sO_{C,x}^h$ the inclusion of the coefficient field.
By \cite[Thm 2.6(iii)]{Ru} there exists a $\beta\in W_{n+s}\Omega^q_{K_m}$ mapping
to $\ul{p}^s \alpha(x)\in W_{n+s}\Omega^{q}_{K(x)}$ and we have 
\eq{lem:LS-DRW0}{\Tr_{K(x)/K_m}(\alpha(x))= R^s(\beta).}
By the choice of $\beta$, we have
\eq{lem:LS-DRW1}{\ul{p}^s (\alpha) - \sigma(\beta)\in \Ker(W_{n+s}\Omega^q_{\sO_{C,x}^h}\to W_{n+s}\Omega^q_{K(x)}).}
Since the kernel is the differential graded ideal generated by $W_{n+s}(\fm_{x})$ we obtain
in $W_{n+s}\Omega^q_{K}$
\begin{align*}
\ul{p}^s\Res_{C/K,x}(\alpha\dlog[f]) & = \Res_{C/K,x}(\ul{p}^s(\alpha\dlog[f]))\\
                                          & = \Res_{C/K,x}(\sigma(\beta)\dlog[f]), &\eqref{lem:LS-DRW1}\\
                                          & =\Res_{C_m/K,x_m}(\Tr_{E/E_m}(\sigma(\beta)\dlog[f]))), & \text{defn.}\\
                                        &=\Res_{C_m/K,x_m}(\beta \dlog\Nm_{E/E_m}[f]), & \text{\ref{lem:trace-dlog}}\\
                                       &=v_{x_m}(\Nm_{E/E_m}(f)))\cdot \Tr_{K_m/K}(\beta), & \text{defn.} \\
                                       & =[K(x):K(x_m)]\cdot v_x(f)\cdot \Tr_{K_m/K}( \beta )\\
                                       &=v_x(f) \cdot\ul{p}^s \Tr_{K_m/K}(R^s(\beta))\\
                                      &=v_x(f)\cdot \ul{p^s}\Tr_{K(x)/K}(\alpha(x)), & \eqref{lem:LS-DRW0}.
\end{align*}
Here the first equality follows from the fact that $\Res_{C/K, x}$ commutes with the restriction $R$.
(This follows from the definition and the fact that $\Res_t$ from \eqref{para:Res1} and $\Tr$
commute with $R$, for the latter see, e.g., Lemma \ref{lem:DRW-comp}\ref{lem:DRW-comp1}.)
The statement follows from the injectivity of $\ul{p}^s$.
\end{proof}

\begin{para}\label{para:relDRW}
Let $A$ be a $\Z_{(p)}$-algebra. For an $A$-algebra $B$ we denote by 
$W_n\Omega^\bullet_{B/A}$ the relative de Rham-Witt complex of Langer-Zink (see \cite{LZ}).
It is equipped with $\wR$, $\wF$, $\wV$, $d$ as usual.
If $B[x]$ is the polynomial ring with coefficients in $B$, we denote
by $I_r\subset W_n\Omega^{\bullet}_{B[x]/A}$ the differential graded ideal generated by
$W_n(x^rB[x])$. We define the $x$-adic completion of $W_n\Omega^{\bullet}_{B[x]/A}$ to be
\[W_n\widehat{\Omega}^\bullet_{B[[x]]/A}:=\varprojlim_{r} W_n\Omega^\bullet_{B[x]/A}/I_r.\]
Note that $W_n\Omega^\bullet_{B[x]/A}/I_r= W_n\Omega^\bullet_{(B[x]/(x^r))/A}$ (see \cite[Lem 2.4]{GH}). 
In particular, $W_n\widehat{\Omega}^\bullet_{B[[x]]/A}$ is a $W_n(B[[x]])=\varprojlim_r W_n(B[x]/(x^r))$-module.
\end{para}

\begin{lemma}\label{lem:Witt-dlog}
The following equalities hold in  $W_n\widehat{\Omega}^1_{\Z_{(p)}[[x]]/\Z_{(p)}}$:
\[-\dlog [1-x]= \sum_{i\ge 0} [x]^i d[x] + 
                  \sum_{s=1}^{n-1}\sum_{(j,p)=1}  \tfrac{1}{j} d\wV^s( [x]^j  ).\]
\end{lemma}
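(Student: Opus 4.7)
The plan is to verify the identity by passing to ghost components, which remain available because the de Rham--Witt complex is formed over $\Z_{(p)}$ rather than in characteristic $p$. First I would observe that $W_n\widehat{\Omega}^1_{\Z_{(p)}[[x]]/\Z_{(p)}}$ is $\Z$-torsion free: each truncation $W_n\Omega^\bullet_{(\Z_{(p)}[x]/(x^r))/\Z_{(p)}}$ is $\Z$-torsion free (standard for the Langer--Zink complex of a $\Z$-torsion-free $\Z_{(p)}$-algebra), and an inverse limit of $\Z$-torsion-free abelian groups is $\Z$-torsion free. Hence the map to the rationalization is injective and it suffices to check the identity after $\otimes\Q$.

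Over $\Q$, the ghost components give an isomorphism of differential graded algebras
\[(w_0,\ldots,w_{n-1}) : W_n\widehat{\Omega}^\bullet_{\Q[[x]]/\Q} \xrightarrow{\sim} \prod_{m=0}^{n-1}\widehat{\Omega}^\bullet_{\Q[[x]]/\Q},\]
uniquely characterized by $w_m([a]) = a^{p^m}$, $w_m\circ d = d\circ w_m$, and $w_m(\wV^s\alpha) = p^s w_{m-s}(\alpha)$ when $m \geq s$ (zero otherwise). Applying $w_m$ to the left hand side of the identity in the lemma gives $-\dlog((1-x)^{p^m}) = p^m\,dx/(1-x)$. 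On the right, the first sum contributes $p^m x^{p^m-1}\,dx/(1-x^{p^m})$, and each summand $\tfrac{1}{j}d\wV^s([x]^j)$ contributes $p^m x^{jp^{m-s}-1}\,dx$ when $m\geq s$ and $0$ otherwise.

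The identity is thereby reduced to the formal power series equality
\[\frac{1}{1-x} \;=\; \frac{x^{p^m-1}}{1-x^{p^m}} + \sum_{s=1}^{m}\sum_{(j,p)=1} x^{jp^{m-s}-1} \qquad \text{in } \Q[[x]],\]
valid for each $0 \leq m \leq n-1$. This follows from the unique $p$-adic factorization $k = jp^u$ with $(j,p)=1$ of each $k\ge 1$: in $1/(1-x)=\sum_{k\geq 1} x^{k-1}$, the terms $x^{k-1}$ with $u \geq m$ collect into $x^{p^m-1}/(1-x^{p^m})$, while those with $0\leq u\leq m-1$ (setting $s=m-u$) make up the second summand.

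The main obstacle I anticipate is not a single hard step but careful bookkeeping: one must set up the ghost components rigorously on the $x$-adically completed relative de Rham--Witt complex and check their compatibility with $d$ and $\wV^s$ as stated, together with the claimed injectivity upon inverting $p$. Once this is extracted from the Langer--Zink construction, everything else is mechanical.
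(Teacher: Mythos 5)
Your proof is correct, but it takes a genuinely different route from the one in the paper. The paper argues by induction on $n$: it invokes [LZ, Cor.~2.13] to write $-\dlog[1-x]$ uniquely as $\sum_i a_i[x]^i d[x]+\sum_{s,j} d\wV^s(b_{s,j}[x]^j)$ with coefficients $a_i\in W_n(\Z_{(p)})$, $b_{s,j}\in W_{n-s}(\Z_{(p)})$ to be determined, applies $\wF^{n-1}$ to land in $\widehat{\Omega}^1_{\Z_{(p)}[[x]]/\Z_{(p)}}$ where coefficients can be compared against $\sum_k x^k\,dx$, and combines this with the inductive hypothesis to pin down $a_i=1$ and $b_{s,j}=1/j$. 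You instead pass to ghost components over $\Q$, which replaces the induction and Frobenius extraction by a single coefficient comparison simultaneously at every level $m$; the reduction to the identity $\frac{1}{1-x}=\frac{x^{p^m-1}}{1-x^{p^m}}+\sum_{s=1}^m\sum_{(j,p)=1}x^{jp^{m-s}-1}$ and its verification by sorting the terms $x^{k-1}$ according to $v_p(k)$ are both correct. The main thing each approach must supply is essentially the same input, namely the Langer--Zink structure theorem: the paper uses it for the uniqueness of the decomposition, while you use it (implicitly) to see that $W_n\Omega^\bullet_{(\Z_{(p)}[x]/(x^r))/\Z_{(p)}}$ is $\Z$-torsion-free so that the ghost map is injective after the limit. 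Your blanket claim that torsion-freeness is ``standard'' for any $\Z$-torsion-free $\Z_{(p)}$-algebra is more than is known (and more than is needed); it would be cleaner to cite [LZ, Cor.~2.13] for the specific rings $\Z_{(p)}[x]/(x^r)$, exactly as the paper does. With that adjustment your argument is complete, and is arguably more transparent conceptually, at the cost of some setup for the ghost map on the completed complex.
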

\begin{proof}
We prove this by induction over $n$. The case $n=1$ is clear.
Assume $n\ge 2$. 
By \cite[Cor 2.13]{LZ} we find unique elements  $a_i\in W_n(\Z_{(p)})$ and $b_{s,j}\in W_{n-s}(\Z_{(p)})$ such that 
\[-\dlog[1-x]= \sum_{i\ge 0} a_i [x]^i d[x]+ \sum_{s=1}^{n-1}\sum_{(j,p)=1}d\wV^s(b_{s,j} [x]^j).\]
Applying $\wF^{n-1}$ we obtain in $\widehat{\Omega}^1_{\Z_{(p)}[[x]]/\Z_{(p)}}$
\mlnl{-\dlog(1-x)=\sum_{k\ge 0}  x^k dx \\ = \sum_{i\ge 0} F^{n-1}(a_i) x^{(i+1)p^{n-1}-1} dx 
                  +\sum_{s=1}^{n-1} \sum_{(j,p)=1} F^{n-1-s}(b_{s,j})j x^{jp^{n-1-s}-1}dx.}
By induction hypothesis we have for all $i$, $j$, and for $s=1,\ldots, n-2$
\[a_i=1+\wV^{n-1}(e_i), \quad b_{s,j }=\tfrac{1}{j}+ \wV^{n-s-1}(f_{s,j}), \]
with $e_i, f_{s,j}\in \Z_{(p)}$.
Comparing coefficients we obtain in $\Z_{(p)}$
\[1= F^{n-1}(a_i)=1+p^{n-1}e_i, \]
and for $s= 1,\ldots, n-2$ 
\[\tfrac{1}{j}= F^{n-s-1}(b_{s,j})=\tfrac{1}{j} + p^{n-s-1}f_{s,j},\]
hence $e_i=f_{s,j}=0$; further we find $b_{n-1,j}=1/j\in W_1(\Z_{(p)})$. 
\end{proof}

\begin{para}\label{para:res-formula}
Let $K$ be a field and $\Res_t: W_n\Omega^*_{K((t))}\to W_n\Omega^{*-1}_{K((t))}$ the resiude map from \ref{para:Res1}.
Then for all $r, s\ge 0$, $i,j\in\Z$,  $a\in W_{n-r}(K)$ and $b\in W_{n-s}(K)$
the following equality holds in  $W_n(K)$
\mlnl{\Res_t(\wV^r([a][t]^i)d \wV^s([b][t]^j))= \\
\begin{cases} {\rm sgn}(j) {\rm gcd}(i,j) \wV^{r+s-c}([a]^{p^{s-c}}[b]^{p^{r-c}}), & \text{if } jp^r+i p^s=0,\\ 
                   0,& \text{else},  \end{cases}}
where ${\rm sgn}(j):= j/|j|$, if $j\neq 0$, and ${\rm sgn}(0):=0$, and $c=\min\{r,s\}$ (see \cite[Prop 2.12]{Ru})
\end{para}

\begin{lemma}\label{lem:LSW-form1}
Let $L\in\Phi$ and let $\sigma: K\inj \sO_L$ be a coefficient field. Let $t\in \sO_L$ be a local parameter,
 and $c\in K$. 
\begin{enumerate}[label = (\arabic*)]
\item\label{lem:LSW-form1.1} Let $ r\ge 1$ and write $r=p^e r_0$, with $(r_0, p)=1$, $e\ge 0$. Then 
\[([t]^{-r_0}, 1-t^r c)_{L,\sigma}= -r_0 \wV^e( [c]),\quad \text{in }W_{e+1}(K).\]
\item\label{lem:LSW-form1.2} Let $r\ge 1$ with $(r,p)=1$ and $m=p^u m_0$, with $(m_0, p)=1$, $u\ge 1$. 
 Assume $r> m_0$. Then for all $n\ge 1$
\[([t]^{-m}, 1-t^{r}c)_{L,\sigma}= 0,\quad \text{in } W_n(K).\] 
\end{enumerate}
\end{lemma}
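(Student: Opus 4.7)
The plan is to reduce everything to the formal residue formula recalled in \ref{para:res-formula} via the explicit form of the local symbol given in Lemma \ref{lem:LS-DRW} and the explicit formula for $\dlog[1-x]$ in the relative de Rham--Witt complex given by Lemma \ref{lem:Witt-dlog}. More precisely, by Lemma \ref{lem:LS-DRW} we have
\[
([t]^{-r_0},1-t^{r}c)_{L,\sigma}=\Res_t\bigl([t]^{-r_0}\dlog[1-t^{r}c]\bigr)\in W_{e+1}(K),
\]
and similarly in case \ref{lem:LSW-form1.2}. The Teichm\"uller lift is multiplicative, so $[t^{r}c]=[c][t]^{r}$, and pulling back Lemma \ref{lem:Witt-dlog} along the map $\Z_{(p)}[[x]]\to K[[t]]$, $x\mapsto t^{r}c$, yields
\[
-\dlog[1-t^{r}c]\;=\;\sum_{i\ge 0}[c]^{i}[t]^{ri}\,d([c][t]^{r})
\;+\;\sum_{s=1}^{n-1}\sum_{(j,p)=1}\tfrac{1}{j}\,d\wV^{s}\bigl([c]^{j}[t]^{rj}\bigr)
\]
in $W_n\widehat{\Omega}^{1}_{K[[t]]}$. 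Using the Leibniz rule $d([c][t]^{r})=r[c][t]^{r-1}d[t]+[t]^{r}d[c]$, the computation of the two symbols reduces to applying $\Res_t$ termwise and using $\Res_t(\omega\,d[c])=0$ when $\omega$ contains no $dt$, together with the formula in \ref{para:res-formula}.

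For \ref{lem:LSW-form1.1}, after multiplying through by $[t]^{-r_0}$, the first sum contributes $r[c]^{i+1}[t]^{r(i+1)-r_0-1}d[t]$, whose residue can only be nonzero if $r(i+1)=r_0$, i.e. $p^{e}(i+1)=1$, which forces $e=0$ and $i=0$ with contribution $r_0[c]=r_0\wV^{0}([c])$. The second sum contributes via \ref{para:res-formula} (applied with $r_{\rm formula}=0$, $a=1$, $i_{\rm formula}=-r_0$, $b=[c]^{j}$, $j_{\rm formula}=rj$) a nonzero term exactly when $rj=r_0 p^{s}$; together with $(j,p)=1$ and $r=p^{e}r_0$, this forces $s=e$ and $j=1$, giving the contribution $\tfrac{1}{1}\cdot r_0\wV^{e}([c])$. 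Collecting signs yields the claim.

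For \ref{lem:LSW-form1.2}, the assumption $(r,p)=1$ together with $m=p^{u}m_0$, $u\ge 1$, makes both families of terms vanish: in the first sum we would need $r(i+1)=m=p^{u}m_0$, but since $(r,p)=1$ this forces $r\mid m_0$, contradicting $r>m_0\ge 1$; in the second sum \ref{para:res-formula} requires $rj=mp^{s}=p^{u+s}m_0$, but $(rj,p)=1$ while $u+s\ge 2$, a contradiction.

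The main obstacle, and really the only place where care is needed, is the bookkeeping in case \ref{lem:LSW-form1.1}: one must correctly identify $[t^{r}c]=[c][t]^{r}$, apply Leibniz, verify that the $[t]^{r}d[c]$ part of $d[t^{r}c]$ contributes trivially to $\Res_t$, and match the exponents in \ref{para:res-formula} to single out the unique pair $(s,j)=(e,1)$ that produces a nonzero residue, ensuring the combinatorial factors $\tfrac{1}{j}\cdot\operatorname{sgn}(rj)\gcd(-r_0,rj)=r_0$ and $\wV^{s-0}([1]^{p^{s}}[c]^{1})=\wV^{e}([c])$ combine into $-r_0\wV^{e}([c])$ as required.
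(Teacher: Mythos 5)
Your proposal is correct and follows essentially the same route as the paper: express the symbol via Lemma \ref{lem:LS-DRW} as $\Res_t([t]^{-r_0}\dlog[1-t^rc])$, expand $\dlog[1-t^rc]$ using Lemma \ref{lem:Witt-dlog} pulled back along $x\mapsto t^rc$, and evaluate termwise with the residue formula of \ref{para:res-formula}; your bookkeeping (the $d[c]$-terms dying, the unique contributing pair being $i=0$ with $e=0$ in the first sum, respectively $(s,j)=(e,1)$ in the second, and the divisibility contradictions in case (2)) matches what the paper leaves implicit.
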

\begin{proof}
\ref{lem:LSW-form1.1}. By the Lemmas \ref{lem:LS-DRW} and \ref{lem:Witt-dlog} we have 
\begin{align*}
([t]^{-r_0}, 1-t^r c)_{L,\sigma} =& \Res_t([t]^{-r_0}\dlog[1-t^r c])\\
                                              =& -\sum_{i\ge 0}\Res_t([c]^i [t]^{ir-r_0} d[t^r c])\\
                                               &  -\sum_{s=1}^{e} \sum_{(j,p)=1} \tfrac{1}{j}\Res_t([t]^{-r_0}d\wV^s([c]^j[t]^{jr})).
\end{align*}
Now the claim follows from \ref{para:res-formula}. The proof of \ref{lem:LSW-form1.2} is similar.
\end{proof}

\begin{lemma}\label{lem:fillog-explicit}
Let $L\in\Phi$ and let $t\in \sO_L$ be a local parameter. 
Let $K\inj \sO_L$ be a coefficient field.  Then, for $r\ge 1$, any element  
$ a\in \fillog{r}W_n(L)/W_n(\sO_L)$ can be  written uniquely in the following way 
\[ a= \sum_{  0>i p^{n-1} \ge -r } a_i [t]^i 
+\sum_{s=1}^{n-1}\sum_{\substack{0>j p^{n-1-s} \ge -r\\ (j,p)=1}} \wV^s(b_{s,j}[t]^j),\]
where $a_i\in W_n(K)$ and $b_{s,j}\in W_{n-s}(K)$. 
\end{lemma}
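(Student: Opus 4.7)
Using the coefficient field $K\hookrightarrow\sO_L$ to identify $L=K((t))$ and $\sO_L=K[[t]]$, direct Witt-vector calculation yields the coordinate formulas
\[(\alpha\cdot[t]^i)_k = \alpha_k\,t^{ip^k},\qquad (\wV^s(\beta\cdot[t]^j))_k = \begin{cases}0 & \text{if } k<s,\\ \beta_{k-s}\,t^{jp^{k-s}} & \text{if }k\ge s,\end{cases}\]
for $\alpha=(\alpha_k)\in W_n(K)$ and $\beta=(\beta_k)\in W_{n-s}(K)$. Since the $\alpha_k$ and $\beta_{k-s}$ lie in $\sO_L$, the condition $p^{n-1-k}v(a_k)\ge -r$ defining $\fillog{r}W_n(L)$ translates exactly into the index bounds $ip^{n-1}\ge -r$ and $jp^{n-1-s}\ge -r$ in the statement. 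This shows every element of the right-hand side belongs to $\fillog{r}W_n(L)/W_n(\sO_L)$.

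For \textbf{uniqueness}, I would appeal to \cite[Cor.~2.13]{LZ} in the case $q=0$ --- the same structure theorem exploited in Lemma \ref{lem:Witt-dlog} --- which provides a canonical direct-sum decomposition of $W_n(K[t,t^{-1}])$ as a $W_n(K)$-module in terms of Teichm\"uller and Verschiebung basis elements indexed by $(s,j')$ with $0\le s\le n-1$ and $(j',p)=1$. Using the Frobenius--Verschiebung identity $\wV^s(\alpha)\cdot[t]^m=\wV^s(\alpha\cdot[t]^{mp^s})$ one rearranges this canonical basis into the form appearing in the lemma. The sub-$W_n(K)$-module generated by summands with strictly negative exponent has trivial intersection with $W_n(K[t])\subset W_n(\sO_L)$ inside $W_n(K[t,t^{-1}])$, hence injects into $W_n(L)/W_n(\sO_L)$; this forces $a_i=0$ and $b_{s,j}=0$ whenever the displayed expression vanishes in the quotient.

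For \textbf{existence}, I would argue by induction on $n$. The base case $n=1$ is immediate: $\fillog{r}W_1(L)/W_1(\sO_L)=t^{-r}\sO_L/\sO_L$ is freely $K$-spanned by $t^{-1},\ldots,t^{-r}$, which is the first sum (the second being empty). For the inductive step, given $a\in\fillog{r}W_n(L)$ with $v(a_0)\ge -m_0:=-\lfloor r/p^{n-1}\rfloor$, I would write $a_0=a_0^+ + \sum_{-m_0\le i<0} c_i t^i$ with $a_0^+\in\sO_L$ and $c_i\in K$, and form $a':=a-\sum_i[c_i]\cdot[t]^i$ (with $[c_i]=(c_i,0,\ldots,0)\in W_n(K)$). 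Then $(a')_0\in\sO_L$, so modulo $W_n(\sO_L)$ we have $a'\equiv\wV(a'')$ for some $a''\in W_{n-1}(L)$, and tracking valuations shows $a''\in\fillog{r}W_{n-1}(L)$; applying the inductive hypothesis to $a''$ and transporting via $\wV$ contributes the $s\ge 1$ part of the sum.

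The \emph{main obstacle} is controlling the Witt-vector cross-terms generated by the subtraction $a-\sum_i[c_i]\cdot[t]^i$: the universal addition polynomials $S_k(x_0,\ldots,x_{k-1};y_0,\ldots,y_{k-1})$ produce contributions in the higher coordinates $k\ge 1$ with negative $t$-valuation, and these must be shown either to lie in $W_n(\sO_L)$ already or to be reabsorbable into the decomposition while respecting the $\fillog{r}$ index bounds. This rests on the observation that each such cross-term is a polynomial in the $c_i t^i$ with $i<0$, so its $t$-valuation at coordinate $k$ is bounded below by $p^k\cdot\min_i(i)\ge -m_0 p^k$, which after multiplication by the Witt-weight $p^{n-1-k}$ remains $\ge -r$; an analogous bookkeeping on the inductive output of $a''$ ensures compatibility with the stated ranges.
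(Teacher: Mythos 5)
The paper's proof is a two-liner: pass to the completion $L=K((t))$, cite the Hesselholt--Madsen/R\"ulling structure theorem (\cite[Lem.~4.1.1]{HeMa04}, \cite[Lem.~2.9]{Ru}) for the unique normal form of any element of $W_n(K((t)))/W_n(K[[t]])$, and then translate $a\in\fillog{r}$ into the stated index bounds by computing $[t]^rF^{n-1}(a)$ against that normal form. Your proposal pursues the same overall plan but re-derives the normal form by hand, and this is where the issues are.

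Your citation of \cite[Cor.~2.13]{LZ} for the decomposition of $W_n(K[t,t^{-1}])$ does not quite do what you want. That result concerns $W_n\Omega^\bullet_{B[x]/A}$ over a polynomial ring $B[x]$ (the paper invokes it in Lemma \ref{lem:Witt-dlog} precisely for the power-series completion with \emph{nonnegative} exponents). Here you need the Laurent-tail decomposition of $W_n(K((t)))/W_n(K[[t]])$, which is not a formal consequence and is exactly what \cite[Lem.~4.1.1]{HeMa04} and \cite[Lem.~2.9]{Ru} provide; substituting that reference would close the uniqueness argument cleanly. As for your backup inductive construction of the normal form: the cross-term bookkeeping you flag as the ``main obstacle'' is actually unproblematic, but for a simpler reason than your valuation estimate --- $\fillog{r}W_n(L)$ is a subgroup of $W_n(L)$, and $\sum_i[c_i][t]^i$ lies in it (as you verify termwise in your first paragraph), so $a'=a-\sum_i[c_i][t]^i\in\fillog{r}W_n(L)$ automatically. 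The genuine gap in the induction is elsewhere: after writing $a'\equiv\wV(a'')$ and applying the inductive hypothesis to $a''\in\fillog{r}W_{n-1}(L)$, the resulting expression for $\wV(a'')$ contains terms $\wV(a''_i[t]^i)$ with $p\mid i$, which are not of the shape $\wV^s(b[t]^j)$ with $(j,p)=1$. You must rewrite them via the projection formula, $\wV(a''_i[t]^{pi'})=\wV(a''_i F([t]^{i'}))=\wV(a''_i)\cdot[t]^{i'}$, and fold the result back into the $s=0$ sum, checking that the new index $i'=i/p$ still satisfies $i'p^{n-1}\ge -r$; without this rearrangement the inductive output is not in the claimed normal form. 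Finally, your opening paragraph only establishes one inclusion (right-hand side $\subset\fillog{r}/W_n(\sO_L)$); the converse direction --- that membership in $\fillog{r}$ forces the index bounds on the coefficients --- is what really needs the uniqueness of the Hesselholt--Madsen decomposition, which is why the paper treats that theorem as the crux of the argument.
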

\begin{proof}
We can assume $L$ is complete and hence have $L=K((t))$.
By \cite[Lem 4.1.1]{HeMa04} (see also \cite[Lem 2.9]{Ru})
we can write any element $a$ in $W_n(K((t)))/W_n(K[[t]])$ uniquely in the form
\[a= \sum_{  0>i } a_i [t]^i 
+\sum_{s=1}^{n-1}\sum_{\substack{0>j \\ (j,p)=1}} \wV^s(b_{s,j}[t]^j),\]
with $a_i\in W_n(K)$ and $b_{s,j}\in W_{n-s}(K)$. 
Now, $a\in \fillog{r}W_n(L)/W_n(\sO_L)$ is equivalent to the following equality in $W_n(K((t)))/W_n(K[[t]])$
\mlnl{0=[t]^r F^{n-1}(a)\\ = \sum_{0>i} F^{n-1}(a_i) [t]^{ip^{n-1}+r} + 
        \sum_{s=1}^{n-1}\sum_{\substack{0> j\\ (j,p)=1}} \wV^s(F^{n-1}(b_{s,j}))\cdot [t]^{jp^{n-1-s}+r}.}
This yields the statement.
\end{proof}

\begin{corollary}\label{cor:gr-Wfillog}
Let  $r=p^e r_0\ge 1$ with $e\ge 0$ and $(r_0, p)=1$.  Let $L\in\Phi$ have local parameter $t\in \sO_L$
and let $\sigma: K\inj \sO_L$  be a coefficient field. Set 
$\gr_r^{\rm log} W_n(L):=\fillog{r}W_n(L)/\fillog{r-1}W_n(L)$, $n\ge 1$.
\begin{enumerate}[label=(\arabic*)]
\item\label{cor:gr-Wfillog1} Assume $e\in [0,n-1]$. There is a group isomorphism
\[W_{e+1}(K)\xr{\simeq} \gr_r^{\rm log}W_n(L),\quad
b\mapsto \wV^{n-1-e}(b[t]^{-r_0}) \text{ mod } \fillog{r-1}W_n(L).\]
\item\label{cor:gr-Wfillog2} Assume $e\ge n$. Then there is a group isomorphism
\[W_n(K)\xr{\simeq} \gr_r^{\rm log}W_n(L), \quad b \mapsto  b[t]^{-p^{e-n+1}r_0}\text{ mod } \fillog{r-1}W_n(L).\]
\end{enumerate}
\end{corollary}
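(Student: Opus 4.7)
The plan is to deduce this corollary directly from the uniqueness of the expansion given in Lemma \ref{lem:fillog-explicit}. Passing to the quotient $\gr_r^{\log}W_n(L)=\fillog{r}W_n(L)/\fillog{r-1}W_n(L)$ amounts to keeping only those basis terms whose ``weight'' is exactly $-r$, i.e., the terms $a_i[t]^i$ with $ip^{n-1}=-r$ (and $a_i\in W_n(K)$), respectively $\wV^s(b_{s,j}[t]^j)$ with $jp^{n-1-s}=-r$, $(j,p)=1$, $s\in[1,n-1]$, and $b_{s,j}\in W_{n-s}(K)$.

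First I would analyse part \ref{cor:gr-Wfillog1}, where $e\in[0,n-1]$. The equation $ip^{n-1}=-p^er_0$ with $i\in\Z_{<0}$ has an integer solution iff $e\ge n-1$, i.e., only when $e=n-1$ (in which case $i=-r_0$); for $e<n-1$ the ``$a_i$-terms'' contribute nothing to $\gr_r^{\log}$. Similarly, the equation $jp^{n-1-s}=-p^er_0$ with $(j,p)=1$ and $s\in[1,n-1]$ forces $n-1-s=e$, i.e., $s=n-1-e\in[0,n-2]$; this lies in $[1,n-1]$ precisely when $e\le n-2$, and in that case $j=-r_0$ and $b_{s,j}\in W_{n-s}(K)=W_{e+1}(K)$. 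In both sub-cases ($e=n-1$ with $\wV^0=\id$, and $e\le n-2$) the surviving term is $\wV^{n-1-e}(b[t]^{-r_0})$ with $b\in W_{e+1}(K)$, and by the uniqueness statement in Lemma \ref{lem:fillog-explicit} the assignment $b\mapsto \wV^{n-1-e}(b[t]^{-r_0})\bmod \fillog{r-1}W_n(L)$ is a bijection onto $\gr_r^{\log}W_n(L)$. It is clearly additive.

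For part \ref{cor:gr-Wfillog2}, where $e\ge n$, the second sum contributes nothing: $jp^{n-1-s}=-p^er_0$ with $(j,p)=1$ would require $e=n-1-s\le n-2$, a contradiction. The first sum contributes exactly one type of term: $ip^{n-1}=-p^er_0$ forces $i=-p^{e-n+1}r_0$, and the coefficient $a_i$ ranges freely over $W_n(K)$. Hence by the same uniqueness result, $b\mapsto b[t]^{-p^{e-n+1}r_0}\bmod\fillog{r-1}W_n(L)$ gives the asserted isomorphism $W_n(K)\xr{\simeq}\gr_r^{\log}W_n(L)$.

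There is no serious obstacle here; the work is purely bookkeeping, and the only point where care is required is tracking the boundary value $s=0$ in case \ref{cor:gr-Wfillog1} (where the first sum in Lemma \ref{lem:fillog-explicit} takes over from the $\wV^s$-sum) so as to confirm that the uniform formula $b\mapsto \wV^{n-1-e}(b[t]^{-r_0})$ with $b\in W_{e+1}(K)$ covers both the $e=n-1$ and $e<n-1$ cases.
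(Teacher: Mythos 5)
Your proposal is correct and follows the same route as the paper, which likewise deduces the corollary directly from the uniqueness of the expansion in Lemma \ref{lem:fillog-explicit} by isolating the terms of weight exactly $-r$ (the parenthetical ``$s=n-1-e\in[0,n-2]$'' is a harmless slip, since you immediately state the correct condition $e\le n-2$ for $s\in[1,n-1]$). No gaps.
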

\begin{proof}
This follows directly from Lemma \ref{lem:fillog-explicit}.
\end{proof}

\begin{corollary}\label{cor:gr-Wfil}
Let  $r=p^e r_0\ge 1$ with $e\ge 0$ and $(r_0, p)=1$.  Let $L\in\Phi$ have local parameter $t\in \sO_L$
and let $\sigma: K\inj \sO_L$  be a coefficient field. Set 
$\gr_r W_n(L):=\fil{r}W_n(L)/\fil{r-1}W_n(L)$, $n\ge 1$.
\begin{enumerate}[label=(\arabic*)]
\item\label{cor:gr-Wfil1} Assume $e=0$. Write $r-1=p^{e_1}r_1$ with $e_1\ge 0$ and $(r_1, p)=1$. Then
$\gr_rW_n(L)=0$, if $e_1\ge n$ and, if $e_1\in [0,n-1]$ there is a group isomorphism
\[K\xr{\simeq}\gr_r W_n(L), \quad b\mapsto \wV^{n-1-e_1}([b t^{-r_1}]) \text{ mod } \fil{r-1}W_n(L).\] 
\item\label{cor:gr-Wfil2} Assume $e\in [1,n-1]$. There is a group isomorphism
\[K\oplus W_e(K)\xr{\simeq} \gr_rW_n(L),\]
\[(b,c)\mapsto \wV^{n-1}(bt^{-(r-1)})+\wV^{n-e}(c[t]^{-r_0p}) \text{ mod } \fil{r-1}W_n(L).\]
\item\label{cor:gr-Wfil3} Assume $e\ge n$. Then there is a group isomorphism
\[K\oplus W_n(K)\xr{\simeq} \gr_rW_n(L),\]
\[(b,c)\mapsto \wV^{n-1}(bt^{-(r-1)})+ c[t]^{-p^{e-n+1}r_0}\text{ mod } \fil{r-1}W_n(L).\]
\end{enumerate}
\end{corollary}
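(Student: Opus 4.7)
The plan is to reduce Corollary~\ref{cor:gr-Wfil} to Corollary~\ref{cor:gr-Wfillog} via the Matsuda decomposition $\fil{j}W_n(L) = \fillog{j-1}W_n(L) + \wV^{n-s}\fillog{j}W_s(L)$ with $s=\min\{n,{\rm ord}_p(j)\}$ recalled in~\ref{para:BMK}. The main technical input is the intersection identity
\[\wV^{n-s}\fillog{r}W_s(L)\cap \fillog{r-1}W_n(L) = \wV^{n-s}\fillog{r-1}W_s(L),\]
which I would verify from the component description $\fillog{j}W_n(L)=\{(a_0,\ldots,a_{n-1}): p^{n-1-i}v(a_i)\ge -j\}$ recalled in~\ref{para:BMK}: a vector in $\wV^{n-s}W_s(L)$ has its first $n-s$ components zero, so the constraints of $\fillog{r-1}W_n(L)$ at the remaining indices translate exactly to those of $\fillog{r-1}W_s(L)$.

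In cases~\ref{cor:gr-Wfil2} and~\ref{cor:gr-Wfil3} (where $e\ge 1$), $r-1$ is coprime to $p$, hence $\fil{r-1}W_n(L)=\fillog{r-2}W_n(L)$ (since $\wV^n\fillog{r-1}W_0(L)=0$), while $\fil{r}W_n(L)=\fillog{r-1}W_n(L)+\wV^{n-s}\fillog{r}W_s(L)$ with $s=\min\{n,e\}$. The intersection identity then yields a short exact sequence
\[0\to \fillog{r-1}W_n(L)/\fillog{r-2}W_n(L)\to \gr_r W_n(L)\to \wV^{n-s}\fillog{r}W_s(L)/\wV^{n-s}\fillog{r-1}W_s(L)\to 0.\]
By Corollary~\ref{cor:gr-Wfillog}\ref{cor:gr-Wfillog1} applied to $r-1$ with $e_1=0$, the left term is identified with $K$ via $b\mapsto \wV^{n-1}(bt^{-(r-1)})$. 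By Corollary~\ref{cor:gr-Wfillog}\ref{cor:gr-Wfillog2} applied inside $W_s$ (the case $e\ge s$ of that corollary), the right term is identified with $W_s(K)$ via $c\mapsto c[t]^{-p^{e-s+1}r_0}$, which specializes to $c[t]^{-pr_0}$ in~\ref{cor:gr-Wfil2} and to $c[t]^{-p^{e-n+1}r_0}$ in~\ref{cor:gr-Wfil3}. This second representative is manifestly a group homomorphism and provides a set-theoretic section of the exact sequence, thereby splitting it and yielding the claimed direct sum decompositions.

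For case~\ref{cor:gr-Wfil1}, $\fil{r}W_n(L)=\fillog{r-1}W_n(L)$ since $\wV^n\fillog{r}W_0(L)=0$. Setting $e_1={\rm ord}_p(r-1)$, if $e_1\ge n$ then $\fil{r-1}W_n(L)\supset \wV^{0}\fillog{r-1}W_n(L)=\fil{r}W_n(L)$, so $\gr_r W_n(L)=0$. If $e_1\in[0,n-1]$, then $\fil{r-1}W_n(L)=\fillog{r-2}W_n(L)+\wV^{n-e_1}\fillog{r-1}W_{e_1}(L)$, and by Corollary~\ref{cor:gr-Wfillog}\ref{cor:gr-Wfillog1}, $\fillog{r-1}W_n(L)/\fillog{r-2}W_n(L)\cong W_{e_1+1}(K)$ via $b\mapsto \wV^{n-1-e_1}([bt^{-r_1}])$. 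Using the Witt vector identity $[t]^{-r_1}\wV(x)=\wV([t]^{-pr_1}x)$ (which follows from the standard $x\wV(y)=\wV(\wF(x)y)$) together with the representative $c\mapsto c[t]^{-pr_1}$ from Corollary~\ref{cor:gr-Wfillog}\ref{cor:gr-Wfillog2} applied inside $W_{e_1}$, the image of $\wV^{n-e_1}\fillog{r-1}W_{e_1}(L)$ in $W_{e_1+1}(K)$ is identified with $\wV W_{e_1}(K)\subset W_{e_1+1}(K)$. Hence $\gr_r W_n(L)\cong W_{e_1+1}(K)/\wV W_{e_1}(K)\cong K$ via the leading Witt component, and the lift of $b\in K$ is $[b]=(b,0,\ldots)\in W_{e_1+1}(K)$, which maps to $\wV^{n-1-e_1}([b][t]^{-r_1})=\wV^{n-1-e_1}([bt^{-r_1}])$, recovering the claimed formula.

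The main obstacle is the bookkeeping with Witt vector algebra: one must check that the representative maps are group homomorphisms (using identities like $[a]\wV(x)=\wV([a]^p x)$) and carefully track the interplay between $\wV$, Teichmüller lifts, and the strict-inequality clause in the Matsuda definition of $\fil{j}W_n(L)$ at the distinguished index $i=n-1-{\rm ord}_p(j)$. All such verifications reduce to direct component-wise manipulations, with no deeper input beyond Corollary~\ref{cor:gr-Wfillog} and the explicit description of $\fillog{j}W_n(L)$.
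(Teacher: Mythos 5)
Your proposal is correct and follows essentially the same route as the paper: the paper also reduces via the Matsuda decomposition $\fil{r}W_n(L)=\fillog{r-1}W_n(L)+\wV^{n-e'}\fillog{r}W_{e'}(L)$ to the explicit description of the log filtration (Lemma \ref{lem:fillog-explicit}, equivalently Corollary \ref{cor:gr-Wfillog}), and handles the only delicate point in case \ref{cor:gr-Wfil1} (and the injectivity in \ref{cor:gr-Wfil2}) with the same identity $\wV^{n-e_1}(\beta_1[t]^{-pr_1})=\wV^{n-1-e_1}(\wV(\beta_1)[t]^{-r_1})$ that you use. Your split-exact-sequence packaging and the intersection identity $\wV^{n-s}\fillog{r}W_s(L)\cap\fillog{r-1}W_n(L)=\wV^{n-s}\fillog{r-1}W_s(L)$ just make explicit the bookkeeping the paper leaves to the uniqueness statement of Lemma \ref{lem:fillog-explicit}.
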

\begin{proof}
Let $e':=\min\{e, n\}$ and  recall 
\[\fil{r}W_n(L)= \fillog{r-1}W_n(L)+ \wV^{n-e'}\fillog{r}W_{e'}(L).\]
Thus \ref{cor:gr-Wfil2} and \ref{cor:gr-Wfil3} follow directly from Lemma \ref{lem:fillog-explicit}.
(For the injcetivity in \ref{cor:gr-Wfil2} use that $\wV^{n-e}(c [t]^{-r_0p})=\wV^{n-e-1}(\wV(c)[t]^{-r_0})$.)
Furthermore, it is immediate from Lemma \ref{lem:fillog-explicit}, that there is an injective map as in
\ref{cor:gr-Wfil1} and that any element in the target has a representative of the form $\wV^{n-1-e_1}(\beta[t]^{-r_1})$
with $\beta\in W_{e_1}(K)$.  Thus the statement follows if we show 
$\wV^{n-1-e_1}(\wV(\beta_1) [t]^{-r_1})\in \fil{r-1}W_n(L)$. 
But by Lemma \ref{lem:fillog-explicit} the element
 $\wV^{n-1-e_1}(\wV(\beta_1) [t]^{-r_1})=\wV^{n-e_1}(\beta_1 [t]^{-p r_1})$ 
lies in $\wV^{n-e_1}\fillog{r-1}W_{e_1}(L)\subset \fil{r-1}W_n(L)$. Hence the statement.
\end{proof}

\begin{proposition}\label{prop:Wfil-symb}
Let $L\in\Phi$ have residue field $\kappa_L$ and local parameter $t\in\sO_L$.
 Let $z_1,\ldots, z_m\subset \sO_L$ be a lift of some $p$-basis of
$\kappa/k$. Let $\sigma_0: K_0\inj \sO_L$ be the unique coefficient field with $z_i\in K_0$, $i=1,\ldots, m$.
Let $x$ be an indeterminate and   set $L_x:=\Frac(\sO_L[x]_{(t)}^h)$.
Denote also by $\sigma_0: K_0(x) \inj  L_x$ the canonical extension of $\sigma_0$.
Let $r\ge 1$  and $a\in \filF{r} W_n(L)$. 
Assume one of the following:
\begin{enumerate}[label=(\arabic*)]
\item\label{prop:Wfil-symb01} $(r,p)=1$  or $r=p=2$  or $m=0$, and  $(a, 1-x t^{r-1})_{L_x,\sigma_0}=0$. 
\item\label{prop:Wfil-symb02} $r>2$, $p| r$, $m\ge 1$,  and $(a, 1-x t^{r-1})_{L_x,\sigma_j}=0$,  for $j=0, 1$,
                                    where  $\sigma_1: K_1\inj \sO_{L}$ is the unique coefficient field with 
                                       $z_i/(1+z_i^{p^e}t)\in K_1$, for all $i$, with  $e={\rm ord}_p(r)$, and we denote also by 
                        $\sigma_1: K_1(x)\inj \sO_{L_x}$ the canonical extension.                                           
\end{enumerate}
Then $a\in \filF{r-1} W_n(L)$.
\end{proposition}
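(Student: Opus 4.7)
The plan is to work modulo $\filF{r-1}W_n(L)$ and show that the symbol conditions force the class of $a$ in $\gr^F_r W_n(L) := \filF{r}W_n(L)/\filF{r-1}W_n(L)$ to vanish. By the definition of the $F$-saturation, I write $a = \sum_{s=0}^{N} F^s(a_s)$ with $a_s \in \fil{r}W_n(L)$ (a finite sum), and bring each $a_s$ into normal form via Lemma \ref{lem:fillog-explicit} and Corollary \ref{cor:gr-Wfil}. This expresses the class of $a$ modulo $\filF{r-1}W_n(L)$ as a linear combination of the distinguished generators listed in Corollary \ref{cor:gr-Wfil}, with Frobenius-twisted coefficients in $K_0$ and $W_\bullet(K_0)$.

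I then compute the two local symbols explicitly. By Lemma \ref{lem:LS-DRW}, the symbol has the residue form $(a, 1-xt^{r-1})_{L_x,\sigma} = \Res_t(a \cdot \dlog[1-xt^{r-1}])$, and Lemma \ref{lem:Witt-dlog} together with the formulas in \ref{para:res-formula} reduce this to the explicit evaluations of Lemma \ref{lem:LSW-form1}. Since $F$ is a morphism in $\RSC_\Nis$ by Lemma \ref{lem:DRW-comp}\ref{lem:DRW-comp1}, the local symbol commutes with $F$ via property \ref{LS6}, so the contributions of $F^s(a_s)$ reduce to symbols of $a_s$ pushed forward by $F^s$.

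In case \ref{prop:Wfil-symb01}, either $e=0$ (so Corollary \ref{cor:gr-Wfil}\ref{cor:gr-Wfil1} yields a single $K_0$-summand), or $m=0$ (no $z_i$'s appear), or $r=p=2$ (a degenerate low-characteristic coincidence where the potential extra summand collapses). In each subcase only one family of normal-form generators survives, and Lemma \ref{lem:LSW-form1}\ref{lem:LSW-form1.1} applied with $r-1 = p^{e_1}r_1$ shows that the pairing with $1 - xt^{r-1}$ is a non-zero linear form on each generator, while Lemma \ref{lem:LSW-form1}\ref{lem:LSW-form1.2} kills all unwanted terms in the normal form. The vanishing of $(a, 1-xt^{r-1})_{L_x,\sigma_0}$ therefore forces the class of $a$ in $\gr^F_r W_n(L)$ to be zero.

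In case \ref{prop:Wfil-symb02}, with $p|r$ and $m \geq 1$, Corollary \ref{cor:gr-Wfil}\ref{cor:gr-Wfil2}--\ref{cor:gr-Wfil3} yields an additional ``hidden'' $W_e(K_0)$-summand whose generators, built from the $p$-basis lifts $z_i$, have vanishing symbol with $\sigma_0$: the relevant residues produce $dz_i$-components that are invisible to $\sigma_0$. The substitution $z_i \mapsto z_i' := z_i/(1+z_i^{p^e}t)$ defining $\sigma_1$ is engineered so that $dz_i' \equiv dz_i \cdot u_i - z_i^{p^e+1}(1+z_i^{p^e}t)^{-2}\, \dlog t \pmod{\filF{r-1}}$ for a unit $u_i$, producing an extra $\dlog t$-component that renders the hidden generators visible in $(a, 1-xt^{r-1})_{L_x,\sigma_1}$. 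Combining the two symbol conditions then linearly separates all generators, and the class of $a$ in $\gr^F_r W_n(L)$ must vanish. The main technical obstacle is this change-of-coefficient bookkeeping in case \ref{prop:Wfil-symb02}: one must express the normal form of $a$ simultaneously with respect to $\sigma_0$ and $\sigma_1$, track the unit corrections in the residue computations, and verify that the resulting two linear forms genuinely detect every generator of the graded piece.
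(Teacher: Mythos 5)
Your overall strategy coincides with the paper's: reduce modulo $\filF{r-1}W_n(L)$, put $a$ into the normal form of Lemma \ref{lem:fillog-explicit} and Corollary \ref{cor:gr-Wfil}, and evaluate the symbols through Lemma \ref{lem:LS-DRW} and Lemma \ref{lem:LSW-form1}. Your treatment of case \ref{prop:Wfil-symb01} is essentially right in the subcase $(r,p)=1$, but the other two subcases are not what you describe: for $r=p=2$ the extra summand does not ``collapse'' --- both families $b_ht^{-1}$ and $c_ht^{-2}$ occur, the symbol condition yields $b_0=0$ and $c_h=b_{h+1}^2$, and one must then reshuffle the sum into $2\sum_h F^h\wV^{n-1}(b_ht^{-1})=0$; and for $m=0$ the point is not that the symbol detects everything but that $K_0$ is perfect, so $W_e(K_0)=FW_e(K_0)$ and the hidden generators already lie in $\filF{r-1}W_n(L)$. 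Also, the hidden terms $\wV^{n-e}(c_h[t]^{-r_0p})$ are invisible to $\sigma_0$ simply by Lemma \ref{lem:LSW-form1}\ref{lem:LSW-form1.2} (the exponent is divisible by $p$ and $r-1>r_0$), not because of ``$dz_i$-components''; the $p$-basis plays no role at that stage.

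The genuine gap is case \ref{prop:Wfil-symb02}, which is the heart of the proposition and which you explicitly defer as a ``technical obstacle''. Moreover your stated target there is not the right one: because of the $F$-saturation the two symbol conditions cannot, and need not, ``linearly separate all generators'' --- any $c_h\in FW_e(K_0)$ produces a term with vanishing symbol for \emph{every} coefficient field, since $\wV^{n-e}(c_h[t]^{-r_0p})\in F\,\fillog{r-2}W_n(L)\subset\filF{r-1}W_n(L)$ (using $r\ge 3$). What must actually be proved is precisely $c_h\in FW_e(K_0)$ for all $h$, and the paper does this by contradiction: after reducing to $n=e$ and $h_0=0$ (injectivity of $F$ and $\wV^{n-e}$), it expands $c_0=\sum_j\wV^j\bigl(\sum_I[a_{I,j}]^p[z]^I\bigr)$ in the $p$-basis, applies $\wF^{e-1}d$ (a morphism of reciprocity sheaves, hence compatible with the symbol by \ref{LS6}), rewrites everything in the $K_1$-coordinates $y_i=z_i/(1+z_i^{p^e}t)$, computes the residue in $\Omega^1_{K_1(x)}$, and uses that $y_1,\dots,y_m,x$ form a $p$-basis of $K_1(x)$ to force, descending through the $\wV^j$-levels, $\bar a_{I,j}=0$ for all $I\neq 0$. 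None of this is routine bookkeeping, your displayed differential identity has $\dlog t$ where $dt$ belongs, and you also omit the case $e\ge n$, which is not covered by Corollary \ref{cor:gr-Wfil}\ref{cor:gr-Wfil2} and is handled in the paper by applying $\wV^{e-n+1}$ to reduce to $e\le n-1$. As written, the proposal reproduces the easy half of the argument and leaves the decisive computation unproved.
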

\begin{proof}
Since $k$ is perfect, a $p$-basis over $k$ is the same as a separating transcendence basis over $k$,
(e.g., \cite[Thm 0.21.4.5]{EGAIV1}), hence there are unique coefficient fields $K_0$ and $K_1$ as in the statement
(see \cite[IX, \S 3, No. 2]{BourbakiCA89}).
By Proposition \ref{prop:cond-Witt} and Corollary \ref{cor:mot-cond-mini} we know 
$\filF{r-1}W_n(L)\subset W_n(\sO_L, \fm_L^{-r+1})$; furthermore for all $b\in W_n(\sO_L, \fm_L^{-r+1})$
we have $(b, 1-xt^{r-1})_{L_x,\sigma}=0$ for all coefficient fields $\sigma$ (by Corollary \ref{cor:localLS3}). 
Thus in the following we may replace $a$ by $a+b$ with $b\in \filF{r-1}W_n(L)$. 
We will use $\sigma_0$ to identify $\hat{L}= K_0((t))$.

Write $r=p^e r_0$ with $e\ge 0$ and $(r_0, p)=1$. We distinguish four cases.

{\em 1st case: $e=0$.} Write $r-1=p^{e_1}r_1$ with $(r_1,p)=1$ and $e_1\ge 0$. 
By Corollary \ref{cor:gr-Wfil}\ref{cor:gr-Wfil1} we have $\gr_r W_n(L)=0$, if $e_1\ge n$, and there is nothing to show;
if  $e_1\in [0,n-1]$ we have
\[a\equiv \sum_{h\ge 0}F^h \wV^{n-1-e_1}([b_h][t]^{-r_1}) \quad\text{mod }\filF{r-1}W_n(L), \]
with $b_h\in K_0$. We compute in $W_n(K_0(x))$:
\begin{align*}
0 & = \bigg(\sum_h F^h(\wV^{n-1-e_1}([b_h][t]^{-r_1})), 1-xt^{r-1}\bigg)_{L_x, \sigma_0}, & & 
          \text{by \ref{prop:Wfil-symb01}},\\
    & =\sum_h F^h\wV^{n-1-e_1}\bigg([b_h] \cdot ([t]^{-r_1}, 1-xt^{r-1})_{L_x,\sigma_0}\bigg),& &
           \text{by Lem \ref{lem:LS-DRW}},\\
    &= -r_1\sum_h F^h\wV^{n-1-e_1}([b_h] \wV^{e_1}([x])), & &
                \text{by Lem \ref{lem:LSW-form1}\ref{lem:LSW-form1.1}},\\
   & = -r_1V^{n-1}(\sum_{h} b_h^{p^{e_1+h}} x^{p^h}).
\end{align*}
Hence $b_h=0$, for all $h\ge 0$, which completes the proof of the first case.

{\em 2nd case: $r=p=2$.} By Corollary \ref{cor:gr-Wfil}\ref{cor:gr-Wfil2}, \ref{cor:gr-Wfil3} we have
\[a\equiv \sum_h F^h\wV^{n-1}(b_h t^{-1}+c_h t^{-2})\quad \text{mod }W_n(\sO_L),\]
with $b_h, c_h\in K_0$. Note
\[\Res_t(t^{-1}\dlog(1-xt))= x,\quad \Res_t(t^{-2}\dlog(1-xt))=x^2.\]
Hence by \ref{prop:Wfil-symb01}
\[0=(a,1-xt)_{L_x,\sigma_0}= \wV^{n-1}\big(\sum_{h} b_h^{2^h}x^{2^h}+c_h^{2^h}x^{2^{h+1}}\big).\]
We obtain 
\[b_0=0\quad \text{and} \quad c_h=b_{h+1}^2,\quad \text{all } h\ge 0.\]
Thus reshuffling the sum defining $a$ we obtain
\[a=\sum_{h} F^h\wV^{n-1}(b_h t^{-1}+ F(b_{h+1} t^{-1}))= 2\sum_h F^h\wV^{n-1}(b_h t^{-1})=0.\]

{\em 3rd case: $1\le e\le n-1$ and $r>2$.} By Corollary \ref{cor:gr-Wfil}\ref{cor:gr-Wfil2} we have 
\[a\equiv \sum_{h\ge 0} F^h\bigg(\wV^{n-1}(b_h [t]^{-(r-1)})+ \wV^{n-e}(c_h [t]^{-r_0p})\bigg)\quad 
 \text{mod }\filF{r-1}W_n(L),\]
where $b_h\in K_0$ and $c_h\in W_e(K_0)$. By a similar computation as in the first case, 
the vanishing $(a, 1-xt^{r-1})_{L_x,\sigma_0}=0$ together with $r-1>r_0$ and 
Lemma \ref{lem:LSW-form1}, \ref{lem:LSW-form1.1} and \ref{lem:LSW-form1.2}, imply
$b_h=0$, for all $h\ge 0$.  Thus
\[a\equiv \sum_{h\ge 0} F^h\big( \wV^{n-e}(c_h [t]^{-r_0p})\big)\quad \text{mod }\filF{r-1}W_n(L).\]
It suffices to show
\eq{prop:Wfil-symb4}{c_h\in FW_e(K_0), \quad \text{all }h\ge 0.}
Indeed, then $V^{n-e}(c_h [t]^{-r_0p})=F V^{n-e}(c_h' [t]^{-r_0})$, for some $c_h'\in W_e(K_0)$,
which lies in $F\fillog{r/p}W_n(L)\subset F\fillog{r-2}W_n(L)$ (use $r\ge 3$ for the last inclusion).

If $m=\td(\kappa/k)=0$, then $K_0$ is perfect and \eqref{prop:Wfil-symb4} holds.
This completes the proof of the implication: \ref{prop:Wfil-symb01} $\Rightarrow$ $a\in \fil{r-1}W_n\sO_L$.

Now assume $m\ge 1$.
We prove \eqref{prop:Wfil-symb4} by contradiction using $(a, 1-xt^{r-1})_{L_x, \sigma_1}=0$ with 
$\sigma_1: K_1(x)\inj \sO_{L_x}$ as in \ref{prop:Wfil-symb02}.
Thus assume not all $c_h$ are in $FW_e(K_0)$. Let $h_0$ be the minimal $h$ with $c_h\not\in FW_e(K_0)$.
Hence modulo $\filF{r-1} W_n(L)$ we can write $a$ as $F^{h_0}(V^{n-e}(a'))$, with 
$a'=\sum_{h\ge h_0} F^{h-h_0}(c_h[t^{-r_0p}])$. Since $F: W_n(K_j(x))\to W_n(K_j(x))$ and 
$V^{n-e}: W_e(K_j(x))\to W_n(K_j(x))$,
 $j=0,1$, are injective, the element $a'$ also satisfies $(a', 1-xt^{r-1})_{L_x,\sigma_j}=0$, $j=0,1$.
Thus we can assume $n=e$ and $h_0=0$, i.e., $c_0\not\in FW_e(K_0)$ and we want to find a contradiction.
Since the elements $z_1,\ldots, z_m\in K_0$ from the statement form a $p$-basis we can write 
$c_0$ as follows:
\[c_0= \sum_{j=0}^{e-1} V^j\bigg(\sum_{I\subset [0,p-1]^m} [a_{I,j}]^p[z]^I\bigg),\]
where $a_{I,j}\in K_0$ and $[z]^I=[z_1]^{i_1}\cdots [z_m]^{i_m}$, for $I=(i_1,\ldots, i_m)$.
Therefore, $c_0\not\in FW_e(K_0)$ translates into
\eq{prop:Wfil-symb4.1}{\exists\, j\in[0,e-1], \,I\in[0,p-1]^m\setminus\{(0,\ldots, 0)\} \quad \text{such that}\quad a_{I,j}\neq 0.}
Since we want to compute the local symbol with respect to the coefficient field $\sigma_1: K_1(x)\inj \sO_{L_x}$, 
we have to rewrite $c_0$ as an element in $W_n(K_1[[t]])$. 
Set 
\[y_i:= \frac{z_i}{1+z_i^{ p^e} t}\in K_1, \quad i=1,\ldots, m.\]
Then 
\[c_0=\sum_{j=0}^{e-1}V^j\bigg(\sum_{I\subset [0,p-1]^m} [a_{I,j}]^p [y(1+z^{p^e}t)]^I\bigg),\]
where 
$[y(1+z^{ p^e}t)]^I:= \prod_{h=1}^m [y_h(1+z_h^{ p^e}t)]^{i_h}$. 
 Note that $a_{I,j}, z_h\in K_0\subset  K_1[[t]]$ are not constant.
The composition  $\wF^{e-1}d: W_e(-)\to \Omega^1$ is a morphism of reciprocity sheaves (see Lemma \ref{lem:DRW-comp}).
 Hence $\wF^{e-1}d$ commutes with the local symbol,
which on $\Omega^1$ is given by $(\alpha, f)_{L_x, \sigma_1}=\Res_{K_1((t))}(\alpha\wedge \dlog f)$
(see Lemma \ref{lem:LS-DRW}).
Using $\wF^{e-1}d F=0$ on $W_e$, we obtain the following equalities in $\Omega^1_{K_1(x)}$:
\begin{align*}
0= & \wF^{e-1}d\big(a, 1-xt^{r-1}\big)_{L_x,\sigma_1}\\
 = & (\wF^{e-1}d \big(c_0[t]^{-r_0p}), 1-xt^{r-1}\big)_{L_x,\sigma_1}\\
 = & \sum_{j=0}^{e-1}\sum_{I} 
            \bigg(\wF^{e-1-j}d \big([a_{I,j}]^p[y(1+z^{p^e}t)]^I[t]^{-r_0p^{j+1}}\big) , 1-xt^{r-1}\bigg)_{L_x,\sigma_1}\\
  = &\sum_{j=0}^{e-1}\sum_{I} 
                \Res_t\bigg(a_{I,j}^{p^{e-j}}t^{-r}\wF^{e-1-j}d([y(1+z^{p^e}t)]^I)\dlog(1-xt^{r-1})\bigg).
\end{align*}
Write 
\[a_{I,j}= \bar{a}_{I,j} + t b_{I,j}, \quad \bar{a}_{I,j}\in K_1, \, b_{I,j}\in K_1[[t]].\]
Denote  by 
\[\bar{\sigma}_j:K_j\xr{\simeq} \kappa_{L}, \quad j=0,1\]
the isomorphisms induced by $\sigma_j: K_j\inj \sO_L$.
Then  $\bar{\sigma}_1(\bar{a}_{I,j})=\bar{\sigma}_0(a_{I,j})$; in particular,
\eq{prop:Wfil-symb4.5}{a_{I,j}=0 \Longleftrightarrow  \bar{a}_{I,j}=0.}
For $j\in [0, e-1]$ we have $a_{I,j}^{p^{e-j}}\equiv\bar{a}_{I,j}^{p^{e-j}}$ mod $t^2$ and 
thus we obtain from the computation above 
\eq{prop:Wfil-symb5}{
0 =-\sum_{j=0}^{e-1}\sum_{I\subset [0,p-1]^m} 
                \Res_t\bigg(\bar{a}_{I,j}^{p^{e-j}}t^{-r}\wF^{e-1-j}d([y(1+z^{p^e}t)]^I) d(xt^{r-1})\bigg). }
We have 
\ml{prop:Wfil-symb6}{\wF^{e-1-j}d[y(1+z^{p^e}t)]^I\\ 
=\sum_{h=1}^m i_h (y(1+z^{p^e}t))^{I p^{e-1-j}} \dlog(y_h(1+ z_h^{p^e}t)).}
Note 
\[z_h= y_h+t\zeta_h,\quad \zeta_h\in K_1[[t]],\, h=1,\ldots, m.\]
Thus, $z_h^{p^e}\equiv y_h^{p^e}$ mod $t^{p^e}$.
Hence the coefficient of $\wF^{e-1-j}d[y(1+z^{p^e}t)]^I$ in $K_1$ in front of $dt$ is equal to
\eq{prop:Wfil-symb7}{ f_{I,j}:= q_I^{p^e} y^{Ip^{e-1-j}}, \text{ with } q_I= \sum_{h=1}^m i_h y_h;}
the coefficient of $\wF^{e-1-j}d[y(1+z^{p^e}t)]^I$ in $\Omega^1_{K_1}$ in front of $t$ is equal to 
$df_{I,j}$. (This is zero if  $j\in [0, e-2]$.) Thus by \eqref{prop:Wfil-symb5} we have
\[0= \sum_{j=0}^{e-1}\sum_I (\bar{a}_{I,j}^{p^{e-j}}(f_{I,j}dx+x df_{I,j}))=
d\left(\sum_{j=0}^{e-1}\sum_{I} \bar{a}^{p^{e-j}}_{I,j} f_{I,j}\cdot x\right).\]
Hence the element in the brackets has to be a $p$-th power, i.e., by \eqref{prop:Wfil-symb7}
\[K_1^p\ni \sum_{j=0}^{e-2} \bigg(\sum_{I\subset [0,p-1]^m} (\bar{a}_{I,j}q_{I}^{p^{j}})^p y^I\bigg)^{p^{e-1-j}}\cdot x
+\sum_{\stackrel{I\subset [0, p-1]^m}{I\neq 0}} (\bar{a}_{I,e-1}q_I^{p^{e-1}})^p y^I x.\]
Note
\[q_I=0 \Longleftrightarrow I=0.\]
Since $y_1,\ldots, y_m,x$ form a $p$-basis of $K_1(x)$ over $k$  we obtain
\[\bar{a}_{I,e-1}=0, \quad \text{for all } I\subset [0,p-1]^m\setminus \{(0,\ldots,0)\},\]
and 
\[\sum_{j=0}^{e-2} \sum_{I\subset [0,p-1]^m} (\bar{a}_{I,j}q_{I}^{p^{j}})^{p^{e-1-j}} y^{Ip^{e-2-j}}=0.\]
Since $y_1,\ldots, y_m\in K_1$ form a $p$-basis over $k$, we obtain, similar as above, 
$\bar{a}_{I,e-2}=0$, for all $I\neq 0$. We may proceed in this way  and obtain
\[\bar{a}_{I,j}=0,\quad \text{for all } I\neq 0, j\ge 0.\]
By \eqref{prop:Wfil-symb4.5}  this contradicts \eqref{prop:Wfil-symb4.1} and proves the statement in this case.

{\em 4th case: $e\ge n$ and $r>2$.} By Corollary \ref{cor:gr-Wfil}\ref{cor:gr-Wfil3} we have
\[a\equiv \sum_{h\ge 0} F^h\bigg(c_h[t]^{-p^{e-n+1}r_0}+ V^{n-1}(b_h[t]^{-(r-1)})\bigg)\quad \text{mod }\filF{r-1}W_n(L),\]
where $c_h\in W_n(K_0)$ and $b_h\in K_0$. As before it follows from $(a, 1-xt^{r-1})_{L_x,\sigma_0}=0$ and 
Lemma \ref{lem:LSW-form1} that $b_h=0$, for all $h\ge 0$.
Thus 
\[a\equiv \sum_{h\ge 0} F^h(c_h[t]^{-p^{e-n+1}r_0})\quad \text{mod }\filF{r-1}W_n(L).\]
Applying $\wV^{e-n+1}$ we obtain
\[\wV^{e-n+1}(a)\equiv \sum_{h\ge 0} F^h \wV(c_h' [t]^{-r_0p}) \quad \text{mod } \filF{r-1}W_{e+1}(L),\]
where $c_h'=V^{e-n}(c_h)\in W_e(K_0)$. Since $V^{e-n+1}(a)\in \fil{r}W_{e+1}(L)$ we can apply the third case, in particular
\eqref{prop:Wfil-symb4} to conclude $c_h\in FW_n(K_0)$, and then also $a\in \filF{r-1}W_n(L)$.
This completes the proof of the proposition.
\end{proof}

\begin{thm}\label{thm:filF-motivic}
Let $L\in\Phi$  and $r\ge 0$.
Then
\[\filF{r}W_n(L)=\widetilde{W}_n(\sO_L,\fm^{-r}),\]
i.e., the Brylinski-Kato-Matsuda-Russell conductor is motivic. 
\end{thm}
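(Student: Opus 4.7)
The plan is to prove this as the culmination of the preceding work: Proposition \ref{prop:cond-Witt} gives semi-continuity of $\gamma_n$, Corollary \ref{cor:mot-cond-mini} gives one inclusion, and Proposition \ref{prop:Wfil-symb} provides the key descent step for the other direction.

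\textbf{Easy direction.} By Proposition \ref{prop:cond-Witt}, $\gamma_n=\{\gamma_{n,L}\}_{L\in\Phi}$ is a semi-continuous conductor on $W_n$. By Corollary \ref{cor:mot-cond-mini}, $c^{W_n}\le \gamma_n$, which translates via Lemma \ref{lem:c6} into the inclusion $\filF{r}W_n(L)\subseteq \widetilde{W}_n(\sO_L,\fm_L^{-r})$.

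\textbf{Hard direction.} I would show $\widetilde{W}_n(\sO_L,\fm_L^{-r})\subseteq \filF{r}W_n(L)$ by descending induction. First observe that $W_n(L)=\bigcup_{N\ge 1}\filF{N}W_n(L)$: any $a=(a_0,\ldots,a_{n-1})\in W_n(L)$ satisfies $p^{n-1-i}v_L(a_i)\ge -M$ for some common $M$, so $a\in \fillog{M}W_n(L)\subseteq \fil{M+1}W_n(L)\subseteq \filF{M+1}W_n(L)$. Hence given $a\in \widetilde{W}_n(\sO_L,\fm_L^{-r})$, there is some $N\ge r$ with $a\in \filF{N}W_n(L)$, and it suffices to show that for any $r'>r$, the inclusion $\filF{r'}W_n(L)\cap \widetilde{W}_n(\sO_L,\fm_L^{-r})\subseteq \filF{r'-1}W_n(L)$ holds.

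\textbf{Inductive step via symbol vanishing.} Fix a local parameter $t\in \sO_L$, a lift $z_1,\ldots,z_m\in \sO_L$ of a $p$-basis of $\kappa_L/k$, and let $\sigma_0\colon K_0\hookrightarrow \sO_L$ be the unique coefficient field containing $z_1,\ldots,z_m$; when $r'$ is divisible by $p$ and $r'>2$, set also $\sigma_1\colon K_1\hookrightarrow \sO_L$ as in Proposition \ref{prop:Wfil-symb}. Consider $L_x=\Frac(\sO_L[x]_{(t)}^h)$, which is an extension of $L$ of ramification index $1$. Since $1-xt^{r'-1}\in U^{(r'-1)}_{L_x}\subseteq U^{(r)}_{L_x}$ (using $r'-1\ge r$), Corollary \ref{cor:localLS3} applied to $a\in \widetilde{W}_n(\sO_L,\fm_L^{-r})$ and to the coefficient field extension $\sigma_j\colon K_j(x)\hookrightarrow \sO_{L_x}$ yields
\[
(a,\,1-xt^{r'-1})_{L_x,\sigma_j}=0,\qquad j=0,1.
\]
Proposition \ref{prop:Wfil-symb} then forces $a\in \filF{r'-1}W_n(L)$, completing the induction.

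\textbf{Main obstacle.} The genuine work is already absorbed in Proposition \ref{prop:Wfil-symb}, whose delicate case $p\mid r$, $r>2$, $m\ge 1$ required testing against two distinct coefficient fields $\sigma_0,\sigma_1$ and a careful residue computation in the relative de Rham--Witt complex; once this is granted, the present theorem is a formal consequence of Corollary \ref{cor:localLS3} together with the fact that $L_x/L$ is unramified, which ensures that the modulus $r$ on $L$ transfers to modulus $r$ on $L_x$ and thus annihilates the relevant $U$-groups.
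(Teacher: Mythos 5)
Your proposal is correct and follows essentially the same route as the paper: the inclusion $\filF{r}W_n(L)\subset \widetilde{W}_n(\sO_L,\fm_L^{-r})$ from Proposition \ref{prop:cond-Witt} together with the minimality of the motivic conductor, and the reverse inclusion by exhausting $W_n(L)$ by the filtration and descending step by step via Corollary \ref{cor:localLS3} (with $e=1$ for $L_x/L$) and Proposition \ref{prop:Wfil-symb}. You merely spell out explicitly the descending induction that the paper leaves implicit in the phrase ``Hence the statement follows from Proposition \ref{prop:Wfil-symb}.''
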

\begin{proof}
We have $\filF{r}W_n(L)\subset \widetilde{W}_n(\sO_L,\fm^{-r})$, by
 Proposition \ref{prop:cond-Witt} and Theorem \ref{thm:rec-cond}\ref{thm:rec-cond4}, 
 and we know this is an equality for $r=0$.
Let $t\in \sO_L$ be a local parameter.
 By Corollary \ref{cor:localLS3} we have 
\[a\in \widetilde{W}_n(\sO_L,\fm^{-r})\Rightarrow (a, 1-xt^m)_{L_x,\sigma}=0, 
\quad \text{for all }m\ge r \text{ and all } \sigma,\]
where $L_x=\Frac(\sO_L[x]_{(t)}^h)$ and $\sigma$ is running through all coefficient fields $\sigma: K\inj \sO_L$.
Furthermore, we know for any $a\in \widetilde{W}_n(\sO_L,\fm^{-r})$ there exists some $m\ge r$ such that  
$a\in\filF{m}W_n(L)$.  Hence the statement follows from Proposition \ref{prop:Wfil-symb}.
\end{proof}

\section{Lisse sheaves of rank 1 and the Artin conductor}\label{subsec:art}
In this section $k$ is a perfect field of characteristic $p>0$.
\subsection{The case of finite monodromy}
\begin{para}\label{para:art}
Consider the constant presheaf with transfers $\Q/\Z$, i.e., an elementary correspondence $V\in \Cor(X,Y)$, with
$X, Y$ smooth and connected, acts by multiplication with $[V:X]$. By \cite[Lem 6.23]{MVW}
\[X\mapsto H^1(X):=H^1_{\et}(X, \Q/\Z)= \Hom_{\cont}(\pi_1(X)^{\rm ab}, \Q/\Z)\]
is a presheaf with transfers, which  we denote by $H^1$ in the following. 
\end{para}
Note that $H^1\in\NST$ as follows from the following Lemma.
\begin{lemma}\label{lem:H1Nis}
Let $A$ be an abelian group. It defines a constant \'etale sheaf on $\Sm$.
Then the presheaf  $X\mapsto H^1_{\et}(X, A)$ is a Nisnevich sheaf on $\Sm$.
\end{lemma}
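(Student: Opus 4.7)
My plan is to verify the sheaf condition using the Morel--Voevodsky criterion: a presheaf $\mathcal F$ on $\Sm$ is a Nisnevich sheaf iff $\mathcal F(\emptyset)=0$, $\mathcal F$ is a Zariski sheaf, and for every distinguished Nisnevich square
\[\xymatrix{W \ar[r] \ar[d] & V \ar[d]^{p} \\ U \ar[r]^{j} & X,}\]
(with $j$ an open immersion, $p$ étale, and $p^{-1}(Z)_{\rm red}\xr{\simeq} Z_{\rm red}$ for $Z=X\setminus U$) the induced square of values is cartesian. The Zariski sheaf property of $H^1_{\et}(-,A)$ is standard: $H^1_{\et}(X,A)$ classifies étale $A$-torsors, and $A$-torsors satisfy Zariski descent.

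For the Nisnevich excision square, I would invoke the Mayer--Vietoris long exact sequence in étale cohomology associated to the étale cover $\{V\to X,\,U\to X\}$ of $X$:
\[\cdots\to H^{n-1}_{\et}(W,A)\to H^n_{\et}(X,A)\to H^n_{\et}(V,A)\oplus H^n_{\et}(U,A)\to H^n_{\et}(W,A)\to\cdots\]
Extracting the relevant terms gives
\[H^0(V,A)\oplus H^0(U,A)\xr{\alpha} H^0(W,A)\to H^1(X,A)\to H^1(V,A)\oplus H^1(U,A)\xr{\beta} H^1(W,A).\]
The cartesian property of the $H^1$-square is equivalent to (i) exactness at $H^1(V,A)\oplus H^1(U,A)$, which is built into Mayer--Vietoris, and (ii) injectivity of $H^1(X,A)\to H^1(V,A)\oplus H^1(U,A)$, which by exactness reduces to surjectivity of $\alpha$.

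For surjectivity of $\alpha$, I would use that $H^0_{\et}(Y,A)=A^{\pi_0(Y)}$ for any scheme $Y$. Since $X,V\in\Sm$ are smooth and $Z\subset X$, $p^{-1}(Z)\subset V$ are nowhere dense closed subsets, removing them does not change the set of connected components; hence $\pi_0(U)\xr{\simeq}\pi_0(X)$ and $\pi_0(W)\xr{\simeq}\pi_0(V)$. In particular $H^0(V,A)\to H^0(W,A)$ is an isomorphism, so $\alpha$ is already surjective via the first summand (taking $b=0$). The main obstacle, such as it is, lies in recording the Mayer--Vietoris sequence precisely (one must view the distinguished square as the étale cover $\{V\to X,\,U\to X\}$ and use that $V\times_X U=W$, $U\times_X U=U$); everything else reduces to the elementary topological observation about $\pi_0$ of smooth schemes.
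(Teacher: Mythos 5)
Your overall strategy --- checking the cartesian condition on elementary Nisnevich squares and reducing it, via a Mayer--Vietoris sequence, to surjectivity of $\alpha: H^0_{\et}(U,A)\oplus H^0_{\et}(V,A)\to H^0_{\et}(W,A)$ --- is viable and genuinely different from the paper's proof, but two steps need repair. First, the long exact sequence you invoke is \emph{not} the \v{C}ech Mayer--Vietoris sequence of the \'etale cover $\{U\to X,\,V\to X\}$: since $p:V\to X$ is in general not a monomorphism (over $U$ it may have any degree), one has $V\times_X V\neq V$, so the \v{C}ech complex of this cover does not have the two-term shape you are using; for a general \'etale cover by two maps no such sequence exists. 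The sequence you want is nevertheless true for a Nisnevich distinguished square, but it must be derived from excision for \'etale cohomology with supports, $H^n_Z(X,A)\cong H^n_{p^{-1}(Z)}(V,A)$ (see, e.g., \cite[III, 1.27]{MilneEt}), combined with the long exact sequences of the pairs $(X,U)$ and $(V,W)$ --- and this is exactly where the Nisnevich condition $p^{-1}(Z)_{\red}\xr{\simeq} Z_{\red}$ enters. Second, your $\pi_0$-argument overstates what holds: $Z$ need not be nowhere dense (e.g.\ the square encoding a disjoint union, where $Z$ is a whole component and $W=\emptyset$), so $\pi_0(W)\to\pi_0(V)$ need not be bijective. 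What is true, and suffices, is that it is \emph{injective}: each connected component of the smooth scheme $V$ is irreducible, so $W$ meets it either in the empty set or in a connected dense open; hence $A^{\pi_0(V)}\to A^{\pi_0(W)}$ is surjective and so is $\alpha$. With these two corrections your argument goes through.

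For comparison, the paper argues via the low-degree exact sequence of the change-of-topology spectral sequence, $H^1_{\Nis}(X,\sH^0)\to H^1_{\et}(X,A)\to H^0_{\Nis}(X,\sH^1)\to H^2_{\Nis}(X,\sH^0)$, where $\sH^q$ is the Nisnevich sheafification of $H^q_{\et}(-,A)$; since $\sH^0=A$ is constant, $H^i_{\Nis}(X,A)=H^i_{\Zar}(X,A)=0$ for $i\ge 1$ by \cite[Thm 3.1.12]{VoDM}, whence $H^1_{\et}(X,A)=\sH^1(X)$, i.e.\ the presheaf coincides with its own sheafification. Your route is more elementary in that it avoids the transfer machinery behind the Nisnevich--Zariski comparison, at the price of needing \'etale excision and the cd-structure criterion; the paper's route is shorter given the cited results, and both ultimately rest on the same geometric input, namely that constant sheaves contribute nothing in higher degrees because components of smooth schemes are irreducible.
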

\begin{proof}
Let $\sH^i$ be the  Nisnevich sheafification of $X\mapsto H^i_{\et}(X, A)$. 
Then for any $X\in \Sm$ we have an exact sequence 
\[ H^1_{\Nis}(X, \sH^0)\to H^1_{\et}(X, A)\to H^0_{\Nis}(X, \sH^1)\to H^2_{\Nis}(X, \sH^0).\]
But $\sH^0=A$ is constant and hence by \cite[Thm 3.1.12]{VoDM} we have 
$H^i_{\Nis}(X, \sH^0)=H^i_{\Zar}(X, \sH^0)=0$, for all $i\ge 1$. Thus the presheaf from the statement is
equal to $\sH^1$. 
\end{proof}

\begin{lemma}\label{lem:ASW-EST}
The Artin-Schreier-Witt sequence 
\eq{ASW-seq}{0\to \Z/p^n\Z\to W_n\xr{F-1} W_n\to 0}
is an exact sequence of \'etale sheaves with transfers on $\Sm$,
where $F: W_n\to W_n$ is the base change over $\Spec k$ of the Frobenius on the $\F_p$-group scheme $W_n$.
\end{lemma}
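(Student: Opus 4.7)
The plan is to split the claim into two separate tasks: (i) exactness of the displayed sequence as a sequence of \'etale sheaves on $\Sm$, and (ii) compatibility of each map with the transfer structures. Task (i) is the classical Artin-Schreier-Witt statement, and task (ii) will follow from general functoriality once each map is recognized as a morphism of commutative algebraic $k$-groups.

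For (i) I would recall that for any strictly henselian local $\F_p$-algebra $R$, a Witt vector $a=(a_0,\ldots,a_{n-1})\in W_n(R)$ satisfies $F(a)=a$ if and only if $a_i^p=a_i$ for every $i$, i.e.\ iff $a\in W_n(\F_p)=\Z/p^n\Z$; and for surjectivity of $F-1$ on $W_n(R)$, the equation $F(x)-x=a$ is an inductive system $x_i^p-x_i=P_i(a_0,\ldots,a_i,x_0,\ldots,x_{i-1})$ which is solvable in any strictly henselian local $\F_p$-algebra. Equivalently, $F-1:W_n\to W_n$ is a finite \'etale (Galois) cover of $k$-group schemes with kernel $\Z/p^n\Z$, giving the exactness on every small \'etale site.

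For (ii) I would invoke \cite[Cor 3.2.5]{KSY-RecII}: the forgetful functor from smooth commutative algebraic $k$-groups to $\RSC_\Nis$ is faithful and functorial, and each such group is actually representable as an \'etale sheaf on $\Sm$. Since $W_n$ is a smooth commutative $k$-group, $\Z/p^n\Z$ is an \'etale finite $k$-group scheme, and the three maps in the sequence --- the inclusion $\Z/p^n\Z\hookrightarrow W_n$, the Witt Frobenius $F$ (the base change to $k$ of the ring endomorphism of the $\F_p$-ring scheme $W_n$), and the identity --- are all $k$-homomorphisms of commutative algebraic groups, they induce morphisms in $\NST$; so do $F-1$ and the composition $\Z/p^n\Z\to W_n\xr{F-1}W_n$.

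The main subtle point, and the only one that is not purely formal, is to identify the transfer structure on $\Z/p^n\Z$ obtained as a sub-PST of $W_n$ with the ``multiplication by $[V:X]$'' transfer structure from \ref{para:art}. For this, given a finite prime correspondence $V\in\Cor(X,Y)$ which (after shrinking around the generic points of $X$) we may assume is a finite free morphism $f:Y\to X$, I would use Lemma \ref{lem:Witt-trace} and the isomorphism \eqref{para:Witt-trace1}: the pushforward $f_*$ sends the unit $(1,0,\ldots,0)\in W_n(Y)$, which under \eqref{para:Witt-trace1} corresponds to $1-T$, to $\Nm_f(1-T)=(1-T)^{[Y:X]}$, which in turn corresponds to the Witt vector $[Y:X]\cdot(1,0,\ldots,0)\in W_n(X)$. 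This is precisely multiplication by the degree of $V$ over $X$ applied to the unit, matching the constant-PST transfer structure; everything else in the argument is functoriality.
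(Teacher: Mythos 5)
Your proof is correct and follows essentially the same route as the paper: exactness of \eqref{ASW-seq} on the \'etale site is taken as classical, compatibility of $F-1$ with transfers comes from its being a homomorphism of $k$-group schemes, and the compatibility of the inclusion $\Z/p^n\Z\inj W_n$ with the constant-presheaf transfer structure is deduced from Lemma \ref{lem:Witt-trace}. Your explicit norm computation $\Nm_f(1-T)=(1-T)^{[V:X]}$ merely spells out the step the paper summarizes as ``follows directly from Lemma \ref{lem:Witt-trace}''.
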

\begin{proof}
The exactness of the sequence \eqref{ASW-seq} on $X_{\et}$ is classical. The map
$F-1:W_n\to W_n$ is a morphism of $k$-group schemes hence is compatible with transfers;
for the inclusion $\Z/p^n\Z\inj W_n$ this follows directly from Lemma \ref{lem:Witt-trace}.
\end{proof}

\begin{para}\label{para:Art-cond}
We denote by $\delta_n$ the composition
\[\delta_n : W_n(L)\to W_n(L)/(F-1)W_n(L)\cong H^1_\et(L, \Z/p^n\Z):=H^1_{p^n}(L),\]
which is the connecting homomorphism stemming from the Artin-Schreier-Witt sequence \eqref{ASW-seq}.
Then we set 
\[\fil{j}H^1_{p^n}(L):= \delta_n(\fil{j}W_n(L))=\delta_n({\rm fil}_j^{F} W_n(L)).\]

For $j\ge 0$, we set
\eq{para:Art-cond2}{ \fil{j}H^1(L):=\begin{cases}
\Im(H^1(\sO_L)\to H^1(L)), & \text{if } j=0,\\
H^1(L)\{p'\}\oplus \bigcup_{n\ge 1} \fil{j}H^1_{p^n}(L),&\text{if } j\ge 1,
\end{cases}}
with $H^1(L)\{p'\}= \bigoplus_{\ell\neq p}H^1_{\et}(L, \Q_{\ell}/\Z_{\ell})$ the prime-to-$p$-part of $H^1(L)$.

For $\chi\in H^1(L)$ we define
\eq{para:Art-cond3}{\Art_L(\chi)=\min\{j\ge 0\mid \chi\in \fil{j}H^1(L)\}.}
\end{para}

\begin{proposition}\label{prop:Art-cond}
The collection 
\[\Art=\{\Art_L: H^1(L)\to \N_0\mid L\in\Phi\}\]
 is a semi-continuous conductor on $H^1$, as is its restriction $\Art^{\le 1}$.
\end{proposition}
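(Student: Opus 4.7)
The plan is to decompose $H^1 = H^1\{p'\} \oplus H^1\{p\}$ with $H^1\{p\} = \colim_n H^1_{p^n}$ and verify the six axioms summand by summand, transferring the semi-continuous conductor $\gamma_n$ on $W_n$ (Proposition \ref{prop:cond-Witt}) to $H^1_{p^n}$ via the Artin--Schreier--Witt surjection.

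For the tame summand $H^1\{p'\}$: by homotopy invariance of $\ell$-adic \'etale cohomology for $\ell \ne p$, this sheaf is $\A^1$-invariant. Hence by Corollary \ref{cor:HI}, its motivic conductor---which agrees with $\Art$ on this summand by definition \eqref{para:Art-cond2}---is the trivial $\{0,1\}$-valued conductor, which is tautologically a semi-continuous conductor of every level.

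For the wild summand: the Artin--Schreier--Witt connecting map $\delta_n : W_n \to H^1_{p^n}$ from Lemma \ref{lem:ASW-EST} is a surjection in $\NST$, because $H^1_\Nis(U, W_n) = 0$ on smooth affine $U$ (since $W_n\sO$ is built from $\sO$ by successive Verschiebung extensions, whose higher quasi-coherent cohomology vanishes on affines). Since $F - 1$ kills $\delta_n$, we have $\delta_n \circ F = \delta_n$, so $\delta_n(\filF{j} W_n(L)) = \delta_n(\fil{j} W_n(L)) = \fil{j} H^1_{p^n}(L)$; consequently the conductor induced on $H^1_{p^n}$ by $\gamma_n$ via Lemma \ref{lem:im-cond} coincides on the nose with $\Art|_{H^1_{p^n}}$. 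Lemma \ref{lem:im-cond} then transfers axioms \ref{c1}, \ref{c2}, \ref{c3}, \ref{c6} from $\gamma_n$ to $\Art|_{H^1_{p^n}}$. For \ref{c5}, one must verify hypothesis \eqref{lem:im-cond1}: given $X \in \Sm$ and any proper modulus pair $(\ol X, X_\infty)$ with $X = \ol X \setminus |X_\infty|$, for each $x \in \ol X$ the scheme $\ol{X}^h_{(x)} \setminus X_{\infty,(x)}$ is affine (complement of a principal Cartier divisor in a local affine scheme), hence $H^1_\Nis(-, W_n)$ vanishes there and $\delta_n$ is surjective on sections. Combining with the tame summand via the direct sum decomposition yields \ref{c1}--\ref{c3}, \ref{c5}, \ref{c6} for $\Art$ and $\Art^{\le 1}$.

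The remaining axiom \ref{c4} is not automatic from Lemma \ref{lem:im-cond} and is the main obstacle. For the tame summand, $\A^1$-invariance yields \ref{c4} immediately via $\chi' = \pi^*i^*\chi'$. For a $p$-primary character $\chi_p \in H^1_{p^n}(\A^1_X)$ satisfying $\Art_{k(x)(t)_\infty}(\rho_x^*\chi_p) \le 1$ at every $x \in X$ of the relevant transcendence degree (closed points in the level-1 case), the crucial fact is that tame $p$-extensions of a henselian DVR with residue characteristic $p$ are trivial, so the hypothesis actually forces $\rho_x^*\chi_p$ to be \emph{unramified} at infinity (i.e.\ $\Art = 0$ rather than just $\le 1$). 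Interpreting $\chi_p$ as a $\Z/p^n$-torsor $Y \to \A^1_X$ and taking the normalization $\bar Y$ of $\P^1_X$ in the function field of $Y$ (finite since $\P^1_X$ is Japanese), unramifiedness at $\infty_x$ combined with \'etaleness on $\A^1_X$ means $\bar Y \to \P^1_X$ is \'etale over the whole fiber $\P^1_{k(x)}$, hence by openness of the \'etale locus for the finite morphism $\bar Y \to \P^1_X$, it is \'etale over an open neighborhood of $\P^1_{k(x)}$. The \'etale locus thus contains $\A^1_X$ together with all closed points of $\infty_X \cong X$; since $X$ is Jacobson, every nonempty closed subset of $\infty_X$ contains a closed point, forcing the \'etale locus to equal $\P^1_X$. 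Thus $\bar Y$ is a global \'etale $\Z/p^n$-torsor over $\P^1_X$, so $\chi_p$ lies in $H^1_{p^n}(\P^1_X) = \pi^* H^1_{p^n}(X)$ (using $\A^1$-homotopy invariance of $H^1_{p^n}$ on the complement of the zero section, or directly $\P^1_X \to X$ having geometrically connected simply connected fibers). This establishes \ref{c4} at level~1, and the level-$\infty$ version follows a fortiori. The essential point is the conversion of the closed-point hypothesis into a global \'etaleness statement, which relies crucially on the characteristic-$p$ phenomenon that tame $p$-extensions are unramified together with the Jacobson property.
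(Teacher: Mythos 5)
Your treatment of everything except \ref{c4} is essentially the paper's own argument: decompose $H^1$ into the prime-to-$p$ part (homotopy invariant, so the $\{0,1\}$-valued conductor works) and the $p$-primary part, and transfer the semi-continuous conductor $\gamma_n$ of Proposition \ref{prop:cond-Witt} along the Artin--Schreier--Witt surjection $\delta_n:W_n\surj H^1_{p^n}$ using Lemma \ref{lem:im-cond}, checking \eqref{lem:im-cond1} via surjectivity of $W_n(Y)\to H^1_{p^n}(Y)$ on affines. That part is fine. Also your observation that, for $p$-primary classes, $\Art_L\le 1$ forces $\Art_L=0$ (unramified) is correct, and so is the reduction of the level-$\infty$ case of \ref{c4} to the level-$1$ case.

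The gap is in your geometric proof of \ref{c4}, precisely at the step ``unramifiedness at $\infty_x$ combined with \'etaleness on $\A^1_X$ means $\bar Y\to \P^1_X$ is \'etale over the whole fiber $\P^1_{k(x)}$.'' The hypothesis at $x$ only controls the restriction of the class to the fiber curve $\P^1_{k(x)}$, i.e.\ the covering $\bar Y\times_{\P^1_X}\Spec\sO^h_{\P^1_{k(x)},\infty}$ after base change to the fiber; it says nothing directly about \'etaleness of the two(or higher)-dimensional covering $\bar Y\to\P^1_X$ at the point $\infty_x$. Concretely, take $X=\Spec k[u]$ and the Artin--Schreier class of $ut$, i.e.\ the covering $z^p-z=ut$ of $\A^1_X$. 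Its restriction to the fiber over the closed point $u=0$ is trivial, hence unramified at $\infty_x$; but the covering is ramified at the generic point of the divisor $\infty_X$ (Swan conductor $1$ in $k(u)(t)_\infty$), so by purity of the branch locus the normalization $\bar Y\to\P^1_X$ is ramified at \emph{every} point of $\infty_X$, including $\infty_{u=0}$. So your pointwise claim is false, and with it the subsequent ``openness of the \'etale locus plus Jacobson'' argument collapses: what \ref{c4} really requires is to deduce unramifiedness at the \emph{generic} point of $\infty_X$ (transcendence degree $\dim X$) from the hypothesis at all closed fibers simultaneously, which is exactly the phenomenon your example-free argument assumes away. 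The paper does this by a global Artin--Schreier--Witt coefficient computation: lift $a$ to a polynomial $\tilde a=\sum a_it^i\in A[t]$, use the hypothesis at all closed points to show the top-degree coefficient satisfies $p\mid n$ and $a_n\in A^p$, subtract $(c_1t^{n_1})^p-c_1t^{n_1}$ to lower the degree, iterate, and then handle $H^1_{p^n}$ by induction on $n$ via the exact sequence $0\to H^1_{p^{n-1}}\to H^1_{p^n}\to H^1_p\to 0$ on affines. Some version of such a simultaneous-in-$x$ argument (or an appeal to a semicontinuity result for ramification) is needed to close your proof.
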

\begin{proof}
By Proposition \ref{prop:cond-Witt} and Lemma \ref{lem:im-cond}, 
$\Art$ satisfies \ref{c1}-\ref{c6} except possibly for \ref{c4}.
(For \ref{c5} note, that $W_n(Y)\to H^1_{p^n}(Y)$ is surjective for any affine scheme over $k$.)
It remains, to show that $\Art^{\le 1}$ satisfies \ref{c4}. 
Let $X$ be a smooth $k$-scheme and $a\in H^1(\A^1_X)$ with
\eq{prop:Art-level1.1}{\Art^1_{k(x)_\infty}(\rho_x^*a)\le 1,\quad \text{for all closed points } x\in X_{(0)},}
where $\rho_x: \Spec k(x)(t)_\infty=\Spec \Frac(\sO_{\P^1_{x},\infty}^h)\to \A^1_X$ is the natural map.
We want to show : $a\in H^1(X)$.
Since $H^1= H^1\{p'\}\oplus \varinjlim_n H^1_{p^n}$ with $H^1\{p'\}$ the $\A^1$-invariant 
subsheaf of prime-to-$p$-torsion, we can assume $a\in H^1_{p^n}(\A^1_X)$.
Furthermore, the question is local on $X$, hence we can assume $X=\Spec A$ affine.
We consider first the case $n=1$.
Condition \eqref{prop:Art-level1.1} implies
\eq{prop:Art-level1.2}{\rho_x^*a\in \Im\big(H^1_p(\sO_{\P^1_x,\infty}^h) \to H^1_p(k(x)(t)_\infty)\big).}
Denote by $a(x)$ the restriction of $a$ to $\A^1_x$. Since $H^1_p$ is a Nisnevich sheaf we conclude 
\[a(x)\in H^1_p(\P^1_x)=H^1_p(x).\]
Thus we find a polynomial $\tilde{a}=a_0+a_1 t+\ldots+ a_n t^n\in A[t]$
 mapping to $a$ such that for all closed points $x\in X$
there exist  $b_x\in k(x)$ and $g_x\in k(x)[t]$ with
\eq{prop:Art-level1.3}{\tilde{a}(x)= b_x + g_x^p- g_x, \quad \text{in }k(x)[t].}
Assume $n\ge 1$. Then,  $n=p\cdot n_1$, for some $n_1\ge 1$. We claim 
\eq{prop:Art-level1.4}{a_n=c_1^p, \quad \text{some }c_1 \in A^p.}
Indeed, write $n=p^e m$ with $e\ge 1$ and $(p,m)=1$, and for a fixed closed point $x\in X$ write
$g_x= c_0+ c_1t+\ldots + c_{p^{e-1}m} t^{p^{e-1}m}$; then \eqref{prop:Art-level1.3} implies
\[a_n(x)=c_{p^{e-1}m}^p, \quad a_{p^{i}m}(x)= c_{p^{i-1}m}^p-c_{p^{i}m},\, i\in[1,e-1],\quad a_m(x)=- c_m. \]
Hence for all maximal ideals $\fm\subset A$ we have
\[a_n \equiv \sum_{j=0}^{e-1}(-a_{p^jm})^{p^{e-j}} \quad \text{mod }\fm.\]
It follows that $a_n=(\sum_{j=0}^{e-1}(-a_{p^jm})^{p^{e-j-1}})^p\in A^p$, 
which yields \eqref{prop:Art-level1.4}. 

Now $a^{(1)}= \tilde{a}-(c_1t^{n_1})^p+ c_1t^{n_1}$ 
also has property \eqref{prop:Art-level1.3} and its degree is strictly smaller than $n$.
We can replace $a$ by $a^{(1)}$ in the above discussion and go on in this way until 
we reach a polynomial $a^{(r)}\in A[t]$
whose degree is strictly smaller than $p$ in which case \eqref{prop:Art-level1.3} forces it to be constant $=c_r\in A$.
We obtain 
\[\tilde{a}= c_r + \sum_{i=1}^{r-1} (c_it^{n_i})^p- c_i t^{n_i},\]
whence $a\in H^1_p(X)$. 

Let $n\ge 1$.
If $a\in H^1_{p^n}(\A^1_X)$ satisfies \eqref{prop:Art-level1.1}, then so does $p^{n-1}a\in H^1_p(\A^1_X)$.
By the case $n=1$ and the exact sequence ($X$ is affine)
\[0\to H^1_{p^{n-1}}(X)\to H^1_{p^n}(X)\xr{p^{n-1}\cdot } H^1_p(X)\to 0\]
we find an element $b\in H^1_{p^n}(X)$ such that
$p^{n-1}(a-b)=0$. Since $a-b$ also satisfies \eqref{prop:Art-level1.1} we obtain
$a-b\in H^1_{p^{n-1}}(X)$ by induction. This completes the proof.
\end{proof}

\begin{lemma}\label{lem:vanWv}
Let $K$ be a field of positive characteristic, $x$ an indeterminate,  and $g\in W_n(K(x))$.
Assume $F(g)-g=\wV^{n-1}(b x)$ for some $b\in K$.
Then $g\in \Z/p^n\Z$, i.e., $F(g)-g=0$.
\end{lemma}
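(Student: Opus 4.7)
The plan is to proceed by induction on $n$, using the short exact sequence
\[0 \to K(x) \xr{\wV^{n-1}} W_n(K(x)) \xr{\wR} W_{n-1}(K(x)) \to 0\]
to reduce everything to the Artin--Schreier case.

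For the base case $n = 1$, the hypothesis reads $g^p - g = bx$ with $g \in K(x)$ and $b \in K$, and we must show $b = 0$. Writing $g = P/Q$ with $P, Q \in K[x]$ coprime and $Q$ monic, clearing denominators gives $P^p = PQ^{p-1} + bxQ^p$, so $Q \mid P^p$; since $\gcd(P,Q) = 1$, this forces $Q \in K^\times$, i.e., $g \in K[x]$. A degree count then shows $g$ must be constant (otherwise $\deg(g^p - g) = p\deg g \ge p \ge 2$), and therefore $bx = g^p - g \in K$, forcing $b = 0$.

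For the inductive step, apply $\wR$ to the hypothesis. Since $\wR$ commutes with $F$ and annihilates $\wV^{n-1}(bx)$, we obtain $F(\wR(g)) = \wR(g)$ in $W_{n-1}(K(x))$, so the induction hypothesis gives $\wR(g) \in \Z/p^{n-1}\Z$. Lift this to some $\tilde{g} \in \Z/p^n\Z \subset W_n(K(x))$; then $g - \tilde{g} \in \Ker(\wR) = \Im(\wV^{n-1})$, so we may write $g = \tilde{g} + \wV^{n-1}(c)$ for a unique $c \in K(x)$. In characteristic $p$, the Frobenius on $W_n$ is componentwise $p$-th power, and Witt addition of vectors supported only in the top coordinate reduces to ordinary addition; hence $F(\wV^{n-1}(c)) - \wV^{n-1}(c) = \wV^{n-1}(c^p - c)$. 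Combined with $F(\tilde{g}) - \tilde{g} = 0$, this gives
\[\wV^{n-1}(c^p - c) = F(g) - g = \wV^{n-1}(bx).\]
The injectivity of $\wV^{n-1}$ yields $c^p - c = bx$, and the base case forces $b = 0$, so $F(g) - g = 0$ as claimed.

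The only real obstacle is the base case, which is the classical Artin--Schreier rigidity statement over $K(x)$; the inductive step is pure Witt-vector bookkeeping using that $\wV^{n-1}$ is an additive section of the short exact sequence above.
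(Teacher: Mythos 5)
Your proof is correct and follows essentially the same route as the paper: split off the $\wV^{n-1}$-part of $g$ after observing that its restriction to $W_{n-1}(K(x))$ is fixed by $F$ (hence lies in $\Z/p^{n-1}\Z$), and reduce to the Artin--Schreier case $n=1$, where $g^p-g=bx$ forces $g$ to be constant and $b=0$. The only cosmetic differences are that you organize the reduction as an induction on $n$ (the paper instead notes directly that $F$-fixed Witt vectors have components in $\F_p$) and that you spell out the denominator/degree argument for $n=1$, which the paper leaves implicit.
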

\begin{proof}
If $n=1$, then $g^p-g=bx$ forces $g$ to be constant and hence
$g^p-g=0$, i.e., $g\in\F_p$. If $n\ge 2$, then $F(g)-g$ is zero when restricted to $W_{n-1}(K(x))$. Hence
 $g= m\cdot [1]+ \wV^{n-1}(f)$ with $f\in K(x)$, $m\in \Z$.
Thus $F(f)-f=bx$, and we conclude with the case $n=1$.
\end{proof}

\begin{proposition}\label{prop:Art-symb}
Let $L$, $t\in \sO_L$, $\sigma_j: K_j\inj \sO_L$, $j=0,1$, be as in Proposition \ref{prop:Wfil-symb}.
We also denote by $\sigma_j: K_j(x)\inj \sO_{L_x}$ the canonical extension.  
Let $r\ge 1$ and $a \in \fil{r}H^1_{p^n}(L)$.
Assume one of the following:
\begin{enumerate}[label=(\arabic*)]
\item\label{prop:Art-symb01} $(r,p)=1$ or $r=p=2$ or $m=0$, and $(a, 1-x t^{r-1})_{L_x,\sigma_0}=0$. 
\item\label{prop:Art-symb02} $r>2$, $p|r$, $m\ge 1$, and $(a, 1-x t^{r-1})_{L_x,\sigma_j}=0$, for $j=0, 1$.
\end{enumerate}
Then $a\in \fil{r-1}H^1_{p^n}(L)$.
\end{proposition}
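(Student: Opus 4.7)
The strategy is to reduce Proposition~\ref{prop:Art-symb} to Proposition~\ref{prop:Wfil-symb} via the Artin--Schreier--Witt surjection $\delta_n\colon W_n \twoheadrightarrow H^1_{p^n}$. This is a morphism in $\RSC_{\Nis}$, since $H^1_{p^n}\in\RSC_{\Nis}$ (by Proposition~\ref{prop:Art-cond} and Theorem~\ref{thm:rec-cond}\ref{thm:rec-cond1}) and $\delta_n$ is a morphism of Nisnevich sheaves with transfers by Lemma~\ref{lem:ASW-EST}. The relation $\delta_n \circ F = \delta_n$ (from $(F-1)W_n \subset \ker\delta_n$) yields $\fil{j}H^1_{p^n}(L) = \delta_n(\fil{j}W_n(L))$, so we may lift $a \in \fil{r}H^1_{p^n}(L)$ to some $\tilde{a} \in \fil{r}W_n(L)$. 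The key simplification compared to Proposition~\ref{prop:Wfil-symb} is that $\tilde a$ lies in $\fil{r}W_n(L)$ (and not merely in $\filF{r}W_n(L)$), so by Corollary~\ref{cor:gr-Wfil} its class in $\gr_r W_n(L)$ is represented by a single element with no $F^h$-summation. By naturality of the local symbol (\ref{LS6}) applied to $\delta_n$, the hypothesis translates into $(\tilde a, 1-xt^{r-1})_{L_x, \sigma_j} \in (F-1)W_n(K_j(x))$, and it suffices to prove $\tilde a \in \fil{r-1}W_n(L) + (F-1)W_n(L)$.

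In case~\ref{prop:Art-symb01}, the $\sigma_0$-symbol of the single-term representative is computed by Lemmas~\ref{lem:LS-DRW} and \ref{lem:LSW-form1}\ref{lem:LSW-form1.1} to be of the form $-r_1\,\wV^{n-1}([b^{p^{e_1}}]\, x)$ for a leading coefficient $b$, and Lemma~\ref{lem:vanWv} forces $b=0$ in the subcases $(r,p)=1$ and $m=0$. For the subcase $r = p = 2$, the analogous computation via Lemma~\ref{lem:vanWv} yields a telescoping relation $c = b^p$ between the two $\gr_r$-coordinates $(b,c) \in K_0 \oplus K_0$, and combining the identities $F\wV = \wV F$, $F([t]^{-1}) = [t]^{-p}$, and $F(w) \equiv w \pmod{(F-1)W_n(L)}$ gives
\[
\tilde a \equiv \wV^{n-1}([bt^{-1}]) + F\wV^{n-1}([bt^{-1}]) \equiv 2\,\wV^{n-1}([bt^{-1}]) = 0 \pmod{\fil{r-1}W_n(L) + (F-1)W_n(L)},
\]
since $2 = 0$ in characteristic $2$.

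In case~\ref{prop:Art-symb02}, the $\sigma_0$-argument as above (using also Lemma~\ref{lem:LSW-form1}\ref{lem:LSW-form1.2}, which implies the $\wV^{n-e}(c[t]^{-r_0p})$-term has zero $\sigma_0$-symbol) eliminates the $b$-coordinate, leaving $\tilde a \equiv \wV^{n-e}(c[t]^{-r_0 p}) \pmod{\fil{r-1}W_n(L)}$ with $c \in W_e(K_0)$. The desired inclusion then follows once we show $c \in F W_e(K_0)$: for if $c = F(c')$, then $F\wV = \wV F$ and $F([t]^{-r_0}) = [t]^{-r_0p}$ give
\[
\wV^{n-e}(c[t]^{-r_0p}) = F\bigl(\wV^{n-e}(c'[t]^{-r_0})\bigr) \equiv \wV^{n-e}(c'[t]^{-r_0}) \pmod{(F-1)W_n(L)},
\]
with $\wV^{n-e}(c'[t]^{-r_0}) \in \fil{r-1}W_n(L)$ by Brylinski--Kato bookkeeping (using $r > p$).

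The main obstacle is thus the proof that $c \in F W_e(K_0)$, which parallels the end of case~3 in the proof of Proposition~\ref{prop:Wfil-symb}. Assuming $c \notin F W_e(K_0)$, I expand $c$ in the $p$-basis $z_1, \ldots, z_m$ of $K_0$ to obtain a nonzero coefficient $a_{I,j}$ with $I \neq 0$, and aim at a contradiction from the $\sigma_1$-symbol computation via the expansion $z_i = y_i + y_i^{p^e+1}t + O(t^2)$. The subtlety is that the symbol lies in $(F-1)W_n(K_1(x))$ rather than vanishing; while $\wF^{n-1}d \circ F = 0$, we have $\wF^{n-1}d\circ(F-1) = -\wF^{n-1}d$, so the $\wF^{n-1}d$-image of $(F-1)W_n$ is not trivial. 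I resolve this by exploiting the freedom in $\tilde a$, which is defined only modulo $(F-1)W_n(L)$: adding a suitable $(F-1)w$ with $w \in W_n(L)$, one arranges the $\sigma_1$-symbol of the modified lift to vanish exactly in $W_n(K_1(x))$, at which point the $\wF^{n-1}d$ and $p$-basis argument from case~3 of Proposition~\ref{prop:Wfil-symb} applies verbatim and yields the contradiction.
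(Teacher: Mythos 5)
Your treatment of case \ref{prop:Art-symb01} is essentially sound and matches the paper: lift $a$ to $\tilde a\in\fil{r}W_n(L)$, compute the $\sigma_0$-symbol as $-(r-1)\wV^{n-1}(b_0x)$ via Corollary \ref{cor:gr-Wfil} and Lemma \ref{lem:LSW-form1}, and invoke Lemma \ref{lem:vanWv} to conclude that the symbol vanishes exactly, after which Proposition \ref{prop:Wfil-symb}\ref{prop:Wfil-symb01} applies (for $m=0$ and $p\mid r>2$ you still need to note that $W_e(K_0)=FW_e(K_0)$ because $K_0$ is perfect, but that is already built into Proposition \ref{prop:Wfil-symb}). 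Your handling of $r=p=2$ is likewise correct.

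However, case \ref{prop:Art-symb02} has a genuine gap. You correctly identify that the $\sigma_1$-symbol of $\tilde a$ is a priori only known to lie in $(F-1)W_n(K_1(x))$, and that applying $\wF^{n-1}d$ no longer gives a vanishing condition, so the $p$-basis argument from case~3 of Proposition \ref{prop:Wfil-symb} does not port over directly. Your proposed remedy --- replacing $\tilde a$ by $\tilde a+(F-1)w$ for a suitable $w\in W_n(L)$ so as to force exact vanishing of the $\sigma_1$-symbol --- is not justified and does not obviously work: there is no reason why an element of $W_n(K_1(x))$ of the required shape should lie in the image of the symbol map $W_n(L)\to W_n(K_1(x))$, such a replacement may spoil the simultaneous vanishing of the $\sigma_0$-symbol that you arranged, and it may take $\tilde a$ out of $\fil{r}W_n(L)$, so that the $\gr_r$-structure from Corollary \ref{cor:gr-Wfil} on which the whole expansion rests is no longer available.

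The fix, and the route the paper takes, is much simpler and bypasses the $p$-basis argument entirely. By Corollary \ref{cor:gr-Wfil}, $\tilde a$ also admits a $K_1$-coordinate expansion $\tilde a\equiv \wV^{n-1}(b_1[t]^{-(r-1)})+\wV^{n-e}(c_1[t]^{-r_0p})\pmod{\filF{r-1}W_n(L)}$ with $b_1\in K_1$, $c_1\in W_e(K_1)$. Computing the $\sigma_1$-symbol with \emph{these} coordinates and Lemma \ref{lem:LSW-form1} gives, exactly as for $\sigma_0$,
\[
(\tilde a, 1-xt^{r-1})_{L_x,\sigma_1}=-(r-1)\wV^{n-1}(b_1 x),
\]
while by hypothesis this equals $F(g_1)-g_1$ for some $g_1\in W_n(K_1(x))$. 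Lemma \ref{lem:vanWv} now forces $F(g_1)-g_1=0$, i.e., the $\sigma_1$-symbol of $\tilde a$ vanishes exactly --- no modification of the lift is needed. With both symbols of $\tilde a$ vanishing, Proposition \ref{prop:Wfil-symb}\ref{prop:Wfil-symb02} applies verbatim and yields $\tilde a\in\filF{r-1}W_n(L)$, hence $a\in\fil{r-1}H^1_{p^n}(L)$. The whole point of Lemma \ref{lem:vanWv} is precisely to eliminate the $(F-1)W_n$-ambiguity you were struggling with, uniformly in both coefficient fields; you invoked it for $\sigma_0$ but overlooked that it does the same job for $\sigma_1$.
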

\begin{proof}
Let $\tilde{a}\in \fil{r}W_n(L)$ be a lift of $a$.
If $(a, 1-xt^{r-1})_{L_x,\sigma_j}=0$, for some $j\in\{0,1\}$, then we find $g_j\in W_n(K_j(x))$ such that 
\eq{prop:Art-symb2}{(\tilde{a}, 1-xt^{r-1})_{L_x, \sigma_j}= F(g_j)- g_j.}
It suffices to show $\tilde{a}\in \filF{r-1}W_n(L)$. Write $r=p^e r_0$ with $e\ge 0$ and $(r_0,p)=1$.

{\em 1st case: $e=0$.} Write $r-1=p^{e_1}r_1$ with $e_1\ge 0$ and $(p,r_1)=0$.
If $e_1\ge n$, then by Corollary \ref{cor:gr-Wfil}\ref{cor:gr-Wfil1} we have $\fil{r}H^1_{p^n}(L)=\fil{r-1}H^1_{p^n}(L)$, 
else we have 
\[\tilde{a}\equiv \wV^{n-1-e_1}([b] [t]^{-r_1})\text{ mod } \fil{r-1}W_n(L),\]
for some $b\in K_0$.
Thus 
\begin{align*}
F(g_0)-g_0 & = (\tilde{a}, 1-xt^{r-1})_{L_x,\sigma_0}, & & \text{by \ref{prop:Art-symb01}},\\
               & = \wV^{n-1-e_1}\bigg([b] ([t]^{-r_1}, 1-xt^{r-1})_{L_x,\sigma_0} \bigg), & &
                                                        \text{by Lem \ref{lem:LS-DRW}},\\
               & = -r_1\wV^{n-1}(b^{p^{e_1}} x), & & \text{by Lem \ref{lem:LSW-form1}\ref{lem:LSW-form1.1}}.
\end{align*}
Lemma \ref{lem:vanWv} implies $F(g_0)-g_0=0$. Hence $\tilde{a}\in \filF{r-1}W_n(L)$ 
by Proposition \ref{prop:Wfil-symb}\ref{prop:Wfil-symb01}.

{\em 2nd case: $r=p=2$.} As in the  proof of Proposition \ref{prop:Wfil-symb} ({\em 2nd case})
we have $\tilde{a}\equiv \wV^{n-1}(b t^{-1}+ c t^{-2})$ mod $W_n(\sO_L)$, with $b,c\in K_0$,
and 
\[g_0^2 - g_0=(\tilde{a}, 1-x t)_{L_x,\sigma_0}= V^{n-1}(bx+cx^2).\]
This implies $c=b^2$; hence $a\in H^1_{p^n}(\sO_L)=\fil{1}H^1_{p^n}(L)$.

{\em 3rd case: $1\le e\le n-1$ and $r>2$.}
By Corollary \ref{cor:gr-Wfil}\ref{cor:gr-Wfil2} we have 
\[\tilde{a}\equiv \wV^{n-1}(b_j [t]^{-(r-1)})+ \wV^{n-e}(c_j [t]^{-r_0p})\quad 
 \text{mod }\filF{r-1}W_n(L),\]
where $b_j\in K_j$ and $c_j\in W_e(K_j)$, $j=0,1$.
By Lemma \ref{lem:LSW-form1}\ref{lem:LSW-form1.1} we have 
\[(\wV^{n-1}(b_j [t]^{-(r-1)}), 1-xt^{r-1})_{L_x,\sigma_j}=-(r-1)\wV^{n-1}(b_j x);\]
by Lemma \ref{lem:LSW-form1}\ref{lem:LSW-form1.2} we have 
\[(\wV^{n-e}(c_j [t]^{-r_0p}), 1-xt^{r-1})_{L_x, \sigma_j}= 0.\]
Thus by \ref{prop:Art-symb02}
\[F(g_j)-g_j=(\tilde{a}, 1-xt^{r-1})_{L_x,\sigma_j}= -(r-1)\wV^{n-1}(b_j x).\]
By Lemma \ref{lem:vanWv} we have $F(g_j)-g_j=0$, for $j=0,1$. 
Hence $\tilde{a}\in \filF{r-1}W_n(L)$ by Proposition \ref{prop:Wfil-symb}.

{\em 4th case: $e\ge n$ and $r>2$.}
By Corollary \ref{cor:gr-Wfil}\ref{cor:gr-Wfil3} we have
\[a\equiv c_j[t]^{-p^{e-n+1}r_0}+ V^{n-1}(b_j[t]^{-(r-1)})\quad \text{mod }\filF{r-1}W_n(L),\]
where $c_j\in W_n(K_j)$ and $b_j\in K_j$, for $j=0,1$.
As in the third case the following equality follows from Lemma \ref{lem:LSW-form1} for $j=0,1$
\[F(g_j)-g_j=(\tilde{a}, 1-xt^{r-1})_{L_x,\sigma_j}= -(r-1)\wV^{n-1}(b_j x).\]
Hence $\tilde{a}\in \filF{r-1}W_n(L)$ as above.  This completes the proof.
\end{proof}

\begin{thm}\label{thm:Art-motivic}
Let $L\in\Phi$ and $r\ge 0$.
Then
\[\fil{r}H^1(L)=\widetilde{H^1}(\sO_L,\fm^{-r}),\]
i.e., the Artin conductor is motivic, $\Art=c^{H^1}$. Furthermore,  $(c^{H^1})^{\le 1}$ is a conductor of level 1.
\end{thm}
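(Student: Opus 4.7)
The proof of the theorem will split into two inclusions plus the level-$1$ assertion. The inclusion
\[
\fil{r}H^1(L)\subset \widetilde{H^1}(\sO_L,\fm^{-r})
\]
is immediate: Proposition \ref{prop:Art-cond} shows that $\Art$ is a semi-continuous conductor on $H^1$ (so in particular $H^1\in \RSC_{\Nis}$ by Theorem \ref{thm:rec-cond}), and Corollary \ref{cor:mot-cond-mini} then gives $c^{H^1}_L\le \Art_L$. Once the equality $c^{H^1}=\Art$ is established, the final assertion that $(c^{H^1})^{\le 1}$ is a conductor of level $1$ is nothing but the level-$1$ part of Proposition \ref{prop:Art-cond}. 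So everything reduces to proving the reverse inclusion $\widetilde{H^1}(\sO_L,\fm^{-r})\subset \fil{r}H^1(L)$.

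The first step is to reduce to the $p$-primary summand. The decomposition $H^1=H^1\{p'\}\oplus \varinjlim_n H^1_{p^n}$ is a direct sum in $\RSC_{\Nis}$, and by Lemma \ref{lem:cond-sum} the motivic conductor of a direct sum equals the maximum of those of the summands, so the filtration $\widetilde{H^1}(\sO_L,\fm^{-r})$ splits accordingly. On the tame part $H^1\{p'\}$, which is $\A^1$-invariant, Corollary \ref{cor:HI} shows that the motivic conductor takes only the values $0$ and $1$, matching the convention in \eqref{para:Art-cond2} defining $\fil{r}H^1\{p'\}(L)$. Using Proposition \ref{prop:mot-cond-sub-sheaf} applied to each inclusion $H^1_{p^n}\subset \varinjlim_n H^1_{p^n}$, together with $\varinjlim_n H^1_{p^n}(L)=\bigcup_n H^1_{p^n}(L)$, the remaining task is, for each fixed $n\ge 1$: show that every $a\in \widetilde{H^1_{p^n}}(\sO_L,\fm^{-r})$ lies in $\fil{r}H^1_{p^n}(L)$.

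For this I would run a descending induction on the Brylinski-Kato-Matsuda filtration. Let $m$ be the smallest integer with $a\in \fil{m}H^1_{p^n}(L)$, which exists by exhaustiveness of the filtration on $H^1_{p^n}(L)$; I claim $m\le r$. Suppose for contradiction that $m>r$. Fix a uniformizer $t\in\sO_L$, set $L_x:=\Frac(\sO_L[x]^h_{(t)})$, which is unramified over $L$ (ramification index $e=1$), and let $\sigma_j: K_j(x)\hookrightarrow \sO_{L_x}$, $j=0,1$, be the canonical extensions of the two coefficient fields appearing in Proposition \ref{prop:Art-symb}. Since $1-xt^{m-1}\in U^{(m-1)}_{L_x}\subset U^{(r)}_{L_x}$ and $a\in \widetilde{H^1}(\sO_L,\fm_L^{-r})$, Corollary \ref{cor:localLS3} yields
\[
(a,1-xt^{m-1})_{L_x,\sigma_j}=0,\qquad j=0,1.
\]
Proposition \ref{prop:Art-symb} then places $a$ in $\fil{m-1}H^1_{p^n}(L)$, contradicting the minimality of $m$; hence $m\le r$, as wanted.

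The main obstacle has in fact been absorbed into the preceding results, specifically Proposition \ref{prop:Art-symb} and its Witt-vector analogue Proposition \ref{prop:Wfil-symb}: the delicate case there is $p\mid m$ with $\td(\kappa_L/k)\ge 1$, where a single coefficient field does not suffice and one must use the second coefficient field built from $z_i/(1+z_i^{p^e}t)$. Given those two propositions, Corollary \ref{cor:localLS3} is precisely the device that converts the abstract hypothesis ``$a$ lies at motivic conductor level $\le r$'' into concrete vanishings of the Artin-Schreier-Witt local symbol, and the descending induction above is then purely formal. No further symbol computations are required at this stage.
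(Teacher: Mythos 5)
Your proof is correct and follows essentially the same route as the paper's. The paper cites Proposition \ref{prop:Art-cond} for the easy inclusion and the level-$1$ assertion, reduces to the $p^n$-torsion subsheaf via Corollary \ref{cor:HI}, and then reruns the argument of Theorem \ref{thm:filF-motivic}: by Corollary \ref{cor:localLS3} the local symbol $(a,1-xt^{m-1})_{L_x,\sigma}$ vanishes for all $m>r$ and all $\sigma$, and Proposition \ref{prop:Art-symb} then forces the Brylinski-Kato-Matsuda level of $a$ down to $r$ by descending induction. You have merely spelled out the reduction to the $p$-primary part (via Lemma \ref{lem:cond-sum} and Proposition \ref{prop:mot-cond-sub-sheaf}) and the induction step a bit more explicitly than the paper; the substance is identical.
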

\begin{proof}
The last statement follows from the first and Proposition \ref{prop:Art-cond}.
By Corollary \ref{cor:HI} it suffices to show  the corresponding statement on the subsheaf of $p^n$-torsion,
 for all $n\ge 1$.
Here the proof is the same as in Theorem \ref{thm:filF-motivic}
if we replace everywhere $W_n$ by $H^1_{p^n}$, $\filF{}$ by $\fil{}$, 
the reference to Proposition \ref{prop:cond-Witt} by a reference to Proposition \ref{prop:Art-cond},
and the reference to Proposition \ref{prop:Wfil-symb}
by a reference to Proposition \ref{prop:Art-symb}.
\end{proof}

\subsection{Lisse sheaves of rank 1}
In this subsection we fix  a prime number $\ell\neq p$, an algebraic closure $\ol{\Q}_\ell$ of $\Q_\ell$,
and a compatible system of primitive roots of unity $\{\zeta_n\}\subset \ol{\Q}_{\ell}^\times$.

\begin{para}\label{para:rank1}
We denote by ${\rm Lisse}^1(X)$ the group of isomorphism classes
of  lisse $\bar{\Q}_\ell$-sheaves on $X$ of rank 1, with group structure given by  $\otimes$. 
Note that 
\eq{para:rank11}{{\rm Lisse}^{1}(X)\cong\varinjlim_{E/\Q_\ell} H^1_{\et}(X, \sO_E^\times)
                   :=\varinjlim_{E/\Q_\ell}\varprojlim_{n} H^1_{\et}(X, (\sO_E/\fm_{E}^n)^\times), }
where $E$ runs over sub-extensions of $\ol{\Q}_\ell/\Q_\ell$ which are finite over $\Q_\ell$,
and $\sO_E$ and  $\fm_E$ denote the ring of integers and the maximal ideal, respectively. 
Indeed,  a  sheaf $M\in {\rm Lisse}^{1}(X)$ corresponds uniquely to a continuous morphism
$\pi_1^{\ab}(X)\to \ol{\Q}_\ell^\times$, which in particular implies that it 
factors as a continuous morphism $\pi_1^{\ab}(X)\to E^\times$, with some $E$ as above (e.g., \cite[1.1]{Deligne-WeilII}).
Since any representation of a profinite group in a finite dimensional $E$-vector space  has an $\sO_{E}$-lattice, 
we see that such a morphism factors via a continuous map 
\[\pi^{\ab}_1(X)\to {\rm Aut}_{\sO_E}(\fm_E^{-j}\sO_E)=\sO_E^\times.\]
The isomorphism classes of such maps correspond uniquely to elements in 
$\varprojlim_{n} H^1_{\et}(X, (\sO_E/\fm_{E}^n)^\times)$.
By  \ref{para:art} and Lemma \ref{lem:H1Nis} the isomorphism \eqref{para:rank11}
induces the structure of a Nisnevich sheaf with transfers on $X\mapsto {\rm Lisse}^1(X)$, i.e.,
\[{\rm Lisse}^1\in \NST.\]
Write
\[|\sO_E/\fm_E|=\ell^{r_E}, \quad \ell^{r_E}-1= p^{s_E}\cdot h_E, \quad \text{with } (h_E,p)=1, s_E\ge 0.\]
Then $\mu_{\ell^{r_E}-1}(\Qlb)\subset \sO_{E}^\times$ and the roots of unity fixed at the beginning of this
 subsection induce a canonical  isomorphism
\[\sO_{E}^\times \cong \Z/p^{s_E} \times \Z/h_E\times U^{(1)}_E. \]
Since $U^{(1)}_E$ is a pro-$\ell$ group this yields the following decomposition 
\[{\rm Lisse}^1= {\rm Lisse}^{1, p'}\oplus H^1_{p^\infty}\quad \text{in }\NST,\]
where 
\[X\mapsto {\rm Lisse}^{1,p'}(X):= \varinjlim_{E/\Q_\ell} \varprojlim_n H^1_{\et}(X, \Z/h_E\times U^{(1)}_E/U^{(n)}_E),\]
\[X\mapsto H^1_{p^\infty}(X):= \varinjlim_{E/\Q_\ell} H^1_{\et}(X, \Z/p^{s_E})=H^1_{p^\infty}(X).\]

Let $L\in\Phi$. For $j\ge 0$ we define
\eq{para:rank12}{\fil{j}{\rm Lisse}^1(L):=
\begin{cases}
\Im({\rm Lisse}^1(\sO_L)\to {\rm Lisse}^1(L)), & \text{if } j=0,\\
{\rm Lisse}^{1,p'}(L)\oplus \fil{j}H^1_{p^\infty}(L), &\text{if } j\ge 1,
\end{cases}} 
where $\fil{j}H^1_{p^\infty}(L)= \cup_n \fil{j}H^1_{p^n}(L)$ is defined in \ref{para:Art-cond}.
\end{para}

\begin{cor}\label{cor:lisse1}
Let the notation be as in \ref{para:rank1} above.
Then
\begin{enumerate}[label=(\arabic*)]
\item\label{cor:lisse11} ${\rm Lisse}^1\in \RSC_{\Nis}$;
\item \label{cor:lisse12} the motivic conductor is given by 
\[ c^{{\rm Lisse}^1}_L(M)=\min\{j\ge 0\mid M\in \fil{j}{\rm Lisse}^1(L)\};\]
        furthermore it restricts to a level 1 conductor.
\item\label{cor:lisse13} let $X\in \Sm$ be proper over $k$ and $U\subset X$ dense open, then 
\[h^0_{\A^1}({\rm Lisse}^1)(U)={\rm Lisse}^{1,p'}(U)\oplus H^1_{p^\infty}(X),\] 
see \ref{para:HIsub} for notation.
\end{enumerate}
\end{cor}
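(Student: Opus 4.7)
The plan is to leverage the direct-sum decomposition
\[
{\rm Lisse}^1 = {\rm Lisse}^{1,p'} \oplus H^1_{p^\infty} \quad \text{in } \NST
\]
recorded in \ref{para:rank1}, and analyze the two summands separately, referring back to Theorem~\ref{thm:Art-motivic} for the $p$-primary piece and to classical $\A^1$-invariance of prime-to-$p$ \'etale cohomology for the other.

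First I would establish \ref{cor:lisse11}. The sheaf ${\rm Lisse}^{1,p'}$ is a filtered colimit of inverse limits of $H^1_{\et}(-,B)$ for $B$ finite of order prime to $p=\ch(k)$; each such $H^1_\et(-,B)$ is $\A^1$-invariant (classical, since $|B|$ is invertible in $k$), and $\A^1$-invariance is preserved by colimits and limits in $\PST$. Hence ${\rm Lisse}^{1,p'}\in \HI_\Nis \subset \RSC_\Nis$. On the other hand, $H^1_{p^\infty}=\varinjlim_n H^1_{p^n}$ is a sub-presheaf-with-transfers of $H^1\in \RSC_\Nis$ (Theorem~\ref{thm:Art-motivic}), and subobjects of reciprocity sheaves are reciprocity sheaves directly from Definition~\ref{defn:rec}. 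Thus the direct sum lies in $\RSC_\Nis$.

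For \ref{cor:lisse12} I would combine Lemma~\ref{lem:cond-sum}, which computes $c^{{\rm Lisse}^1}_L$ as the maximum of the conductors on each summand, with two ingredients: (i) Corollary~\ref{cor:HI} applied to the $\A^1$-invariant sheaf ${\rm Lisse}^{1,p'}$, giving a conductor equal to $0$ or $1$ according to whether $M$ extends over $\sO_L$; and (ii) Proposition~\ref{prop:mot-cond-sub-sheaf} combined with Theorem~\ref{thm:Art-motivic}, giving $c^{H^1_{p^\infty}}_L(\chi)=\Art_L(\chi)$. Taking the max and matching with the definitions \eqref{para:Art-cond2} and \eqref{para:rank12} of $\fil{j}{\rm Lisse}^1(L)$ yields the formula; the cases $j=0$ and $j\ge 1$ need a moment of care, since for $j\ge 1$ the prime-to-$p$ part is unconstrained, but it enters the maximum with conductor at most $1\le j$. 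The level-$1$ assertion follows because $c^{{\rm Lisse}^{1,p'}}$ is a conductor of any level by Remark~\ref{rmk:cond}\ref{rmk:cond1} and $(c^{H^1})^{\le 1}$ is of level $1$ by Theorem~\ref{thm:Art-motivic}; verification of \ref{c4} on a direct sum reduces componentwise.

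Finally for \ref{cor:lisse13}, the plan is to apply Corollary~\ref{cor:bir-inv-h0} to $F=H^1_{p^\infty}$, using that $h^0_{\A^1}$ commutes with direct sums (being right adjoint to $\HI \hookrightarrow \PST$, with $\oplus$ a biproduct), so
\[
h^0_{\A^1}({\rm Lisse}^1)(U) = h^0_{\A^1}({\rm Lisse}^{1,p'})(U)\,\oplus\, h^0_{\A^1}(H^1_{p^\infty})(U).
\]
The first summand equals ${\rm Lisse}^{1,p'}(U)$ since ${\rm Lisse}^{1,p'}\in \HI_\Nis$. For the second, the hypothesis of Corollary~\ref{cor:bir-inv-h0} reads $\widetilde{H^1_{p^\infty}}(\sO_L,\fm_L^{-1})=H^1_{p^\infty}(\sO_L)$, which by \ref{cor:lisse12} becomes $\fil{1}H^1_{p^\infty}(L)=H^1_{p^\infty}(\sO_L)$. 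The main (small) obstacle is this last identification: from the definition in \ref{para:BMK} one gets $\fil{1}W_n(L)=\fillog{0}W_n(L)=W_n(\sO_L)$, which is $F$-stable, so $\filF{1}W_n(L)=W_n(\sO_L)$; applying $\delta_n$ and using the vanishing of the quasi-coherent cohomology $H^i_\et(\sO_L,W_n)=0$ for $i>0$ identifies $\fil{1}H^1_{p^n}(L)$ with $H^1_{p^n}(\sO_L)$, and passing to the colimit in $n$ yields the required equality. Everything else is a formal assembly of earlier results.
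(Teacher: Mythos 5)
Your proposal is correct and follows essentially the same route as the paper: split ${\rm Lisse}^1={\rm Lisse}^{1,p'}\oplus H^1_{p^\infty}$, handle the prime-to-$p$ part via $\A^1$-invariance and Corollary \ref{cor:HI}, the $p$-part via Theorem \ref{thm:Art-motivic} (with Proposition \ref{prop:mot-cond-sub-sheaf} and Lemma \ref{lem:cond-sum}), and deduce \ref{cor:lisse13} from Corollary \ref{cor:bir-inv-h0} after checking $\widetilde{H^1_{p^\infty}}(\sO_L,\fm_L^{-1})=H^1_{p^\infty}(\sO_L)$. The only difference is that you spell out details the paper leaves implicit (e.g.\ the surjectivity of $W_n(\sO_L)\to H^1_{p^n}(\sO_L)$ from $\filF{1}W_n(L)=W_n(\sO_L)$, and additivity of $h^0_{\A^1}$), all of which are correct.
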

\begin{proof}
Note ${\rm Lisse}^{1,p'}\in HI_\Nis$. Hence \ref{cor:lisse11} and \ref{cor:lisse12} follow
directly from Theorem \ref{thm:Art-motivic}
together with the Corollaries \ref{cor:HI} and Lemma \ref{lem:cond-sum}.
For \ref{cor:lisse13} observe that by Theorem \ref{thm:Art-motivic} and the definition of the Artin conductor,
we have    $H^1_{p^\infty}(\sO_L,\fm_L^{-1})=H^1_{p^\infty}(\sO_L)$; hence the statement follows from
Corollary \ref{cor:bir-inv-h0}.
 \end{proof}

\begin{remark}\label{rmk:tame-sh}
Let $U\in \Sm$ and denote by $\pi_1^{\rm ab, t}(U/k)$  the abelian tame fundamental group in the sense of
 \cite[7]{KeSch}; it is a quotient of $\pi_1^{\rm ab}(U)$. Denote by ${\rm Tame}^1(U)$ the subgroup of
${\rm Lisse}^1(U)$ consisting of those lisse sheaves of rank one whose corresponding representation factors via
$\pi^{\rm ab, t}_1(U/k)$. Then
\[h^0_{\A^1}({\rm Lisse}^1)(U)={\rm Tame}^1(U).\]
Indeed, we classically have ${\rm Tame}^1(C)={\rm Lisse}^{1,p'}(C)\oplus H^1_{p^\infty}(\ol{C})$, in case
$C\in \Sm$ is a curve over $k$ with smooth compactification $\ol{C}$. Hence
this $\subset$ inclusion follows from Corollary \ref{cor:lisse1}\ref{cor:lisse13} and 
the description of $\pi^{\rm ab, t}_1(U/k)$ via curve-tameness, see \cite{KeSch}.
The other inclusion follows from the $\A^1$-invariance of ${\rm Tame}^1$.
\end{remark}

\section{Torsors under finite group schemes over a perfect field}\label{sec:torsor}
In this section $k$ is a perfect field of positive characteristic $p$.
We fix an algebraic closure $\bar{k}$ of $k$. 
The term {\em $k$-group} is short for {\em commutative group scheme of finite type over $k$}.

\begin{lemma}\label{lem:Hfppf-PST}
Let $G$ be a finite $k$-group. 
Then there exists an exact sequence of sheaves on $({\rm Sch}/k)_{\fppf}$, 
the $\fppf$-site on $k$-schemes,
\eq{lem:Hfppf-PST1}{0\to G\to H_1\to H_2\to 0,}
with $H_i$, $i=1,2$, smooth $k$-groups. Furthermore, if we denote by
$u: ({\rm Sch}/k)_{\fppf}\to ({\rm Sch}/k)_{\et}$ the morphism from the fppf-site to the \'etale site,
then the above sequence induces a canonical isomorphism 
\eq{lem:Hfppf-PST2}{Ru_*G\cong [H_1\to H_2]}
in the derived category of abelian sheaves on $({\rm Sch}/k)_\et$. In particular,
for all $n\ge 0$ the presheaf on $\Sm$
\eq{lem:Hfppf-PST3}{\Sm\ni X\mapsto H^n(X_{\fppf}, G)\cong H^n(X_{\et}, H_1\to H_2),}
admits the structure of a presheaf with transfers. This transfers structure does not depend 
on the choice of the sequence \eqref{lem:Hfppf-PST1} (up to isomorphism).
\end{lemma}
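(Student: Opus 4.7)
\emph{Proof plan.} I would proceed in four steps, addressing in order the existence of the resolution, the identification \eqref{lem:Hfppf-PST2}, the transfer structure on \eqref{lem:Hfppf-PST3}, and the independence of the choice.

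First I would construct the sequence \eqref{lem:Hfppf-PST1}. Since $G$ is a finite commutative $k$-group scheme and $k$ is perfect, one decomposes $G=G^{\loc}\times G^{\et}$ into its infinitesimal and étale parts and embeds each factor into a smooth commutative $k$-group (for instance, by embedding $G^{\loc}$ into a Witt-vector group via Dieudonn\'e theory and the étale part into a quasi-trivial torus obtained by inducing the Galois module to a permutation module inside $\G_m^n$); alternatively, one may invoke Oort's theorem that every finite commutative $k$-group embeds into an abelian variety. Take $H_2:=H_1/G$ as the fppf quotient. Since $G\to H_1$ is finite flat and $H_1$ smooth, $H_2$ is representable by a smooth commutative $k$-group, yielding \eqref{lem:Hfppf-PST1}.

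Second, I would deduce \eqref{lem:Hfppf-PST2}. Applying $Ru_*$ to \eqref{lem:Hfppf-PST1} gives a distinguished triangle $Ru_*G\to Ru_*H_1\to Ru_*H_2\xrightarrow{+1}$ in $D^+(({\rm Sch}/k)_\et,\Ab)$. By Grothendieck's comparison theorem for smooth group schemes (see, e.g., \cite[III.3.9]{Milne-EC}), we have $R^iu_*H_j=0$ for $i>0$ and $j=1,2$, so $Ru_*H_j\simeq H_j$ in degree zero. Therefore $Ru_*G$ is quasi-isomorphic to the two-term complex $[H_1\to H_2]$ placed in degrees $[0,1]$, and \eqref{lem:Hfppf-PST3} follows from the Leray spectral sequence $H^p(X_\et,R^qu_*G)\Rightarrow H^{p+q}(X_{\fppf},G)$.

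Third, for the PST structure on \eqref{lem:Hfppf-PST3}: the smooth commutative $k$-groups $H_1,H_2$ lie in $\RSC_{\Nis}\subset\PST$ by \cite[Cor 3.2.5]{KSY-RecII}, and the morphism $H_1\to H_2$ is a morphism of $k$-group schemes, hence a morphism in $\PST$. Consequently, $[H_1\to H_2]$ is naturally a two-term complex in the derived category of étale sheaves with transfers, and its étale hypercohomology sheaves inherit the transfer structure.

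Fourth, for the independence on the choice, given a second resolution $0\to G\to H_1'\to H_2'\to 0$, I would construct a common refinement via the pushout $H_1'':=H_1\oplus_G H_1'$ with $H_2'':=H_1''/G$, again a short exact sequence of sheaves with $H_1'',H_2''$ smooth, admitting maps of short exact sequences from both initial resolutions. The resulting morphisms of two-term complexes $[H_i\to H_{i+1}]\to [H_1''\to H_2'']$, $i\in\{1,1'\}$, are morphisms in $\PST$ and, by Step~2, are quasi-isomorphisms of complexes of étale sheaves. Hence they induce canonical $\PST$-isomorphisms on étale hypercohomology, proving that the transfer structure on \eqref{lem:Hfppf-PST3} is independent of the chosen resolution. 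The principal obstacle will be carrying out Step~4 carefully enough to make the comparison of transfer structures canonical in the derived category, which is where one must exploit both the exactness of quotients by finite flat group schemes and the $\PST$-compatibility of all morphisms involved.
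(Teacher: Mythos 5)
Your proposal is essentially the paper's proof: embed $G$ into a smooth group via the quotient-by-abelian-variety route (the paper invokes exactly the embedding of $G$ into an abelian variety, citing Raynaud via \cite[3.1.1]{BBM}, and takes the smooth fppf quotient), identify $Ru_*G\cong[H_1\to H_2]$ using the vanishing of higher direct images for smooth groups (\cite[Thm (11.7)]{Gro-Brauer}), obtain transfers because the terms are commutative group schemes (the paper makes this precise via \cite[Lem 6.23]{MVW}, which you should cite for the claim that \'etale hypercohomology of complexes of \'etale sheaves with transfers again has transfers), and compare two resolutions through an auxiliary third one. The only genuine variation is the comparison step: you map both resolutions \emph{into} the pushout $H_1\oplus_G H_1'$ with quotient $H_1''/G$, whereas the paper embeds $G$ diagonally into $L_1\times H_1$ and projects \emph{onto} both resolutions; these are dual constructions and both give quasi-isomorphisms of the associated two-term complexes over the identity of $G$, compatibly with transfers. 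One correction to your first suggested construction of \eqref{lem:Hfppf-PST1}: the pairing of types is wrong as stated, since $\mu_{p^n}$ is infinitesimal but admits no nontrivial homomorphism to a Witt-vector group (which is unipotent), and $\Z/p$ is \'etale but does not embed into any torus in characteristic $p$; the correct splitting sends the unipotent part (\'etale $p$-primary together with infinitesimal unipotent) into Witt groups and the multiplicative part (prime-to-$p$ \'etale together with infinitesimal multiplicative) into quasi-trivial tori. Since you also offer the abelian-variety embedding, which is the paper's argument, the proof goes through.
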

\begin{proof}
By a result of Raynaud (see \cite[3.1.1]{BBM}), there exists a closed immersion $G\inj A$, with $A$ an abelian variety $A$.
By \cite[Exp {${\rm VI}_A$}, Thm 3.2]{SGA3I}, the fppf-quotient sheaf $(A/G)_{\fppf}$ is representable by a 
$k$-group $A/G$ and the quotient map $A\to A/G$ is finite and faithfully flat. Hence $A/G$ is reduced and hence 
a smooth $k$-group.
This shows the existence of a sequence \eqref{lem:Hfppf-PST1}.
By \cite[Thm (11.7)]{Gro-Brauer} a smooth $k$-group is acyclic for the direct image functor
\[u_*: {\rm Shv}(({\rm Sch}/k)_{\fppf})\to {\rm Shv}(({\rm Sch}/k)_{\et}).\]
 Hence \eqref{lem:Hfppf-PST1} is a 
$u_*$-acyclic resolution of the fppf-sheaf $G$, which yields the canonical isomorphism \eqref{lem:Hfppf-PST2}.
Since $H_1\to H_2$ is a complex of \'etale sheaves with transfers,  the presheaf \eqref{lem:Hfppf-PST3} has transfers,
by  \cite[Lem 6.23]{MVW}.
Finally, we have to show that this transfer structure does not depend on the resolution \eqref{lem:Hfppf-PST1}.
Assume $0\to G\to L_1\to L_2\to 0$ is a second such exact sequence. We obtain a commutative diagram
with exact rows in $({\rm Sch}/k)_{\fppf}$
\[\xymatrix{
0\ar[r] & G\ar[r]\ar@{=}[d] & L_1\times H_1\ar[r]\ar[d] & (L_1\times H_1)/G\ar[d]\ar[r] & 0\\
0\ar[r] & G\ar[r]                 &  H_1\ar[r]                     & H_2                               \ar[r] & 0,
}\]
where the vertical arrows are induced by projection and the top horizontal arrow on the left is the diagonal embedding of $G$;
we have also such a sequence with $H$  replaced by $L$ in the lower line.
This yields  the isomorphism $[H_1\to H_2]\cong [L_1\to L_2]$ in the derived category of  \'etale sheaves with transfers,
proving the final statement.
\end{proof}

\begin{nota}\label{nota:H1G}
Let $G$ be a finite $k$-group. Then we denote by $H^1(G)\in \PST$
the presheaf with transfers from Lemma \ref{lem:Hfppf-PST}, 
\[X\mapsto H^1(G)(X):=H^1(X_{\fppf},G).\]
\end{nota}

\begin{lemma}\label{lem:Gal-coh-proper}
Let $\Gal(\bar{k}/k)$ be the absolute Galois group of $k$ and $G$ an \'etale $k$-group.
Then the following functor defined by the Galois cohomology groups
\eq{lem:Gal-coh-proper1}{\Sm\ni X\mapsto H^n(\Gal(\bar{k}/k), G(X_{\bar{k}}))}
is a proper sheaf in $\RSC_\Nis$ in the sense of Definition \ref{defn:properRSC}.
\end{lemma}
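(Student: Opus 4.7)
My plan is first to give a concrete description of $F$. Since $G$ is \'etale, $G_{\bar k}$ is the constant $\bar k$-group scheme associated to the finite abelian group $M := G(\bar k)$, which carries a natural action of $\Gal(\bar k/k)$. Consequently $G(X_{\bar k}) = \mathrm{Map}(\pi_0(X_{\bar k}), M)$ as a Galois module. For $X \in \Sm$ smooth integral, letting $k_X$ denote the algebraic closure of $k$ in $k(X)$, one has $\pi_0(X_{\bar k}) \cong \Gal(\bar k/k)/\Gal(\bar k/k_X)$ as Galois sets, so $G(X_{\bar k}) \cong \mathrm{Ind}_{\Gal(\bar k/k_X)}^{\Gal(\bar k/k)} M$, and Shapiro's lemma yields
\[
F(X) \;\cong\; H^n(\Gal(\bar k/k_X),\, M).
\]
In particular, for any dense open $U \subset X$ we have $k_U = k_X$, whence $F(X) \xrightarrow{\simeq} F(U)$ (this is condition \ref{defn:properRSC1} of Definition \ref{defn:properRSC}); applying the same formula to $\A^1_X$ (and using $k_{\A^1_X} = k_X$) gives $\A^1$-invariance of $F$.

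Next I would equip $F$ with transfers. By Lemma \ref{lem:Hfppf-PST} the functor $X \mapsto G(X)$ is a presheaf with transfers on $\Sm_k$, and base-changing correspondences along $\Spec \bar k \to \Spec k$ upgrades $X \mapsto G(X_{\bar k})$ to a presheaf of $\Gal(\bar k/k)$-modules with transfers; the transfer action commutes with Galois because correspondences are defined over $k$. Composing with $H^n(\Gal(\bar k/k),-)$ then endows $F$ with the structure of a presheaf with transfers.

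The main technical obstacle is showing that $F$ is a Nisnevich sheaf: this does not come for free from the sheaf property of $G(-_{\bar k})$, since $H^n(\Gal(\bar k/k),-)$ is not exact. My plan is to verify the distinguished-square criterion directly. Given a distinguished Nisnevich square with $U \hookrightarrow X$ open, $p: Y \to X$ \'etale, and $W := Y \times_X U$, one may assume $X$ is smooth integral and $U \subsetneq X$. Decomposing $Y = \sqcup_l Y_l$ into integral components, each $Y_l$ has $\dim Y_l = \dim X$ while $Y_l \setminus W_l$ embeds into $X \setminus U$, which has strictly smaller dimension; hence each $W_l$ is open dense in $Y_l$ and $F(Y_l) \xrightarrow{\simeq} F(W_l)$ by properness. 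Under the Shapiro identification the pullback $F(U) \times_{F(W)} F(Y)$ then collapses to $F(U) = F(X)$, the compatibility being governed by the standard restriction maps $H^n(\Gal(\bar k/k_X), M) \to H^n(\Gal(\bar k/k_{Y_l}), M)$. The case $U = X$ is trivial and the non-integral case reduces componentwise. Combined with the previous paragraphs this gives $F \in \HI_\Nis \subset \RSC_\Nis$, and $F$ is proper in the sense of Definition \ref{defn:properRSC}.
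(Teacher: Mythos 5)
Your proof is correct and takes essentially the same route as the paper: transfers are obtained by base-changing finite correspondences to $\bar{k}$ and noting they act by Galois-equivariant maps on $G(X_{\bar{k}})$, and the key point is that $G(X_{\bar{k}})$ depends only on $\pi_0(X_{\bar{k}})$ (equivalently on the algebraic closure of $k$ in $k(X)$), which yields $\A^1$-invariance and invariance under dense open restriction. Your Shapiro identification and the explicit distinguished-square verification simply fill in details that the paper compresses into ``hence it is a Nisnevich sheaf and proper.''
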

\begin{proof}
The composition 
\[\Cor_k(X,Y)\to \Cor_{\bar{k}}(X_{\bar{k}}, Y_{\bar{k}})\to \Hom_{\Ab}(G(Y_{\bar{k}}), G(X_{\bar{k}}))\]
factors through  the homomorphism of Galois-modules; hence $\eqref{lem:Gal-coh-proper1} \in\PST$.
Since $G$ is \'etale we have $G(X_{\bar{k}})= G(\bar{k})^{\pi_0(X_{\bar{k}})}$. It follows that
\eqref{lem:Gal-coh-proper1} is $\A^1$-invariant and restrictions to dense open subsets are isomorphisms. 
Hence it is a Nisnevich sheaf and proper.
\end{proof}

\begin{lemma}\label{lem:HSS-PST}
Let $G$ be an \'etale  $k$-group. 
Then the exact sequence 
\[E(X)\,: \,0\to H^1(\Gal(\bar{k}/k), G(X_{\bar{k}}))\to H^1(G)(X)\to K^1(X)\to 0,\]
with
\[K^1(X):=\Ker(H^1(X_{\bar{k},\et}, G_{\bar{k}})^{\Gal(\bar{k}/k)}\to H^2(\Gal(\bar{k}/k), G(X_{\bar{k}}))),\]
coming from the $E_2$-page of the Hochschild-Serre spectral sequence, defines 
an exact sequence $X\mapsto E(X)$ in $\PST$. 
\end{lemma}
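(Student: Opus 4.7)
We will show that every term of $E(X)$ carries a natural structure of PST and that every arrow is a PST morphism; exactness at the level of abelian presheaves is then the classical five-term Hochschild--Serre exact sequence. The middle term $B := H^1(G)$ is a PST by Notation \ref{nota:H1G}. For the left term, the assignment $X \mapsto G(X_{\bar{k}})$ is a PST (with transfers defined by base-changing correspondences to $\bar{k}$, as in the proof of Lemma \ref{lem:Gal-coh-proper}) carrying a continuous $\Gal(\bar{k}/k)$-action that commutes with transfers: for $\alpha \in \Cor_k(X, Y)$, the base change $\alpha_{\bar{k}}$ lies in the $\Gal$-invariants of $\Cor_{\bar{k}}(X_{\bar{k}}, Y_{\bar{k}})$. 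Hence the continuous cochain complex $C^\bullet(\Gal(\bar{k}/k), G(-_{\bar{k}}))$ is a complex in $\PST$, and in particular both $A := H^1(\Gal, G(-_{\bar{k}}))$ and $D := H^2(\Gal, G(-_{\bar{k}}))$ are PSTs.

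For the right-hand term I introduce the auxiliary presheaf $\bar{B}(X) := H^1(X_{\bar{k},\et}, G_{\bar{k}})$. Applying Lemma \ref{lem:Hfppf-PST} to the $\bar{k}$-group $G_{\bar{k}}$ yields a PST structure over $\bar{k}$ on $U \mapsto H^1(U_\et, G_{\bar{k}})$; I endow $\bar{B}$ with a PST structure over $k$ by setting $\alpha^* := (\alpha_{\bar{k}})^*$ for $\alpha \in \Cor_k(X, Y)$. The Galois group acts on $\bar{B}(X)$ via the $k$-automorphisms $\sigma_X : X_{\bar{k}} \to X_{\bar{k}}$, and the same $\Gal$-invariance of $\alpha_{\bar{k}}$ as above yields
\[\sigma_X^* \circ \alpha_{\bar{k}}^* \;=\; \alpha_{\bar{k}}^* \circ \sigma_Y^* \quad \text{on } \bar{B},\]
so that $C^1(X) := \bar{B}(X)^{\Gal}$ inherits a PST structure as a subpresheaf of $\bar{B}$.

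The two maps are constructed as follows. Choose a resolution $0 \to G \to H_1 \to H_2 \to 0$ by smooth $k$-groups as in Lemma \ref{lem:Hfppf-PST}; base change yields an analogous resolution for $G_{\bar{k}}$, and the induced morphism of complexes of PSTs $[H_1 \to H_2] \to [H_{1,\bar{k}} \to H_{2,\bar{k}}]$ gives a restriction morphism $B \to \bar{B}$ in $\PST$, whose image lands in $C^1$. The inflation $A \to B$ is then identified, via the classical Hochschild--Serre sequence applied section-wise, with the kernel of $B \to C^1$ in $\PST$, hence is a PST morphism. Define
\[K^1 \;:=\; \Im\bigl(B \to C^1\bigr) \;\subseteq\; C^1 \quad \text{in } \PST.\]
By construction $0 \to A \to B \to K^1 \to 0$ is short exact in $\PST$, and the classical five-term Hochschild--Serre exact sequence at each $X$ identifies $K^1(X)$, as an abelian subgroup of $C^1(X)$, with $\ker(C^1(X) \to D(X))$, matching the description in the statement.

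The main technical point is the commutativity of the Galois action with the transfer action on $\bar{B}$, which reduces to the $\Gal$-invariance of the base-changed cycle $\alpha_{\bar{k}}$ for any $\alpha \in \Cor_k(X, Y)$. Once this is in hand, the remaining steps are formal: the two non-trivial arrows arise from morphisms of complexes of PSTs, short-exactness is built into the definition of $K^1$ as an image, and the identification of $K^1$ with the kernel inside $C^1$ is the classical Hochschild--Serre five-term sequence taken section-wise.
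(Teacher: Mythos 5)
Your term-by-term strategy is genuinely different from the paper's: the paper proves the stronger statement that transfers act on the whole Hochschild--Serre spectral sequence, by choosing an injective resolution $G\to I^\bullet$ in the category of \'etale sheaves with transfers and observing that $(\alpha,\sigma)\mapsto(\alpha\otimes_k L)\circ(\id_{X\times_k Y}\times\sigma)$ makes $X\mapsto I^\bullet(X_L)$ a complex of presheaves with $\Gal(L/k)$-equivariant transfers, so that the double complex $C^\bullet(\Gal(L/k),I^\bullet(X_L))$ -- and with it the spectral sequence, its $E_2$-terms and its edge maps -- lives in $\PST$ by construction.

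The gap in your argument is at the step ``the inflation $A\to B$ is then identified \ldots with the kernel of $B\to C^1$ in $\PST$, hence is a PST morphism.'' You equip $A=H^1(\Gal(\bar k/k),G(-_{\bar k}))$ with its own transfer structure via Galois cochains (as in Lemma \ref{lem:Gal-coh-proper}), while $\Ker(B\to C^1)$ inherits its transfers from $B=H^1(G)$ via Lemma \ref{lem:Hfppf-PST}; the classical five-term sequence only says that inflation is, section-wise, a group isomorphism onto that kernel. A section-wise isomorphism of presheaves that is natural for scheme morphisms need not intertwine two independently defined transfer actions, and this transfer-equivariance of the inflation (and of the edge map) is precisely the non-formal content of the lemma -- it does not follow from anything you have established. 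The specific structures matter later: in Lemma \ref{lem:et-p} the left-hand term must carry the structure of Lemma \ref{lem:Gal-coh-proper} in order to apply Lemma \ref{lem:prop-exact}, and $K^1$ must sit inside $R_{\bar k/k}H^1(G_{\bar k})$ with the base-change transfers of Proposition \ref{prop:mot-cond-restr}. A smaller instance of the same issue is your assertion that $B\to \bar B$ is a morphism in $\PST$: this is the compatibility of the transfers of Lemma \ref{lem:Hfppf-PST} with base change along $\bar k/k$ (i.e.\ $\alpha^*$ versus $(\alpha_{\bar k})^*$ on \'etale hypercohomology), which is true but is asserted rather than proved. To close the gap you must either verify the transfer-equivariance of inflation and the edge maps directly, or, as the paper does, construct the entire spectral sequence from a single Galois-equivariant complex of presheaves with transfers so that all the identifications are transfer-compatible by construction.
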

\begin{proof}
First note that by Grothendieck's theorem (see Lemma \ref{lem:Hfppf-PST}) we have $H^1(G)(X)=H^1(X_{\et}, G)$,
so that the sequence $E(X)$, indeed is induced by the Hochschild-Serre spectral sequence.
We show that transfers act on the whole spectral sequence.
By a limit argument it suffices to consider finite Galois extensions $L/k$ and the corresponding spectral sequence.
Let $G\to I^\bullet$ be an injective resolution in ${\rm Sh}_{\et}(\Cor_k)$, the category of \'etale sheaves with transfers.
Then 
\eq{lem:HSS-PST1}{  H^i(X_{\et}, G)= H^i(I^\bullet(X)) = H^i(I^\bullet(X_L)^{\Gal(L/k)}), \quad i\ge 0, }
for all $X\in\Sm$,  see \cite[Lem 6.23]{MVW}. Moreover,
$ H^i(X_{L,\et}, I^n)=0=H^i(X_{\et}, I^n)$, for $i\ge 1$ and $n\ge 0$, see {\em loc. cit.}
Hence
\eq{lem:HSS-PST2}{H^i(\Gal(L/k), I^n(X_L))=0.}
Let $C^\bullet(\Gal(L/k), M)$  be the complex of cochains computing the cohomology of the $\Gal(L/k)$-module $M$.
By \eqref{lem:HSS-PST1}, \eqref{lem:HSS-PST2} the cohomology groups $H^i_{\et}(X, G)$ are the cohomology groups of the
total complex associated to the double complex $C^\bullet(\Gal(L/k), I^\bullet(X_L))$.
The Hochschild-Serre spectral sequence arises from a filtration of this complex.
Furthermore, the canonical map $\Cor_k(X,Y)\times \Gal(L/k)\to \Cor_k(X_L, Y_L)$, 
$(\alpha,\sigma)\mapsto (\alpha\otimes_k L)\circ (\id_{X\times_k Y}\times \sigma)$ induces 
the structure of a complex of presheaves with $\Gal(L/k)$-equivariant transfers on $X\mapsto I^\bullet(X_L)$.
Hence
$X\mapsto C^\bullet(\Gal(L/k), I^\bullet(X_L))$
is a double complex in $\PST$. This proves the Lemma.
\end{proof}

\begin{lemma}\label{lem:et-pp}
Let $G$ be an \'etale $k$-group of order prime to $p$.
Then $H^1(G)\in\HI_\Nis$ (see \ref{nota:H1G} for notation).
\end{lemma}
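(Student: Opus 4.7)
The plan is to exploit the short exact sequence in $\PST$ from Lemma \ref{lem:HSS-PST},
\[0\to H^1(\Gal(\bar{k}/k), G((-)_{\bar{k}}))\to H^1(G)\to K^1\to 0,\]
and to reduce the $\A^1$-invariance of $H^1(G)$ to that of the two outer terms via a five-lemma argument. First I would note that $H^1(G)\in \NST$ holds already: by Lemma \ref{lem:Hfppf-PST} we have $H^1(G)(X)=H^1(X_{\et},G)$, and since $G$ is étale this étale cohomology defines a Nisnevich sheaf on $\Sm$.

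For the left-hand term, Lemma \ref{lem:Gal-coh-proper} shows that $X\mapsto H^i(\Gal(\bar{k}/k),G(X_{\bar{k}}))$ is proper, lies in $\RSC_{\Nis}$, and is in particular $\A^1$-invariant, for every $i\ge 0$. For the quotient $K^1$, recall that by construction
\[K^1(X)=\Ker\bigl(H^1(X_{\bar{k},\et},G_{\bar{k}})^{\Gal(\bar{k}/k)}\longrightarrow H^2(\Gal(\bar{k}/k),G(X_{\bar{k}}))\bigr).\]
The target of this differential is $\A^1$-invariant by the previous paragraph, and the formation of $\Gal$-invariants is left exact and preserves $\A^1$-invariance; so the $\A^1$-invariance of $K^1$ will follow as soon as one knows that $X\mapsto H^1(X_{\bar{k},\et},G_{\bar{k}})$ is $\A^1$-invariant on $\Sm$. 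This is precisely the classical homotopy invariance of étale cohomology of smooth schemes with torsion coefficients of order prime to the characteristic (\cite[XV]{SGA4III} or Milne's book); this is where the assumption that $|G|$ is coprime to $p$ enters decisively.

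Having $\A^1$-invariance at both ends, I would conclude by applying the five-lemma to the commutative diagram comparing the exact sequence $E(X)$ with $E(X\times\A^1)$, where the vertical maps are pullback along the projection $X\times\A^1\to X$: the outer verticals are isomorphisms, so the middle one is too, giving $H^1(G)\in\HI$ and hence $H^1(G)\in\HI_{\Nis}$. The single substantial ingredient is the $\A^1$-invariance of prime-to-$p$ étale $H^1$ on smooth $\bar{k}$-schemes; everything else is formal within the Hochschild--Serre framework already set up in Lemma \ref{lem:HSS-PST}.
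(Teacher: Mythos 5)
Your proof is correct and takes essentially the same route as the paper: both use the short exact sequence from Lemma \ref{lem:HSS-PST}, dispose of the left-hand term via Lemma \ref{lem:Gal-coh-proper}, and reduce the $\A^1$-invariance of $K^1$ to the classical homotopy invariance of \'etale cohomology with prime-to-$p$ torsion coefficients over $\bar{k}$ (the paper cites \cite[Cor 5.29]{VoPST}, the transfers-theoretic version of the SGA 4 statement you invoke).

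One small point: your opening claim that $H^1(G)\in\NST$ follows directly ``since $G$ is \'etale'' is not covered by Lemma \ref{lem:H1Nis}, which treats only \emph{constant} coefficients; the argument there uses that $\sH^0=A$ is a constant Nisnevich sheaf, whereas for an \'etale but non-constant $G$ the restriction of $G$ to the Nisnevich topology is not constant (a finite \'etale cover of $\Spec k$ is not a Nisnevich cover). The paper instead obtains the Nisnevich-sheaf property from the two ends of the Hochschild--Serre sequence: it applies Lemma \ref{lem:H1Nis} over $\bar{k}$, where $G_{\bar{k}}$ \emph{is} constant, together with Lemma \ref{lem:Gal-coh-proper}, to conclude that $K^1\in\NST$, and then uses that an extension of Nisnevich sheaves by Nisnevich sheaves in $\PST$ is a Nisnevich sheaf. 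If you want to keep your direct claim, you should supply the extra argument; otherwise, folding the Nisnevich-sheaf verification into the same five-lemma diagram you already set up gives the cleaner proof.
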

\begin{proof}
In this case $G_{\bar{k}}$ is a constant finite $k$-group of order prime to $p$.
By \cite[Cor 5.29]{VoPST} the presheaf $X\mapsto K^1(X)$ from Lemma \ref{lem:HSS-PST} is $\A^1$-invariant
and by Lemma \ref{lem:H1Nis} and Lemma \ref{lem:Gal-coh-proper} it is a Nisnevich sheaf.
Thus the claim follows from the Lemmas \ref{lem:HSS-PST}, \ref{lem:Gal-coh-proper}.
\end{proof}

\begin{lemma}\label{lem:et-p}
Let $G$ be an \'etale $k$-group of $p$-primary order. Then $H^1(G)\in \RSC_{\Nis}$
and the motivic conductor $c^{H^1(G)}$ is given by
\[c^{H^1(G)}_L: H^1(G)(L)\to \bigoplus_i H^1_{\et}( \Spec L_i, G_{\bar{k}})
                                          \xr{\max_i\{c^{H^1(G_{\bar{k}})}_{L_i}\}} \N_0,\]
where $L\otimes_k \bar{k}= \prod_i L_i$ and $c^{H^1(G_{\bar{k}})}$ is computed in Theorem \ref{thm:Art-motivic}
(note that $G_{\bar{k}}=\oplus_j \Z/p^{n_j}$). In particular, $(c^{H^1(G)})^{\le 1}$ is a conductor.
Moreover, if $X$ is smooth proper and $U\subset X$ is dense open, then
$h^0_{\A^1}(H^1(G))(U)= H^1(G)(X)$ (see \ref{para:HIsub} for notation).
\end{lemma}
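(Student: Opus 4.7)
Since $G$ is étale, it becomes constant over some finite Galois extension $k_1/k$ in $\bar{k}$, so $G_{k_1}\cong \bigoplus_j \Z/p^{n_j}\Z$. Then $H^1(G_{k_1}) = \bigoplus_j H^1_{p^{n_j}}\in \RSC_{\Nis, k_1}$, and its motivic conductor is the maximum of the Artin conductors of the summands by Theorem~\ref{thm:Art-motivic} combined with Lemma~\ref{lem:cond-sum}. Proposition~\ref{prop:mot-cond-restr} then places $R_{k_1/k}H^1(G_{k_1})\in \RSC_{\Nis}$ with motivic conductor $c_L(a)=\max_{i'}\Art_{L_{i'}}(a_{i'})$ for $L\otimes_k k_1=\prod_{i'} L_{i'}$.

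\textbf{Hochschild--Serre and Lemma~\ref{lem:prop-exact}.} The pullback defines a morphism $\varphi: H^1(G)\to R_{k_1/k}H^1(G_{k_1})$ in $\NST$; by the same proof as Lemma~\ref{lem:HSS-PST} (applied with $k_1$ in place of $\bar k$), its kernel is $X\mapsto H^1(\Gal(k_1/k), G(X_{k_1}))$, and the argument of Lemma~\ref{lem:Gal-coh-proper} (again with $k_1$ in place of $\bar k$) shows this kernel is a \emph{proper} object of $\RSC_{\Nis}$. The image $\Im(\varphi)$, being a subsheaf of a reciprocity sheaf, lies in $\RSC_{\Nis}$ (SC-reciprocity descends to subobjects by Definition~\ref{defn:rec}), with motivic conductor equal to the restriction of $c^{R_{k_1/k}H^1(G_{k_1})}$ by Proposition~\ref{prop:mot-cond-sub-sheaf}. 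Applying Lemma~\ref{lem:prop-exact} to
\[
0\to H^1(\Gal(k_1/k), G(-_{k_1}))\to H^1(G) \xrightarrow{\varphi} \Im(\varphi)\to 0
\]
gives $H^1(G)\in \RSC_{\Nis}$ with $c^{H^1(G)}=c^{\Im(\varphi)}\circ \varphi$. This matches the stated $\bar k$-indexed formula: for any finite $k_1\subset k'\subset \bar k$, each $L_{i'}$ splits over $k'$ into unramified extensions, under which the Artin conductor on $p^n$-torsion characters is unchanged (axiom \ref{c3} with ramification index $1$), and one passes to the colimit.

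\textbf{Level~1 and birational invariance.} Property \ref{c4} at level~1 for the Artin conductor (Theorem~\ref{thm:Art-motivic}) is preserved by: (i) direct sums, via Lemma~\ref{lem:cond-sum}; (ii) Weil restriction along $k_1/k$, since closed points of $X_{k_1}$ lie over closed points of $X$; (iii) passage to a subsheaf, using that $i_0^*$ retracts $\pi^*\colon F(X)\to F(\A^1_X)$; and (iv) the exact sequence of Lemma~\ref{lem:prop-exact}, which explicitly preserves level. This yields that $(c^{H^1(G)})^{\le 1}$ is a level-1 conductor. Finally, for the birational invariance, by Corollary~\ref{cor:bir-inv-h0} it suffices to show $\widetilde{H^1(G)}(\sO_L, \fm_L^{-1})=H^1(G)(\sO_L)$: if $c^{H^1(G)}_L(a)\le 1$ then $\Art_{L_i}(\varphi(a)_i)\le 1$, hence $=0$ since $\fil{1}H^1_{p^n}(L)=H^1_{p^n}(\sO_L)$ (cf.~\ref{para:Art-cond}); thus $\varphi(a)\in \Im(\varphi)(\sO_L)$, which lifts to some $a'\in H^1(G)(\sO_L)$ because $H^1_{\Nis}(\sO_L,\Ker(\varphi))=0$ on the henselian local $\sO_L$, and then $a-a'\in \Ker(\varphi)(L)=\Ker(\varphi)(\sO_L)$ by properness, giving $a\in H^1(G)(\sO_L)$. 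The main obstacle is the bookkeeping for \ref{c4} through all of these operations and the matching between the $k_1$- and $\bar k$-formulations; Lemma~\ref{lem:prop-exact} does the heavy lifting for the exact-sequence step itself.
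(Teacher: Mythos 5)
Your overall architecture is the same as the paper's (Hochschild--Serre sequence with proper kernel, Lemma \ref{lem:prop-exact}, Propositions \ref{prop:mot-cond-sub-sheaf} and \ref{prop:mot-cond-restr}, Theorem \ref{thm:Art-motivic}, Corollary \ref{cor:bir-inv-h0}), but your decision to descend only to a finite Galois splitting field $k_1$ creates a step the paper never has to face, and your justification for it does not work. Your argument produces the formula $c^{H^1(G)}_L(a)=\max_{i'}\Art_{L_{i'}}(a_{i'})$ with $L\otimes_k k_1=\prod_{i'}L_{i'}$, and to get the stated formula you must show $\max_{i'}\Art_{L_{i'}}(a_{i'})=\max_i \Art_{L_i}(\bar a_i)$ for the components $L_i$ of $L\otimes_k\bar k$. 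You assert this because ``the Artin conductor on $p^n$-torsion characters is unchanged'' under the unramified extensions $L_i/L_{i'}$, citing \ref{c3} with $e=1$. But \ref{c3} is a statement about pushforward and only gives inequalities; what you need is invariance under \emph{pullback} along extensions obtained from constant-field extensions $k'/k_1$ whose degree may be divisible by $p$ (so no trace/averaging argument recovers $\chi$ from $[L_i:L_{i'}]\cdot\chi$). The inequality $\Art_{L_i}(\chi_{L_i})\le\Art_{L_{i'}}(\chi)$ is easy from the description of $\fil{j}W_n$, but the reverse inequality is a genuine descent statement for the filtration $\fil{j}H^1_{p^n}$ modulo $(F-1)W_n$ --- indeed it is essentially the constant-group instance of the very formula you are proving. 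The paper sidesteps this entirely: it works with $\bar k$ from the start, noting that $H^2(\Gal(\bar k/k),G(X_{\bar k}))=0$ because $G$ is $p$-primary and $k$ has characteristic $p$, so the sequence of Lemma \ref{lem:HSS-PST} presents $H^1(G)$ as an extension of the full invariants $K^1\subset R_{\bar k/k}H^1(G_{\bar k})$ by a proper sheaf, and Proposition \ref{prop:mot-cond-restr} (which is stated for arbitrary algebraic extensions, so applies to $\bar k/k$) gives the $\bar k$-indexed formula verbatim. Your image-sheaf trick would in fact also work at the level of $\bar k$ and make even the $H^2$-vanishing unnecessary; the gap is purely a consequence of stopping at $k_1$.

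Two smaller points. First, in the birational-invariance step the membership ``$\varphi(a)\in\Im(\varphi)(\sO_L)$'' is not justified: knowing $\varphi(a)$ extends to $\sO_L$ inside $R_{k_1/k}H^1(G_{k_1})$ and lies in $\Im(\varphi)(L)$ does not formally place it in the stalk $\Im(\varphi)(\sO_L)$. This detour is unnecessary: once the conductor formula is available, $c^{H^1(G)}_L(a)\le 1$ forces $c^{H^1(G)}_L(a)=0$ (the Artin conductors of the geometric components are never $1$, cf. \ref{para:Art-cond}), and then \ref{c1} for the motivic conductor (Theorem \ref{thm:rec-cond}, Remark \ref{rmk:cond}) gives $a\in H^1(G)(\sO_L)$ directly, which is exactly how the paper argues before invoking Corollary \ref{cor:bir-inv-h0}. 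Second, your bookkeeping for the level-$1$ property and the use of the sheaf-theoretic image in the exact sequence are fine in substance, though you should say explicitly that you take the Nisnevich sheaf image so that the sequence fed into Lemma \ref{lem:prop-exact} is exact in $\NST$.
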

\begin{proof}
Note in this case $H^2(\Gal(\bar{k}/k), G(X_{\bar{k}}))=0$ (e.g. \cite[Exp X, Thm 5.1]{SGA4III}).
Thus the first statement follows from Lemma \ref{lem:HSS-PST}, Lemma \ref{lem:Gal-coh-proper}, 
Lemma \ref{lem:H1Nis}, Lemma \ref{lem:prop-exact}, Proposition \ref{prop:mot-cond-sub-sheaf},
Proposition \ref{prop:mot-cond-restr}, and Theorem \ref{thm:Art-motivic}. For the final statement observe that
\[\widetilde{H^1(G)}(\sO_L,\fm_L^{-1})= H^1(G)(\sO_L).\]
This follows directly from the explicit description of the motivic conductor on $H^1(G_{\bar{k}})$ in Theorem \ref{thm:Art-motivic}.
Hence the final statement follows from Corollary \ref{cor:bir-inv-h0}.
\end{proof}

\begin{lemma}\label{lem:HSS-inf}
Let $G$ be an infinitesimal finite $k$-group. 
Then  
\[H^1(X_{\fppf}, G)\cong H^1(X_{\bar{k}, \fppf}, G_{\bar{k}})^{\Gal(\bar{k}/k)}, \quad \text{for all }X\in \Sm.\]
Furthermore, this isomorphism induces an isomorphism in $\NST$ (cf. Proposition \ref{prop:mot-cond-restr} for notation)
\[H^1(G)\cong (R_{\bar{k}/k}H^1(G_{\bar{k}}))^{\rm \Gal(\bar{k}/k)}.\]
\end{lemma}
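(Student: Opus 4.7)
The plan is to reduce the statement to a Hochschild--Serre computation enabled by the vanishing of $G$-sections on smooth schemes, and then to track the transfer structure through the isomorphism.

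First, I would observe that since $G$ is infinitesimal, $G(X_L) = 0$ for every $X \in \Sm$ and every algebraic field extension $L/k$: the scheme $X_L$ is reduced while $G_L$ is an infinitesimal $L$-group supported set-theoretically on $\Spec L$, so any $L$-morphism $X_L \to G_L$ must factor through the unit. In particular $H^0(X_{L,\fppf}, G) = 0$. For any finite Galois subextension $L/k$ of $\bar{k}/k$, the cover $\Spec L \to \Spec k$ is étale (hence fppf), and the associated Hochschild--Serre spectral sequence
\[E_2^{p,q} = H^p(\Gal(L/k), H^q(X_{L,\fppf}, G)) \Rightarrow H^{p+q}(X_{\fppf}, G)\]
has vanishing row $q = 0$. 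The five-term exact sequence therefore degenerates to an isomorphism $H^1(X_{\fppf}, G) \xrightarrow{\sim} H^1(X_{L,\fppf}, G)^{\Gal(L/k)}$. Passing to the filtered colimit over $L$ and using that fppf cohomology with coefficients in a finitely presented group scheme commutes with cofiltered limits of quasi-compact quasi-separated schemes along affine transition maps yields the first isomorphism of the lemma.

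To upgrade the bijection to an isomorphism in $\NST$, I would use the resolution $0 \to G \to H_1 \to H_2 \to 0$ of Lemma \ref{lem:Hfppf-PST}. Its base change to $\bar{k}$ is a resolution of $G_{\bar{k}}$ by smooth $\bar{k}$-groups, and by Lemma \ref{lem:Hfppf-PST} applied over $\bar{k}$ it endows $H^1(G_{\bar{k}})$ with the structure of an \'etale sheaf with transfers on $\Sm_{\bar{k}}$, canonically identifying $Ru_*G_{\bar{k}} \simeq [H_{1,\bar{k}} \to H_{2,\bar{k}}]$. Defining $R_{\bar{k}/k} H^1(G_{\bar{k}})(X) := H^1(X_{\bar{k},\fppf}, G_{\bar{k}})$ (as the directed colimit over finite subextensions) and equipping it with the natural $\Gal(\bar{k}/k)$-action by pullback on the second factor, this is an object of $\NST$ by (the obvious extension to infinite algebraic extensions of) Proposition~\ref{prop:mot-cond-restr}. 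For any finite correspondence $V \in \Cor_k(X,Y)$, the base change $V_{\bar{k}} \in \Cor_{\bar{k}}(X_{\bar{k}}, Y_{\bar{k}})$ is fixed by $\Gal(\bar{k}/k)$, so the Galois action commutes with transfers and the subfunctor of invariants is again an object of $\NST$. The isomorphism of the first part is induced by pullback along the pro-\'etale cover $X_{\bar{k}} \to X$, which commutes with the transfer action computed through $[H_1 \to H_2]$ because that resolution base-changes; hence the isomorphism is in $\NST$.

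The main obstacle I anticipate is purely bookkeeping: verifying that the PST structure on $H^1(G)$ defined via the auxiliary resolution of Lemma \ref{lem:Hfppf-PST} is genuinely compatible with the $\Gal(\bar{k}/k)$-action on $H^1(G_{\bar{k}})$ after base change, and that Proposition \ref{prop:mot-cond-restr} can be extended to the infinite algebraic extension $\bar{k}/k$ (via a colimit argument over finite Galois subextensions). The underlying geometric inputs -- vanishing of $G(X_L)$ and Hochschild--Serre -- are straightforward; the care lies in ensuring that the transfer structure and the Galois action fit together.
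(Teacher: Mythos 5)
Your first isomorphism is fine and is essentially the paper's own argument: $G(Y)=0$ for reduced $Y$ because $G$ is infinitesimal, so the fppf Hochschild--Serre spectral sequence (run over finite Galois subextensions and then passed to the colimit, exactly as in the proof of Lemma \ref{lem:HSS-PST}) collapses to $H^1(X_{\fppf},G)\cong H^1(X_{\bar k,\fppf},G_{\bar k})^{\Gal(\bar k/k)}$, and compatibility with transfers comes from Lemma \ref{lem:Hfppf-PST}.

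The gap is in the second half. Saying the isomorphism holds ``in $\NST$'' requires knowing that $H^1(G)$ is a Nisnevich sheaf (equivalently, that the right-hand side is), and your proposal never proves this. You invoke Proposition \ref{prop:mot-cond-restr} to claim $R_{\bar k/k}H^1(G_{\bar k})\in\NST$, but that proposition's hypothesis is $H^1(G_{\bar k})\in\RSC_{\Nis,\bar k}$ --- in particular that $H^1(G_{\bar k})$ is a Nisnevich sheaf with transfers over $\bar k$ --- which is exactly the kind of statement under proof and is not available at this point in the paper (reciprocity and sheafness for $H^1$ of infinitesimal groups are only established later, in Lemma \ref{lem:multA1} and Proposition \ref{prop:infu-RSC}, and those proofs themselves use the present lemma, so your route risks circularity; also no ``extension to infinite extensions'' is needed, since the proposition is already stated for arbitrary algebraic $k_1/k$, but that is not the issue). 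The missing ingredient, which is the actual content of the paper's proof, is this: because $G(X)=0$ for $X\in\Sm$, any resolution $0\to G\to H_1\to H_2\to 0$ from Lemma \ref{lem:Hfppf-PST} gives an injection $H_1\inj H_2$ of presheaves on $\Sm$, so the identification $Ru_*G\cong[H_1\to H_2]$ yields $Ru_*G\cong (H_2/H_1)_{\et}[-1]$ on $\Sm_{\et}$ and hence $H^1(G)(X)=H^0(X,(H_2/H_1)_{\et})$; this exhibits $H^1(G)$ as an \'etale, in particular Nisnevich, sheaf. Without this (or an equivalent direct argument over $\bar k$), the ``isomorphism in $\NST$'' part of the statement is not established.
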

\begin{proof}
Since $G$ is infinitesimal, we have $G(Y)=0$ for all reduced schemes $Y$ over $k$.
There is also a Hochschild-Serre spectral sequence for the fppf-cohomology (e.g., \cite[III, Rem 2.21]{MilneEt});
 by the above remark the fppf-version  
of the exact sequence $E(X)$ from Lemma \ref{lem:HSS-PST} yields the first isomorphism.
By Lemma \ref{lem:Hfppf-PST} this isomorphism is compatible with the transfer structure. It remains to
show that $H^1(G)$ is a Nisnevich sheaf.  By the remark from the beginning of this proof 
any sequence \eqref{lem:Hfppf-PST1} yields an injection $H_1\inj H_2$ when restricted to $\Sm$.
Thus the isomorphism \eqref{lem:Hfppf-PST2} implies 
\[Ru_* G\cong (H_2/H_1)_{\et}[-1]\]
in the derived category of \'etale sheaves on $\Sm$, where  $(H_2/H_1)_{\et}$ denotes the \'etale
sheafification of the presheaf $X\mapsto H_2(X)/H_1(X)$. Hence
\[H^1(G)(X)= H^0(X, (H_2/H_1)_{\et}).\]
It follows that $H^1(G)$ is even an \'etale sheaf.
\end{proof}

\begin{lemma}\label{lem:multA1}
Assume $G$ is an infinitesimal  finite $k$-group of multiplicative type.
Then $H^1(G)\in \HI_{\Nis}$.
\end{lemma}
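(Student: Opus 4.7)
The plan is to reduce to the case of tori by embedding $G$ in a short exact sequence of $k$-groups with torus outer terms, and then appeal to the $\A^1$-invariance of torus cohomology on smooth schemes.

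First I would construct a resolution
\[0\to G\to T_1\to T_2\to 0\]
of $G$ with $T_1,T_2$ tori. Let $\hat{G}$ be the Cartier dual of $G$, a finite \'etale $k$-group. Any surjection of $\Gal(\bar k/k)$-modules $\Z^n\twoheadrightarrow \hat{G}(\bar k)$ has kernel a Galois-stable free $\Z$-module $M$ of finite rank, and applying Cartier duality to the resulting exact sequence of \'etale groups yields the desired sequence with $T_1=\G_m^n$ and $T_2$ the torus with character module $M$.

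Next, using this as the resolution in Lemma \ref{lem:Hfppf-PST}, the canonical isomorphism $Ru_*G\cong [T_1\to T_2]$ in the derived category of \'etale sheaves with transfers gives, for any $X\in\Sm$, a five-term exact sequence of presheaves with transfers
\[T_1(X)\to T_2(X)\to H^1(G)(X)\to H^1_{\et}(X,T_1)\to H^1_{\et}(X,T_2).\]
I would then argue that each of the four end terms is an $\A^1$-invariant Nisnevich sheaf with transfers on $\Sm$. For the $T_i$ this is standard, since a torus has no $\G_a$-subgroup, and hence $T_i(\A^1_X)=T_i(X)$ for every smooth $X$. For $H^1_{\et}(-,T_i)$, Hilbert~90 identifies the \'etale, Nisnevich and Zariski $H^1$ on $\Sm$ for the smooth commutative group $T_i$; $\A^1$-invariance then reduces via \'etale descent (tori split after a finite separable extension of $k$) to the classical $\A^1$-invariance of $\Pic=H^1_{\Zar}(-,\G_m)$ on smooth schemes. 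A five-lemma argument applied to the above exact sequence then yields $H^1(G)\in \HI_{\Nis}$.

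The main subtle point is checking that the transfer structures on the four end terms are compatible with the long exact sequence, so that the five lemma can be applied in $\NST$. This follows from the fact that the quasi-isomorphism $Ru_*G\cong[T_1\to T_2]$ supplied by Lemma \ref{lem:Hfppf-PST} is canonical in the derived category of \'etale sheaves with transfers, and hence the associated connecting maps are automatically morphisms in $\PST$.
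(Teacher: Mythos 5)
Your five-lemma argument does establish $\A^1$-invariance (the pullbacks along $\A^1_X\to X$ are compatible with the long exact hypercohomology sequence, and the four outer terms are $\A^1$-invariant on smooth schemes), but it does not establish the Nisnevich sheaf property, and that is a genuine gap. The outer terms $X\mapsto H^1_{\et}(X,T_i)$ are Picard-type groups and are \emph{not} Nisnevich sheaves (a nontrivial line bundle dies on a Zariski cover, so these presheaves are not even separated); the assertion that Hilbert 90 identifies \'etale, Nisnevich and Zariski $H^1$ is moreover false for a non-split torus, where already $H^1_{\et}(\Spec K,T)$ can be nonzero while the Nisnevich $H^1$ of a point vanishes. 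And even if the outer terms were sheaves, being the middle term of an exact sequence of presheaves with sheaf outer terms does not force sheafness: the sequence only exhibits $H^1(G)$ as an extension of a kernel sheaf by a presheaf cokernel, which need not be a sheaf. So the conclusion $H^1(G)\in\HI_{\Nis}$, as opposed to mere $\A^1$-invariance of the presheaf, is not reached. The paper avoids this by reducing to $k=\bar k$ via Lemma \ref{lem:HSS-inf} (using that the relevant multiplicative groups are infinitesimal), where injectivity of $H_1\to H_2$ on reduced schemes gives $H^1(G)\cong H^0_{\et}(-,(H_2/H_1)_{\et})$, which is visibly an (\'etale, hence Nisnevich) sheaf; the resolution by split tori and homotopy invariance of $\sO^\times$ and $\Pic$ then give $\A^1$-invariance.

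There is also a smaller error in your construction of the resolution: a surjection of Galois modules $\Z^n\twoheadrightarrow\hat G(\bar k)$ with $\Z^n$ carrying the trivial action (which is what $T_1=\G_m^n$ amounts to) exists only when the Galois action on $\hat G(\bar k)$ is trivial, i.e.\ when $G$ is already diagonalizable over $k$. In general you must surject from a permutation module $\Z[\Gal(k'/k)]^m$ for a splitting field $k'$, so $T_1$ is the quasi-trivial torus $\Res_{k'/k}\G_m^m$ (for which Shapiro plus Hilbert 90 gives $H^1_{\et}(X,T_1)=\Pic(X_{k'})^m$), and $T_2$ is a possibly non-quasi-trivial torus, for which the identifications you invoke fail. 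Alternatively, split off the \'etale multiplicative part (handled by Lemma \ref{lem:et-pp}) and reduce the infinitesimal part to $\bar k$ as in the paper; as written, though, both the sheafness step and the choice of $T_1$ need repair.
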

\begin{proof}
By Lemma \ref{lem:HSS-inf} we may assume $k=\bar{k}$.
In this  case $G$ is diagonalizable and we find an exact sequence \eqref{lem:Hfppf-PST1}
with $H_i= \G_m^{n_i}$, some $n_i\ge 1$, see \cite[IV, \S1, 1.5 Cor]{DG}. 
The statement follows from the $\A^1$-invariance of $X\mapsto H^i(X_{\Zar},\G_m)$, $i=0,1$, and Hilbert 90.
\end{proof}

\begin{para}\label{para:ap}
We denote
\[\alpha_p:=\Ker (F: \G_a\to \G_a),\]
where $F$ is the absolute Frobenius on the additive group.
Then $\alpha_p$ is a unipotent infinitesimal finite  $k$-group.
Let $L\in\Phi$ and let $t\in \sO_L$ be a local parameter.
Recall from \ref{para:BMK} that $\fil{j}\G_a(L):= \fil{j}W_1(L)$ is given by
\eq{para:ap3}{\fil{j}\G_a(L)=\begin{cases}
\sO_L, &\text{if }j=0\\
\frac{1}{t^{j-1}}\cdot \sO_L, & \text{if } (j,p)=1\\
\frac{1}{t^j}\cdot \sO_L, & \text{if } p\mid j.
\end{cases}}
We denote by 
\eq{para:ap4}{\fil{j}H^1(\alpha_p)(L)}
the image of $\fil{j}\G_a(L)$ under the connecting homomorphism
\[\delta: \G_a(L)\to H^1(\alpha_p)(L)= H^1(\Spec L_{\fppf}, \alpha_p).\]
Note that $\fil{j}H^{1}(\alpha_p)(L)$ is also equal to the image of the Frobenius saturated filtration $\filF{j} W_1(L)$.
\end{para}

\begin{prop}\label{prop:ap-mot-cond}
We have $H^1(\alpha_p)\in\RSC_{\Nis}$ and the motivic conductor $c^{H^1(\alpha_p)}$ on $H^1(\alpha_p)$ is given by 
\eq{prop:ap-mot-cond1}{c^{H^1(\alpha_p)}_L(b)=\min\{j\ge 0\mid b\in \fil{j}H^1(\alpha_p)(L)\}.}
In particular, either $b\in H^1(\alpha_p)(\sO_L)$ or $c^{H^1(\alpha_p)}_L(b)\ge 2$.
Furthermore, it restricts to a level 2 conductor.
\end{prop}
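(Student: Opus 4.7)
My plan is to follow the two-step strategy used in Theorem \ref{thm:filF-motivic} (for $W_n$) and Theorem \ref{thm:Art-motivic} (for characters): first exhibit the collection $c=\{c_L\}$ defined by the right-hand side of \eqref{prop:ap-mot-cond1} as a semi-continuous conductor of level $2$ on $H^1(\alpha_p)$, which via Theorem \ref{thm:rec-cond} immediately gives $H^1(\alpha_p) \in \RSC_\Nis$ together with the inequality $c^{H^1(\alpha_p)} \le c$; then establish the reverse inequality by a local symbol computation adapted from Proposition \ref{prop:Wfil-symb}. The basic tool is the Artin--Schreier-type fppf-exact sequence $0 \to \alpha_p \to \G_a \xrightarrow{F} \G_a \to 0$, which combined with $H^i_\Zar(X, \sO_X)=0$ for $i\ge 1$ on smooth affine $X$ gives a connecting surjection $\delta: \G_a(X) \surj H^1(\alpha_p)(X)$. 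Since $F\G_a(L)=L^p \subseteq \ker\delta$, one checks that $\delta(\filF{j}\G_a(L))=\delta(\fil{j}\G_a(L))=\fil{j}H^1(\alpha_p)(L)$, so $c$ agrees with the conductor induced from the Brylinski--Kato--Matsuda--Russell conductor $\gamma_1$ on $\G_a$ via Lemma \ref{lem:im-cond}.

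Applying Lemma \ref{lem:im-cond} and Proposition \ref{prop:cond-Witt} to $\delta$ then delivers axioms \ref{c1}, \ref{c2}, \ref{c3}, and \ref{c6} for $c$ for free. For \ref{c5} I would verify the surjectivity hypothesis \eqref{lem:im-cond1}: for any compactification $(\ol{X}, X_\infty)$ of $X\in\Sm$, each $\ol{X}^h_{(x)}\setminus X_{\infty,(x)}$ is affine, so $\delta$ is surjective on those sections. The only axiom requiring genuine work is \ref{c4} at level $2$, which is not provided by Lemma \ref{lem:im-cond}. I plan to handle it by reducing to $X=\Spec A$ affine, lifting $b\in H^1(\alpha_p)(\A^1_X)$ to a polynomial $\tilde{b}=\sum_i a_i t^i \in A[t]$, and analyzing the condition $\rho_x^*\tilde{b}\in\sO_L + L^p$ with $L=k(x)(t)_\infty$ at $\infty\in\P^1_{k(x)}$. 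Expansion in the uniformizer $1/t$ forces $a_i(x)=0$ for all $i\ge 1$ with $p\nmid i$, and Jacobsonness of $A$ over the perfect field $k$ then yields $a_i=0\in A$ for such $i$. The same expansion at each $1$-dimensional point $y\in X$ forces $a_{pj}(y)\in k(y)^p$, equivalently $d(a_{pj})|_y=0$ in $\Omega^1_{k(y)/k}$; since $\Omega^1_{A/k}$ is locally free and any section vanishing at every $1$-dimensional point must vanish globally, one concludes $da_{pj}=0$, and smoothness of $A/k$ forces $a_{pj}\in A^p$. Writing $a_{pj}=c_j^p$ one gets $\tilde{b}-a_0=(\sum_j c_j t^j)^p \in F\G_a(\A^1_X)$, so $b=\delta(a_0) \in H^1(\alpha_p)(X)$. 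This settles \ref{c4} and completes the verification that $c$ is a semi-continuous conductor of level $2$.

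For the reverse inequality $c\le c^{H^1(\alpha_p)}$, the plan is to mimic Theorem \ref{thm:Art-motivic} with $F-1$ replaced by $F$. Given $b\in \widetilde{H^1(\alpha_p)}(\sO_L,\fm_L^{-j})$, pick a lift $\tilde{b}\in\G_a(L)$ of minimal $s:=\gamma_{1,L}(\tilde{b})$, and assume $s>j$ to reach a contradiction. By Corollary \ref{cor:localLS3} applied to $H^1(\alpha_p)$ together with the functoriality \ref{LS6} of the local symbol under $\delta$, one has $(\tilde{b},1-xt^{s-1})_{L_x,\sigma}\in K(x)^p$ for each coefficient field $\sigma$ considered in Proposition \ref{prop:Wfil-symb}. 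The key substitute for Lemma \ref{lem:vanWv} is the elementary fact that $g^p=bx$ with $g\in K(x)$ and $b\in K$ forces $b=0$ (since $x\notin K(x)^p$). Running through cases 1, 2, and 4 of Proposition \ref{prop:Wfil-symb} (case 3 is empty for $n=1$) using this substitute, the top graded part of $\tilde{b}$ modulo $\filF{s-1}\G_a(L)$ is absorbable into $F\G_a(L)$, producing a better lift and contradicting the minimality of $s$. The main technical obstacle is case 4 ($p\mid r$, $r>2$), where the twisted coefficient field $\sigma_1$ from Proposition \ref{prop:Wfil-symb}\ref{prop:Wfil-symb02} is needed: after the $\sigma_0$-computation kills the $t^{-(r-1)}$ piece, the $\sigma_1$-computation must force the remaining coefficient $c_0\in K_0$ to lie in $K_0^p$, and this requires the same careful $p$-basis expansion and extraction of the coefficient of $t$ in $c_0$ (after substituting $z_i=z_i'(1+z_i^{p^e}t)$) as in Proposition \ref{prop:Wfil-symb}. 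Finally, the dichotomy ``$b\in H^1(\alpha_p)(\sO_L)$ or $c^{H^1(\alpha_p)}_L(b)\ge 2$'' is immediate from \eqref{para:ap3}: since $\fil{1}\G_a(L)=\sO_L$, one gets $\fil{1}H^1(\alpha_p)(L)=\delta(\sO_L)=H^1(\alpha_p)(\sO_L)$.
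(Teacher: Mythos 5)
Your overall strategy is the same as the paper's (exhibit the right-hand side of \eqref{prop:ap-mot-cond1} as a semi-continuous conductor of level $2$ via Proposition \ref{prop:cond-Witt} and Lemma \ref{lem:im-cond}, then prove minimality by a symbol computation reducing to the $n=1$ case of Proposition \ref{prop:Wfil-symb}), but there is a genuine gap in your verification of \ref{c4}. After you have killed the coefficients $a_i$ with $p\nmid i$ using closed points, the remaining step is: if $a\in A$ satisfies $a(y)\in k(y)^p$ for every point $y\in X$ with $\td(k(y)/k)=1$, then $a\in A^p$. You justify this by saying that $da$ is a section of the locally free sheaf $\Omega^1_{A/k}$ ``vanishing at every $1$-dimensional point'', hence zero. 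This conflates two notions of vanishing: the hypothesis only says that the image of $da$ under the surjection $\Omega^1_{A/k}\otimes_A k(y)\to \Omega^1_{k(y)/k}$ is zero, which is far weaker than vanishing of $da$ in the fibre $\Omega^1_{A/k}\otimes_A k(y)$ (for such $y$ the kernel of that surjection has dimension $\dim X-1$). For example, on $\A^2$ the form $dx_2$ has zero image in $\Omega^1_{k(y)/k}$ for the generic point $y$ of every horizontal line, so the local-freeness argument cannot be applied to the quotient classes. The statement you need is precisely the non-formal content of the level-$2$ check: if $a\notin A^p$, one must \emph{produce} a curve $C\subset X$ whose generic point $y$ has $a(y)\notin k(y)^p$. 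The paper does this by first reducing to $k=\bar k$ (allowed by Lemma \ref{lem:HSS-inf}, since $\alpha_p$ is infinitesimal), choosing an \'etale chart $X\to \A^d$ and a coordinate $x_1$ with $a\notin A^p[x_2,\ldots,x_d][x_1^p]$, and then specializing $(x_2,\ldots,x_d)$ to a suitable tuple $\lambda\in k^{d-1}$ so that $\partial a/\partial x_1$ is not identically zero on the resulting smooth affine curve $C_\lambda$. You need to supply an argument of this kind (including the passage to $\bar k$, or a descent of the curve); the rest of your first half, i.e.\ \ref{c1}--\ref{c3}, \ref{c5}, \ref{c6} via Lemma \ref{lem:im-cond} and Proposition \ref{prop:cond-Witt} and the identification $\delta(\filF{j}\G_a(L))=\fil{j}H^1(\alpha_p)(L)$, is fine and agrees with the paper.

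Your second half is essentially the paper's argument: Corollary \ref{cor:localLS3} together with \ref{LS6} gives $(\tilde b,1-xt^{r-1})_{L_x,\sigma}\in K(x)^p$, and the elementary fact that $cx\in K(x)^p$ forces $c=0$ plays the role of Lemma \ref{lem:vanWv}. One simplification over your plan for the case $p\mid r$, $r>2$: you do not need to redo the $p$-basis computation of Proposition \ref{prop:Wfil-symb} under the weaker hypothesis ``symbol in $K(x)^p$''. With respect to each of the coefficient fields $\sigma_0,\sigma_1$ the symbol equals $-(r-1)c_\sigma x$, the contribution of the $t^{-r}$-term vanishing by Lemma \ref{lem:LSW-form1}\ref{lem:LSW-form1.2}; membership in $K(x)^p$ therefore already forces the $\G_a$-valued symbol to vanish on the nose, and one can then quote Proposition \ref{prop:Wfil-symb} (case $n=1$) as a black box, which is exactly how the paper concludes.
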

\begin{proof}
Denote the collection of maps $H^1(\alpha_p)(L)\to \N_0$
defined by the right hand side of \eqref{prop:ap-mot-cond1} by $c$.
By Proposition \ref{prop:cond-Witt} and Lemma \ref{lem:im-cond}, 
$c$ satisfies \ref{c1}-\ref{c6} except possibly for \ref{c4}.
(For \ref{c5} note, that $\G_a(Y)\to H^1(\alpha_p)(Y)$ is surjective for any affine scheme $Y$ over $k$.)
We claim that $c^{\le 2}$ satisfies \ref{c4}.
Let $X$ be a smooth $k$-scheme and $b\in H^1(\alpha_p)(\A^1_X)$ with
\eq{prop:ap-mot-cond2}{c_{k(x)_\infty}(\rho_x^*b)\le 1,\quad \text{for all } x\in X\text{ with }\td(k(x)/k)\le 1,}
where $\rho_x: \Spec k(x)(t)_\infty=\Spec \Frac(\sO_{\P^1_{x},\infty}^h)\to \A^1_X$ is the natural map.
We want to show : $b\in H^1(\alpha_p)(X)$.
This is  equivalent to $b=\pi^*i^*b$ in $H^1(\alpha_p)(\A^1_X)$; 
by the definition of $c$ and Lemma \ref{lem:HSS-inf}, we can therefore assume $k$ is algebraically closed.
Furthermore, the question is local  on $X$, hence we can assume $X=\Spec A$ affine.
Note, for a general $\beta\in H^1(\alpha_p)(L)\setminus H^1(\alpha_p)(\sO_L)$  we have $c_L(\beta)\ge 2$, as
follows directly from \eqref{para:ap3}.
Hence condition \eqref{prop:ap-mot-cond2} implies
\[\rho_x^*b\in \Im(H^1(\alpha_p)(\sO_{\P^1_x,\infty}^h) \to H^1(\alpha_p)(k(x)(t)_\infty)).\]
Denote by $b(x)$ the restriction of $b$ to $\A^1_x$. Since $H^1(\alpha_p)$ is a Nisnevich sheaf we conclude 
\[b(x)\in H^1(\alpha_p)(\P^1_x)=H^1(\alpha_p)(x).\]
Thus we find a polynomial $\tilde{b}=b_0+b_1 t+\ldots+ b_n t^n\in A[t]$
 mapping to $b$ such that for all points $x\in X$ with $\td(k(x)/k)\le 1$
there exist  $c_x\in k(x)$ and $g_x\in k(x)[t]$ with
\eq{prop:ap-mot-cond3}{\tilde{b}(x)= c_x + g_x^p, \quad \text{in }k(x)[t].}
It follows immediately that $\tilde{b}\in A[t^p]$ and it remains to show $b_i\in A^p$, for all $i\ge 1$,
since then $b=b_0$ in $H^1(\alpha_p)(\A^1_X)$. Thus we are reduced to show the following:
Let $X=\Spec A\to \A^d=\Spec k[x_1,\ldots, x_d]$ be an \'etale map and $a\in A\setminus A^p$. 
Then there exists a smooth connected curve $i: C\inj X$ such that  $i^*a\in \sO(C)\setminus \sO(C)^p$.
If $a\not\in A^p$ we find a variable - say $x_1$ - such that 
$a= a_0+ a_1 x_1+\ldots +a_n x_1^n$, where $a_i\in A^p[x_2,\ldots, x_d]:=B$ and $a\not\in B[x_1^p]$.
A tuple $\lambda=(\lambda_2,\ldots, \lambda_d)\in k^{d-1}$ induces a closed immersion
$i_\lambda : \A^1\to \A^d$ given by $x_1\mapsto x_1$, $x_i\mapsto \lambda_i$, $i=2,\ldots, d$.
Denote by $C_\lambda$ the pullback of $X$ along $i_\lambda$. Since $k$ is algebraically closed we  find  a tuple
$\lambda$ such that $a_{|C_\lambda}\not\in\sO(C_\lambda)^p$. This proves the above claim; hence $c^{\le 2}$ satisfies \ref{c4}.

Corollary \ref{cor:mot-cond-mini} yields $c^{H^1(\alpha_p)}\le c$.
To show the other inequality it suffices by Corollary \ref{cor:localLS3}
to show the following:
Let $L\in\Phi$, $t\in \sO_L$ a local parameter, and
let $\sigma: K\inj \sO_L$ be some coefficient field; extend it in the canonical way to $\sigma: K(x)\inj \sO_{L_x}$,
where $L_x=\Frac(\sO_L[x]_{(t)}^h)$. Assume $b\in \fil{r}H^1(\alpha_p)(L)$, $r\ge 1$.
Then the following implication holds
\eq{prop:ap-mot-cond4}{(b, 1-xt^{r-1})_{L_x,\sigma}=0 \quad \text{for all }\sigma\,\Rightarrow\, b\in \fil{r-1}H^1(\alpha_p)(L),}
where the local symbol on the left hand side is the one from \ref{para:symbol-hdvf} for $H^1(\alpha_p)$, and $\sigma$
runs through all coefficient fields of $\sO_L$.
By \ref{LS6} the local symbol on $H^1(\alpha_p)$ is given by
\[(b, 1-xt^{r-1})_{L_x,\sigma} =\delta(\Res_{t,\sigma}(\tilde{b}\dlog (1-xt^{r-1}))),\]
where $\tilde{b}\in \fil{r}\G_a(L)$ is a lift of $b$, $\delta: \G_a(K(x))\to H^1(\alpha_p)(K(x))$ 
is the connecting homomorphism, and we use the isomorphism $L_x=K(x)((t))$ defined by $\sigma$ and $t$
to compute the residue symbol on the right.
To prove the implication \eqref{prop:ap-mot-cond4} it suffices to consider $b$ modulo $\fil{r}$.
Fix $\sigma: K\inj \sO_L$.

{\em 1st case: $(r,p)=1=(r-1,p)$.} In this case $\tilde{b}\equiv c/t^{r-1}$ mod $\fil{r-1}\G_a(L)$, for some $c\in K$.
Hence 
\[\Res_{t,\sigma}(\tilde{b}\dlog (1-xt^{r-1}))=-(r-1)cx.\]
Since $\delta(-(r-1)cx)=0$ iff $cx\in K(x)^p$, this is only possible if $c=0$.

{\em 2nd case:  $p\mid r-1$.} In this case $\fil{r}H^1(\alpha_p)(L)=\fil{r-1}H^1(\alpha_p)(L)$, 
and there is nothing to show.

{\em 3rd case: $p\mid r$.}  In this case $\tilde{b}\equiv  c/t^{r-1}+e/t^r$ mod $\fil{r-1}\G_a(L)$, for some $c, e\in K$.
By the same argument as in the 1st case we obtain the following implication
\[(b, 1-xt^{r-1})_{L_x,\sigma}=0 \, \Rightarrow \, 
(\tilde{b}, 1-xt^{r-1})_{L_x,\sigma}=0 \quad \text{in } \G_a(K(x)).\]
Since this hold for all $\sigma$, Proposition \ref{prop:Wfil-symb} (in the case $n=1$) yields
$\tilde{b}\in \filF{r-1}\G_a(L)$, hence $b\in \fil{r-1}H^1(\alpha_p)(L)$.
This completes the proof.
\end{proof}

\begin{proposition}\label{prop:infu-RSC}
Let $G$ be a finite unipotent infinitesimal $k$-group.
\begin{enumerate}[label= (\arabic*)]
\item\label{prop:infu-RSC1} $H^1(G)\in \RSC_{\Nis}$;
\item\label{prop:infu-RSC2} the motivic conductor $c^{H^1(G)}$ restricts to a level 2 conductor;
\item\label{prop:infu-RSC3} if $X$ is a proper smooth $k$-scheme and $U\subset X$ is open dense,
then $h^0_{\A^1}(H^1(G))(U)= H^1(G)(X)$ (see \ref{para:HIsub} for notation).
\end{enumerate}
\end{proposition}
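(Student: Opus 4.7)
The plan is to resolve $G$ by smooth connected commutative unipotent $k$-groups and then combine Lemma \ref{lem:im-cond} with an inductive reduction, via a composition series whose graded pieces are $\alpha_p$, to the base case Proposition \ref{prop:ap-mot-cond}. Since $G$ is finite unipotent infinitesimal over the perfect field $k$, there exists a resolution in fppf sheaves
\[0 \to G \to U_1 \xr{f} U_2 \to 0\]
with $U_1,U_2$ smooth connected commutative unipotent $k$-groups (e.g.\ by embedding $G$ via its contravariant Dieudonn\'e module into a Witt vector group and taking the quotient). For smooth affine $X$, Grothendieck's theorem gives $H^1_{\fppf}(X,U_i)=H^1_{\et}(X,U_i)$, which vanishes by d\'evissage to $\G_a$ together with $H^1(X,\sO_X)=0$; the long exact fppf cohomology sequence then yields a short exact sequence in $\NST$
\[0 \to U_1 \to U_2 \to H^1(G) \to 0.\]

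For part \ref{prop:infu-RSC1}, I would define a collection of maps $c=\{c_L\}_{L\in\Phi}$ on $H^1(G)$ by $c_L(a):=\min\{c^{U_2}_L(\tilde{a})\mid \tilde{a}\in U_2(L),\ \tilde{a}\mapsto a\}$, where $c^{U_2}$ is the motivic conductor on $U_2\in\RSC_{\Nis}$ (the extended Rosenlicht--Serre conductor from Theorem \ref{thm:rose-cond}). The surjectivity hypothesis \eqref{lem:im-cond1} of Lemma \ref{lem:im-cond} is satisfied by choosing any smooth compactification $(\ol{X},X_\infty)$ of $X$ and using that $H^1$ of a smooth connected unipotent group vanishes on spectra of henselian local rings. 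Lemma \ref{lem:im-cond} then shows $c$ satisfies axioms \ref{c1}--\ref{c3}, \ref{c5} and \ref{c6}, giving $H^1(G)$ SC-reciprocity; combined with $H^1(G)\in\NST$ (Lemma \ref{lem:HSS-inf}), Theorem \ref{thm:rec-cond}\ref{thm:rec-cond1} gives $H^1(G)\in\RSC_{\Nis}$.

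For part \ref{prop:infu-RSC2}, we must verify axiom \ref{c4} for $c^{H^1(G)}$ at level $2$. Since over a perfect field $k$ the only simple finite commutative unipotent infinitesimal $k$-group is $\alpha_p$ (as $\alpha_p$ has no non-trivial $k$-forms), $G$ admits a composition series with successive quotients isomorphic to $\alpha_p$, and we induct on its length. The base case is Proposition \ref{prop:ap-mot-cond}. In the inductive step, given $0 \to \alpha_p \to G \to G' \to 0$, the image $\bar{a}\in H^1(G')(\A^1_X)$ of any $a\in H^1(G)(\A^1_X)$ satisfying the level-$2$ hypothesis inherits that hypothesis by functoriality of the motivic conductor along the morphism $H^1(G)\to H^1(G')$ in $\RSC_{\Nis}$, so by induction $\bar{a}=\pi^*\bar{b}$ for some $\bar{b}\in H^1(G')(X)$. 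The main obstacle is to promote this to $a=\pi^*b$ for some $b\in H^1(G)(X)$: the obstruction is the class of $\bar{b}$ under the connecting homomorphism $H^1(G')(X)\to H^2_{\fppf}(X,\alpha_p)$, and one verifies it vanishes by using that its pullback along $\pi$ does (since $a$ lifts $\pi^*\bar{b}$ globally on $\A^1_X$). Analysing $H^2_{\fppf}(-,\alpha_p)$ via the Artin--Schreier sequence $0\to\alpha_p\to\G_a\xr{F}\G_a\to 0$, this descent reduces to the behaviour of $H^1(-,\sO)$ and $H^2(-,\sO)$ under $\pi^*$, which is controlled by $\A^1$-invariance of coherent cohomology on smooth affine bases.

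For part \ref{prop:infu-RSC3}, Corollary \ref{cor:bir-inv-h0} reduces the claim to $\widetilde{H^1(G)}(\sO_L,\fm_L^{-1})=H^1(G)(\sO_L)$ for all $L\in\Phi$, i.e.\ that $c^{H^1(G)}_L$ never takes the value $1$. For $G=\alpha_p$ this is recorded in Proposition \ref{prop:ap-mot-cond}, and the inductive extension step along $0\to\alpha_p\to G\to G'\to 0$ preserves this property by the same conductor-functoriality argument used in part \ref{prop:infu-RSC2}, combined with the analysis of the connecting map to $H^2_{\fppf}(\sO_L,\alpha_p)$ on the henselian base.
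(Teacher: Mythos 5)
You have a genuine gap in part \ref{prop:infu-RSC1}. Lemma \ref{lem:im-cond} deliberately says nothing about \ref{c4}: it only transports \ref{c1}--\ref{c3}, \ref{c5}, \ref{c6} to the quotient. But Theorem \ref{thm:rec-cond}\ref{thm:rec-cond1} requires a conductor \emph{of some level}, i.e.\ all of \ref{c1}--\ref{c5}; axiom \ref{c4} is precisely what makes $F_c$ cube-invariant in Proposition \ref{prop:cCI}\ref{prop:cCI01}, which is where SC-reciprocity comes from. Verifying \ref{c4} for a conductor pushed forward along a surjection is exactly the hard point (compare the work needed in Propositions \ref{prop:Art-cond} and \ref{prop:ap-mot-cond}), so your part \ref{prop:infu-RSC1} is not established. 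Worse, your part \ref{prop:infu-RSC2} is phrased for the motivic conductor $c^{H^1(G)}$, which only exists once $H^1(G)\in\RSC_{\Nis}$ is known, so as written the two parts are circular. The paper avoids this entirely: it embeds $G\inj H_1=W_n^N$ with smooth unipotent quotient $H_2$ (your resolution $0\to G\to U_1\to U_2\to 0$ is the same device), identifies $H^1(G)\cong (H_2/H_1)_{\Nis}$, and invokes \cite[Thm 0.1]{Saito-Purity-Rec}, which says that Nisnevich sheafification preserves SC-reciprocity; no conductor axioms are needed at that stage. Your argument can be repaired either by quoting that theorem, or by proving \ref{c4} at level $2$ for the \emph{induced} conductor $c$ (via your $\alpha_p$-d\'evissage) before appealing to Theorem \ref{thm:rec-cond}\ref{thm:rec-cond1}.

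Parts \ref{prop:infu-RSC2} and \ref{prop:infu-RSC3} are otherwise essentially the paper's argument: a composition series with graded pieces $\alpha_p$, base case Proposition \ref{prop:ap-mot-cond}, the lift-and-correct step as at the end of the proof of Proposition \ref{prop:Art-cond}, and for \ref{prop:infu-RSC3} the identity $\widetilde{H^1(G)}(\sO_L,\fm_L^{-1})=H^1(G)(\sO_L)$ fed into Corollary \ref{cor:bir-inv-h0}, using Proposition \ref{prop:mot-cond-sub-sheaf}. The only difference is that you peel $\alpha_p$ off as a subgroup $0\to\alpha_p\to G\to G'\to 0$, while the paper uses $0\to G_r\to G_{r-1}\to\alpha_p\to 0$ (sequence \eqref{prop:infu-RSC5}); both orderings work. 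Your excursion through the obstruction in $H^2_{\fppf}(X,\alpha_p)$ is unnecessary: since $H^1(G)$ is a Nisnevich sheaf the \ref{c4}-question is Zariski-local on $X$, so you may assume $X$ affine, and then $H^2_{\fppf}(X,\alpha_p)=0$ (Artin--Schreier plus vanishing of higher coherent cohomology on affines), so $H^1(G)(X)\to H^1(G')(X)$ is already surjective; likewise $H^2_{\fppf}(\sO_L,\alpha_p)=0$ handles the lifting in part \ref{prop:infu-RSC3}. Also note that in the correction step you should control the conductor of $a-\pi^*b$ as an element of the subsheaf $H^1(\alpha_p)$; this is exactly what Proposition \ref{prop:mot-cond-sub-sheaf} provides, and it is worth citing explicitly.
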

\begin{proof}
\ref{prop:infu-RSC1}. We find an exact sequence in the category of $k$-groups 
\[0\to G\to H_1\to H_2\to 0\]
with $H_i$ smooth unipotent $k$-groups. Indeed, by \cite[V, \S 1, 4.2, 4.7]{DG} we find a closed immersion  
$G\inj W_n^N:=H_1$, for some $n, N$, and by \cite[IV, \S2, 2.3]{DG} the quotient $H_2:=H_1/G$ is again unipotent,
and it is automatically reduced, hence is smooth. As in the proof of Lemma \ref{lem:HSS-inf} we find 
$H^1(G)\cong (H_2/H_1)_{\et}$, where $(H_2/H_1)_{\et}$ is the \'etale sheaf associated to the presheaf
$\Sm\ni X\mapsto H_2(X)/H_1(X)$. Let $v: \Sm_{\et}\to \Sm_{\Nis}$ be the natural morphism of sites.
Since $H_1$ is smooth unipotent, it  is a successive extension of $\G_a$'s, hence $R^1v_*H_1=0$.
We obtain an isomorphism in $\NST$
\[H^1(G)\cong (H_2/H_1)_{\Nis},\]
where $(H_2/H_1)_{\Nis}$ is the Nisnevich sheaf associated to the presheaf $X\mapsto H_2(X)/H_1(X)$.
Thus $H^1(G)\in \RSC_{\Nis}$ follows from  $H_i\in \RSC_{\Nis}$ and \cite[Thm 0.1]{Saito-Purity-Rec},
which states that Nisnevich sheafification preserves SC-reciprocity.

\ref{prop:infu-RSC2}. By \cite[IV, 5.8]{DG} $G$ admits a descending sequence 
\eq{prop:infu-RSC4}{0=G_n\subset G_{n-1}\subset \ldots\subset G_0=G }
with successive quotients $G_{r-1}/G_r\cong \alpha_p$.
In particular, $H^2(X_{\fppf}, G)=0$, for all {\em affine} smooth $k$-schemes $X$.
Note that this induces for all $r\in [1,n]$  an exact sequence in $\NST$
\eq{prop:infu-RSC5}{0\to H^1(G_r)\to H^1(G_{r-1})\to H^1(\alpha_p)\to 0.}
Indeed, by Lemma \ref{lem:HSS-inf} this sequence is in $\NST$; hence it suffices to check
its exactness on any smooth affine $k$-scheme $X$, in which case it follows from 
$H^0(X_{\fppf}, \alpha_p)=0=H^2(X_{\fppf}, G_r)$. By Proposition \ref{prop:ap-mot-cond}
the motivic conductor of $H^1(\alpha_p)$ restricts to a level 2 conductor and by induction we may assume
that so does the motivic conductor of $H^1(G_{r-1})$. We deduce that the motivic conductor of
$H^1(G_r)$ restricts to a level 2 conductor from \eqref{prop:infu-RSC5} and a similar argument as at the end of the proof of
Proposition \ref{prop:Art-cond}.

\ref{prop:infu-RSC3}. We claim
\eq{prop:infu-RSC6}{\widetilde{H^1(G)}(\sO_L, \fm^{-1}_L)=H^1(G)(\sO_L).}
The claim is true for $G=\alpha_p$, by the explicit formula of the motivic conductor in Proposition \ref{prop:ap-mot-cond}.
Consider the sequence $\eqref{prop:infu-RSC4}$ and assume the claim is proven for $G_r$.
Let $b\in \widetilde{H^1(G_{r-1})}(\sO_L, \fm^{-1})$. By the exact sequence \eqref{prop:infu-RSC5} and the claim
for $\alpha_p$ we find a $c\in H^1(G_{r-1})(\sO_L)$ such that $b-c$ is in the image of $H^1(G_r)(L)$. 
By Proposition \ref{prop:mot-cond-sub-sheaf} we find
\[b-c\in \widetilde{H^1(G_{r})}(\sO_L, \fm^{-1})= H^1(G_r)(\sO_L),\]
which proves \eqref{prop:infu-RSC6}. Hence \ref{prop:infu-RSC3} follows from Corollary \ref{cor:bir-inv-h0}.
\end{proof}

In summary:
\begin{thm}\label{thm-H1G-RSC}
Let $G$ be a finite $k$-group. Then:
\begin{enumerate}[label=(\arabic*)]
\item\label{thm-H1G-RSC1} $H^1(G)\in \RSC_{\Nis}$;
\item\label{thm-H1G-RSC2} 
the motivic conductor of $H^1(G)$ restricts to conductor  of level 2, and if $G$ has no infinitesimal unipotent factor,
       to a conductor of level 1;
\item\label{thm-H1G-RSC3}
 write $G=G'\times G_{\rm unip}$ with $G_{\rm unip}$ unipotent and $G'$ without any unipotent subgroup, and let
         $X$ be smooth proper over $k$ and $U\subset X$ dense open. Then 
\[h^0_{\A^1}(H^1(G))(U)= H^1(G')(U)\oplus H^1(G_{\rm unip})(X).\]
\end{enumerate}
\end{thm}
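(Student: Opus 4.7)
My plan is to reduce the statement to the four special cases already treated: étale prime-to-$p$ (Lemma \ref{lem:et-pp}), étale $p$-primary (Lemma \ref{lem:et-p}), multiplicative type (Lemma \ref{lem:multA1}), and infinitesimal unipotent (Proposition \ref{prop:infu-RSC}). Since $k$ is perfect, the connected-étale sequence of a finite $k$-group splits and infinitesimal groups further decompose by their multiplicative and unipotent parts (e.g.\ \cite[IV, \S3]{DG}); combined with the splitting of an étale group into prime-to-$p$ and $p$-primary parts, I obtain a canonical decomposition
\[G \;=\; G^{\et}_{p'}\times G^{\et}_{p}\times G^{\inf}_{\rm mult}\times G^{\inf}_{\rm unip}.\]
Under the identification $G'\times G_{\rm unip}$ from the statement, $G_{\rm unip}=G^{\et}_p\times G^{\inf}_{\rm unip}$ and $G'=G^{\et}_{p'}\times G^{\inf}_{\rm mult}$.

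Next I would verify that the presheaf-with-transfers structure from Lemma \ref{lem:Hfppf-PST} is compatible with direct products, i.e.\ $H^1(G_1\times G_2)\cong H^1(G_1)\oplus H^1(G_2)$ in $\NST$: indeed, if $0\to G_i\to H_{1,i}\to H_{2,i}\to 0$ are smooth resolutions, then their product gives such a resolution of $G_1\times G_2$, and the canonical isomorphism \eqref{lem:Hfppf-PST2} shows this identifies $H^1(G_1\times G_2)$ with $[H_{1,1}\times H_{1,2}\to H_{2,1}\times H_{2,2}]$, which visibly splits. (Independence of the chosen resolution, proved in Lemma \ref{lem:Hfppf-PST}, is what makes this well-defined.) Once this is in place, \ref{thm-H1G-RSC1} follows immediately by applying each of Lemma \ref{lem:et-pp}, Lemma \ref{lem:et-p}, Lemma \ref{lem:multA1}, and Proposition \ref{prop:infu-RSC} to the four factors, together with the fact that $\RSC_{\Nis}$ is closed under finite direct sums.

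For \ref{thm-H1G-RSC2}, I would apply Lemma \ref{lem:cond-sum}: the motivic conductor of a direct sum $F_1\oplus F_2$ is the pointwise maximum of the conductors. The étale prime-to-$p$ and multiplicative-type factors give $\A^1$-invariant sheaves, whose motivic conductor is of level $1$ (even trivial beyond values $0$ and $1$, by Corollary \ref{cor:HI}); the étale $p$-primary factor contributes a level $1$ conductor by Lemma \ref{lem:et-p}; and the infinitesimal unipotent factor contributes a level $2$ conductor by Proposition \ref{prop:infu-RSC}. Taking the pointwise maximum of level-$n$ conductors yields a conductor of level $\max n$ (the axioms \ref{c1}--\ref{c4} are stable under such a maximum, as is semi-continuity); this gives level $2$ in general and level $1$ when the infinitesimal unipotent factor vanishes, i.e.\ precisely when $G$ has no infinitesimal unipotent factor.

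Finally, for \ref{thm-H1G-RSC3}, the functor $h^0_{\A^1}$ is a right adjoint (see \ref{para:HIsub}) and therefore commutes with finite direct sums. Decomposing $H^1(G)=H^1(G')\oplus H^1(G_{\rm unip})$ and further into the four factors, I observe that on $H^1(G^{\et}_{p'})$ and $H^1(G^{\inf}_{\rm mult})$, which are already homotopy invariant, $h^0_{\A^1}$ acts as the identity so the restriction map $H^1(G')(X)\xr{\simeq} H^1(G')(U)$ is an isomorphism and gives the first summand; on $H^1(G^{\et}_p)$ and $H^1(G^{\inf}_{\rm unip})$, the computations in Lemma \ref{lem:et-p} and Proposition \ref{prop:infu-RSC}\ref{prop:infu-RSC3} respectively identify $h^0_{\A^1}(-)(U)$ with the value on the proper $X$, and their sum is $H^1(G_{\rm unip})(X)$ by the product compatibility of $H^1$. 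The potentially subtle point — and essentially the only one requiring real care — is the product-compatibility of the transfer structure on $H^1(G)$; once granted, everything else is an assembly of the already-proven pieces.
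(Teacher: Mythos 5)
Your proposal is correct and follows essentially the same route as the paper: the canonical decomposition of $G$ into its \'etale multiplicative, \'etale unipotent, infinitesimal multiplicative, and infinitesimal unipotent factors (\cite[IV, \S 3, 5.9]{DG}), followed by an appeal to Lemma \ref{lem:et-pp}, Lemma \ref{lem:et-p}, Lemma \ref{lem:multA1}, and Proposition \ref{prop:infu-RSC}. The extra points you verify --- additivity of $H^1(-)$ with its transfer structure under products of resolutions, Lemma \ref{lem:cond-sum} for the conductor of a direct sum, and additivity of $h^0_{\A^1}$ --- are exactly the implicit steps the paper's proof leaves to the reader.
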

\begin{proof}
By \cite[IV, \S 3, 5.9]{DG} we can decompose $G$ uniquely into a product
\[G= G_{em}\times G_{eu}\times G_{im}\times G_{iu},\]
where $G_{em}$ is \'etale multiplicative, i.e., it is an \'etale $k$-group without $p$-torsion,
$G_{eu}$ is \'etale unipotent, i.e., it is an  \'etale $k$-group with $p$-primary torsion,
$G_{im}$ is infinitesimal and of multiplicative type, and $G_{iu}$ is an infinitesimal unipotent $k$-group.
Hence the statement follows from Lemma \ref{lem:et-pp}, Lemma \ref{lem:et-p}, Lemma \ref{lem:multA1},
and  Proposition \ref{prop:infu-RSC}.
\end{proof}

\begin{remark}\label{rmk:H1unip-birinv}
Let $G$ be a finite unipotent $k$-group.
Note that by  Theorem \ref{thm-H1G-RSC}\ref{thm-H1G-RSC3} above, the functor
$X\mapsto H^1(X_{\fppf}, G)$ is a birational invariant for smooth proper $k$-schemes.
This gives a new proof of this (probably) well-known result (it follows, e.g., also from \cite{CR11}).
\end{remark}

\medskip
\medskip


\providecommand{\bysame}{\leavevmode\hbox to3em{\hrulefill}\thinspace}
\providecommand{\MR}{\relax\ifhmode\unskip\space\fi MR }
\providecommand{\MRhref}[2]{%
  \href{http://www.ams.org/mathscinet-getitem?mr=#1}{#2}
}
\providecommand{\href}[2]{#2}

\end{document}